\documentclass[10pt,a4paper]{article}
\usepackage[utf8]{inputenc}
\usepackage[T1]{fontenc}
\usepackage{geometry}
\geometry{top=0pt, bottom=0pt, left=0pt, right=0pt}
\usepackage{amsthm}
\usepackage[english]{babel}
\usepackage{amssymb}
\usepackage{lmodern}
\usepackage{calligra,amsmath,amsfonts}
\usepackage{mathrsfs} 
\usepackage{dsfont}
\usepackage{stmaryrd}

\usepackage{graphicx}
\usepackage{fullpage}
\usepackage[nottoc,notlot,notlof]{tocbibind}
\usepackage[colorlinks=true,urlcolor=black,linkcolor=black,citecolor=black]{hyperref}
\numberwithin{equation}{section}
\usepackage{tikz-cd}
\usepackage{pst-node}
\usetikzlibrary{matrix}
\hfuzz=5.002pt 
\usepackage{mathtools}
\usepackage{verbatimbox}
\usepackage{diagbox}
\usepackage{array}
\newcolumntype{C}{>{$\displaystyle} c <{$}}
\usepackage{makecell}
\usepackage{float}
\usepackage{tabu}
\usepackage[babel=true]{csquotes}
\usepackage{cancel}
\usepackage{xcolor}

\makeatletter
\def\env@dmatrix{\hskip -\arraycolsep
	\let\@ifnextchar\new@ifnextchar
	\def\arraystretch{2}%
	\array{*{\c@MaxMatrixCols}{>{\displaystyle}c}}%
}

\begin{document}
	
	\title{%On the 
		Morse Index Estimates of Min-Max Willmore Surfaces}
	\author{Alexis Michelat\footnote{Department of Mathematics, ETH Zentrum, CH-8093 Zürich, Switzerland.}\setcounter{footnote}{0}}
	\date{\today}
	
	\maketitle
	
	\vspace{1.5em}
	
	\begin{abstract}
		We show that the sum of the Morse indices of the Willmore spheres realising the width of Willmore type sweep-outs is bounded by the number of the parameters of the min-max. As an application, we deduce that among the \emph{true} Willmore  spheres realising the min-max sphere eversion, at most one of them one has index $1$, while the others must be stable.
	\end{abstract}

	\tableofcontents
	\vspace{0.5cm}
	\begin{center}
		{Mathematical subject classification :\\ 35J35, 35R01, 49Q10, 53A05, 53A10, 53A30, 53C42, 58E15.}
	\end{center}
	\theoremstyle{plain}
	\newtheorem*{theorem*}{Theorem}
	\newtheorem{theorem}{Theorem}[section]
	\newtheorem{theoremdef}{Théorème-Définition}[section]
	\newtheorem{lemme}[theorem]{Lemma}
	\newtheorem{propdef}[theorem]{Proposition-Definition}
	\newtheorem{prop}[theorem]{Proposition}
	\newtheorem{cor}[theorem]{Corollary}
	\theoremstyle{definition}
	\newtheorem*{definition}{Definition}
	\newtheorem*{definitions}{Definitions}
	\newtheorem{defi}[theorem]{Definition}
	\newtheorem{rem}[theorem]{Remark}
	\newtheorem{rems}[theorem]{Remarks}
	\newtheorem*{rems2}{Remarks}
	\newtheorem{exemple}[theorem]{Example}
	\newcommand{\N}{\ensuremath{\mathbb{N}}}
	\parskip 1ex
	\newcommand{\vc}[3]{\overset{#2}{\underset{#3}{#1}}}
	\newcommand{\conv}[1]{\ensuremath{\underset{#1}{\longrightarrow}}}
	\newcommand{\A}{\ensuremath{\mathscr{A}}}
	\newcommand{\D}{\ensuremath{\nabla}}
	\renewcommand{\N}{\ensuremath{\mathbb{N}}}
	\newcommand{\Z}{\ensuremath{\mathbb{Z}}}
	\newcommand{\I}{\ensuremath{\mathbb{I}}}
	\newcommand{\R}{\ensuremath{\mathbb{R}}}
	\newcommand{\W}{\ensuremath{\mathscr{W}}}
	\newcommand{\Q}{\ensuremath{\mathscr{Q}}}
	\newcommand{\C}{\ensuremath{\mathbb{C}}}
	\newcommand{\z}{\ensuremath{\bar{z}}}
	\newcommand{\vol}{\ensuremath{\mathrm{vol}}}
	\renewcommand{\tilde}{\ensuremath{\widetilde}}
	\newcommand{\p}[1]{\ensuremath{\partial_{#1}}}
	\newcommand{\Res}{\ensuremath{\mathrm{Res}}}
	\newcommand{\lp}[2]{\ensuremath{\mathrm{L}^{#1}(#2)}}
	\renewcommand{\wp}[3]{\ensuremath{\left\Vert #1\right\Vert_{\mathrm{W}^{#2}(#3)}}}
	\newcommand{\hp}[3]{\ensuremath{\left\Vert #1\right\Vert_{\mathrm{H}^{#2}(#3)}}}
	\newcommand{\np}[3]{\ensuremath{\left\Vert #1\right\Vert_{\mathrm{L}^{#2}(#3)}}}
	\newcommand{\h}{\ensuremath{\vec{h}}}
	\renewcommand{\Re}{\ensuremath{\mathrm{Re}\,}}
	\renewcommand{\Im}{\ensuremath{\mathrm{Im}\,}}
	\newcommand{\diam}{\ensuremath{\mathrm{diam}\,}}
	\newcommand{\leb}{\ensuremath{\mathscr{L}}}
	\newcommand{\supp}{\ensuremath{\mathrm{supp}\,}}
	\renewcommand{\phi}{\ensuremath{\vec{\Phi}}}
	\newcommand{\Perp}{\ensuremath{\perp}}
	\renewcommand{\perp}{\ensuremath{N}}
	\renewcommand{\H}{\ensuremath{\vec{H}}}
	\newcommand{\norm}[1]{\ensuremath{\Vert #1\Vert}}
	\newcommand{\e}{\ensuremath{\vec{e}}}
	\newcommand{\f}{\ensuremath{\vec{f}}}
	\renewcommand{\epsilon}{\ensuremath{\varepsilon}}
	\renewcommand{\bar}{\ensuremath{\overline}}
	\newcommand{\s}[2]{\ensuremath{\langle #1,#2\rangle}}
	\newcommand{\bs}[2]{\ensuremath{\left\langle #1,#2\right\rangle}}
	\newcommand{\n}{\ensuremath{\vec{n}}}
	\newcommand{\ens}[1]{\ensuremath{\left\{ #1\right\}}}
	\newcommand{\w}{\ensuremath{\vec{w}}}
	\newcommand{\vg}{\ensuremath{\mathrm{vol}_g}}
	\renewcommand{\d}[1]{\ensuremath{\partial_{x_{#1}}}}
	\newcommand{\dg}{\ensuremath{\mathrm{div}_{g}}}
	\renewcommand{\Res}{\ensuremath{\mathrm{Res}}}
	\newcommand{\totimes}{\ensuremath{\,\dot{\otimes}\,}}
	\newcommand{\un}[2]{\ensuremath{\bigcup\limits_{#1}^{#2}}}
	\newcommand{\res}{\mathbin{\vrule height 1.6ex depth 0pt width
			0.13ex\vrule height 0.13ex depth 0pt width 1.3ex}}
	\newcommand{\ala}[5]{\ensuremath{e^{-6\lambda}\left(e^{2\lambda_{#1}}\alpha_{#2}^{#3}-\mu\alpha_{#2}^{#1}\right)\left\langle \nabla_{\vec{e}_{#4}}\vec{w},\vec{\mathbb{I}}_{#5}\right\rangle}}
	\setlength\boxtopsep{1pt}
	\setlength\boxbottomsep{1pt}
	\allowdisplaybreaks
	\newcommand*\mcup{\mathbin{\mathpalette\mcapinn\relax}}
	\newcommand*\mcapinn[2]{\vcenter{\hbox{$\mathsurround=0pt
				\ifx\displaystyle#1\textstyle\else#1\fi\bigcup$}}}
	\def\Xint#1{\mathchoice
		{\XXint\displaystyle\textstyle{#1}}%
		{\XXint\textstyle\scriptstyle{#1}}%
		{\XXint\scriptstyle\scriptscriptstyle{#1}}%
		{\XXint\scriptscriptstyle\scriptscriptstyle{#1}}%
		\!\int}
	\def\XXint#1#2#3{{\setbox0=\hbox{$#1{#2#3}{\int}$ }
			\vcenter{\hbox{$#2#3$ }}\kern-.58\wd0}}
	\def\ddashint{\Xint=}
	\newcommand{\dashint}[1]{\ensuremath{{\Xint-}_{\mkern-10mu #1}}}

\section{Introduction}

We present in this paper a proof of the lower semi-continuity of the Morse index for min-max Willmore spheres constructed by the viscosity method of Tristan Rivi\`{e}re (see \cite{eversion}, \cite{viscosity}, \cite{geodesics}). Combining this result with the general Morse index bounds in viscosity methods of \cite{index2} and the classification of branched Willmore spheres (\cite{classification}, \cite{sagepaper}), we obtain the expected Morse index bound for the critical points arising as solutions of Willmore min-max problems.

We recall briefly the principal definitions related to the Willmore energy. Let $\Sigma$ be a closed Riemann surface, $n\geq 3$, and $\phi:\Sigma\rightarrow \R^n$ be a smooth immersion. Then its Willmore energy is defined by
\begin{align}\label{lagrange}
	W(\phi)=\int_{\Sigma}|\H_g|^2d\vg,
\end{align}
where $g=\phi^{\ast}g_{\,\R^n}$ is the pull-back metric on $\Sigma$ of the flat metric $g_{\,\R^n}$ on $\R^n$, and $\H_g$ is the mean curvature of the immersion $\phi$, which is half of the trace of the second fundamental form $\vec{\I}$ of the immersion $\phi$, \text{i.e.} 
\begin{align*}
	\H_g=\frac{1}{2}\sum_{i,j=1}^{2}g^{i,j}\vec{\I}_{i,j}.
\end{align*}
Critical points of the Lagrangian in \eqref{lagrange} are called Willmore surfaces. We will not recall in details the main developments obtained for minimisers of the Willmore energy (see \cite{lieyau} and \cite{marqueswillmore}). Let us mention that besides direct estimates given by explicit examples and sphere rigidity for $4\pi$ energy Willmore spheres, nothing in known in codimension at least $2$, with the notable exception of $\R\mathbb{P}^2$ (see \cite{lieyau}, \cite{kusnerpacific}, \cite{bryant3}). In this article, we are rather interested in min-max problems, for which one aims at obtaining Morse index bounds depending on the dimension (defined in some natural way to be described later in explicit examples) the considered admissible family of min-max. 

The admissible min-max families for which the following result applies are any of the families described in \cite{index2} (including classical examples of Lazer and Solimini \cite{lazer}), excepted for the dual admissible families. Rather than giving a general definition, we will give examples of $k$-dimensional admissible families after the statement of the following main result (see Section \ref{admissible} for a precise definition).

\begin{theorem}\label{main}
	Let $n\geq 3$ and let $\mathscr{A}$ be a \emph{non-trivial} $d$-dimensional admissible
	family of $W^{3,2}$ immersions of the sphere $S^2$ into $\R^n$. Assume that
	\begin{align*}
	\beta_0=\inf_{A\in \mathscr{A}}\sup W(A)<\infty.
	\end{align*}
	Then there exists finitely many \emph{true} branched compact Willmore spheres $\phi_1
	,\cdots,\phi_p:S^2\rightarrow\R^n$, and \emph{true} branched compact Willmore spheres $\vec{\Psi}_1,\cdots,\vec{\Psi}_q:S^2\rightarrow\R^n$ such that
	\begin{align}\label{quantization2}
	\beta_0=&\sum_{i=1}^{p}W(\phi_i)+\sum_{j=1}^{q}\left(W(\vec{\Psi}_j)-4\pi\theta_j\right),
	\end{align}
	where the integer $\theta_1,\cdots,\theta_q$ correspond respectively to the highest multiplicities of $\vec{\Psi}_1,\cdots,\vec{\Psi}_q$. Furthermore, we have
	\begin{align*}
	\sum_{i=1}^{p}\mathrm{Ind}_{W}(\phi_i)+\sum_{j=1}^{q}\mathrm{Ind}_{W}(\vec{\Psi}_j)\leq d.
	\end{align*}
	Furthermore, if $n=3$ or $n=4$, we have $\beta_0\in 4\pi \N$.
\end{theorem}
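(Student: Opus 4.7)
The plan is to combine Rivi\`ere's viscosity regularisation of the Willmore functional with the general abstract index bounds of \cite{index2}, and then pass to the limit via bubble-tree analysis, upgrading the passage to the limit by proving lower semi-continuity of the Morse index. First I would introduce a relaxed Lagrangian of the form $W_\sigma(\phi)=W(\phi)+\sigma^2\int_{\Sigma}(1+|\vec{\I}|^2)^p\,d\vg$ for some $p>1$ and perform the min-max of $W_\sigma$ over $\A$. By the general viscosity scheme of \cite{viscosity}, there exist critical points $\phi_\sigma$ of $W_\sigma$ with $W_\sigma(\phi_\sigma)\to\beta_0$ and satisfying the Struwe-type entropy condition $\sigma^2\int(1+|\vec{\I}_{\phi_\sigma}|^2)^p\log^{-2}(1/\sigma)\to 0$. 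Applying the abstract result of \cite{index2} to the functional $W_\sigma$ on the $d$-dimensional non-trivial admissible family $\A$ yields $\mathrm{Ind}_{W_\sigma}(\phi_\sigma)\leq d$.

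Next I would pass to the limit $\sigma\to 0$. The $\epsilon$-regularity for the perturbed Willmore equation, together with the entropy bound, yields a bubble-tree convergence of $\phi_\sigma$ to finitely many branched Willmore immersions of $S^2$ into $\R^n$. The macroscopic limits give the spheres $\phi_1,\ldots,\phi_p$, while concentration points (both interior blow-ups and branch point blow-ups of inverted minimal surfaces) produce the bubbles $\vec{\Psi}_1,\ldots,\vec{\Psi}_q$. The quantization identity \eqref{quantization2} then follows from the neck analysis developed in \cite{classification,sagepaper}: the $4\pi\theta_j$ defect is exactly the residual energy concentrated at a branch point of multiplicity $\theta_j$ of an inverted minimal surface, which is lost in the inversion that produces the ``true'' Willmore sphere $\vec{\Psi}_j$.

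The main obstacle is the lower semi-continuity of the Morse index along this bubble-tree convergence, i.e. showing
\begin{align*}
\sum_{i=1}^{p}\mathrm{Ind}_W(\phi_i)+\sum_{j=1}^{q}\mathrm{Ind}_W(\vec{\Psi}_j)\leq \liminf_{\sigma\to 0}\mathrm{Ind}_{W_\sigma}(\phi_\sigma).
\end{align*}
The idea is to fix, for each limit $\phi_i$ and each bubble $\vec{\Psi}_j$, a negative subspace of the second variation spanned by smooth variations of compact support in the respective coordinate charts. Pulling these back by the bubble-tree parametrisation and inserting appropriate logarithmic cut-offs adapted to the necks, one obtains families of variations on $\phi_\sigma$ with essentially disjoint supports. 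The negativity of the second variation of $W_\sigma$ on these variations has to be preserved when $\sigma$ is small enough, which requires fine control on the neck contributions: this is where the residue/pole analysis of \cite{classification} and the sharp quantization of $W$-energy on necks are indispensable, especially at branch points of high multiplicity where the $4\pi\theta_j$ defect could a priori produce spurious negative directions.

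Finally, when $n=3$ or $n=4$, Bryant's classification together with \cite{classification} implies that the Willmore energy of any (branched) Willmore sphere lies in $4\pi\N$. Since moreover the multiplicity defect $4\pi\theta_j$ is an integer multiple of $4\pi$, the quantization identity \eqref{quantization2} immediately gives $\beta_0\in 4\pi\N$, concluding the proof.
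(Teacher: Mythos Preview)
Your outline captures the global architecture correctly, but there is a genuine gap in the choice of relaxation. The functional the paper actually uses is
\[
W_\sigma(\phi)=W(\phi)+\sigma^2\int_{S^2}(1+|\H|^2)^2\,d\vg+\frac{1}{\log(1/\sigma)}\,\mathscr{O}(\phi),
\]
where $\mathscr{O}$ is the Onofri energy of the conformal factor. This third term is not optional: it is precisely what guarantees the Palais-Smale condition and the control on the conformal class needed in \cite{eversion} for the min-max construction of $\phi_\sigma$. Your proposed relaxation $W+\sigma^2\int(1+|\vec{\I}|^2)^p$ does not carry this control, so the very existence of the critical points $\phi_\sigma$ with the entropy property is not secured by the references you cite.

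More importantly, the presence of $\mathscr{O}$ is exactly what makes the lower semi-continuity of the index non-trivial and constitutes the main technical content of the paper. The second derivative $D^2\mathscr{O}(\phi)$ is \emph{non-local}: it involves $\alpha'_0=\frac{d}{dt}\alpha_t|_{t=0}$, the derivative of the conformal parameter with respect to the variation, and one cannot simply localise it by cut-offs as you propose for $D^2W$. The paper handles this by proving an a priori estimate
\[
\left(\int_{S^2}|d\alpha'_0|_g^2\,d\vg\right)^{1/2}\leq C\Big(1+\sqrt{W(\phi)}+\Big(\textstyle\int|d\alpha|_g^2\Big)^{1/2}\Big)\,\||d\w|_g\|_{L^\infty},
\]
whose proof rests on the differentiable dependence of solutions to the Beltrami equation on the coefficient (Ahlfors--Bers), combined with the Liouville equation. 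This yields $\|D^2\mathscr{O}(\phi_k)\|\leq C(1+W(\phi_k)+\mathscr{O}(\phi_k))$, and then the entropy bound $\frac{1}{\log(1/\sigma_k)}\mathscr{O}(\phi_k)\to 0$ kills the Onofri contribution in the limit. Your sketch does not anticipate this obstacle at all. A secondary omission is the need to identify the correct space of admissible variations $\mathrm{Var}(\phi)$ at a \emph{branched} limit, namely $\{\w:\Delta_g^\perp\w\in L^2,\ |d\w|_g\in L^\infty\}$, and to verify that $D^2W$ and $D^2\mathscr{W}$ agree on it; without this, the cut-off argument at branch points is formal.
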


One of the main motivation for this theorem comes from the following corollary.

\begin{theorem}\label{main2}
	Let $\iota_+:S^2\rightarrow \R^3$ be the standard embedding of the $2$-sphere  into the $3$-dimensional Euclidean space, $\iota_-:S^2\rightarrow\R^3$, and $\mathrm{Imm}(S^2,\R^3)$ be the space of smooth immersions from $S^2$ to $\R^3$. We denote by $\Omega$ the set of paths between the two spheres, defined by
	\begin{align*}
	\Omega=C^0\left([0,1],\mathrm{Imm}(S^2,\R^3)\right)\cap\ens{\{\phi_t\}_{t\in [0,1]},\, \phi_0=\iota_+,\; \phi_1=\iota-}.
	\end{align*}
	and we define the \emph{cost of the sphere eversion} as
	\begin{align*}
	\beta_0=\min_{\phi\in \Omega}\max_{t\in [0,1]}W(\phi_t)\geq 16\pi.
	\end{align*}
	Then there exists finitely many \emph{true} branched Willmore spheres $\phi_1,\cdots,\phi_m:S^2\rightarrow \R^3$ such that
	\begin{align}\label{cost}
	\beta_0=\sum_{j=1}^{m}W(\phi_j)\in 4\pi\N.
	\end{align}
	Furthermore, we have
	\begin{align*}
	\displaystyle \sum_{j=1}^{m}\mathrm{Ind}_W(\phi_j)\leq 1.
	\end{align*}
\end{theorem}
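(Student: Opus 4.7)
My plan is to recognise Theorem \ref{main2} as a specialisation of Theorem \ref{main} to the one-parameter min-max determined by the sphere eversion problem. The argument splits into three stages: identifying $\Omega$ (or a regularisation thereof) with a non-trivial $1$-dimensional admissible family, invoking Theorem \ref{main} with $d=1$ and $n=3$, and repackaging the resulting decomposition into the pure-sum form stated in \eqref{cost}.

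For the first stage, I would check that $\Omega$ fits the admissibility framework of Section \ref{admissible} with parameter space $[0,1]$ -- whence the dimension count $d=1$ -- and boundary constraint $\phi_0 = \iota_+$, $\phi_1 = \iota_-$. Non-triviality of the family follows because, although $\iota_+$ and $\iota_-$ are regularly homotopic by Smale's theorem, the minimax value satisfies $\beta_0 \geq 16\pi$, which strictly exceeds the endpoint energies $W(\iota_\pm)=4\pi$, so no constant sweep-out can realise the infimum. Finiteness of $\beta_0$ is ensured by any explicit sphere eversion with bounded Willmore energy (e.g.\ Kusner's or Morin's construction).

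Applying Theorem \ref{main} to this family then directly yields true branched Willmore spheres $\phi_1,\ldots,\phi_p$ and $\vec\Psi_1,\ldots,\vec\Psi_q$ of highest multiplicities $\theta_j$ satisfying the identity \eqref{quantization2} together with the quantization $\beta_0 \in 4\pi\N$ (valid since $n=3$) and the index bound
\[
\sum_{i=1}^p \mathrm{Ind}_W(\phi_i) + \sum_{j=1}^q \mathrm{Ind}_W(\vec\Psi_j) \leq 1.
\]

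To rewrite this as a single sum $\beta_0 = \sum_{j=1}^m W(\phi_j)$, I would interpret each term $W(\vec\Psi_j)-4\pi\theta_j$ as the energy of the limit Willmore sphere once a collapsing bubble of energy $4\pi\theta_j$ has been removed; in $\R^3$ this bubble decomposes into $\theta_j$ round unit $2$-spheres, each of Willmore energy $4\pi$ and Morse index $0$. Incorporating these round spheres into the master list $\phi_1,\ldots,\phi_m$ preserves the total energy and total Morse index, yielding \eqref{cost} together with $\sum_j\mathrm{Ind}_W(\phi_j)\leq 1$. The main obstacle I anticipate lies in the first stage: $\Omega$ is defined using $C^0$ paths of smooth immersions, while the framework of Section \ref{admissible} and \cite{index2} operates with $W^{3,2}$ viscosity regularisations, so one must verify that passing to a regularised admissible family preserves the min-max level $\beta_0$. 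A secondary delicate point is the precise identification of the ``ghost bubble'' contribution $4\pi\theta_j$ with genuine round sphere bubbles in dimension three, which must be carried out within the classification of branched Willmore spheres of \cite{classification,sagepaper} in order to legitimately list them as true branched Willmore spheres.
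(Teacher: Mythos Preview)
Your approach is precisely that of the paper, which presents Theorem \ref{main2} as an immediate corollary of Theorem \ref{main} with $d=1$ and $n=3$ and gives no separate proof; the non-triviality check $\beta_0\geq 16\pi>4\pi=W(\iota_\pm)$, the finiteness of $\beta_0$, the quantization, and the energy identity are all imported from \cite{eversion} and \cite{classification}, and the index bound is then the new content supplied by Theorem \ref{main}.

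One correction on your third stage: the manoeuvre of absorbing each $-4\pi\theta_j$ by adjoining $\theta_j$ round spheres goes the wrong way. Inserting $\theta_j$ round spheres into the list \emph{adds} $4\pi\theta_j$ to the total, so the contribution of the $j$-th packet would become $W(\vec\Psi_j)+4\pi\theta_j$, not the required $W(\vec\Psi_j)-4\pi\theta_j$. The passage from \eqref{quantization2} to the clean form \eqref{cost} is not achieved by inserting further surfaces; rather, the $\vec\Psi_j$ in \eqref{quantization2} are auxiliary compactifications (via an inversion in the target) of the actual rescaled bubbles, and the inversion is what \emph{raises} the energy by $4\pi\theta_j$. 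The quantity $W(\vec\Psi_j)-4\pi\theta_j$ is already the Willmore energy of the genuine compact bubble appearing in the decomposition of \cite{eversion}, so \eqref{cost} should be invoked directly from that reference rather than rebuilt by round-sphere bookkeeping.
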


More generally, consider the following generalisation of Theorem \ref{main2} (we restrict the discussion to the specific case of codimension $1$ for simplicity). Let $k>0$ and $\Gamma\in \pi_k(\mbox{Imm}(S^2,{\mathbb R}^3))$ be a non-zero element (provided $\pi_k(\mathrm{Imm}(S^2,\R^3))$ is not trivial). Thanks of \cite{eversion} and \cite{classification}, we have
\[
\beta_\Gamma=\inf_{\vec{\Phi}(t,\cdot)\simeq\Gamma}\ \max_{t\in S^k}\ W(\vec{\Phi}(t,\cdot))\in 4\pi \, {\mathbb N}^\ast.
\]
Furthermore, there exists finitely many \emph{true} branched Willmore spheres $\phi_1,\cdots, \phi_{m(\Gamma)}:S^2\rightarrow \R^3$ such that
	\begin{align*}
		\beta_{\Gamma}=\sum_{j=1}^{m(\Gamma)}W(\phi_j),\quad \text{and}\;\, \sum_{j=1}^{m(\Gamma)}\mathrm{Ind}_{W}(\phi_j)\leq k.
	\end{align*}
This furnishes a map
\begin{align}\label{map}
\Gamma\in  \pi_k(\mbox{Imm}(S^2,{\mathbb R}^3))\ \longrightarrow \ \frac{\beta_\Gamma}{4\pi}\in {\mathbb N}^\ast.
\end{align}
Now, recall from Smale (\cite{smale}) that $\pi_k(\mbox{Imm}(S^2,{\mathbb R}^3))=\pi_k(SO(3))\times\pi_{k+2}(SO(3))$, and it would be interesting to study the map \eqref{map} giving the Willmore energy of the optimal representative of a non-zero element of these homotopy groups. Notice in particular, that $\beta_0$ given by \eqref{cost} in Theorem \ref{main2} corresponds to $\beta_{\Gamma}$ where $\Gamma\in \pi_1(\mathrm{Imm}(S^2,\R^3))=\Z_2\times \Z$ is a generator.

\textbf{Acknowledgments.} I wish to thank my advisor Tristan Rivi\`{e}re for helpful comments and suggestions on this work.

\section{Second derivative of the Onofri energy and main estimate}

\subsection{A brief introduction to weak immersions}

A weak immersion from a closed Riemann surface $\Sigma$ is a map $\phi\in W^{2,2}\cap W^{1,\infty}(\Sigma,\R^n)$ such that 
\begin{align*}
	\inf_{p\in \Sigma}|d\phi\wedge d\phi|_{g_0}>0,
\end{align*}
for some smooth metric $g_0$ on $\Sigma$. We denote this space $\mathscr{E}(\Sigma,\R^n)$ (more generally, branched weak immersions will be described in Section \ref{bfinite}). We refer to \cite{rivierecrelle} and the papers cited within this article for more informations about this subject.

For all weak immersion $\phi\in \mathscr{E}(\Sigma,\R^n)$, we denote by $g_{\phi}=\phi^{\ast}g_{\,\R^n}$ the pull-back metric, by $\Gamma(\phi^{\ast}T\R^n)$ the \emph{continuous sections} of the pull-back bundle $\phi^{\ast}T\R^n$ and we define
\begin{align*}
    W^{2,2}_{\phi}\cap W^{1,\infty}(\Sigma,T\R^n)=W^{2,2}\cap W^{1,\infty}(\Sigma,T\R^n)\cap\ens{\w: \w(p)\in T_{\phi(p)}^{\perp}\R^n\;\, \text{for a.e.}\;\, p\in \Sigma}\subset \Gamma(\phi^{\ast}T\R^n)
\end{align*}
the space of weak \emph{normal} sections from the pull-back bundle $\phi^{\ast}T\R^n$ and by
\begin{align*}
\mathscr{E}_{\phi}(\Sigma,\R^n)=W^{2,2}_{\phi}\cap W^{1,\infty}(\Sigma,T\R^{n})\cap\ens{\w:\;\,\Delta_{g_{\phi}}^{\perp}\w\in \mathscr{L}^2(\Sigma,d\mathrm{vol}_{g_{\phi}})} 
\end{align*}
the space of all admissible normal variations, where $\Delta_{g_{\phi}}^{\perp}$ is the normal Laplacian, given for an orthonormal moving frame $(\e_1,\e_2)$ of the tangent bundle by
\begin{align*}
	\Delta_{g_{\phi}}^{\perp}=\sum_{i,j=1}^{2}\D^2_{\e_i,\e_i}=\sum_{i,j=1}^{2}\left(\D^{\perp}_{\e_i}\D^{\perp}_{\e_i}-\D^{\perp}_{\bar{\D}_{\e_i}\e_i}\right).
\end{align*} 
We will see in the next section that the space $\mathscr{E}_{\phi}(\Sigma,\R^n)$ constitutes in a sense the tangent space of weak immersions at $\phi$ (although this is not Finsler manifold), and we endow it with the norm $\norm{\,\cdot\,}_{\mathscr{E}_{\phi}(\Sigma)}$ (it is a slight modification of the norm presented in \cite{viscosity})
\begin{align*}
\norm{\w}_{\mathscr{E}_{\phi}(\Sigma)}=\left(\int_{\Sigma}\left(|\w|^2+|d\w|_{g_{\phi}}^2+|\Delta_{g_{\phi}}^{\perp}\w|_{g_{\phi}}^2\right)d\mathrm{vol}_{g_{\phi}}\right)^{\frac{1}{2}}+\np{|d\w|_{g_{\phi}}}{\infty}{\Sigma},
\end{align*}
where $g_{\phi}=\phi^{\ast}g_{\,\R^n}$ is the pull-back metric on the weak bundle $\phi^{\ast}T\R^n$. More precisely, we will show that a branched Willmore immersion $\phi$, the second derivative $D^2W(\phi)(\w,\w)$ is well-defined for a variation $\w\in W^{2,2}_{\phi}\cap W^{1,\infty}(\Sigma,T\R^n)$ if and only if $\norm{\w}_{\mathscr{E}_{\phi}(\Sigma,\R^n)}<\infty$.

Notice that by standard elliptic estimate and as $\Sigma$ is compact $\w\in \mathscr{E}_{\phi}(\Sigma,\R^n)$ implies that  for some constant $C_{\phi}$ (depending on $\phi$)
\begin{align}\label{ests}
	\left(\int_{\Sigma}|\D^{\perp}d\w|_{g_{\phi}}^2d\mathrm{vol}_{g_{\phi}}\right)^{\frac{1}{2}}\leq C_{\phi}\norm{\w}_{\mathscr{E}_{\phi}(\Sigma)}
\end{align}
 However, this estimate is in general false for branched immersions (the left-hand of \eqref{ests} might be infinite) and we refer to Lemma \ref{est2} for a similar estimate.
 
 \subsection{The Onofri energy}

Let $\Sigma$ be a compact connected Riemann surface, and for all smooth immersion $\phi:\Sigma\rightarrow\R^n$, let
\begin{align*}
g=\phi^\ast g_{\,\R^n},
\end{align*}
where $g_{\,\R^n}$ is the Euclidean flat metric on $\R^n$. By the uniformisation theorem, there exists a metric ${g}_0$ of constant Gauss curvature and unit volume and a smooth function $\alpha:\Sigma^2\rightarrow\R$ such that
\begin{align*}
g=e^{2\alpha}\,{g}_0.
\end{align*}
If $\Sigma$ has genus at least $1$, this function $\alpha$ is uniquely well-defined, while for the sphere, $\alpha$ is defined up to the positive conformal diffeomorphims $\mathscr{M}^+(S^2)$. Then we define the Onofri energy $\mathscr{O}(\phi)$ by
\begin{align*}
\mathscr{O}(\phi)=\frac{1}{2}\int_{\Sigma}|d\alpha|_g^2d\vg+K_{{g}_0}\int_{\Sigma}\alpha e^{-2\alpha}d\vg-\frac{K_{g_0}}{2}\log\int_{\Sigma}d\vg.
\end{align*}
where $K_{g_0}$ is the Gauss curvature of $g_0$. Therefore, if $\Sigma$ is of genus $\gamma$, we have 
\begin{align}\label{Kbar}
K_{{g}_0}=4\pi(1-\gamma)
\end{align}
As ${g}_0$ has- volume one, $d\mathrm{vol}_{g_0}$ is a probability measure and we have by Jensen's inequality
\begin{align*}
\exp\int_{\Sigma}2\alpha\,e^{-2\alpha}d\vg=\exp\int_{\Sigma}2\alpha\, d\mathrm{vol}_{g_0}\leq \int_{\Sigma}e^{2\alpha}d\mathrm{vol}_{g_0}=\int_{\Sigma}d\vg
\end{align*}
so for $\gamma\geq 2$ (for $\gamma=1$, there is nothing to prove), we have 
\begin{align*}
K_{{g}_0}\int_{\Sigma}\alpha\,e^{-2\alpha}d\vg\geq \frac{K_{{g}_0}}{2}\log\int_{\Sigma}d\vg,
\end{align*}
so
\begin{align*}
\mathscr{O}(\phi)\geq \frac{1}{2}\int_{\Sigma^2}|d\alpha|_g^2d\vg\geq 0
\end{align*}
while for $\gamma=0$, the Onofri inequality implies that 
\begin{align*}
\frac{1}{2}\int_{S^2}|d\alpha|_g^2d\vg+4\pi\int_{S^2}\alpha\,e^{-2\alpha}d\vg-2\pi\log\int_{S^2}d\vg\geq 0.
\end{align*}
Furthermore, for a function $\alpha$ satisfying an Aubin gauge (see \cite{eversion}), we have the improvement
\begin{align*}
	\frac{1}{3}\int_{S^2}|d\alpha|_g^2d\vg+4\pi\int_{S^2}\alpha e^{-2\alpha}d\vg-2\pi\log\int_{\Sigma}d\vg\geq 0,
\end{align*}
which implies in particular that
\begin{align*}
	\frac{1}{6}\int_{S^2}|d\alpha|_g^2d\vg\leq \mathscr{O}(\phi).
\end{align*}
an inequality which will prove to be crucial here as in \cite{eversion}. 
For the sake of completeness, let us recall the definition of an Aubin gauge as given by Tristan Rivi\`{e}re in \cite{eversion}.
\begin{defi}
	Let $\phi:S^2\rightarrow \R^n$ be a smooth immersion and $g=\phi^{\ast}g_{\R^n}$ the induced metric on $S^2$. We see that a couple  $(\Psi,\alpha)$ of a diffeomorphism $\Psi:S^2\rightarrow S^2$ and of a smooth function $\alpha:S^2\rightarrow \R$ is an \emph{Aubin gauge} if there exists a constant Gauss curvature metric $g_0$ of unit volume such that the following conditions are satisfied
	\begin{align*}
		&g=e^{2\alpha}g_0, \quad \Psi^{\ast}g_0=\frac{g_{S^2}}{4\pi},\quad\text{and}\;\,
		\int_{S^2}x_j\,e^{2\alpha(\Psi(x))}d\mathrm{vol}_{S^2}(x)=0,
	\end{align*}
	where $g_{S^2}$ is the standard round metric on $S^2$.
\end{defi}
\begin{rem}
As will appear clearly in the section dedicated to the study of the Onofri energy for weak immersions thanks of the measurable Riemann mapping theorem of Ahlfors-Bers, this definition does not need to assume smoothness of the immersion in consideration.
\end{rem}

In the proof of the semi-continuity of the index in the viscosity method for the Willmore energy, we have compute the second derivative of the Lagrangian
\begin{align*}
W_{\sigma}(\phi)=W(\phi)+\sigma^2\int_{\Sigma}(1+|\H|^2)^2d\vg+\frac{1}{\log\frac{1}{\sigma}}\mathscr{O}(\phi),
\end{align*}
and to estimate each term to show that the components depending on $\sigma$ are negligible as $\sigma\rightarrow 0$, while the second derivative $D^2W(\phi)$ converges in an appropriate sense. Notice also that we will have to check a key technical condition (defined in \cite{index2}) called the \textbf{Energy bound} in order to be able to apply the general theory of \cite{index2}.

\subsection{Measurable Riemann mapping theorem and an estimate for varying conformal parameters}

As we want to pass to the limit in the second derivative of the approximation $W_{\sigma}$ of the Willmore energy $W$, certain estimates are pretty straightforward thanks of the $\epsilon$-regularity. However, there is a non-local contribution coming from the second derivative of the Onofri energy and as this is not clear how such a quantity behaves as $\sigma\rightarrow 0$ so we will have to bound them directly by a local quantity which can be estimated thanks of an \emph{a priori} inequality. 

The forthcoming estimate of Theorem \ref{mainestimate} is largely based on the regular dependence of the solution of the Beltrami equation of Ahlfors and Bers (\cite{ahlfors}) with respect to the parameters which is recalled below (see \cite{ahlfors2} and also \cite{imayoshi}). The estimate is only necessary for the sphere, while in higher genus we can obtain an essentially sharp inequality by a direct computation. It permits to show that we can differentiate the conformal parameter of varying metrics of weak immersions as elements of suitable Banach spaces (to be defined below) and provides an \emph{a priori} estimate which will is the key point to obtain the lower semi-continuity of the index.

First, recall that the Beurling transform (the following integral should be understood as a principal value)
\begin{align*}
Sf(z)=\frac{1}{2\pi i}\int_{\C}\frac{f(\zeta)d\zeta\wedge d\bar{\zeta}}{(\zeta-z)^2}
\end{align*}
defined for all function $f\in L^p(\C)$ for all $1<p<\infty$ is a continuous operator $L^p(\C)\rightarrow L^p(\C)$. For all $2<p<\infty$ and for all $R>0$, define the following norm on $C^{1}_c(B(0,R),\C)/\C$
\begin{align}
\norm{f}_{B_{R,p}}=\sup_{z_1,z_2\in B(0,R),  z_1\neq z_2}\frac{|f(z_1)-f(z_2)|}{|z_1-z_2|^{\alpha_p}}+\np{f_z}{p}{B(0,R)}+\np{f_{\z}}{p}{B(0,R)}
\end{align}
where $\alpha_p=1-\dfrac{2}{p}$. We define $B_{R,p}$ as the completion of $C^1_c(B(0,R),\C)/\C$ with respect to the $B_{R,p}$ norm.

\begin{theorem}[Ahlfors-Bers\label{ab}]
	Let $2<p<\infty$, $0<k<1$ and assume that $k\norm{S}_{L^p(\C)}<1$, where $\norm{S}_{L^p(\C)}$ is the operator norm of the Beurling transform $S:L^p(\C)\rightarrow L^p(\C)$. Let $I\subset \R$ be an open interval containing $0\in \R$ and $\mu=\ens{\mu_t}_{t\in I}: I\rightarrow L^{\infty}(\C)$ be a differentiable mapping at $0$. For all $t\in I$, let $f_t$ be the unique solution in $z+W^{1,p}(\C)$ of the Beltrami equation
	\begin{align*}
	\p{\z}f_t=\mu_t\,\p{z}f_t.
	\end{align*}
	Then for all $R>0$, the map $f_t$ is differentiable at $t=0$ as an element of $B_{R,p}$ and 
	\begin{align*}
	\left(\frac{d}{dt}f_t(z)\right)_{|t=0}=\frac{1}{2\pi i}\int_{\C}\left(\frac{d}{dt}\mu_t(\zeta)\right)_{|t=0}R(f_0(\zeta),f_0(z))f_0^2(\zeta)d\zeta\wedge d\bar{\zeta}
	\end{align*} 
	where $R(z,\zeta)=\dfrac{1}{z-\zeta}-\dfrac{\zeta}{z-1}+\dfrac{\zeta-1}{z}$.
\end{theorem}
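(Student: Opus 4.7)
The plan is to reduce the Beltrami equation to a linear fixed-point problem in $L^p(\C)$ via the Beurling transform, differentiate this fixed-point expression with respect to $t$ in the $L^p$-norm, and then upgrade the conclusion to differentiability in the stronger $B_{R,p}$-norm using Morrey's embedding, before finally identifying the explicit integral kernel.

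First, writing $f_t(z)=z+\phi_t(z)$ with $\phi_t\in W^{1,p}(\C)$ and setting $\omega_t=\p{\z}\phi_t\in L^p(\C)$, the identity $\p{z}\phi=S(\p{\z}\phi)$ valid for any $\phi\in W^{1,p}(\C)$ turns the Beltrami equation into
\begin{align*}
\omega_t-\mu_t\,S\omega_t=\mu_t\qquad\text{in}\;\,L^p(\C).
\end{align*}
Since $k\,\norm{S}_{L^p(\C)}<1$ and $\norm{\mu_t}_{L^\infty(\C)}\le k$, the operator $I-\mu_t S$ is invertible on $L^p(\C)$, hence $\omega_t=(I-\mu_t S)^{-1}\mu_t$. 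Combining the Neumann series representation with the resolvent identity
\begin{align*}
(I-\mu S)^{-1}-(I-\mu_0 S)^{-1}=(I-\mu S)^{-1}\bigl[(\mu-\mu_0)S\bigr](I-\mu_0 S)^{-1}
\end{align*}
and the hypothesis that $t\mapsto\mu_t$ is differentiable at $0$ as a map into $L^\infty(\C)$ yields the differentiability of $t\mapsto\omega_t\in L^p(\C)$ at $0$ with
\begin{align*}
\dot\omega=(I-\mu_0 S)^{-1}\Bigl(\dot\mu\,(1+S\omega_0)\Bigr)=(I-\mu_0 S)^{-1}\bigl(\dot\mu\cdot\p{z}f_0\bigr),
\end{align*}
where $\dot\mu=\frac{d}{dt}\mu_t|_{t=0}\in L^\infty(\C)$.

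To recover $\phi_t$ from the pair $(\p{z}\phi_t,\p{\z}\phi_t)=(S\omega_t,\omega_t)$, I would invert $\p{\z}$ by a Cauchy-type integral operator compatible with the normalization $\phi_t\in W^{1,p}(\C)$ (and, if one adopts the three-point normalization, also with $f_t(0)=0$ and $f_t(1)=1$). By Morrey's embedding $W^{1,p}_{loc}(\C)\hookrightarrow C^{0,\alpha_p}_{loc}(\C)$ with $\alpha_p=1-2/p$, control of $(\p{z}\phi_t,\p{\z}\phi_t)$ in $L^p(B(0,R))\times L^p(B(0,R))$ controls $\phi_t$ in the $B_{R,p}$-norm; the $L^p$-differentiability established above therefore upgrades to differentiability of $f_t$ in $B_{R,p}$ for every $R>0$.

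The explicit integral formula is then obtained by composing with $f_0$ and applying the transformation law of Beltrami coefficients: pulling back by $f_0^{-1}$ transforms $\mu_t$ into a Möbius-type combination of $\mu_t$ and $\mu_0$ multiplied by a phase factor involving $\p{z}f_0/\overline{\p{z}f_0}$, and linearising in $t$ at $t=0$ produces the factor $f_0^2(\zeta)$ appearing in the statement. The kernel $R(z,\zeta)=\frac{1}{z-\zeta}-\frac{\zeta}{z-1}+\frac{\zeta-1}{z}$ is recognised as the unique holomorphic perturbation of $(z-\zeta)^{-1}$ which vanishes at $z\in\{0,1\}$ and has the correct growth at infinity, matching the normalization $f_t(0)=0$, $f_t(1)=1$, $f_t-z\in W^{1,p}(\C)$. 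The main obstacle is this last identification: the abstract $L^p$-differentiability follows routinely from the Neumann series and the resolvent identity, but converting the operator expression $\dot\omega=(I-\mu_0 S)^{-1}(\dot\mu\cdot\p{z}f_0)$ into the explicit pointwise integral kernel with its normalizing factors requires the delicate classical change-of-variables computation $\zeta\mapsto f_0(\zeta)$ originally performed by Ahlfors and Bers, and the bookkeeping of holomorphic correction terms needed to enforce the three-point normalization at $0$, $1$, and $\infty$.
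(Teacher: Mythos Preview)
The paper does not prove this theorem at all: it is quoted as a classical result of Ahlfors and Bers, with references to \cite{ahlfors2} and \cite{imayoshi}, and is used as a black box in the proof of Theorem~\ref{mainestimate}. There is therefore no proof in the paper to compare your proposal against.

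That said, your sketch is a reasonable outline of the standard argument. The reduction to the fixed-point equation $\omega_t=(I-\mu_tS)^{-1}\mu_t$ and the resolvent identity are the correct mechanism for $L^p$-differentiability, and the Morrey upgrade to $B_{R,p}$ is straightforward. Your caveat at the end is honest: the genuinely nontrivial part is the explicit kernel identification, which requires the change of variables $\zeta\mapsto f_0(\zeta)$ together with the factorisation $f_t=g_t\circ f_0$ where $g_t$ solves a Beltrami equation with coefficient vanishing at $t=0$, so that one can linearise around the identity rather than around $f_0$. You gesture at this but do not carry it out; if you were actually writing a self-contained proof you would need to fill in that computation, which is where most of the work lies.
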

\begin{rem}
	The operators $\partial_z$ and $\partial_{\z}$ are the standard Cauchy-Riemann operators. Their definition will be recalled in the proof of Lemma \ref{d2k}.
\end{rem}

\begin{theorem}\label{mainestimate}
	Let $\Sigma$ be a closed Riemann surface and $\phi\in \mathscr{E}(\Sigma,\R^n)$ be a weak immersion of finite total curvature and $\w\in \mathscr{E}_{\phi}(\Sigma,T\R^n)$ be an admissible variation of $\phi$ and define for all $t\in (-\epsilon,\epsilon)$ (for some $\epsilon>0$ small enough) the weak immersion $\phi_t=\phi+t\w$. For all $t\in (-\epsilon,\epsilon)$,  define the metric $g_t=\phi_t^{\ast}g_{\,\R^n}$ and let (by the uniformisation theorem) $g_{0,t}$ be a constant Gauss curvature metric of volume $1$ and $\alpha_t:\Sigma\rightarrow \R$ be a measurable function such that
	\begin{align}\label{eq1}
	g_t=e^{2\alpha_t}g_{0,t}.
	\end{align}
	Then the map $t\mapsto \alpha_t$ is differentiable as a map with values into $L^2(\Sigma)$ and we have
	\begin{align*}
	\alpha'_0=\frac{d}{dt}\alpha_t|_{|t=0}\in W^{1,2}(\Sigma),
	\end{align*}
	where the $L^2(\Sigma)$ and $W^{1,2}(\Sigma)$ space are taken with respect to a constant Gauss curvature metric independent of $t$. Furthermore, we have for some universal constant $0<C<\infty$ if $\Sigma=S^2$ the estimate
	\begin{align}
	\int_{S^2}|d\alpha'_0|_g^2d\vg&\leq  \left(C+\sqrt{W(\phi)}+\sqrt{W(\phi)-4\pi}+4\left(\int_{S^2}|d\alpha|_g^2d\vg\right)^{\frac{1}{2}}\right)\np{|d\w|_g}{\infty}{S^2},
	\end{align}
	while for $\Sigma$ of genus at least $1$, we have
	\begin{align*}
		\left(\int_{\Sigma}|d\alpha_0'|^2d\vg\right)^{\frac{1}{2}}\leq \left(\sqrt{\pi|\chi(\Sigma)|}+\sqrt{W(\phi)}+\sqrt{W(\phi)-2\pi\chi(\Sigma)}+4\left(\int_{\Sigma}|d\alpha|^2d\vg\right)^{\frac{1}{2}}\right)\np{|d\w|_g}{\infty}{\Sigma}.
	\end{align*}
\end{theorem}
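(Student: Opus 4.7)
\medskip

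\textbf{Plan.} The strategy is to reduce the existence and computation of $\alpha'_0$ to an application of Ahlfors--Bers (Theorem \ref{ab}) for the uniformising metric $g_{0,t}$, and then obtain the $W^{1,2}$ estimate by differentiating the Liouville equation relating $\alpha_t$, $g_t$, and $g_{0,t}$. First, in an isothermal chart $z$ for a fixed reference metric $g_{0}=g_{0,0}$ (in the case $\Sigma=S^2$, obtained globally via stereographic projection and the Aubin gauge), the metric $g_t=(\phi+t\w)^{\ast}g_{\,\R^n}$ has the form $\lambda_t|dz|^2+\nu_t\,dz^2+\bar{\nu}_t\,d\bar{z}^2$, with Beltrami coefficient $\mu_t=\nu_t/\lambda_t$. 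Since $\phi_t$ is affine in $t$ and $\w$ is normal, $\mu_t$ is smooth in $t$ with $\mu_0=0$; in particular, $t\mapsto \mu_t\in L^{\infty}$ is differentiable at $0$. Solving $\partial_{\bar z}f_t=\mu_t\,\partial_z f_t$ with the Aubin normalisation (three-point condition) on the sphere, or via the standard Teichmüller parametrisation for genus $\geq 1$, Theorem \ref{ab} yields that $f_t$ is differentiable at $0$ as an element of $B_{R,p}$, which gives the differentiability of $g_{0,t}=c_t\,f_t^{\ast}g_{\rm mod}$ (where $g_{\rm mod}$ is the model constant-curvature metric and $c_t$ is the volume normalisation).

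\medskip

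The derivative $\alpha'_0$ is then obtained from $2\alpha_t=\log(g_t)-\log(g_{0,t})$ (a pointwise identity between volume densities in local charts). The leading contribution is the standard normal variation formula: since $\w$ is normal, $(g_t')_{ij}|_{t=0}=-2\,\s{\w}{\vec{\I}_{ij}}$, so the trace gives $-4\,\s{\w}{\H}$, while the traceless part encodes $d\mu_t/dt|_0$ through the second fundamental form and will enter the variation of $g_{0,t}$. Combining these with the Ahlfors--Bers derivative formula yields $\alpha'_0\in L^2(\Sigma)$, together with a pointwise-type representation.

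\medskip

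For the $W^{1,2}$ estimate, I would differentiate the Liouville equation
\[
-\Delta_{g_0}\alpha_t=K_{g_t}e^{2\alpha_t}-K_{g_{0,t}}
\]
at $t=0$ (being careful that the Laplacian acts with respect to $g_0$, which is fixed, while $K_{g_{0,t}}$ is a $t$-dependent constant determined by Gauss--Bonnet and the unit-volume normalisation). This gives an elliptic equation of the form $-\Delta_{g_0}\alpha'_0=F$, where $F$ is an expression involving $\w$, $\H$, the traceless second fundamental form $\vec{\I}^{\circ}=\vec{\I}-\H\,g$, and $d\alpha$. Multiplying by $\alpha'_0$, integrating by parts, and applying Cauchy--Schwarz term by term then produces each summand on the right-hand side: the factor $\sqrt{W(\phi)}$ arises from $\|\H\|_{L^2}$, the factor $\sqrt{W(\phi)-2\pi\chi(\Sigma)}$ (respectively $\sqrt{W(\phi)-4\pi}$ for the sphere) from the Gauss equation $\int|\vec{\I}^{\circ}|^2_g\,d\vg=2W(\phi)-4\pi\chi(\Sigma)$, the $\int|d\alpha|^2$ term from mixed products between $d\alpha$ and $d\alpha'_0$ coming from the variation of the Laplacian inside $K_{g_t}e^{2\alpha_t}$, and the constants $C$ and $\sqrt{\pi|\chi(\Sigma)|}$ from the curvature normalisation of $g_{0,t}$.

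\medskip

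The main obstacle is the sphere case: because $\mu_0=0$ and the conformal class is not rigid, $g_{0,t}$ genuinely varies through the $6$-dimensional Möbius orbit, and the estimate on $dg_{0,t}/dt$ is non-local, obtainable only through the integral representation given by Theorem \ref{ab}. This non-locality costs one power: estimating the resulting volume integral of the Beurling kernel against $d\w$ in $L^{\infty}$ only yields a linear (not square-root) bound, which is why the sphere estimate is squared on the left. The Aubin gauge is essential here, as it kills the kernel of the linearised Liouville operator (the conformal Killing fields of $S^2$) and hence allows $\alpha'_0$ to be uniquely determined by the PDE. In higher genus the kernel is absent and the moduli space derivative is controlled directly, so that the sharper square-rooted estimate follows from a single Cauchy--Schwarz.
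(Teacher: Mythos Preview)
Your overall strategy matches the paper's: use Ahlfors--Bers to obtain differentiability of the uniformising data in $t$, then differentiate the Liouville equation, multiply by $\alpha_0'$, integrate by parts, and estimate term by term via Cauchy--Schwarz. The identification of each summand on the right-hand side with $\sqrt{W(\phi)}$, $\sqrt{W(\phi)-2\pi\chi(\Sigma)}$, and $\int|d\alpha|^2$ is also correct.

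One point deserves sharpening, however: your explanation of the dichotomy between $S^2$ and higher genus is not quite the actual mechanism. In the paper's proof, after multiplying the differentiated Liouville equation by $\alpha_0'$ and integrating, a term $2K_{g_0}\int_{\Sigma}|\alpha_0'|^2\,d\mathrm{vol}_{g_0}$ appears on the right-hand side. For genus $\geq 1$ one has $K_{g_0}\leq 0$, so this term has the favourable sign and absorbs the cross-term $-K_{g_0}\int\langle d\phi,d\w\rangle_g\,\alpha_0'\,d\mathrm{vol}_{g_0}$ directly; the estimate then closes with a single Cauchy--Schwarz and no external input beyond the Liouville PDE. For $S^2$, $K_{g_0}=4\pi>0$ gives the \emph{wrong} sign, and one needs an independent \emph{a priori} $L^2$ bound on $\alpha_0'$ itself (not $d\alpha_0'$). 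The paper extracts this from the explicit Ahlfors--Bers derivative formula in a stereographic chart, obtaining $\|\alpha_0'\|_{L^2}\leq C\,\||d\w|_g\|_{L^\infty}$ before ever touching the PDE. So Ahlfors--Bers plays a double role on the sphere: differentiability \emph{and} a quantitative zeroth-order bound. The distinction between the two cases is therefore the sign of the constant curvature, not ``non-locality costing a power'' or the presence of conformal Killing fields as such. Also note a slip in your Liouville equation: since $g_t=e^{2\alpha_t}g_{0,t}$ with $g_{0,t}$ varying, the fixed-background form $-\Delta_{g_0}\alpha_t=\ldots$ is not immediate; the paper works instead with $-\Delta_{g_t}\alpha_t=K_{g_t}-K_{g_0}e^{-2\alpha_t}$ and differentiates $\Delta_{g_t}$ in $t$.
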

\begin{proof}
	We first consider estimate for metrics of immersions of $S^2$.
	Fix some conformal chart $z=x_1+ix_2:U\subset S^2\rightarrow D^2\subset \C$ and for all $t\in I$, let $g_{i,j}(t)=\s{\p{x_i}\phi_t}{\p{x_j}\phi_t}$ ($1\leq i,j\leq 2$) be the coefficients of the metric of the immersion $\phi_t:S^2\rightarrow \R^n$ and $\lambda_t,\mu_t:\Sigma\rightarrow\R$ be such that 
	\begin{align*}
	&g_t=e^{2\lambda_t}|dz+\mu_td\z|^2,\quad e^{2\lambda_t}=\frac{1}{4}\left(g_{1,1}(t)+g_{2,2}(t)+2\sqrt{g_{1,1}(t)g_{2,2}(t)-g_{1,2}^2(t)}\right)\\
	&
	\mu_t=\frac{g_{1,1}(t)-g_{2,2}(t)+2i\,g_{1,2}(t)}{g_{1,1}(t)+g_{2,2}(t)+2\sqrt{g_{1,1}(t)g_{2,2}(t)-g_{1,2}^2(t)}}.
	\end{align*}
	Furthermore, denote by $\bar{\mu_t}\in L^{\infty}(\C)$ the extension by $0$ of $\mu_t:D^2\rightarrow \C$.
	As $\mu_0=0$, for all $2<p<\infty$, there exists $0<\delta(p)<\epsilon$ such that
	\begin{align*}
	C_p\sup_{t\in (-\delta(p),\delta(p))}\np{\mu_t}{\infty}{D^2}<1.
	\end{align*}
	Therefore, for all $2<p<\infty$, there exists $\delta=\delta(p)>0$ such that for all $t\in (-\delta(p),\delta(p))$ the equation
	\begin{align*}
	\p{\z}f_t=\bar{\mu_t}\p{z}f_t
	\end{align*}
	admits a unique $f_t:\C\rightarrow \C$ in $z+W^{1,p}(\C)$ and satisfying
	\begin{align}\label{s0}
	\left\Vert\left(\frac{d}{dt}f_t\right)_{|t=0}\,\right\Vert{B_{1,p}}\leq C_p'\np{\left(\frac{d}{dt}\mu_t\right)_{|t=0}}{\infty}{D^2}
	\end{align}
	for some universal constant $C_p'$. We fix in the rest of the proof some real number $2<p<\infty$.
	As
	\begin{align*}
	e^{2\alpha_t}\frac{|df_t|^2}{\pi(1+|f_t|^2)^2}
	=e^{2\alpha_t}\frac{|\p{z}f_tdz+\p{\z}f_td\z|^2}{\pi(1+|f_t|^2)^2}
	=\frac{e^{2\alpha_t}|\p{z}f_t|^2}{\pi(1+|f_t|^2)^2}\left|dz+\frac{\p{\z}f_t}{\p{z}f_t}d\z\right|^2
	=\frac{e^{2\alpha_t}|\p{z}f_t|^2}{\pi(1+|f_t|^2)^2}|dz+\mu_td\z|^2,
	\end{align*}
	if $\alpha_t:D^2\rightarrow \R$ is defined by
	\begin{align}\label{diff}
	e^{2\lambda_t}=e^{2\alpha_t}\frac{|\p{z}f_t|^2}{\pi(1+|f_t(z)|^2)^2}
	\end{align}
	this implies that
	\begin{align*}
	g_t=e^{2\alpha_t}\frac{|df_t|^2}{\pi(1+|f_t|^2)^2}=e^{2\alpha_t}g_{0,t}.
	\end{align*}
	As $g_{0,t}=\dfrac{1}{4\pi}f_t^{\ast}g_{S^2}$ and $f_t$ is a diffeomorphism, we deduce that $g_{0,t}$ is a volume $1$  metric of constant sectional curvature. By Theorem \ref{ab}, the functions $f_t$ and $\p{z}f_t$ are differentiable at $0$ and continuous in $L^p(D^2)$ for all $2<p<\infty$, so $\alpha_t$ is also differentiable as en element of $L^p(D^2)$ and all the following identities will be valid in the distributional sense.
	
	First, observe as we took a conformal chart that we have by the normalisation $f_0(z)=z$. We first compute
	\begin{align*}
	\frac{d}{dt}\log\left(\frac{|\p{z}f_t|^2}{\pi(1+|f_t|^2)^2}\right)&=\left(\frac{|\p{z}f_t|^2}{(1+|f_t|^2)^2}\right)^{-1}\left(2\,\Re\left(\frac{\p{z}\left(\frac{d}{dt}f_t\right)\,\bar{\p{z}f_t}}{(1+|f_t|^2)^2}\right)-\frac{|\p{z}f_t|^2\,4\,\Re(\frac{d}{dt}f_t\,\bar{f_t})}{(1+|f_t|^2)^3}\right)\\
	&=2\,\Re\left(\frac{\p{z}\left(\frac{d}{dt}f_t\right)}{\p{z}f_t}\right)-\frac{4\,\Re(\frac{d}{dt}f_t\,\bar{f_t})}{(1+|f_t|^2)}.
	\end{align*}
	As $\mu_0=0$, we also obtain
	\begin{align*}
	\left(\frac{d}{dt}\mu_t\right)_{|t=0}&=\frac{1}{2}\left(e^{-2\lambda}\s{\D_{\e_1}\w}{\e_1}-e^{-2\lambda}\s{\D_{\e_2}\w}{\e_2}+i\,e^{-2\lambda}\left(\s{\D_{\e_1}\w}{\e_2}+\s{\D_{\e_2}\w}{\e_1}\right)\right)\frac{dz}{d\z}\\
	&=2e^{-2\lambda}\s{\p{z}\phi}{\D_{\p{z}}\w}\frac{dz}{d\z}
	=2\,g^{-1}\otimes \left(\partial\phi\totimes \partial\w\right),
	\end{align*}
	where $\partial$ is the first order linear elliptic operator yielding $(p+1,q)$ section from a $(p,q)$ section and defined in our local complex coordinate $z$ by
	\begin{align*}
		\partial=\D_{\p{z}}\left(\,\cdot\,\right)\otimes dz,
	\end{align*}
	where $\D$ is the pull-back connexion by $\phi$ of the flat connexion of $\R^n$ (we will adopt consistently the notations given in the introduction of \cite{classification}).
	In particular, we have
	\begin{align*}
		\np{\left(\frac{d}{dt}\mu_t\right)_{|t=0}}{\infty}{D^2}=\np{2g^{-1}\otimes \left(\partial\phi\totimes\partial\w\right)}{\infty}{D^2}\leq \np{|d\w|_g}{\infty}{\Sigma}
	\end{align*}
	By differentiating \eqref{diff}, we obtain 
	\begin{align*}
	\s{d\phi_t}{d\w_t}_{g_t}e^{2\lambda_t}=2\left(\frac{d}{dt}\alpha_t\right)e^{2\lambda_t}+\left(2\,\Re\left(\frac{\p{z}\left(\frac{d}{dt}f_t\right)}{\p{z}f_t}\right)-\frac{4\,\Re(\frac{d}{dt}f_t\,\bar{f_t})}{(1+|f_t|^2)}\right)e^{2\lambda_t}
	\end{align*}
	Therefore, taking $t=0$ and writing for all differentiable function $h_t:\C\rightarrow \C$
	\begin{align*}
	\frac{d}{dt}h_t|_{|t=0}=h'_0,
	\end{align*}
	we obtain
	\begin{align*}
	\s{d\phi}{d\w}_g=2\,\alpha_0'+2\,\Re(\p{z}f_0')-4\,\Re\left(\frac{f_0'{\z}}{(1+|z|^2)}\right)
	\end{align*}
	We have immediately
	\begin{align*}
	\np{\s{d\phi}{d\w}_g}{\infty}{D^2}\leq 2\np{|d\w|_g}{\infty}{D^2}<\infty
	\end{align*}
	and
	\begin{align*}
	\left|\Re\left(\frac{z}{1+|z|^2}\right)\right|\leq \frac{1}{2}.
	\end{align*}
	Therefore, by \eqref{s0}, we have
	\begin{align*}
	\np{\alpha_0'}{p}{D^2}\leq \np{f_0'}{p}{D^2}+\np{\p{z}f'_0}{p}{D^2}+{\pi}\np{|d\w|_g}{\infty}{D^2}\leq C_p''\np{|d\w|_g}{\infty}{D^2}
	\end{align*}
	so there exists in particular a constant $C>0$ independent of $\phi$ such that
	\begin{align}\label{estend}
	\np{\alpha_0'}{2}{D^2}\leq C\np{|d\w|_g}{\infty}{D^2}
	\end{align}
	By the Liouville equation, we have
	\begin{align*}
	-\Delta_{g_t}\alpha_t=K_{g_t}-e^{-2\alpha_t}K_{g_0}.     
	\end{align*}
	By differentiating this equation, we get
	\begin{align*}
	-\Delta_{g_t}\left(\frac{d}{dt}\alpha_t\right)=2\left(\frac{d}{dt}\alpha_t\right)e^{-2\alpha_t}K_{g_0}+\left(\frac{d}{dt}\Delta_{g_t}\right)\alpha_t+\frac{d}{dt}(K_{g_t}).
	\end{align*}
	Therefore, we have by \cite{eversion} or the proof of Theorem \ref{d2o}
	\begin{align*}
	-\Delta_{g}\alpha_0'=2K_{g_0}e^{-2\alpha}\alpha_0'+\s{d\s{d\phi}{d\w}_g}{d\alpha}_g-\ast_g d\ast\left(\left(d\phi\totimes d\w+d\w\totimes d\phi\right)\res_g d\alpha\right)+\frac{d}{dt}(K_{g_t})|_{t=0}.
	\end{align*}
	Now, multiply this equation by $\alpha_0'd\vg$, and integrate by parts to find
	\begin{align}\label{end0}
		&\int_{\Sigma}|d\alpha_0'|_g^2d\vg=\int_{\Sigma}-\alpha_0'\,\Delta_{g}\alpha_0' \,d\vg=2K_{g_0}\int_{\Sigma}|\alpha_0'|^2d\mathrm{vol}_{g_0}+\int_{\Sigma}\s{d\s{d\phi}{d\w}_g}{d\alpha}_g\alpha_0'd\vg\nonumber\\
		&-\int_{\Sigma}\alpha_0'\ast_g\,d\,\ast\left(\left(d\phi\totimes d\w+d\w\totimes d\phi\right)\res_g d\alpha\right)d\vg+\int_{\Sigma}\alpha_0'\frac{d}{dt}\left(K_{g_t}\right)_{t=0}d\vg
	\end{align}
	Now, by integrating by parts and using the Liouville equation, we obtain
	\begin{align}\label{end1}
		&\int_{\Sigma}\s{d\s{d\phi}{d\w}_g}{d\alpha}\alpha_0'd\vg=-\int_{\Sigma}\s{d\phi}{d\w}_g\s{d\alpha
		}{d\alpha_0'}_gd\vg+\int_{\Sigma}\s{d\phi}{d\w}_g\left(-\Delta_g\alpha\right)\,\alpha_0'd\vg\nonumber\\
	    &=-\int_{\Sigma}\s{d\phi}{d\w}_g\s{d\alpha
	    }{d\alpha_0'}_gd\vg+\int_{\Sigma}\s{d\phi}{d\w}_gK_g\alpha_0'd\vg-K_{g_0}\int_{\Sigma}\s{d\phi}{d\w}_g\alpha_0'd\mathrm{vol}_{g_0}
	\end{align}
	Furthermore, as $\frac{d}{dt}\left(d\mathrm{vol}_{g_t}\right)_{|t=0}=\s{d\phi}{d\w}_gd\vg$, and by \cite{indexS3}, we have
	\begin{align}\label{end2}
		&\int_{\Sigma}\alpha_0'\frac{d}{dt}\left(K_{g_t}\right)_{|t=0}d\vg=\int_{\Sigma}\alpha_0'\left(\frac{d}{dt}\left(K_{g_t}d\mathrm{vol}_{g_t}\right)_{|t=0}-K_{g}\frac{d}{dt}\left(d\mathrm{vol}_{g_t}\right)_{|t=0}\right)\nonumber\\
		&=\int_{\Sigma}\alpha_0'\,d\,\Im\left(2\s{\H}{\partial\w}-2\,g^{-1}\otimes\left(\h_0\totimes\bar{\partial}\w\right)\right)-\int_{\Sigma}\s{d\phi}{d\w}K_g\alpha_0'd\vg\nonumber\\
		&=-\int_{\Sigma}d\alpha_0'\wedge \Im\left(2\s{\H}{\partial\w}-2\,g^{-1}\otimes\left(\h_0\totimes\bar{\partial}\w\right)\right)-\int_{\Sigma}\s{d\phi}{d\w}_gK_g\alpha_0'd\vg
	\end{align}
	Finally, we have
	\begin{align}\label{end3}
		-\int_{\Sigma}\alpha_0'\ast_g\,d\,\ast\left(\left(d\phi\totimes d\w+d\w\totimes d\phi\right)\res_g d\alpha\right)d\vg=\int_{\Sigma}\s{d\phi\totimes d\w+d\w\totimes d\phi}{d\alpha\otimes d\alpha_0'}_gd\vg.
	\end{align}
	Gathering \eqref{end0}, \eqref{end1}, \eqref{end2}, \eqref{end3}, we obtain
	\begin{align*}
		&\int_{\Sigma}|d\alpha_0'|_g^2d\vg=2K_{g_0}\int_{\Sigma}|\alpha_0'|^2d\mathrm{vol}_{g_0}-\int_{\Sigma}\s{d\phi}{d\w}_g\s{d\alpha
		}{d\alpha_0'}_gd\vg+\int_{\Sigma}\s{d\phi}{d\w}_gK_g\alpha_0'd\vg\\
	&-K_{g_0}\int_{\Sigma}\s{d\phi}{d\w}_g\alpha_0'd\mathrm{vol}_{g_0}+\int_{\Sigma}\s{d\phi\totimes d\w+d\w\totimes d\phi}{d\alpha\otimes d\alpha_0'}_gd\vg\\
	&-\int_{\Sigma}d\alpha_0'\wedge \Im\left(2\s{\H}{\partial\w}-2\,g^{-1}\otimes\left(\h_0\totimes\bar{\partial}\w\right)\right)-\int_{\Sigma}\s{d\phi}{d\w}_gK_g\alpha_0'd\vg\\
	&=2K_{g_0}\int_{\Sigma}|\alpha_0'|^2d\mathrm{vol}_{g_0}-K_{g_0}\int_{\Sigma}\s{d\phi}{d\w}_g\alpha_0'd\mathrm{vol}_{g_0}-\int_{\Sigma}\s{d\phi}{d\w}_g\s{d\alpha
	}{d\alpha_0'}_gd\vg
	\\
	&+\int_{\Sigma}\s{d\phi\totimes d\w+d\w\totimes d\phi}{d\alpha\otimes d\alpha_0'}_gd\vg
	-\int_{\Sigma}d\alpha_0'\wedge \Im\left(2\s{\H}{\partial\w}-2\,g^{-1}\otimes\left(\h_0\totimes\bar{\partial}\w\right)\right)
	\end{align*}
	Now, we estimate directly by Cauchy-Schwarz inequality
	\begin{align*}
		&\left|\int_{\Sigma}\s{d\phi}{d\w}_g\s{d\alpha}{d\alpha_0'}_gd\vg\right|\leq 2\np{|d\w|_g}{\infty}{\Sigma}\left(\int_{\Sigma}|d\alpha|_g^2d\vg\right)^{\frac{1}{2}}\left(\int_{\Sigma}|d\alpha_0'|^2d\vg\right)^{\frac{1}{2}}\\
		&\left|\int_{\Sigma}\s{d\phi\totimes d\w+d\w\totimes d\phi}{d\alpha\otimes d\alpha_0'}d\vg\right|\leq 2\np{|d\w|_g}{\infty}{\Sigma}\left(\int_{\Sigma}|d\alpha|_g^2d\vg\right)^{\frac{1}{2}}\left(\int_{\Sigma}|d\alpha_0'|^2d\vg\right)^{\frac{1}{2}}\\
		&\left|\int_{\Sigma}d\alpha_0'\wedge \Im\left(2\s{\H}{\partial\w}-2\,g^{-1}\otimes\left(\h_0\totimes\bar{\partial}\w\right)\right)\right|\\
		&\leq \np{|d\w|_g}{\infty}{\Sigma}\left(\sqrt{W(\phi)}+\sqrt{W(\phi)-2\pi\chi(\Sigma)}\right)\left(\int_{\Sigma}|d\alpha_0'|^2d\vg\right)^{\frac{1}{2}}.
	\end{align*}
	Finally, we have
	\begin{align*}
		&\left|K_{g_0}\int_{\Sigma}\s{d\phi}{d\w}_g\alpha_0'd\mathrm{vol}_{g_0}\right|\leq \sqrt{2}|K_{g_0}|\np{|d\w|_g}{\infty}{\Sigma}\left(\int_{\Sigma}|\alpha_0'|^2d\mathrm{vol}_{g_0}\right)^{\frac{1}{2}}\\
		&\leq 2|K_{g_0}|\int_{\Sigma}|\alpha_0'|^2d\mathrm{vol}_{g_0}+\frac{1}{4}|K_{g_0}|\np{|d\w|_g}{\infty}{\Sigma}^2
	\end{align*}
	If $\Sigma$ has genus $0$, \textit{i.e} $\Sigma=S^2$, then we obtain by \eqref{estend}
	\begin{align}\label{read}
		&\int_{S^2}|d\alpha_0'|^2d\vg\leq 16\pi \int_{S^2}|\alpha_0'|^2d\mathrm{vol}_{g_0}+\pi \np{|d\w|_g}{\infty}{S^2}^2\nonumber\\
		&+\np{|d\w|_g}{\infty}{S^2}\left(\sqrt{W(\phi)}+\sqrt{W(\phi)-4\pi}+4\left(\int_{S^2}|d\alpha|_g^2d\vg\right)^{\frac{1}{2}}\right)\left(\int_{S^2}|d\alpha_0'|^2d\vg\right)^{\frac{1}{2}}
		\leq C\np{|d\w|_g}{\infty}{S^2}^2\nonumber\\
		&+\left(\sqrt{W(\phi)}+\sqrt{W(\phi)-4\pi}+4\left(\int_{S^2}|d\alpha|_g^2d\vg\right)^{\frac{1}{2}}\right)\left(\int_{S^2}|d\alpha_0'|^2d\vg\right)^{\frac{1}{2}}\np{|d\w|_g}{\infty}{S^2}.
	\end{align}
	Now, let 
	\begin{align*}
		&X=\left(\int_{S^2}|d\alpha_0'|^2d\vg\right)^{\frac{1}{2}}\\
		&a=\left(\sqrt{W(\phi)}+\sqrt{W(\phi)-4\pi}+4\left(\int_{S^2}|d\alpha|_g^2d\vg\right)^{\frac{1}{2}}\right)\np{|d\w|_g}{\infty}{S^2}\\
		&b=C\np{|d\w|_g}{\infty}{S^2}^2.
	\end{align*}
	The inequality \eqref{read} reads
	\begin{align*}
		X^2\leq aX+b
	\end{align*}
	and as $X\geq 0$, we deduce that
	\begin{align*}
		&X\leq \frac{1}{2}\left(a+\sqrt{a^2+4b}\right)\leq a+\sqrt{b}\leq \left(\sqrt{W(\phi)}+\sqrt{W(\phi)-4\pi}+4\left(\int_{S^2}|d\alpha|_g^2d\vg\right)^{\frac{1}{2}}\right)\np{|d\w|_g}{\infty}{S^2}\\
		&+\sqrt{C}\np{|d\w|_g}{\infty}{S^2}
	\end{align*}
	Now, if $\Sigma$ has genus at least $1$, we obtain as $K_{g_0}\leq 0$
	\begin{align*}
		&2K_{g_0}\int_{\Sigma}|\alpha_0'|^2d\mathrm{vol}_{g_0}-K_{g_0}\int_{\Sigma}\s{d\phi}{d\w}_g\alpha_0'd\mathrm{vol}_{g_0}\\
		&\leq 2K_{g_0}\int_{\Sigma}|\alpha_0'|^2d\mathrm{vol}_{g_0}+2|K_{g_0}|\int_{\Sigma}|\alpha_0'|^2d\mathrm{vol}_{g_0}+\frac{|K_{g_0}|}{8}\int_{\Sigma}\s{d\phi}{d\w}_g^2d\vg=\frac{|K_{g_0}|}{8}\int_{\Sigma}\s{d\phi}{d\w}_g^2d\mathrm{vol}_{g_0}
	\end{align*}
	\begin{align*}
		\int_{\Sigma}|d\alpha|_g^2d\vg\leq \frac{|K_{g_0}|}{8}\np{|d\w|_g}{\infty}{\Sigma}
	\end{align*}
	and the preceding reasoning gives
	\begin{align*}
		&\left(\int_{S^2}|d\alpha_0'|^2d\vg\right)^{\frac{1}{2}}\leq \left(4\left(\int_{\Sigma}|d\alpha|_g^2d\vg\right)^{\frac{1}{2}}+\sqrt{W(\phi)}+\sqrt{W(\phi)-2\pi\chi(\Sigma)}\right)\np{|d\w|_g}{\infty}{\Sigma}\\
		&+\frac{\sqrt{\pi|\chi(\Sigma)|}}{2}\left(\int_{\Sigma}\s{d\phi}{d\w}_g^2d\mathrm{vol}_{g_0}\right)^{\frac{1}{2}}\\
		&\leq \left(\sqrt{\pi|\chi(\Sigma)|}+\sqrt{W(\phi)}+\sqrt{W(\phi)-2\pi\chi(\Sigma)}+4\left(\int_{\Sigma}|d\alpha|^2d\vg\right)^{\frac{1}{2}}\right)\np{|d\w|_g}{\infty}{\Sigma}.
	\end{align*}
	This concludes the proof of the theorem.
\end{proof}

\subsection{Second derivative of the Onofri energy and main estimate}

\begin{lemme}\label{d2k}
	Let $\Sigma$ be a closed Riemann surface, $\phi\in \mathscr{E}(\Sigma,\R^n)$ be a weak immersion, and $\w\in W^{2,2}_{\phi}\cap W^{1,\infty}(\Sigma,T\R^n)$ be an admissible \emph{normal} variation and $\epsilon>0$ be small enough such that for all $t\in (-\epsilon,\epsilon)$, the map $\phi_t=\phi+t\w:\Sigma\rightarrow \R^n$ be a weak immersion and denote by $g_t=\phi_t^{\ast}g_{\,\R^n}$ the pull-back metric. Then we have
	\begin{align}
	&\frac{d}{dt}(K_{g_t}d\mathrm{vol}_{g_t})_{|t=0}=d\,\Im\left(2\s{\H}{\partial\w}-2\,g^{-1}\otimes\left(\h_0\totimes\bar{\partial}\w\right)\right)\label{k1}\\
	&\frac{d^2}{dt^2}\left(K_{g_t}d\mathrm{vol}_{g_t}\right)=d\,\Im\,\bigg(2\s{\Delta_g^{\perp}\w+4\,\Re\left(g^{-2}\otimes\left(\bar{\partial}\phi\totimes\bar{\partial}\w\right)\otimes \h_0\right)}{\partial\w}-\partial\left(|\D^{\perp}\w|^2_g\right)\nonumber\\
	&-2\s{d\phi}{d\w}_g\left(\s{\H}{\partial \w}-\,g^{-1}\otimes\left(\h_0\totimes\bar{\partial}\w\right)\right)
	-8\,g^{-1}\otimes \left(\partial\phi\totimes\partial\w\right)\otimes \s{\H}{\bar{\partial}\w}\bigg)\label{k2}.
	\end{align}
	In particular, we have
	\begin{align*}
	\frac{d}{dt}(K_{g_t}d\mathrm{vol}_{g_t})_{|t=0}\in H^{-1}(\Sigma),
	\end{align*}
	where $H^{-1}(\Sigma)=\left(W^{1,2}(\Sigma)\right)'$ is the dual space of the Sobolev space $W^{1,2}(\Sigma)$ with respect to any fixed smooth metric on $\Sigma$.
\end{lemme}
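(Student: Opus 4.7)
The plan is to combine the local Liouville representation of the Gaussian--curvature 2--form with explicit variation formulas for the conformal factor $\lambda_t$, the mean curvature $\H$ and the Weingarten form $\h_0$; both formulas are first obtained in a conformal chart and then glued into the global intrinsic identities \eqref{k1}--\eqref{k2}.

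For \eqref{k1}, I fix a conformal chart $(U,z)$ for $g=g_0$ and pass to the Ahlfors--Bers chart $w=f_t(z)$ adapted to $g_t$ from Theorem \ref{ab}, so that $g_t=e^{2\tilde\lambda_t}|dw|^2$ and $K_{g_t}\,d\mathrm{vol}_{g_t}=d(2i\,\partial\tilde\lambda_t)$ locally. Combining $\dot g_{ij}|_{t=0}=-2\s{\w}{\vec{\I}_{ij}}$ for a normal variation with the differentiation of $f_t$ already performed in the proof of Theorem \ref{mainestimate}, I compute $\partial\dot{\tilde\lambda}_0$ and use the Codazzi equation $\D_{\partial_{\z}}^{\perp}\h_0=e^{2\lambda}\D_{\partial_z}^{\perp}\H$ to identify
\[
2\,\partial\dot{\tilde\lambda}_0\;\equiv\;\s{\H}{\partial\w}-g^{-1}\otimes\bigl(\h_0\totimes\bar{\partial}\w\bigr)
\]
modulo a real exact 1--form. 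Taking $d\,\Im$ on both sides yields \eqref{k1}; the same identity is derived in \cite{indexS3} and may be quoted directly.

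For \eqref{k2}, I differentiate \eqref{k1} once more at $t=0$. Three families of contributions arise: the variation of $\H$ and $\h_0$ (with $\dot{\H}=\tfrac12\Delta_g^{\perp}\w+(\text{curvature corrections})$, and $\dot{\h_0}$ expressed through the second normal derivatives of $\w$); the variation of the Cauchy--Riemann operators $\partial,\bar{\partial}$ produced by the shift $\dot{\mu}_0=2\,g^{-1}\otimes(\partial\phi\totimes\partial\w)$ of the Beltrami parameter, already computed in Theorem \ref{mainestimate}; and the variation of the volume form $\dot{(d\vg)}=-2\s{\H}{\w}\,d\vg$. Plugging all of these into $d\,\Im\bigl(2\s{\H}{\partial\w}-2\,g^{-1}\otimes(\h_0\totimes\bar{\partial}\w)\bigr)$ and regrouping: the leading term $\Delta_g^{\perp}\w$ comes from $\dot{\H}$; the cross contraction $4\,\Re\bigl(g^{-2}\otimes(\bar{\partial}\phi\totimes\bar{\partial}\w)\otimes\h_0\bigr)$ arises from combining $\dot{\h_0}$ with the Codazzi identity; the exact piece $\partial(|\D^{\perp}\w|_g^2)$ emerges from the interaction between $\s{\dot{\H}}{\partial\w}$ and $\s{\H}{(\dot\partial)\w}$ after a Weitzenb\"ock-type rearrangement; the prefactor $\s{d\phi}{d\w}_g$ is produced by the volume-form variation together with Liouville; and the final term involving $g^{-1}\otimes(\partial\phi\totimes\partial\w)$ is exactly the shift caused by $\dot{\mu}_0$ acting on $\partial$.

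For the distributional statement, \eqref{k1} already shows that $\frac{d}{dt}(K_{g_t}\,d\mathrm{vol}_{g_t})_{|t=0}$ is the exterior derivative of a 1--form whose coefficients are $L^2$ normal tensors ($\H,\h_0\in L^2(d\vg)$, the latter by $W(\phi)<\infty$ and the Gauss equation) multiplied by the $L^\infty$ tensor $d\w$; hence the 1--form lies in $L^2(\Sigma)$ and its exterior derivative in $W^{-1,2}(\Sigma)=H^{-1}(\Sigma)$. The main obstacle is the identification step in the second variation: keeping track of the $t$-dependence of $\partial,\bar{\partial}$ through $\dot{\mu}_0$ generates several non-obvious cross-terms, and matching them to the precise right-hand side of \eqref{k2} requires systematic use of the Codazzi and Gauss equations in complex notation together with careful bookkeeping of $(1,0)$-- versus $(0,1)$--components.
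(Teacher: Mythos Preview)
Your route differs substantially from the paper's. The paper never invokes Ahlfors--Bers or the Liouville equation for this lemma. Instead it starts from a formula, quoted from \cite{indexS3} and valid for \emph{all} $t$, expressing $\frac{d}{dt}(K_{g_t}d\mathrm{vol}_{g_t})$ as the exterior derivative of a $1$-form built from the components $\vec{\I}(\e_i,\e_j)$ of the second fundamental form and $\D_{\e_i}\w$ in a \emph{fixed} real chart $(x_1,x_2)$. The proof of \eqref{k1} is then a direct rewriting of this in the complex frame $(\e_z,\e_{\z})$. For \eqref{k2} the paper differentiates this all-$t$ formula once more in $t$ and reduces the problem to computing the four quantities $\D^{\perp}_{\frac{d}{dt}}\vec{\I}(\e_z,\e_{\z})$, $\D^{\perp}_{\frac{d}{dt}}\vec{\I}(\e_z,\e_z)$, $\D^{\perp}_{\frac{d}{dt}}\D^{\perp}_{\e_z}\w$ and $\D^{\perp}_{\frac{d}{dt}}\D^{\perp}_{\e_{\z}}\w$, each of which is obtained by elementary moving-frame identities (normality of $\w$, the Ricci equation, and the relation $\Delta_g^{\perp}=2e^{-2\lambda}(\D^{\perp}_{\e_z}\D^{\perp}_{\e_{\z}}+\D^{\perp}_{\e_{\z}}\D^{\perp}_{\e_z})$). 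The term $\partial(|\D^{\perp}\w|_g^2)$ appears through an explicit product-rule identity applied to $e^{-2\lambda}\s{\D^2_{\e_z,\e_z}\w}{\D^{\perp}_{\e_{\z}}\w}$, i.e.\ it originates from the variation of $\vec{\I}(\e_z,\e_z)\sim\h_0$, not from $\dot{\H}$ or $(\dot\partial)\w$ as you suggest.

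The practical advantage of the paper's approach is that working in a fixed chart removes any need to track the variation of the complex structure: there is no $\dot\mu_0$, no $\dot\partial$, no moving Ahlfors--Bers chart, and hence no ``non-obvious cross-terms'' of that type. Your strategy is not wrong in principle, but the computations you defer (the precise ``curvature corrections'' in $\dot{\H}$, the formula for $\dot{\h_0}$, the ``Weitzenb\"ock-type rearrangement'', the explicit action of $\dot\mu_0$ on $\partial$ and $\bar\partial$) are exactly where the substance lies, and your attribution of which piece of \eqref{k2} comes from which variation is in places inaccurate. As written, the outline identifies plausible ingredients but does not carry out the calculation that actually establishes \eqref{k2}.
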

\begin{rem}
	However, we do not have in general for a weak immersion the estimate
	\begin{align*}
		\frac{d^2}{dt^2}\left(K_{g_t}d\mathrm{vol}_{g_t}\right)\in H^{-1}(\Sigma).
	\end{align*}
	Indeed, we have rather the estimate (as the quantity under the exterior is in $L^2(\Sigma)$)
	\begin{align*}
		\frac{d^2}{dt^2}\left(K_{g_t}d\mathrm{vol}_{g_t}\right)+d\,\Im\left(\partial |\D^{\perp}\w|_g^2\right)=\frac{d^2}{dt^2}\left(K_{g_t}d\mathrm{vol}_{g_t}\right)+\frac{1}{2}\Delta_g|\D^{\perp}\w|_g^2d\vg\in H^{-1}(\Sigma),
	\end{align*}
	and as for a general admissible variation $\w$, we have the optimal estimate $|\D^{\perp}\w|_g^2\in L^{\infty}(\Sigma)$, we can only define $\Delta_g|\D^{\perp}\w|_g^2d\vg$ as an element of the space of distributions $\mathscr{D}'(\Sigma)$.
\end{rem}
\begin{proof}
	We recall that for all $t\in I$ (see \cite{indexS3})
	\begin{align}\label{qf}
	\frac{d}{dt}\left(K_{g_t}\vol_{g_t}\right)&=d\bigg(\frac{1}{\sqrt{\mathrm{det}(g_t)}}\Big\{\left(-\s{\vec{\I}(\e_1,\e_1)}{\D_{\vec{e_2}}\w}+\s{\vec{\I}(\e_1,\e_2)}{\D_{\vec{e}_1}\w}\right)dx_1\nonumber\\
	&\qquad\qquad\qquad\qquad+\left(\s{\vec{\I}(\e_2,\e_2)}{\D_{\vec{e}_1}\w}-\s{\vec{\I}(\e_2,\e_1)}{\D_{\vec{e}_2}\w}\right)dx_2 \Big\}\bigg).
	\end{align}
	where $\e_i=\p{x_i}\phi$ for $i=1,2$.
	Now, we introduce the Cauchy-Riemann operators
	\begin{align*}
	\p{z}=\frac{1}{2}\left(\d{1}-i\,\d{2}\right),\quad \p{\z}=\frac{1}{2}\left(\d{1}+i\,\d{2}\right)
	\end{align*}
	and the $\partial$ and $\bar{\partial}$ operators defined by
	\begin{align*}
	\partial=\p{z}\left(\,\cdot\,\right)\otimes dz,\quad \bar{\partial}=\p{\z}\left(\,\cdot\,
	\right)\otimes d\z,
	\end{align*}
	so that
	\begin{align*}
	\d{1}=\partial_z+\partial_{\z},\quad \d{2}=-i(\p{z}-\p{\z}).
	\end{align*}
	We also introduce the notations
	\begin{align*}
	\e_z=\p{z}\phi=\frac{1}{2}(\e_1-i\e_2),\quad\e_{\z}=\p{\z}\phi=\frac{1}{2} (\e_1+i\e_2)
	\end{align*}
	so that
	\begin{align*}
	\e_1=\e_z+\e_{\z},\quad \e_2=i\left(\e_z-\e_{\z}\right).
	\end{align*}
	Then we have in this conformal chart
	\begin{align}\label{fcomplexes}
	\left\{\begin{alignedat}{1}
	&\H=2\,e^{-2\lambda}\vec{\I}(\e_z,\e_{\z})\\
	&\H_0=2\,e^{-2\lambda}\vec{\I}(\e_{z},\e_z)\\
	&\h_0=e^{2\lambda}H_0dz^2=2\,\partial^N\partial\phi=2\,\vec{\I}(\e_z,\e_z)dz^2.
	\end{alignedat}\right.
	\end{align}
	Therefore, we obtain the identities
	\begin{align*}
	&\vec{\I}(\e_1,\e_1)=\vec{\I}(\e_z+\e_{\z},\e_z+\e_{\z})=2\,\Re\,\vec{\I}(\e_z,\e_z)+2\vec{\I}(\e_z,\e_{\z}).\\
	&\vec{\I}(\e_2,\e_2)=-2\,\Re\,\vec{\I}(\e_z,\e_z)+2\,\vec{\I}(\e_z,\e_{\z})\\
	&\vec{\I}(\e_1,\e_2)=-2\,\Im\,\vec{\I}(\e_z,\e_z)
	\end{align*}
	To simplify notations in the following computation, we will write (abusively) $\p{z}\vec{w}=\partial$ and $\p{\z}\vec{w}=\bar{\partial}$.	Finally, as we have trivially 
	\begin{align*}
	dx_1=\frac{1}{2}(dz+d\z)\quad dx_2=-\frac{i}{2}(dz-d\z),
	\end{align*}
	we obtain the identity
	\begin{align}\label{q0}	&\left(-\s{\vec{\I}(\e_1,\e_1)}{\D_{\vec{e}_2}\w}+\s{\vec{\I}(\e_1,\e_2)}{\D_{\vec{e}_1}\w}\right)dx_1+\left(\s{\vec{\I}(\e_2,\e_2)}{\D_{\vec{e}_1}\w}-\s{\vec{\I}(\e_2,\e_1)}{\D_{\vec{e}_2}\w}\right)dx_2\nonumber\\
	&=\frac{1}{2}\bigg\{-\s{2\,\Re\,\vec{\I}(\e_z,\e_z)+2\,\vec{\I}(\e_z,\e_{\z})}{i(\partial-\bar{\partial})(dz+d\z)}-\s{2\,\Im(\e_z,\e_z)}{(\partial+\bar{\partial})(dz+d\z)}\nonumber\\
	&+\s{-2\,\Re\,\vec{\I}(\e_z,\e_z)+2\,\vec{\I}(\e_z,\e_{\z})}{(\partial+\bar{\partial})(-i(dz-d\z))}-\s{-2\,\Im\,\vec{\I}(\e_z,\e_z)}{i(\partial-\bar{\partial})(-i(dz-d\z))}\bigg\}\nonumber\\
	&=\s{\,\Re\,\vec{\I}(\e_z,\e_z)+\,\vec{\I}(\e_z,\e_{\z})}{-i(\partial-\bar{\partial})(dz+d\z)}+\s{\,\Im\,\vec{\I}(\e_z,\e_z)}{-(\partial+\bar{\partial})(dz+d\z)}\nonumber\\
	&+\s{-\,\Re\,\vec{\I}(\e_z,\e_z)+\,\vec{\I}(\e_z,\e_{\z})}{-i(\partial+\bar{\partial})(dz-d\z))}+\s{\Im\,\vec{\I}(\e_z,\e_z)}{(\partial-\bar{\partial})(dz-d\z))}
	\end{align}
	We first compute
	\begin{align*}
	-i(\partial-\bar{\partial})(dz+d\z)-i(\partial+\bar{\partial})(dz-d\z)=-2i(\partial dz-\bar{\partial}d\z)=4\,\Im(\partial dz)=4\,\Im(\partial\w).
	\end{align*}
	Then we have
	\begin{align}\label{q1}
	&\s{\,\Re\,\vec{\I}(\e_z,\e_z)}{-i(\partial-\bar{\partial})(dz+d\z)}+\s{\,\Im\,\vec{\I}(\e_z,\e_z)}{-(\partial+\bar{\partial})(dz+d\z)}\nonumber\\
	&+\s{-\,\Re\,\vec{\I}(\e_z,\e_z)}{-i(\partial+\bar{\partial})(dz-d\z))}+\s{\Im\,\vec{\I}(\e_z,\e_z)}{(\partial-\bar{\partial})(dz-d\z))}\nonumber\\
	&=\s{\Re\,\vec{\I}(\e_z,\e_z)}{-2i(\partial d\z-\bar{\partial}dz)}+\s{\Im\,\vec{\I}(\e_z,\e_z)}{-2(\partial d\z+\bar{\partial} dz)}\nonumber\\
	&=4\s{\Re\,\vec{\I}(\e_z,\e_z)}{\Im(\partial_z\w \,d\z)}-4\s{\Im\,\vec{\I}(\e_z,\e_z)}{\Re(\D_{\p{z}}\w\, d\z)}.
	\end{align}
	Now, we observe that for all $a,b\in \C^n$, we have
	\begin{align*}
	-\s{\Re(a)}{\Im(b)}+\s{\Im(a)}{\Re(b)}=\Im\s{a}{\bar{b}},
	\end{align*}
	so that
	\begin{align}\label{q2}
	&\s{\,\Re\,\vec{\I}(\e_z,\e_z)}{-i(\partial-\bar{\partial})(dz+d\z)}+\s{\,\Im\,\vec{\I}(\e_z,\e_z)}{-(\partial+\bar{\partial})(dz+d\z)}\nonumber\\
	&+\s{-\,\Re\,\vec{\I}(\e_z,\e_z)}{-i(\partial+\bar{\partial})(dz-d\z))}+\s{\Im\,\vec{\I}(\e_z,\e_z)}{(\partial-\bar{\partial})(dz-d\z))}\nonumber\\
	&=-4\,\Im\left(\s{\vec{\I}(\e_z,\e_{z})}{\p{\z}\w\,dz}\right).
	\end{align}
	Finally, we obtain by \eqref{qf}, \eqref{q0}, \eqref{q1} and \eqref{q2} that for all $t\in I$,
	\begin{align}\label{f1}
	\frac{d}{dt}\left(K_{g_t}d\mathrm{vol}_{g_t}\right)=d\,\Im\left(\frac{1}{\sqrt{\det(g_t)}}\left(4\s{\vec{\I}(\e_z,\e_{\z})}{\D_{\p{z}}\w\,dz} -4\s{\vec{\I}(\e_z,\e_z)}{\p{\z}\w\,dz}\right)\right)
	\end{align}
	so that
	\begin{align}
	\frac{d}{dt}\left(K_{g_t}d\mathrm{vol}_{g_t}\right)_{|t=0}=d\,\Im\left(2\s{\H}{\partial\w}-2\,g^{-1}\otimes \left(\h_0\totimes\bar{\partial}\w\right)\right).
	\end{align}
	Now, we introduce the second order differential operator $\D^2$ on the normal bundle (see \cite{federer}, $5.4.12$ for the standard operator and \cite{indexS3}) defined for all $X,Y\in T\Sigma$ by
	\begin{align}\label{opd2}
	\D^2_{X,Y}=\D^{\perp}_X\D^{\perp}_Y-\D^{\perp}_{(\D_XY)^{\top}}.
	\end{align}
	Notice that this operator is not symmetric in general, as
	\begin{align*}
	R^{\perp}(X,Y)=\D^{2}_{X,Y}-\D^{2}_{Y,X}.
	\end{align*}
	In particular, we have
	\begin{align*}
	\left(\left(\D_{X}\D_{Y}-\D_{(\D_{X}Y)^{\top}}\right)\w\right)^{\perp}=\D^2_{X,Y}\w+e^{-2\lambda}\sum_{i=1}^{2}\s{\D_{Y}\w}{\e_k}\vec{\I}(X,\e_k).
	\end{align*}
	We recall  the formulas (see \cite{indexS3})
	\begin{align*}
	&\frac{d}{dt}\left(\sqrt{\mathrm{det}(g_t)}\right)_{|t=0}=\s{d\phi}{d\w}_g\sqrt{\mathrm{det}(g)}\\
	&\D_{\frac{d}{dt}}^{\perp}\vec{\I}(\e_i,\e_j)=\D^{2}_{\e_i,\e_j}\w+e^{-2\lambda}\sum_{k=1}^{2}\s{\D_{\e_j}\w}{\e_k}\vec{\I}(\e_j,\e_k).
	\end{align*}
	Now, by \eqref{f1}, we obtain as $\left(\D_{\frac{d}{dt}}\w_t\right)_{|t=0}=0$ and $\s{d\phi}{d\w}_g=-2\s{\H}{\w}$
	\begin{align}\label{q3}
	\frac{d^2}{dt^2}\left(K_{g_t}d\mathrm{vol}_{g_t}\right)_{|t=0}&=d\,\Im\bigg(-\s{d\phi}{d\w}_g\left(2\s{\H}{\partial\w}-2\,g^{-1}\otimes \left(\h_0\totimes\bar{\partial}\w\right)\right)
	+e^{-2\lambda}\Big(4\s{\D_{\frac{d}{dt}}^{\perp}\vec{\I}(\e_z,\e_{\z})}{\D_{\p{z}}^{\perp}\w\,dz}\nonumber\\
	&+4\s{\vec{\I}(\e_z,\e_{\z})}{\D_{\frac{d}{dt}}^{\perp}\D_{\p{z}}^{\perp}\w\,dz}
	-4\s{\D_{\frac{d}{dt}}^{\perp}\vec{\I}(\e_z,\e_z)}{\D_{\p{\z}}^{\perp}\w\,dz}-4\s{\vec{\I}(\e_z,\e_z)}{\D_{\frac{d}{dt}}^{\perp}\D_{\p{z}}^{\perp}\w\,dz}\Big)\bigg)\nonumber\\
	&=d\,\Im\bigg(4\s{\H}{\w}\s{\H}{\partial \w}-4\s{\H}{\w}\,g^{-1}\otimes \left(\h_0\totimes\bar{\partial}\w\right)
	+e^{-2\lambda}\Big(4\s{\D_{\frac{d}{dt}}^{\perp}\vec{\I}(\e_z,\e_{\z})}{\D_{\p{z}}^{\perp}\w\,dz}\nonumber\\
	&+4\s{\vec{\I}(\e_z,\e_{\z})}{\D_{\frac{d}{dt}}^{\perp}\D_{\p{z}}^{\perp}\w\,dz}
	-4\s{\D_{\frac{d}{dt}}^{\perp}\vec{\I}(\e_z,\e_z)}{\D_{\p{\z}}^{\perp}\w\,dz}-4\s{\vec{\I}(\e_z,\e_z)}{\D_{\frac{d}{dt}}^{\perp}\D_{\p{\z}}^{\perp}\w\,dz}\Big)\bigg)\nonumber\\
	&=d\,\Im\left(4\s{\H}{\w}\s{\H}{\partial\w}-4\s{\H}{\w}\,g^{-1}\otimes\left(\h_0\totimes\bar{\partial}\w\right)+\mathrm{(I)}+\mathrm{(II)}+\mathrm{(III)}+\mathrm{(IV)}\right).
	\end{align}
	Now, notice that
	\begin{align*}
	\D_{\frac{d}{dt}}\D_{\p{z}}\w=\D_{\p{z}}\D_{\frac{d}{dt}}\w=0,
	\end{align*}
	so that
	\begin{align}\label{q4}
	\D_{\frac{d}{dt}}^{\perp}\D_{\p{z}}^{\perp}\w=-\left(\D_{\frac{d}{dt}}\D_{\p{z}}^{\top}\w\right)^{\perp}.
	\end{align}
	By an immediate formula using the conformity relation and as $\w$ is a normal variation (we can use this relation without creating non-integrable terms as the formula we obtained for the first derivative of the Gauss curvature is already in divergence form, we refer to \cite{riviere1} for more informations on this point), we obtain
	\begin{align*}
	\D_{\p{z}}^{\top}\w&=2e^{-2\lambda}\s{\D_{\p{z}}\w}{\e_{\z}}\e_z+2e^{-2\lambda}\s{\D_{\p{z}}\w}{\e_{z}}\e_{\z}=-2e^{-2\lambda}\s{\w}{\vec{\I}(\e_z,\e_{\z})}\e_{z}-2e^{-2\lambda}\s{\w}{\vec{\I}(\e_{z},\e_{z})}\e_{\z}\\
	&=-\s{\w}{\H}\e_z-\s{\w}{\H_0}\e_{\z}
	\end{align*}
	and using \eqref{q4}, we have
	\begin{align}
	\D_{\frac{d}{dt}}^{\perp}\D_{\p{z}}^{\perp}\w&=\s{\w}{\H}\D_{\e_z}^{\perp}\w+\s{\w}{\H_0}\D_{\e_{\z}}^{\perp}\w
	\end{align}
	By taking the conjugate, we                                                                                                                                                                                                                                                                                                                                                                                                                                                                                                                                                                                                                                                                                                                                                                                                                                       obtain 
	\begin{align}\label{q4bis}
	\D_{\frac{d}{dt}}^{\perp}\D_{\e_{\z}}^{\perp}\w=\s{\w}{\bar{\H_0}}\D_{\e_z}^{\perp}\w+\s{\w}{\H}\D_{\e_{\z}}^{\perp}\w.
	\end{align}
	Therefore, we have by \eqref{q4} and \eqref{q4bis} the identities
	\begin{align}\label{q01}
	\mathrm{(II)}=4\,e^{-2\lambda}\s{\vec{\I}(\e_z,\e_{\z})}{\D_{\frac{d}{dt}}^{\perp}\D_{\e_z}^{\perp}\w}dz&=2\s{\w}{\H}\s{\H}{\D_{\e_{z}}\w}dz+2\s{\w}{\H_0}\s{\H}{\D_{\p{\z}}\w}dz\nonumber\\
	&=2\s{\H}{\w}\s{\H}{\partial\w}+2\,g^{-1}\otimes \s{\w}{\h_0}\otimes \s{\H}{\bar{\partial}\w}
	\end{align}
	and
	\begin{align}\label{q02}
	\mathrm{(IV)}=-4\,e^{-2\lambda}\s{\vec{\I}(\e_z,\e_{z})}{\D_{\frac{d}{dt}}^{\perp}\D_{\e_{\z}}^{\perp}\w}dz&=-4\,e^{-2\lambda}\s{\vec{\I}(\e_z,\e_z)}{\D_{\e_z}^{\perp}\w}\s{\bar{\H_0}}{\w}-4\,e^{-2\lambda}\s{\H}{\w}\s{\vec{\I}(\e_z,\e_z)}{\D_{\e_{\z}}\w}\nonumber\\
	&=-2\,g^{-2}\otimes \s{\bar{\h_0}}{\w}\otimes\left(\h_0\totimes \partial\w\right)-2\s{\H}{\w}\,g^{-1}\otimes\left(\h_0\totimes\bar{\partial}\w\right).
	\end{align}
	Then, we have using the normality of $\w$
	\begin{align}\label{q5}
	&4\D_{\frac{d}{dt}}^{\perp}\vec{\I}(\e_z,\e_z)=\D_{\frac{d}{dt}}^{\perp}\left(\vec{\I}(\e_1,\e_1)-\vec{\I}(\e_2,\e_2)-2i\vec{\I}(\e_1,\e_2)\right)\nonumber\\
	&=\left(\D^2_{\e_1,\e_1}-\D^2_{\e_2,\e_2}-i\D_{\e_1,\e_2}^2-i\D_{\e_2,\e_1}^{2}\right)\w\nonumber\\
	&+e^{-2\lambda}\sum_{k=1}^{2}\left(\s{\D_{\e_1}\w}{\e_k}\vec{\I}(\e_1,\e_k)-\s{\D_{\e_2}\w}{\e_k}\vec{\I}(\e_2,\e_k)-i\s{\D_{\e_2}\w}{\e_k}\vec{\I}(\e_1,\e_k)-i\s{\D_{\e_1}\w}{\e_k}\vec{\I}(\e_2,\e_k)\right)\nonumber\\
	&=4\,\D^2_{\e_z,\e_z}\w+e^{-2\lambda}\Big\{-\s{\w}{\vec{\I}(\e_1,\e_1)}\vec{\I}(\e_1,\e_1)-\s{\w}{\vec{\I}(\e_1,\e_2)}\vec{\I}(\e_1,\e_2)+\s{\vec{w}}{\vec{\I}(\e_1,\e_2)}\vec{\I}(\e_1,\e_2)\nonumber\\
	&+\s{\w}{\vec{\I}(\e_2,\e_2)}\vec{\I}(\e_2,\e_2)
	+i\s{\w}{\vec{\I}(\e_1,\e_2)}\vec{\I}(\e_1,\e_1)+i\s{\vec{w}}{\vec{\I}(\e_2,\e_2)}\vec{\I}(\e_1,\e_2)+i\s{\vec{w}}{\vec{\I}(\e_1,\e_1)}\vec{\I}(\e_1,\e_2)\nonumber\\
	&+i\s{\vec{w}}{\vec{\I}(\e_1,\e_2)}\vec{\I}(\e_2,\e_2)\Big\}\nonumber\\
	&=\D^2_{\e_z,\e_z}\w+e^{-2\lambda}\Big\{-\s{\w}{\vec{\I}(\e_1,\e_1)}\vec{\I}(\e_1,\e_1)+\s{\w}{\vec{\I}(\e_2,\e_2)}\vec{\I}(\e_2,\e_2)
	+i\s{\w}{\vec{\I}(\e_1,\e_2)}\vec{\I}(\e_1,\e_1)\nonumber\\
	&+i\s{\vec{w}}{\vec{\I}(\e_2,\e_2)}\vec{\I}(\e_1,\e_2)+i\s{\vec{w}}{\vec{\I}(\e_1,\e_1)}\vec{\I}(\e_1,\e_2)+i\s{\vec{w}}{\vec{\I}(\e_1,\e_2)}\vec{\I}(\e_2,\e_2)\Big\}.
	\end{align}
	Using the previous formulas \eqref{fcomplexes}, we obtain from \eqref{q5} 
	\begin{align}\label{q6}
	&4\D_{\frac{d}{dt}}^{\perp}\vec{\I}(\e_z,\e_z)=4\,\D^2_{\e_z,\e_z}\w+e^{-2\lambda}\Big\{-\s{\w}{2\,\Re\,\vec{\I}(\e_z,\e_z)+2\vec{\I}(\e_z,\e_{\z})}\left(2\,\Re\,\vec{\I}(\e_z,\e_z)+2\,\vec{\I}(\e_z,\e_{\z})\right)\nonumber\\
	&+\s{\w}{-2\,\Re\,\vec{\I}(\e_z,\e_z)+2\vec{\I}(\e_z,\e_z)}\left(-2\,\Re\,\vec{\I}(\e_z,\e_z)+2\vec{\I}(\e_z,\e_{\z})\right)
	\nonumber\\
	&+i\s{\w}{-2\,\Im\,\vec{\I}(\e_z,\e_z)}\left(2\,\Re\,\vec{\I}(\e_z,\e_z)+2\vec{\I}(\e_z,\e_{\z})\right)+i\s{\w}{-2\,\Re\,\vec{\I}(\e_z,\e_z)+2\,\vec{\I}(\e_z,\e_{\z})}\left(-2\,\Im\,\vec{\I}(\e_z,\e_z)\right)\nonumber\\
	&+i\s{\w}{2\,\Re\,\vec{\I}(\e_z,\e_z)+2\,\vec{\I}(\e_z,\e_{\z})}\left(-2\,\Im\,\vec{\I}(\e_z,\e_z)\right)+i\s{\w}{-2\,\Im\,\vec{\I}(\e_z,\e_z)}\left(-2\,\Re\,\vec{\I}(\e_z,\e_z)+2\,\vec{\I}(\e_z,\e_{\z})\right)\Big\}\nonumber\\
	&=4\,\D^2_{\e_z,\e_z}\w+4\,e^{-2\lambda}\Big\{\s{\w}{\vec{\I}(\e_z,\e_{\z})}\left(-\Re\,\vec{\I}(\e_z,\e_z)-\vec{\I}(\e_z,\e_{\z})-\Re\,\vec{\I}(\e_z,\e_z)+\vec{\I}(\e_z,\e_{\z})-2i\,\Im\,\vec{\I}(\e_z,\e_z)\right)\nonumber\\
	&+\s{\w}{\Re\,\vec{\I}(\e_z,\e_z)}\left(-\Re\,\vec{\I}(\e_z,\e_z)-\vec{\I}(\e_z,\e_{\z})+\Re\,\vec{\I}(\e_z,\e_z)-\vec{\I}(\e_z,\e_{\z})+i\,\Im\,\vec{\I}(\e_z,\e_z)-i\Im\,\vec{\I}(\e_z,\e_z)\right)\nonumber\\
	&+i\s{\w}{\Im\,\vec{\I}(\e_z,\e_z)}\left(-\Re\,\vec{\I}(\e_z,\e_z)-\vec{\I}(\e_z,\e_z)+\Re\,\vec{\I}(\e_z,\e_z)-\vec{\I}(\e_z,\e_{\z})\right)\Big\}\nonumber\\
	&=4\,\D^2_{\e_z,\e_z}\w-8e^{-2\lambda}\s{\w}{\vec{\I}(\e_z,\e_{\z})}\vec{\I}(\e_z,\e_z)-8e^{-2\lambda}\s{\w}{\vec{\I}(\e_z,\e_z)}\vec{\I}(\e_{z},\e_{\z})\nonumber\\
	&=4\,\D^2_{\e_z,\e_z}\w-2e^{2\lambda}\s{\w}{\H}\H_0-2e^{2\lambda}\s{\w}{\H_0}\H.
	\end{align}
	Now, we have
	\begin{align*}
	&\s{\D_{\e_z}\e_z}{\e_z}=\frac{1}{2}\p{z}\left(\s{\e_z}{\e_z}\right)=0\\
	&\s{\e_z}{\D_{\e_z}\e_{\z}}=\frac{e^{2\lambda}}{2}\s{\e_z}{\H}=0,
	\end{align*}
	so that
	\begin{align*}
	\s{\D_{\e_z}\e_z}{\e_{\z}}=\p{z}\left(\s{\e_z}{\e_{\z}}\right)=\frac{1}{2}\p{z}\left(e^{2\lambda}\right).
	\end{align*}
	Therefore, we obtain
	\begin{align*}
	\D^{\top}_{\e_z}\e_z=2e^{-2\lambda}\s{\D_{\e_z}\e_z}{\e_{\z}}\e_z+2e^{-2\lambda}\s{\D_{\e_z}\e_z}{\e_z}\e_{\z}=e^{-2\lambda}\p{z}\left(e^{2\lambda}\right)\e_z.
	\end{align*}
	Now, we finally have
	\begin{align}\label{q7}
	\D_{\e_z,\e_z}^2\w=\D_{\e_z}^{\perp}\D_{\e_z}^{\perp}\w-\D_{\D_{\e_z}^{\top}\e_z}^{\perp}\w=\D_{\e_z}^{\perp}\D_{\e_z}^{\perp}\w-e^{-2\lambda}\p{z}(e^{2\lambda})\D_{\e_z}^{\perp}\w.
	\end{align}
	This implies that
	\begin{align}\label{q8}
	&e^{-2\lambda}\s{\D^{2}_{\e_z,\e_z}\w}{\D_{\e_{\z}}^{\perp}\w}dz=\bigg\{e^{-2\lambda}\s{\D_{\e_z}^{\perp}\D_{\e_z}^{\perp}\w}{\D_{\e_{\z}}^{\perp}\w}-e^{-4\lambda}\p{z}(e^{2\lambda})\s{\D_{\e_z}^{\perp}\w}{\D_{\e_{\z}}^{\perp}\w}\bigg\}dz\nonumber\\
	&=\bigg\{\p{z}\left(e^{-2\lambda}\s{\D_{\e_{z}}^{\perp}\w}{\D_{\e_{\z}}^{\perp}\w}\right)-\p{z}(e^{-2\lambda})\s{\D_{\e_z}^{\perp}w}{\D_{\e_{\z}}^{\perp}\w}-e^{-2\lambda}\s{\D_{\e_z}^{\perp}\w}{\D_{\e_z}^{\perp}\D_{\e_{\z}}^{\perp}\w}\nonumber\\
	&-e^{-4\lambda}\p{z}(e^{2\lambda})\s{\D_{\e_z}^{\perp}\w}{\D_{\e_{\z}}^{\perp}\w}\bigg\}dz
	=\,\partial\Big(\left|{\partial}^{\perp}\w\right|^2_g\Big)-e^{-2\lambda}\s{\D_{\e_z}^{\perp}\D_{\e_{\z}}^{\perp}\w}{\D_{\e_z}^{\perp}\w}dz.
	\end{align}
	Now, one checks easily by (see \cite{classification}) that
	\begin{align*}
	\Delta_g^{\perp}=2e^{-2\lambda}\left(\D^2_{\e_z,\e_{\z}}+\D^2_{\e_{\z},\e_z}\right)=2e^{-2\lambda}\left(\D^{\perp}_{\e_z}\D_{\e_{\z}}^{\perp}+\D^{\perp}_{\e_{\z}}\D_{\e_{z}}^{\perp}\right).
	\end{align*}
	Furthermore, by the definition of the normal curvature and thanks of the Ricci equation, we have
	\begin{align*}
	\D^{\perp}_{\e_z}\D_{\e_{\z}}^{\perp}\w&=\D_{\e_{\z}}^{\perp}\D_{\e_z}^{\perp}\w+R^{\perp}(\e_z,\e_{\z})\w\\
	&=\D_{\e_{\z}}^{\perp}\D_{\e_z}^{\perp}\w+\vec{\I}(\D^{\top}_{\e_z}\w,\e_{\z})-\vec{\I}(\e_z,\D^{\top}_{\e_{\z}}\w)
	\end{align*}
	Recalling that
	\begin{align*}
	\D^{\top}_{\e_z}\w=2e^{-2\lambda}\s{\D_{\e_z}\w}{\e_{\z}}\e_z+2e^{-2\lambda}\s{\D_{\e_z}\w}{\e_{z}}\e_{\z}=-\s{\w}{\H}\e_z-\s{\w}{\H_0}\e_{\z},
	\end{align*}
	we obtain
	\begin{align*}
	\D^{\perp}_{\e_z}\D_{\e_{\z}}^{\perp}\w&=\D_{\e_{\z}}^{\perp}\D_{\e_z}^{\perp}\w-\s{\w}{\H}\vec{\I}(\e_z,\e_{\z})-\s{\w}{\H_0}\vec{\I}(\e_{\z},\e_{\z})+\s{\w}{\H}\vec{\I}(\e_z,\e_{\z})+\s{\w}{\bar{\H_0}}\vec{\I}(\e_z,\e_z)\\
	&=\D_{\e_{\z}}^{\perp}\D_{\e_z}^{\perp}\w-\frac{e^{2\lambda}}{2}\s{\w}{\H_0}\bar{\H_0}+\frac{e^{2\lambda}}{2}\s{\w}{\bar{\H_0}}\H_0.
	\end{align*}
	This implies that
	\begin{align}\label{q9}
	&4e^{-2\lambda}\D_{\e_z}^{\perp}\D_{\e_{\z}}^{\perp}\w=2e^{-2\lambda}\left(\D_{\e_z,\e_{\z}}^{2}+\D_{\e_{\z},\e_z}^{2}\right)\w+\s{\w}{\bar{\H_0}}\H_0-\s{\w}{\H_0}\bar{\H_0}\nonumber\\
	&=\Delta_g^{\perp}\w+g^{-2}\otimes\left(\s{\w}{\bar{\h_0}}\otimes \h_0-\s{\w}{\h_0}\otimes \bar{\h_0}\right)
	\end{align}
	Now, by  \eqref{q8} and \eqref{q9}, we finally have
	\begin{align*}
	&4e^{-2\lambda}\s{\D^2_{\e_z,\e_z}\w}{\D_{\e_{\z}}^{\perp}\w}dz=4\,\partial\left(|\partial^{\perp}\w|^2_g\right)-\s{\Delta_g^{\perp}\w}{\partial\w}+g^{-2}\otimes \s{\h_0}{\w}\otimes \left(\bar{\h_0}\totimes {\partial}\w\right)\\
	&-g^{-2}\otimes \s{\bar{\h_0}}{\w}\otimes \left(\h_0\totimes{\partial}\w\right)\\
	&=\partial\left(|\D^{\perp}\w|^2_g\right)-\s{\Delta_g^{\perp}\w}{\partial\w}+g^{-2}\otimes \s{\h_0}{\w}\otimes (\bar{\h_0}\totimes {\partial}\w)-g^{-2}\otimes \s{\bar{\h_0}}{\w}\otimes \left(\h_0\totimes{\partial}\w\right).
	\end{align*}
	Therefore, we have
	\begin{align*}
	&4e^{-2\lambda}\s{\D_{\frac{d}{dt}}^{\perp}\vec{\I}(\e_z,\e_z)}{\D_{\e_{\z}}^{\perp}\w}dz=\partial\left(|\D^{\perp}\w|^2_g\right)-\s{\Delta_g^{\perp}\w}{\partial\w}+g^{-2}\otimes \s{\h_0}{\w}\otimes (\bar{\h_0}\totimes {\partial}\w)\nonumber\\
	&-g^{-2}\otimes \s{\bar{\h_0}}{\w}\otimes \left(\h_0\totimes{\partial}\w\right)
	-2\,\s{\w}{\H}\,g^{-1}\otimes \left(\h_0\totimes\bar{\partial}\w\right)
	-2\,g^{-1}\otimes \s{\h_0}{\w}\otimes \s{\H}{\bar{\partial} \w}\\
	\end{align*}
	and
	\begin{align}\label{q03}
	\mathrm{(III)}&=-4e^{-2\lambda}\s{\D_{\frac{d}{dt}}^{\perp}\vec{\I}(\e_z,\e_z)}{\D_{\e_{\z}}^{\perp}\w}dz=-\partial\left(|\D^{\perp}\w|^2_g\right)+\s{\Delta_g^{\perp}\w}{\partial\w}-g^{-2}\otimes \s{\h_0}{\w}\otimes \left(\bar{\h_0}\totimes {\partial}\w\right)\nonumber\nonumber\\
	&+g^{-2}\otimes \s{\bar{\h_0}}{\w}\otimes \left(\h_0\totimes{\partial}\w\right)
	+2\,\s{\H}{\w}\,g^{-1}\otimes \left(\h_0\totimes\bar{\partial}\w\right)
	+2\,g^{-1}\otimes \s{\h_0}{\w}\otimes \s{\H}{\bar{\partial} \w}
	\end{align}
	Now, we compute
	\begin{align}\label{q10}
	4\,\D_{\frac{d}{dt}}^{\perp}\vec{\I}(\e_z,\e_{\z})&=\D_{\frac{d}{dt}}^{\perp}\left(\vec{\I}(\e_1,\e_1)+\vec{\I}(\e_2,\e_2)\right)=\left(\D_{\e_1,\e_1}^{\perp}+\D_{\e_2,\e_2}^{\perp}\right)\w+e^{-2\lambda}\sum_{k=1}^{2}\s{\D_{\e_1}\w}{\e_k}\vec{\I}(\e_1,\e_k)\nonumber\\
	&+e^{-2\lambda}\sum_{k=1}^{2}\s{\D_{\e_2}\w}{\e_k}\vec{\I}(\e_2,\e_k)\nonumber\\
	&=e^{2\lambda}\Delta_g^{\perp}\w+e^{-2\lambda}\sum_{k=1}^{2}\s{\D_{\e_1}\w}{\e_k}\vec{\I}(\e_1,\e_k)
	+e^{-2\lambda}\sum_{k=1}^{2}\s{\D_{\e_2}\w}{\e_k}\vec{\I}(\e_2,\e_k)\nonumber\\
	&=e^{2\lambda}\Delta_g^{\perp}\w-e^{-2\lambda}\left(\s{\w}{\vec{\I}(\e_1,\e_1)}\vec{\I}(\e_1,\e_1)+2\s{\w}{\vec{\I}(\e_1,\e_2)}\vec{\I}(\e_1,\e_2)+\s{\w}{\vec{\I}(\e_2,\e_2)}\vec{\I}(\e_2,\e_2)\right).
	\end{align}
	By \eqref{fcomplexes}, we have
	\begin{align}\label{q11}
	&\s{\w}{\vec{\I}(\e_1,\e_1)}\vec{\I}(\e_1,\e_1)+2\s{\w}{\vec{\I}(\e_1,\e_2)}\vec{\I}(\e_1,\e_2)+\s{\w}{\vec{\I}(\e_2,\e_2)}\vec{\I}(\e_2,\e_2)\nonumber\\
	&=\s{\w}{2\,\Re\,\vec{\I}(\e_z,\e_z)+2\,\vec{\I}(\e_z,\e_{\z})}\left(2\,\Re\,\vec{\I}(\e_z,\e_z)+2\,\vec{\I}(\e_z,\e_{\z})\right)+2\s{\w}{-2\,\Im\,\vec{\I}(\e_z,\e_z)}\left(-2\,\Im\,\vec{\I}(\e_z,\e_z)\right)\nonumber\\
	&+\s{\w}{-2\,\Re\,\vec{\I}(\e_z,\e_z)+2\,\vec{\I}(\e_z,\e_{\z})}\left(-2\,\Re\,\vec{\I}(\e_z,\e_z)+2\,\vec{\I}(\e_z,\e_{\z})\right)
	\nonumber\\
	&=4\Big\{\s{\w}{\vec{\I}(\e_z,\e_{\z})}\left(\Re\,\vec{\I}(\e_z,\e_z)+\vec{\I}(\e_z,\e_{\z})-\Re\,\vec{\I}(\e_z,\e_z)+\vec{\I}(\e_z,\e_{\z})\right)\nonumber\\
	&+\s{\w}{\Re\,\vec{\I}(\e_z,\e_z)}\left(\Re\,\vec{\I}(\e_z,\e_z)+\vec{\I}(\e_z,\e_{\z})+\Re\,\vec{\I}(\e_z,\e_z)-\vec{\I}(\e_z,\e_{\z})\right)
	+2\s{\w}{\Im\,\vec{\I}(\e_z,\e_z)}\Im\,\vec{\I}(\e_z,\e_z)\Big\}\nonumber\\
	&=8\s{\w}{\vec{\I}(\e_z,\e_{\z})}\vec{\I}(\e_z,\e_{\z})+8\s{\w}{\Re\,\vec{\I}(\e_z,\e_z)}\Re\,\vec{\I}(\e_z,\e_{z})+8\s{\w}{\Im\,\vec{\I}(\e_z,\e_{z})}\Im\,\vec{\I}(\e_z,\e_z)\nonumber\\
	&=8\s{\w}{\vec{\I}(\e_z,\e_{\z})}\vec{\I}(\e_z,\e_{\z})+8\,\Re\left(\s{\w}{\vec{\I}(\e_{\z},\e_{\z})}\vec{\I}(\e_z,\e_z)\right)\nonumber\\
	&=2e^{4\lambda}\s{\w}{\H}\H+2e^{4\lambda}\,\Re\left(\s{\w}{\bar{\H_0}}\H_0\right)
	\end{align}
	where we use in the last line the following identity true for all $a,b\in \C^n$
	\begin{align*}
	\s{\,\cdot\,}{\Re(a)}\Re(a)+\s{\,\cdot\,}{\Im(a)}\Im(a)=\frac{1}{4}\s{\,\cdot\,}{a+\bar{a}}\left(a+\bar{a}\right)-\frac{1}{4}\s{\,\cdot\,}{a-\bar{a}}\left(a-\bar{a}\right)=\,\Re(\s{\,\cdot\,}{\bar{a}}a).
	\end{align*}
	By \eqref{q10} and \eqref{q11}, we obtain
	\begin{align*}
	4\,\D_{\frac{d}{dt}}^{\perp}\vec{\I}(\e_z,\e_{\z})=e^{2\lambda}\left(\Delta_g^{\perp}\w-2\,\s{\w}{\H}\H-2\,\Re\left(\s{\w}{\bar{\H_0}}\H_0\right)\right)
	\end{align*}
	and
	\begin{align}
	4\,e^{-2\lambda}\,\D_{\frac{d}{dt}}^{\perp}\vec{\I}(\e_z,\e_{\z})=\Delta_g^{\perp}\w-2\s{\w}{\H}\H-2\,\Re\left(g^{-2}\otimes \s{\w}{\bar{\h_0}}\otimes\h_0\right).
	\end{align}
	This implies that
	\begin{align}\label{q04}
	\mathrm{(I)}=4\,e^{-2\lambda}\s{\D_{\frac{d}{dt}}^{\perp}\vec{\I}(\e_z,\e_{\z})}{\D_{\e_z}^{\perp}\w}dz &=\s{\Delta_g^{\perp}\w}{\partial\w}-2\s{\H}{\w}\s{\H}{\partial\w}-g^{-2}\otimes \s{\bar{\h_0}}{\w}\otimes \left(\h_0\totimes \partial \w \right)\nonumber\\
	&-g^{-2}\otimes \s{\h_0}{\w}\otimes \left(\bar{\h_0}\totimes \partial \w\right).
	\end{align}
	Finally, we obtain by \eqref{q3}, \eqref{q01}, \eqref{q02}, \eqref{q03}, \eqref{q04}
	\begin{align}
	&\frac{d^2}{dt^2}\left(K_{g_t}d\mathrm{vol}_{g_t}\right)
	=d\,\Im\bigg(4\s{\H}{\w}\s{\H}{\partial\w}-4\s{\H}{\w}\,g^{-1}\otimes \left(\h_0\totimes\bar{\partial}\h_0\right)\nonumber\\
	&+\s{\Delta_g^{\perp}\w}{\partial\w}-2\s{\H}{\w}\s{\H}{\partial\w}-g^{-2}\otimes \s{\bar{\h_0}}{\w}\otimes \left(\h_0\totimes \partial \w \right)
	-g^{-2}\otimes \s{\h_0}{\w}\otimes \left(\bar{\h_0}\totimes \partial \w\right)\nonumber\\
	&+2\s{\H}{\w}\s{\H}{\partial\w}+2\,g^{-1}\otimes \s{\w}{\h_0}\otimes \s{\H}{\bar{\partial}\w}\nonumber\\
	&-\partial\left(|\D^{\perp}\w|^2_g\right)+\s{\Delta_g^{\perp}\w}{\partial\w}-g^{-2}\otimes \s{\h_0}{\w}\otimes \left(\bar{\h_0}\totimes {\partial}\w\right)\nonumber\\
	&+g^{-2}\otimes \s{\bar{\h_0}}{\w}\otimes \left(\h_0\totimes{\partial}\w\right)
	+2\,\s{\w}{\H}\,g^{-1}\otimes \left(\h_0\totimes\bar{\partial}\w\right)
	+2\,g^{-1}\otimes \s{\h_0}{\w}\otimes \s{\H}{\bar{\partial} \w}\nonumber\\
	&-2\,g^{-2}\otimes \s{\bar{\h_0}}{\w}\otimes\left(\h_0\totimes \partial\w\right)-2\s{\H}{\w}\,g^{-1}\otimes\left(\h_0\totimes\bar{\partial}\w\right)\bigg)\nonumber\\
	&=d\,\Im\,\bigg(2\s{\Delta_g^{\perp}\w-2\,\Re\left(g^{-2}\otimes\s{\bar{\h_0}}{\w}\otimes \h_0\right)}{\partial\w}-\partial\left(|\D^{\perp}\w|^2_g\right)\nonumber\\
	&+4\s{\H}{\w}\left(\s{\H}{\partial \w}- g^{-1}\otimes\left(\h_0\totimes\bar{\partial}\w\right)\right)
	+4\,g^{-1}\otimes \s{\h_0}{\w}\otimes \s{\H}{\bar{\partial}\w}\bigg)\nonumber\\
	&=d\,\Im\,\bigg(2\s{\Delta_g^{\perp}\w+4\,\Re\left(g^{-2}\otimes\left(\bar{\partial}\phi\totimes\bar{\partial}\w\right)\otimes \h_0\right)}{\partial\w}-\partial\left(|\D^{\perp}\w|^2_g\right)\nonumber\\
	&-2\s{d\phi}{d\w}_g\left(\s{\H}{\partial \w}-\,g^{-1}\otimes\left(\h_0\totimes\bar{\partial}\w\right)\right)
	-8\,g^{-1}\otimes \left(\partial\phi\totimes\partial\w\right)\otimes \s{\H}{\bar{\partial}\w}\bigg)
	\end{align}
	by normality of $\w$.
	In particular, observe that in codimension $1$ we have $\w=w\,\n$ for some function $w:\Sigma\rightarrow \R$, and
	\begin{align*}
	\Delta_g^{\perp}\w=\left(\Delta_gw\right)\,\n,\qquad |\D^{\perp}\w|^2_g=|dw|_g^2 ,\qquad|\h_0|^2_{WP}=|\H|^2-K_g,
	\end{align*}
	and as for a real function $f$, the following identity holds $2\,\Im(\partial f)=\ast df$, we recover (see \cite{indexS3}) for a minimal surface ($\H=0$)
	\begin{align*}
	&\frac{d^2}{dt^2}\left(K_{g_t}d\mathrm{vol}_{g_t}\right)_{|t=0}=\left((\Delta_gw+2K_gw)\ast dw-\frac{1}{2}\ast d|dw|_g^2\right)
	\end{align*}
	For a minimal surface in arbitrary codimension ($\H=0$), we have likewise
	\begin{align}\label{p2}
	\frac{d^2}{dt^2}\left(K_{g_t}d\mathrm{vol}_{g_t}\right)_{|t=0}=\left(\s{\Delta_g^{\perp}\w+\mathscr{A}(\w)}{\ast\, d\w}-\frac{1}{2}\ast d |\D^{\perp}\w|_g^2\right),
	\end{align}
	where $\mathscr{A}$ is the Simons operator, defined by 
	\begin{align*}
	\mathscr{A}(\w)=e^{-2\lambda}\sum_{i,j=1}^{2}\s{\vec{\I}(\e_i,\e_j)}{\w}\vec{\I}(\e_i,\e_j)=2\s{\H}{\w}\w+2\,\Re\left(g^{-2}\otimes \s{\bar{\h_0}}{\w}\h_0\right).
	\end{align*}
	The last remark is immediate, thanks of the estimates
	\begin{align*}
	|d\w|_g\in L^{\infty}(\Sigma),\quad \Delta_g^{\perp}\w\in L^2(\Sigma,d\vg),\quad  \H,|\h_0|_{WP}\in L^2(\Sigma,d\vg).
	\end{align*}
	This concludes the proof of the lemma.
\end{proof}

Thanks of this result, we can now compute the second derivative of the Onofri energy and show that it satisfies the \textbf{Entropy condition} of \cite{index2}, making it a natural smoother in the viscosity method (see \cite{viscosity} and \cite{eversion}).

\begin{theorem}\label{d2o}
	Let $\phi:\Sigma\rightarrow\R^n$ be a weak immersion, $\epsilon>0$ be a fixed positive real number, $I=(-\epsilon,\epsilon)$ be a fixed interval, $\{\phi_t\}_{t\in I}$ be a $C^2$ family of weak immersions, $\{g_{0,t}\}_{t\in I}$ be a family of constant Gauss curvature metrics of volume $1$ and $\alpha_t,\alpha,\alpha'_0:\Sigma\rightarrow \R$ and $\w_t,\w:\Sigma\rightarrow T\R^n$ be functions such that for all $t\in I$
	\begin{align*}
	&g_t=\phi^{\ast}_t g_{\,\R^n},\quad g_t=e^{2\alpha_t}g_{0,t},\quad \alpha=\alpha_0,\quad \alpha_0'=\left(\frac{d}{dt}\alpha_t\right)_{|t=0}\\
	&\w_t=\frac{d}{dt}\phi_t,\quad \w=\w_0=\left(\frac{d}{dt}\phi_t\right)_{|t=0}.
	\end{align*}
	Then we have for all $t\in I$,
	\begin{align}
	&D\mathscr{O}(\phi)(\w)=-\frac{1}{2}\int_{\Sigma}|d\alpha|_g^2\s{d\phi}{d\w}_gd\mathrm{vol}_g+\frac{1}{2}\int_{\Sigma}\s{d\phi\totimes d\w+d\w\totimes d\phi}{d\alpha\otimes d\alpha}_gd\vg\nonumber\\
	&+\frac{K_{g_0}}{2}\int_{\Sigma}\s{d\phi}{d\w}_gd\mathrm{vol}_{g_0}
	-2\int_{\Sigma}d\alpha \wedge \Im\,\left(\s{\H}{\partial\w}-g^{-1}\otimes\left(\h_0\totimes \bar{\partial}\w\right)\right)-\frac{K_{g_0}}{2}\frac{\int_{\Sigma}\s{d\phi}{d\w}_gd\vg}{\int_{\Sigma}d\vg}
	\end{align}
	and if $\{\phi_t\}_{t\in I}$ satisfies the osculating property $\D_{\frac{d}{dt}}\frac{d}{dt}\phi_t=0$, we have
	\begin{align}
	&D^2\mathscr{O}(\phi)(\w,\w)=\int_{\Sigma}\s{d\alpha\otimes d\alpha}{d\w\totimes d\w}_gd\vg-\int_{\Sigma}\s{d\alpha\otimes d\alpha}{d\phi\totimes d\w+d\w\totimes d\phi}_g\s{d\phi}{d\w}_gd\vg\nonumber\\
	&-\frac{1}{2}\int_{\Sigma}|d\alpha|_g^2\left(|d\w|_g^2-2\s{d\phi}{d\w}_g^2-16|\partial\phi\totimes\partial\w|_{WP}^2\right)d\vg
	+\frac{K_{g_0}}{2}\int_{\Sigma}\left(|d\w|_g^2-16|\partial\phi\totimes\partial\w|_{WP}^2\right)d\mathrm{vol}_{g_0}\nonumber\\
	&+\frac{1}{2}\int_{\Sigma}\s{d\phi\totimes d\w+d\w\totimes d\phi}{d\alpha\otimes d\alpha_0'+d\alpha_0'\otimes d\alpha}_gd\vg-\int_{\Sigma}\s{d\alpha}{d\alpha_0'}_g\s{d\phi}{d\w}_gd\vg\nonumber\\
	&-2\int_{\Sigma}d\alpha_0'\wedge \Im\left(\s{\H}{\partial\w}-g^{-1}\otimes\left(\h_0\totimes\bar{\partial}\w\right)\right)-K_{g_0}\int_{\Sigma}\alpha_0'\s{d\phi}{d\w}d\mathrm{vol}_{g_0}\nonumber\\
	&-2\int_{\Sigma}d\alpha\wedge\,\Im\bigg(\s{\Delta_{g}^{\perp}\w+4\,\Re\left(g^{-2}\otimes\left(\bar{\partial\phi}\totimes\bar{\partial}\w\right)\otimes \h_0\right)}{\partial\w}
	-\s{d\phi}{d\w}_g\left(\s{\H}{\partial\w}-g^{-1}\otimes\left(\h_0\totimes\bar{\partial}\w\right)\right)\nonumber\\
	&-4\,g^{-1}\otimes\left(\partial\phi\totimes\partial\w\right)\otimes\s{\H}{\bar{\partial}\w}\bigg)-\frac{1}{2}\int_{\Sigma}|\D^{\perp}\w|_g^2\,\Delta_{g}\alpha\,d\mathrm{vol}_{g}
	+\frac{K_{g_0}}{2}\left(\frac{1}{\int_{\Sigma}d\vg}\int_{\Sigma}\s{d\phi}{d\w}_gd\vg\right)^2\nonumber\\
	&-\frac{K_{g_0}}{2}\frac{1}{\int_{\Sigma}d\vg}\int_{\Sigma}\left(|d\w|_g^2-16|\partial\phi\totimes\partial\w|_{WP}^2\right)d\vg.
	\end{align}
	Furthermore, $D \mathscr{O}(\phi)$ 
	is a continuous linear map on $\mathscr{E}_{\phi}(\Sigma,\R^n)$ and we have the estimates
	\begin{align*}
		&\norm{D\mathscr{O}(\phi)}_{\mathscr{E}'_{\phi}(\Sigma,\R^n)}\leq 4\pi|\chi(\Sigma)|+3\int_{\Sigma}|d\alpha|_g^2d\vg+\left(\sqrt{W(\phi)}+\sqrt{W(\phi)-2\pi\chi(\Sigma)}\right)\left(\int_{\Sigma}|d\alpha|_g^2d\vg\right)^{\frac{1}{2}}.\\
		&|D^2\mathscr{O}(\phi)(\w,\w)|\leq \left(W(\phi)+12\pi|\chi(\Sigma)|+16\int_{\Sigma}|d\alpha|_g^2d\vg\right)\norm{\w}_{\mathscr{E}_{\phi}(\Sigma,\R^n)}^2\nonumber\\
		&+\left(2+5\sqrt{W(\phi)}+5\sqrt{W(\phi)-2\pi\chi(\Sigma)}\right)\left(\int_{\Sigma}|d\alpha|_g^2d\vg\right)^{\frac{1}{2}}\norm{\w}_{\mathscr{E}_{\phi}(\Sigma,\R^n)}^2\nonumber\\
		&+\left(4\pi C_{\mathrm{PW}}|\chi(\Sigma)|+\sqrt{W(\phi)}+\sqrt{W(\phi)-2\pi\chi(\Sigma)}+6\left(\int_{\Sigma}|d\alpha|_g^2d\vg\right)^{\frac{1}{2}}\right)\left(\int_{\Sigma}|d\alpha_0'|^2_g\vg\right)^{\frac{1}{2}}\norm{\w}_{\mathscr{E}_{\phi}(\Sigma,\R^n)},
	\end{align*}
	where $C_{\mathrm{PW}}=C_{\mathrm{PW}}(g_0)$ is the Poincar\'{e}-Wirtinger constant for the injection $L^{2}(\Sigma,g_0)/\R\hookrightarrow \dot{W}^{1,2}(\Sigma,g_0)$.
\end{theorem}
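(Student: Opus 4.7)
Decompose $\mathscr{O}(\phi) = T_1 + T_2 + T_3$ with $T_1 = \frac{1}{2}\int_\Sigma |d\alpha|_g^2\,d\vg$, $T_2 = K_{g_0}\int_\Sigma \alpha\,e^{-2\alpha}\,d\vg = K_{g_0}\int_\Sigma \alpha\,d\mathrm{vol}_{g_0}$, and $T_3 = -\frac{K_{g_0}}{2}\log\int_\Sigma d\vg$, and differentiate each piece separately. The $T_3$ derivative is immediate from $\frac{d}{dt}\int d\vg_t|_{t=0} = \int\s{d\phi}{d\w}_g\,d\vg$. For $T_1$, the identities $\frac{d}{dt}|d\alpha|_g^2|_{t=0} = 2\s{d\alpha}{d\alpha_0'}_g - \s{d\phi\totimes d\w + d\w\totimes d\phi}{d\alpha\otimes d\alpha}_g$ and $\frac{d}{dt}d\vg_t|_{t=0} = \s{d\phi}{d\w}_g\,d\vg$ yield three of the five summands in the statement plus a leftover $\int \s{d\alpha}{d\alpha_0'}_g\,d\vg$. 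This leftover, together with the $K_{g_0}\int\alpha_0'\,d\mathrm{vol}_{g_0}$-type contribution from $T_2$, is treated by integrating by parts via the Liouville equation $-\Delta_g\alpha = K_g - K_{g_0}e^{-2\alpha}$ and then applying the first-variation formula \eqref{k1} of Lemma \ref{d2k}; this produces the term $-2\int d\alpha\wedge\Im(\cdots)$ and causes all explicit $\alpha_0'$ and $\frac{d}{dt}g_{0,t}|_{t=0}$ contributions to cancel.

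For $D^2\mathscr{O}(\phi)(\w,\w)$, I differentiate the first-variation expression once more in $t$, invoking the osculating hypothesis $\D_{\frac{d}{dt}}\frac{d}{dt}\phi_t = 0$ to control the derivative of $\w_t$. Four new kinds of contributions arise: (a) the second derivative of $d\vg_t$, which after decomposing the metric variation into its trace, conformal, and traceless-holomorphic parts yields the combination involving $|d\w|_g^2$, $\s{d\phi}{d\w}_g^2$, and $|\partial\phi\totimes\partial\w|^2_{WP}$; (b) the second-variation formula \eqref{k2} of Lemma \ref{d2k} for $K_{g_t}d\mathrm{vol}_{g_t}$; (c) quadratic and mixed $\alpha_0'$ terms from differentiating $\alpha_t$ a second time (all $\alpha_0''$ and second-order conformal-class contributions again cancel via the same Liouville trick as in the first derivative); and (d) an integration by parts of the distributional piece $d\,\Im\,\partial(|\D^\perp\w|_g^2)$ against $\alpha$, producing $-\frac{1}{2}\int|\D^\perp\w|_g^2\,\Delta_g\alpha\,d\vg$. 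Step (d) precisely addresses the issue flagged in the remark after Lemma \ref{d2k}: although $\frac{d^2}{dt^2}(K_{g_t}d\mathrm{vol}_{g_t})$ is not in $H^{-1}(\Sigma)$ in general, the pairing becomes well-defined after this integration by parts because $\alpha\in W^{1,2}$ while $|\D^\perp\w|_g^2\in L^\infty$.

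For the norm estimates, I apply Cauchy--Schwarz term-by-term. The curvature quantities are controlled via Gauss--Bonnet and the identity $K_g = |\H|^2 - |\h_0|^2_{WP}$, yielding $\int|\H|^2\,d\vg = W(\phi)$ and $\int|\h_0|^2_{WP}\,d\vg = W(\phi) - 2\pi\chi(\Sigma)$; the Dirichlet energy of $\alpha$ is kept explicit; and $\np{|d\w|_g}{\infty}{\Sigma}$ together with $\|\Delta_g^\perp\w\|_{L^2(d\vg)}$ are absorbed by $\|\w\|_{\mathscr{E}_\phi(\Sigma,\R^n)}$. The $\alpha_0'$-dependent terms in the second-derivative bound are estimated by invoking Theorem \ref{mainestimate} for $\|d\alpha_0'\|_{L^2}$ and the Poincar\'e--Wirtinger inequality on $(\Sigma, g_0)$, with constant $C_{\mathrm{PW}}$, for the $L^2$ norm of $\alpha_0'$ modulo its mean; the mean itself is controlled by differentiating the normalization $\int_\Sigma d\mathrm{vol}_{g_{0,t}} = 1$. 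The main obstacle I anticipate is the bookkeeping required for the systematic cancellation of the $\alpha_0''$ and second-order conformal-class contributions in the second derivative, together with the careful handling of the distributional term $\Delta_g|\D^\perp\w|_g^2$ via integration by parts against $\alpha$.
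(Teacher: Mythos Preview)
Your proposal is correct and follows essentially the same route as the paper: decompose $\mathscr{O}$ into the Dirichlet, potential, and logarithmic pieces (the paper groups the first two as $\widetilde{\mathscr{O}}$), eliminate the $\alpha_0'$-leftover in the first variation by integrating by parts and invoking the differentiated Liouville equation together with formula~\eqref{k1} of Lemma~\ref{d2k}, and for the second variation differentiate once more using~\eqref{k2} and integrate the singular piece $d\,\Im\,\partial(|\D^\perp\w|_g^2)$ by parts against $\alpha$ to obtain $-\frac{1}{2}\int|\D^\perp\w|_g^2\,\Delta_g\alpha\,d\vg$. One small correction: Theorem~\ref{mainestimate} is not needed for the estimate stated here, since the bound in this theorem retains the factor $\left(\int_\Sigma|d\alpha_0'|_g^2\,d\vg\right)^{1/2}$ explicitly; only the Poincar\'e--Wirtinger inequality and the differentiated volume normalisation are used to control $\int_\Sigma|\alpha_0'|\,d\mathrm{vol}_{g_0}$, and it is in Corollary~\ref{endproof} that Theorem~\ref{mainestimate} is invoked to close the estimate.
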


\begin{proof}
	\textbf{Step 1: First derivative of the Onofri energy.}
	
	Let $g_t=\phi_t^\ast g_{\,\R^n}$, and ${g}_{0,t}$ a constant Gauss curvature metric of volume $1$ and $\alpha_t:\Sigma\rightarrow \R$ as in the introduction, \textit{i.e.}
	\begin{align*}
	g_t=e^{2\alpha_t}\,{g}_{0,t}.
	\end{align*}
	As ${g}_{0,t}$ is of constant volume $1$ for all $t$, we have
	\begin{align}\label{i}
	0=\frac{d}{dt}\int_{\Sigma}d\vol{{g}_{0,t}}=-2\int_{\Sigma}\left(\frac{d\alpha_t}{dt}\right)e^{-2\alpha_t}d\vol_{g_t}+\int_{\Sigma^2}e^{-2\alpha_t}\s{d\phi_t}{d\w_t}_{g_t}d\vol_{g_t}.
	\end{align}
	where we noted
	\begin{align*}
	\w_t=\frac{d}{dt}\phi_t.
	\end{align*}
	By the Liouville equation, we have
	\begin{align}\label{liouville}	-\Delta_{g_t}\alpha_t=K_{g_t}-K_{{g}_0}e^{-2\alpha_t}
	\end{align}
	by \eqref{Kbar}. Now, by deriving this equation, we have
	\begin{align*}
		-\left(\frac{d}{dt}\Delta_{g_t}\right)\alpha_t-\Delta_{g_t}\left(\frac{d}{dt}\alpha_t\right)=\frac{d}{dt}\left(K_{g_t}\right)+2K_{g_0}\,\left(\frac{d}{dt}\alpha_t\right)e^{-2\alpha_t}
	\end{align*}
	so that
	\begin{align}\label{dliouville}
		-\Delta_{g_t}\left(\frac{d}{dt}\alpha_t\right)=2K_{g_0}\left(\frac{d}{dt}\alpha_t\right)e^{-2\alpha_t}+\frac{d}{dt}\left(K_{g_t}\right)+\left(\frac{d}{dt}\Delta_{g_t}\right)\alpha_t.
	\end{align}
	Now, we recall the formulas
	\begin{align*}
		&\frac{d}{dt}d\mathrm{vol}_{g_t}=\s{d\phi_t}{d\w_t}_{g_t}d\mathrm{vol}_{g_t}\\
		&\frac{d}{dt}g^{i,j}_t=-\frac{1}{\mathrm{det}(g_t)}\left(\s{\D_{\e_i(t)}\w_t}{\e_j(t)}+\s{\D_{\e_j(t)}\w_t}{\e_i(t)}\right)
	\end{align*}
	so that
	\begin{align}\label{a1}
		&\frac{d}{dt}\int_{\Sigma}|d\alpha_t|_{g_t}^2d\mathrm{vol}_{g_t}=\frac{d}{dt}\int_{\Sigma}\sum_{i,j=1}^{2}g^{i,j}_t\,\p{x_i}\alpha_t\cdot\p{x_j}\alpha_t\,d\mathrm{vol}_{g_t}
		=\int_{\Sigma}\left(\frac{d}{dt}g^{i,j}_t\right)\,\p{x_i}\alpha_t\cdot\p{x_j}\alpha_t\,d\mathrm{vol}_{g_t}\nonumber\\
		&+\int_{\Sigma}\sum_{i,j=1}^{2}g^{i,j}_t\left(\p{x_i}\left(\frac{d}{dt}\alpha_t\right)\cdot\p{x_j}\alpha_t+\p{x_i}\alpha_t\cdot\p{x_j}\left(\frac{d}{dt}\alpha_t\right)\right)d\mathrm{vol}_{g_t}
		+\int_{\Sigma}|d\alpha_t|_{g_t}^2\s{d\phi_t}{d\w_t}_{g_t}d\mathrm{vol}_{g_t}\nonumber\\
		&=\int_{\Sigma}\left(\frac{d}{dt}g^{i,j}\right)\p{x_i}\alpha_t\cdot\p{x_j}\alpha_t\,d\mathrm{vol}_{g_t}+\int_{\Sigma}|d\alpha_t|_{g_t}^2\s{d\phi_t}{d\w_t}_{g_t}d\mathrm{vol}_{g_t}+2\int_{\Sigma}\s{d\left(\frac{d}{dt}\alpha_t\right)}{d\alpha_t}_{g_t}d\mathrm{vol}_{g_t}\nonumber\\
		&=\mathrm{(1)}+\mathrm{(2)}+\mathrm{(3)}.
	\end{align}
	Now, by integrating by parts and using the derivative of the Liouville equation \eqref{dliouville}, we obtain
	\begin{align}\label{a2}
     	\mathrm{(3)}&=2\int_{\Sigma}\s{d\left(\frac{d}{dt}\alpha_t\right)}{d\alpha_t}_{g_t}d\mathrm{vol}_{g_t}=-2\int_{\Sigma}\Delta_{g_t}\left(\frac{d}{dt}\alpha_t\right)\,\alpha_td\mathrm{vol}_{g_t}\nonumber\\
		&=4K_{g_0}\int_{\Sigma}\left(\frac{d}{dt}\alpha_t\right)\alpha_t e^{-2\alpha_t}d\mathrm{vol}_{g_t}+2\int_{\Sigma}\frac{d}{dt}(K_{g_t})\alpha_td\mathrm{vol}_{g_t}+2\int_{\Sigma}\alpha_t\left(\frac{d}{dt}\Delta_{g_t}\right)\alpha_t\,d\mathrm{vol}_{g_t}\nonumber\\
		&=\mathrm{(I)}+\mathrm{(II)}+\mathrm{(III)}.
	\end{align}
	We have
	\begin{align}\label{a3}
		\mathrm{(II)}=2\int_{\Sigma}\frac{d}{dt}(K_{g_t})\alpha_td\mathrm{vol}_{g_t}=2\int_{\Sigma}\alpha_t\frac{d}{dt}(K_{g_t}d\mathrm{vol}_{g_t})-2\int_{\Sigma}\alpha_tK_{g_t}\s{d\phi_t}{d\w_t}_{g_t}d\mathrm{vol}_{g_t}
	\end{align}
	Recalling that their exists a $1$-form $\omega_t$ (see \cite{indexS3}) such that
	\begin{align}\label{a4}
		&\frac{d}{dt}\left(K_{g_t}d\mathrm{vol}_{g_t}\right)=d\,\omega_t\nonumber\\
		&\omega_0=\Im\left(2\s{\H}{\partial\phi}-2\,g^{-1}\otimes\left(\h_0\totimes\bar{\partial}\phi\right)\right)
	\end{align}
	we get by \eqref{a3} and \eqref{a4}
	\begin{align}\label{a5}
		\mathrm{(II)}=-2\int_{\Sigma}d\alpha_t\wedge \omega_t-2\int_{\Sigma}\alpha_tK_{g_t}\s{d\phi_t}{d\w_t}_{g_t}d\mathrm{vol}_{g_t}.
	\end{align}
	Now, we compute
	\begin{align}\label{newref1}
		&\frac{d}{dt}\left(\Delta_{g_t}\right)
		=\frac{d}{dt}\left(\frac{1}{\sqrt{\det(g_t)}}\sum_{i,j=1}^{2}\partial_{x_i}\left(\sqrt{\det(g_t)}g^{i,j}_t\partial_{x_j}\left(\,\cdot\,\right)\right)\right)\nonumber\\
		&=\frac{d}{dt}\left(\sum_{i,j=1}^{2}\frac{\partial_{x_i} \sqrt{\det(g_t)}}{\sqrt{\det(g_t)}}g^{i,j}_t\d{j}\left(\,\cdot\,\right)+\partial_{x_i}\left(g_t^{i,j}\partial_{x_j}\left(\,\cdot\,\right)\right)\right)\nonumber\\
		&=\frac{1}{\sqrt{\det(g_t)}}\sum_{i,j=1}^{2}g^{i,j}_t\partial_{x_i}\left(\s{d\phi_t}{d\w_t}_{g_t}\right)\partial_{x_j}\left(\,\cdot\,\right)-\frac{\s{d\phi_t}{d\w_t}_{g_t}}{\sqrt{\det(g_t)}}\sum_{i,j=1}^{2}g^{i,j}_t\partial_{x_i}\left(\sqrt{\det(g_t)}\right)\partial_{x_j}\left(\,\cdot\,\right)\nonumber\\
		&-\sum_{i,j=1}^{2}\frac{\d{i}\sqrt{\det(g_t)}}{\det(g_t)^{\frac{3}{2}}}\left(\s{\d{i}\phi_t}{\D_{\d{j}}\w}+\s{\d{j}\phi_t}{\D_{\d{i}}\w_t}\right)\d{j}\left(\,\cdot\,\right)\nonumber\\
		&-\sum_{i,j=1}^{2}\partial_{x_i}\left(\frac{1}{\det(g_t)}\left(\s{\d{i}\phi_t}{\D_{\d{j}}\w_t}+\s{\d{j}\phi_t}{\D_{\d{i}}\w_t}\right)\d{j}\left(\,\cdot\,\right)\right)\nonumber\\
		&=\frac{1}{\sqrt{\det(g_t)}}\sum_{i,j=1}^{2}g^{i,j}_t\left(\d{i}\left(\s{d\phi_t}{d\w_t}_{g_t}\right)\right)\d{j}\left(\,\cdot\,\right)\nonumber\\
		&-\frac{1}{\sqrt{\det(g_t)}}\sum_{i,j=1}^{2}\d{i}\left(\frac{1}{\sqrt{\det(g_t)}}\left(\s{\d{i}\phi_t}{\D_{\d{j}}\w_t}+\s{\d{j}\phi_t}{\D_{\d{i}}\w_t}\right)\d{j}\left(\,\cdot\,\right)\right).
	\end{align}
	Therefore, we have by \eqref{newref1}
	\begin{align*}
		\frac{d}{dt}(\Delta_{g_t})
		&=\s{d\s{d\phi_t}{d\w_t}_{g_t}}{d(\,\cdot\,)}_{g_t}+\frac{1}{\sqrt{\det(g_t)}}\sum_{i,j=1}^{2}\d{i}\left(\sqrt{\det(g_t)}\left(\frac{d}{dt}g^{i,j}_t\right)\d{j}\left(\,\cdot\,\right)\right).
	\end{align*}
	As $d\mathrm{vol}_{g_t}=\sqrt{\mathrm{det}(g_t)}dx_1\wedge dx_2$ in our local chart, we obtain thanks of an integration by parts
	\begin{align}\label{a6}
		\mathrm{(III)}&=2\int_{\Sigma}\alpha_t\left(\frac{d}{dt}\Delta_{g_t}\right)\alpha_t\,d\mathrm{vol}_{g_t}\nonumber\\
		&=2\int_{\Sigma}\alpha_t\s{d\s{d\phi_t}{d\w_t}_{g_t}}{d\alpha_t}_{g_t}d\mathrm{vol}_{g_t}+2\int_{\Sigma}\alpha_t\sum_{i,j=1}^2\p{x_i}\left(\sqrt{\det(g_t)}\left(\frac{d}{dt}g_t^{i,j}\right)\p{x_j}\alpha_t\right)dx_1\wedge dx_2\nonumber\\
		&=-2\int_{\Sigma}|d\alpha_t|_{g_t}^2\s{d\phi_t}{d\w_t}_{g_t}d\mathrm{vol}_{g_t}-2\int_{\Sigma}\alpha_t\s{d\phi_t}{d\w_t}\Delta_{g_t}\alpha_td\mathrm{vol}_{g_t}\nonumber\\
		&-2\int_{\Sigma}\sum_{i,j=1}^{2}\left(\frac{d}{dt}g^{i,j}\right)\p{x_i}\alpha_t\cdot\p{x_j}\alpha_t\,\sqrt{\det(g_t)}dx_1\wedge dx_2\nonumber\\
		&=-2\left\{\mathrm{(1)}+\mathrm{(2)}\right\}-2\int_{\Sigma}\alpha_t\s{d\phi_t}{d\w_t}_{g_t}\Delta_{g_t}\alpha_t\,d\mathrm{vol}_{g_t}
	\end{align}
	by the definition given in \eqref{a1}. Now, by the Liouville equation \eqref{liouville}, we have
	\begin{align}\label{a7}
		-2\int_{\Sigma}\alpha_t\s{d\phi_t}{d\w_t}_{g_t}\Delta_{g_t}\alpha_t\,d\mathrm{vol}_{g_t}=2\int_{\Sigma}\alpha_t\,K_{g_t}\s{d\phi_t}{d\w_t}_{g_t}d\mathrm{vol}_{g_t}-2K_{g_0}\int_{\Sigma}\alpha_te^{-2\alpha_t}\s{d\phi_t}{d\w_t}_{g_t}d\mathrm{vol}_{g_t}
	\end{align}
	Therefore, by \eqref{a5}, \eqref{a6} and \eqref{a7}, we obtain
	\begin{align}\label{a8}
		\mathrm{(II)}+\mathrm{(III)}=-2\left\{\mathrm{(1)}+\mathrm{(2)}\right\}-2\int_{\Sigma}d\alpha_t\wedge \omega_t-2K_{g_0}\int_{\Sigma}\alpha_te^{-2\alpha_t}\s{d\phi_t}{d\w_t}_{g_t}d\mathrm{vol}_{g_t}.
	\end{align}
	Now, as 
	\begin{align*}
		\int_{\Sigma}e^{-2\alpha_t}d\mathrm{vol}_{g_t}=\int_{\Sigma}d\mathrm{vol}_{g_{0,t}}=1,
	\end{align*}
	we obtain for all $\lambda\in \R$
	\begin{align*}
		&\frac{d}{dt}\int_{\Sigma}\alpha_te^{-2\alpha_t}d\mathrm{vol}_{g_t}=\frac{d}{dt}\int_{\Sigma}\left(\alpha_t+\lambda\right)e^{-2\alpha_t}d\mathrm{vol}_{g_t}=\int_{\Sigma}(\alpha_t+\lambda)e^{-2\alpha_t}\s{d\phi_t}{d\w_t}_{g_t}d\mathrm{vol}_{g_t}\\
		&+\int_{\Sigma}\left(\frac{d}{dt}\alpha_t\right)e^{-2\alpha_t}d\mathrm{vol}_{g_t}-\int_{\Sigma}2\left(\frac{d}{dt}\alpha_t\right)\left(\alpha_t+\lambda\right)e^{-2\alpha_t}d\mathrm{vol}_{g_t}\\
		&=\int_{\Sigma}(\alpha_t+\lambda)e^{-2\alpha_t}\s{d\phi_t}{d\w_t}d\mathrm{vol}_{g_t}-2\int_{\Sigma}\left(\frac{d}{dt}\alpha_t\right)\alpha_te^{-2\alpha_t}d\mathrm{vol}_{g_t}+\int_{\Sigma}(1-2\lambda)\left(\frac{d}{dt}\alpha_t\right)e^{-2\alpha_t}d\mathrm{vol}_{g_t}.
	\end{align*}
	Taking $\lambda=\dfrac{1}{2}$ yields
	\begin{align}\label{a9}
		\frac{d}{dt}\left(2K_{g_0}\int_{\Sigma}\alpha_te^{-2\alpha_t}d\mathrm{vol}_{g_t}\right)&=-4K_{g_0}\int_{\Sigma}\left(\frac{d}{dt}\alpha_t\right)\alpha_te^{-2\alpha_t}\,d\mathrm{vol}_{g_t}+K_{g_0}\int_{\Sigma}(1+2\alpha_t)e^{-2\alpha_t}\s{d\phi_t}{d\w_t}d\mathrm{vol}_{g_t}\nonumber\\
		&=-\mathrm{(I)}+K_{g_0}\int_{\Sigma}(1+2\alpha_t)e^{-2\alpha_t}\s{d\phi_t}{d\w_t}d\mathrm{vol}_{g_t}.
	\end{align}
	Therefore, by \eqref{a8} and \eqref{a9}, we obtain as $(3)=\mathrm{(I)}+\mathrm{(II)}+\mathrm{(III)}$ the identity
	\begin{align}\label{a10}
		\frac{d}{dt}\left(2K_{g_0}\int_{\Sigma}\alpha_te^{-2\alpha_t}d\mathrm{vol}_{g_t}\right)+\mathrm{(3)}=-2\left\{\mathrm{(1)}+\mathrm{(2)}\right\}+K_{g_0}\int_{\Sigma}e^{-2\alpha_t}\s{d\phi_t}{d\w_t}_{g_t}d\mathrm{vol}_{g_t}-2\int_{\Sigma}d\alpha_t\wedge \omega_t
	\end{align}
	Finally, by \eqref{a1}, and \eqref{a10}, we obtain
	\begin{align}
		&\frac{d}{dt}\left(\int_{\Sigma}|d\alpha_t|_{g_t}^2d\mathrm{vol}_{g_t}+2K_{g_0}\int_{\Sigma}\alpha_te^{-2\alpha_t}d\mathrm{vol}_{g_t}\right)\nonumber\\
		&=-\left\{(1)+(2)\right\}+K_{g_0}\int_{\Sigma}e^{-2\alpha_t}\s{d\phi_t}{d\w_t}_{g_t}d\mathrm{vol}_{g_t}-2\int_{\Sigma}d\alpha_t\wedge \omega_t\nonumber\\
		&=-\int_{\Sigma}\sum_{i,j=1}^2\left(\frac{d}{dt}g^{i,j}_t\right)\p{x_i}\alpha_t\cdot\p{x_j}\alpha_t\,d\mathrm{vol}_{g_t}-\int_{\Sigma}|d\alpha_t|_{g_t}^2\s{d\phi_t}{d\w_t}_{g_t}d\mathrm{vol}_{g_t}+K_{g_0}\int_{\Sigma}e^{-2\alpha_t}\s{d\phi_t}{d\w_t}d\mathrm{vol}_{g_t}\nonumber\\
		&-2\int_{\Sigma}d\alpha_t\wedge \omega_t
	\end{align}
	By \eqref{a4}, we obtain
	\begin{align}\label{aend}
		&\frac{d}{dt}\left(\int_{\Sigma}|d\alpha_t|_{g_t}^2d\mathrm{vol}_{g_t}+2K_{g_0}\int_{\Sigma}\alpha_te^{-2\alpha_t}d\mathrm{vol}_{g_t}\right)_{|t=0}\nonumber\\
		&=\int_{\Sigma}\sum_{i,j=1}^{2}\frac{1}{\det(g)}\left(\s{\D_{\e_i}\w}{\e_j}+\s{\D_{\e_j}\w}{\e_i}\right)\p{x_i}\alpha_t\cdot \p{x_j}\alpha_td\vg-\int_{\Sigma}|d\alpha|_g^2\s{d\phi}{d\w}_gd\vg\nonumber\\
		&+K_{g_0}\int_{\Sigma}\s{d\phi}{d\w}_gd\mathrm{vol}_{g_0}
		-4\int_{\Sigma}d\alpha \wedge \Im\,\left(\s{\H}{\partial\w}-g^{-1}\otimes\left(\h_0\totimes \bar{\partial}\w\right)\right)\nonumber\\
		&=-\int_{\Sigma}|d\alpha|_g^2\s{d\phi}{d\w}_gd\mathrm{vol}_g+\int_{\Sigma}\s{d\phi\totimes d\w+d\w\totimes d\phi}{d\alpha\otimes d\alpha}_gd\vg+K_{g_0}\int_{\Sigma}\s{d\phi}{d\w}_gd\mathrm{vol}_{g_0}\nonumber
		\\
		&-4\int_{\Sigma}d\alpha \wedge \Im\,\left(\s{\H}{\partial\w}-g^{-1}\otimes\left(\h_0\totimes \bar{\partial}\w\right)\right).
	\end{align}
	Now, as 
	\begin{align*}
		\mathscr{O}(\phi)=\frac{1}{2}\int_{\Sigma}|d\alpha|_g^2d\vg+K_{g_0}\int_{\Sigma}\alpha e^{-2\alpha}\,d\vg-\frac{K_{g_0}}{2}\log\int_{\Sigma}d\vg,
	\end{align*}
	we obtain
	\begin{align*}
		&\frac{d}{dt}\mathscr{O}(\phi)_{|t=0}=-\frac{1}{2}\int_{\Sigma}|d\alpha|_g^2\s{d\phi}{d\w}_gd\mathrm{vol}_g+\frac{1}{2}\int_{\Sigma}\s{d\phi\totimes d\w+d\w\totimes d\phi}{d\alpha\otimes d\alpha}_gd\vg\\
		&+\frac{K_{g_0}}{2}\int_{\Sigma}\s{d\phi}{d\w}_gd\mathrm{vol}_{g_0}
		-2\int_{\Sigma}d\alpha \wedge \Im\,\left(\s{\H}{\partial\w}-g^{-1}\otimes\left(\h_0\totimes \bar{\partial}\w\right)\right)-\frac{K_{g_0}}{2}\frac{\int_{\Sigma}\s{d\phi}{d\w}_gd\vg}{\int_{\Sigma}d\vg}.
	\end{align*}
	Therefore, we obtain the estimate by Cauchy-Schwarz and Minkowski's inequalities (recall that $g_0$ has unit volume) and Gauss-Bonnet identity
	\begin{align*}
		&|D\mathscr{O}(\phi)(\w)|\leq 3\int_{\Sigma}|d\alpha|_g^2d\vg+|K_{g_0}|\np{|d\w|_g}{\infty}{\Sigma}\\
		&+\left(\int_{\Sigma}|d\alpha|_g^2d\vg\right)^{\frac{1}{2}}\left(\left(\int_{\Sigma}|\H|^2d\vg\right)^{\frac{1}{2}}+\left(\int_{\Sigma}|\h_0|_{WP}^2d\vg\right)^{\frac{1}{2}}\right)\np{|d\w|_g}{\infty}{\Sigma}\\
		&=\left(3\int_{\Sigma}|d\alpha|_g^2+2|K_{g_0}|+\left(\sqrt{W(\phi)}+\sqrt{W(\phi)-2\pi\chi(\Sigma)}\right)\left(\int_{\Sigma}|d\alpha|_g^2d\vg\right)^{\frac{1}{2}}\right)\np{|d\w|_g}{\infty}{\Sigma}\\
		&=\left(3\int_{\Sigma}|d\alpha|_g^2+4\pi|\chi(\Sigma)|+\left(\sqrt{W(\phi)}+\sqrt{W(\phi)-2\pi\chi(\Sigma)}\right)\left(\int_{\Sigma}|d\alpha|_g^2d\vg\right)^{\frac{1}{2}}\right)\np{|d\w|_g}{\infty}{\Sigma}\\
		&\leq \left(3\int_{\Sigma}|d\alpha|_g^2+4\pi|\chi(\Sigma)|+\left(\sqrt{W(\phi)}+\sqrt{W(\phi)-2\pi\chi(\Sigma)}\right)\left(\int_{\Sigma}|d\alpha|_g^2d\vg\right)^{\frac{1}{2}}\right)\norm{\w}_{\mathscr{E}_{\phi}(\Sigma,\R^n)}.
	\end{align*}
	Therefore, the function
	\begin{align*}
		W_{\sigma}(\phi)=W(\phi)+\sigma^2\int_{\Sigma}\left(1+|\H_g|^2\right)^2d\mathrm{vol}_g+\frac{1}{\log\left(\frac{1}{\sigma}\right)}\mathscr{O}(\phi)
	\end{align*}
	satisfies the \textbf{Energy bound} condition of the main theorem in \cite{index2}. Notice that by the refined Onofri inequality if $\alpha$ is chosen according to the Aubin gauge (see \cite{eversion})
	\begin{align*}
		\frac{1}{6}\int_{S^2}|d\alpha|_g^2d\vg\leq \mathscr{O}(\phi),
	\end{align*}
	while for $\Sigma$ of genus at least one we have by Jensen's inequality (as $K_{g_0}\leq 0$)
	\begin{align*}
		K_{g_0}\int_{\Sigma}\alpha\,e^{-2\alpha}d\vg-\frac{K_{g_0}}{2}\log\int_{\Sigma}d\vg\geq 0,
	\end{align*}
	so
	\begin{align*}
		\frac{1}{2}\int_{\Sigma}|d\alpha|_g^2d\vg\leq \mathscr{O}(\phi),
	\end{align*}
	and an upper-bound on $W_{\sigma}$ implies also a uniform bound on $D W_{\sigma}$ (actually, as we shall see, it also implies a uniform control on the second derivative $D^2W_{\sigma}$).
	
	\textbf{Step 2: Computation of the second derivative of Onofri's energy and energy bound.}
	We first recall the following formula
	\begin{align}\label{idem}
	\frac{d^2}{dt^2}\left(d\mathrm{vol}_{g_t}\right)_{|t=0}&=\left(|d\w|_g^2+\s{d\phi}{d\w}_g^2-16\left(|\partial\phi\totimes\partial\w|_{WP}^2+|\Re(\partial\phi\totimes \bar{\partial}\w)|_{WP}^2\right)\right)d\vg\nonumber\\
	&=\left(|d\w|_g^2-16|\partial\phi\totimes \partial\w|_{WP}^2\right)d\vg
	\end{align}
	where $|\,\cdot\,|_{WP}$ is the Weil-Petersson product with respect to the metric $g$. In a complex chart $z$, we identify $g=e^{2\lambda}|dz|^2$ with $e^{2\lambda}dz\otimes d\z$ and for any quadratic differential $\vec{\alpha}=\vec{F}(z)dz^2$ or symmetric tensor $\vec{\beta}=\vec{G}(z)dz\otimes d\z$ with values into $\R^n$, we define
	\begin{align*}
	|\vec{\alpha}|_{WP}^2=g^{-2}\otimes \left(\vec{\alpha}\totimes\bar{\vec{\alpha}}\right)=e^{-4\lambda}|\vec{F}|^2,\quad  |\vec{\beta}|_{WP}^2=g^{-2}\otimes\left(\vec{\beta}\totimes \bar{\vec{\beta}}\right)=e^{-4\lambda}|\vec{G}|^2.
	\end{align*}	
	The last formula may not be presented under this form in general, so we will add a derivation of it. By \cite{indexS3} for example, we have
	\begin{align*}
	\frac{d^2}{dt^2}\left(d\mathrm{vol}_{g_t}\right)_{|t=0}=|d\w|_g^2+\s{d\phi}{d\w}_g^2-e^{-4\lambda}\left(2\s{\D_{\e_1}\w}{\e_1}^2+2\s{\D_{\e_2}\w}{\e_2}^2+\left(\s{\D_{\e_1}\w}{\e_2}+\s{\D_{\e_2}\w}{\e_1}\right)^2\right).
	\end{align*}
	Now, we have by identifying  $\D_{\p{z}}$ and $\partial=\D_{\p{z}}(\,\cdot\,)\otimes dz$ the identities
	\begin{align*}
	&\e_1=\e_{\z}+\e_{\z}=(\partial+\bar{\partial})\phi\\
	&\e_2=i(\e_z-\e_{\z})=i(\partial-\bar{\partial})\phi.
	\end{align*}
	Then we find that
	\begin{align*}
	&e^{-4\lambda}\Big\{2\s{\D_{\e_1}\w}{\e_1}^2+2\s{\D_{\e_2}\w}{\e_2}^2+\left(\s{\D_{\e_1}\w}{\e_2}+\s{\D_{\e_2}\w}{\e_1}\right)^2=2\s{(\partial+\bar{\partial})\phi}{(\partial+\bar{\partial})\w}^2\nonumber\\
	&+2\s{(\partial-\bar{\partial})\phi}{(\partial-\bar{\partial})\w}^2
	+\left(\s{(\partial+\bar{\partial})\phi}{i(\partial-\bar{\partial})\phi}+\s{i(\partial-\bar{\partial})\w}{(\partial+\bar{\partial})\w}\right)^2\Big\}\\
	&=e^{-4\lambda}\Big\{4(\s{\partial\phi}{\bar{\partial}\w})^2+4(\s{\bar{\partial}\phi}{\partial\w})^2+8\s{\partial\phi}{\bar{\partial}\w}\s{\bar{\partial}\phi}{\partial\w}+16\s{\partial\phi}{\partial\w}\s{\bar{\partial}\phi}{\bar{\partial}\w}\Big\}\\
	&=4e^{-4\lambda}\left(\partial\phi\totimes\bar{\partial}\w+\bar{\partial}\phi\totimes\partial\w\right)^2+16e^{-4\lambda}|\partial\phi\totimes{\partial}\w|^2\\
	&=16|\Re(\partial\phi\totimes\bar{\partial}\w)|_{WP}^2+16|\partial\phi\totimes\partial\w|_{WP}^2.
	\end{align*}
	Therefore, we have
	\begin{align*}
	\frac{d^2}{dt^2}\left(d\mathrm{vol}_{g_t}\right)_{|t=0}=|d\w|_g^2+\s{d\phi}{d\w}_g^2-16\left(|\Re(\partial\phi\totimes\bar{\partial}\w)|_{WP}^2+|\partial\phi\totimes\partial\w|_{WP}^2\right)
	\end{align*}
	The formula given in \cite{hierarchies} is
	\begin{align*}
	\frac{d^2}{dt^2}\left(d\mathrm{vol}_{g_t}\right)_{|t=0}=|d\w|_g^2+\s{d\phi}{d\w}_g^2-\frac{1}{2}|d\phi\totimes d\w+d\w\totimes d\phi|_g^2,
	\end{align*}
	where in local coordinates, we have
	\begin{align*}
	d\phi\totimes d\w=\sum_{i,j=1}^{2}\s{\p{x_i}\w}{\D_{\p{x_i}}\w}dx_i\otimes dx_j,
	\end{align*}
	which implies that
	\begin{align*}
	d\phi\totimes d\w+d\w\totimes d\phi=\sum_{i,j=1}^2\left(\s{\p{x_i}\phi}{\D_{\p{x_j}}\w}+\s{\p{x_j}\w}{\D_{\p{x_i}}\w}\right)dx_i\otimes dx_j
	\end{align*}
	and
	\begin{align*}
	|d\phi\totimes d\w+d\phi\totimes d\w|^2_{g}&=e^{-4\lambda}\sum_{i,j=1}^{2}\left(\s{\p{x_i}\phi}{\D_{\p{x_j}}\w}+\s{\p{x_j}\w}{\D_{\p{x_i}}\w}\right)^2\\
	&=e^{-4\lambda}\left(4\s{\D_{\e_1}\w}{\e_1}^2+4\s{\D_{\e_2}\w}{\e_2}^2+2\left(\s{\D_{\e_1}\w}{\e_2}+\s{\D_{\e_2}\w}{\e_1}\right)^2\right)		
	\end{align*}
	which coincides with formula \ref{idem}.
	
	Recall that for all $t\in  I$
	\begin{align}\label{b0}
		&\frac{d}{dt}\left(\int_{\Sigma}|d\alpha_t|_{g_t}^2d\mathrm{vol}_{g_t}+2K_{g_0}\int_{\Sigma}\alpha_te^{-2\alpha_t}d\mathrm{vol}_{g_t}\right)\nonumber\\
		&=-\int_{\Sigma}\sum_{i,j=1}^2\left(\frac{d}{dt}g^{i,j}_t\right)\p{x_i}\alpha_t\cdot\p{x_j}\alpha_t\,d\mathrm{vol}_{g_t}-\int_{\Sigma}|d\alpha_t|_{g_t}^2\s{d\phi_t}{d\w_t}_{g_t}d\mathrm{vol}_{g_t}+K_{g_0}\int_{\Sigma}e^{-2\alpha_t}\s{d\phi_t}{d\w_t}d\mathrm{vol}_{g_t}\nonumber\\
		&-2\int_{\Sigma}d\alpha_t\wedge d\omega_t\nonumber\\
		&=-\int_{\Sigma}\sum_{i,j=1}^2\left(\frac{d}{dt}g^{i,j}_t\right)\p{x_i}\alpha_t\cdot\p{x_j}\alpha_t\,d\mathrm{vol}_{g_t}-\int_{\Sigma}|d\alpha_t|_{g_t}^2\frac{d}{dt}\left(d\mathrm{vol}_{g_t}\right)+K_{g_0}\int_{\Sigma}e^{-2\alpha_t}\s{d\phi_t}{d\w_t}d\mathrm{vol}_{g_t}\nonumber\\
		&-2\int_{\Sigma}d\alpha_t\wedge d\omega_t
	\end{align}
	Furthermore, we recall the formulas
	\begin{align}\label{form}
		&\frac{d^2}{dt^2}\left(d\mathrm{vol}_{g_t}\right)_{|t=0}=\left(|d\w|_g^2+\s{d\phi}{d\w}_g^2-16|\Re(\partial\phi\totimes\bar{\partial}\phi)|_{WP}^2-16|\partial\phi\totimes\partial\w|_{WP}^2\right)d\vg\nonumber\\
		&\left(\frac{d^2}{dt^2}g^{i,j}_{t}\right)_{|t=0}=2\bigg\{-e^{-4\lambda}\s{\D_{\e_i}\w}{\D_{\e_j}\w}+2e^{-4\lambda}\s{d\phi}{d\w}_g\left(\s{\D_{\e_i}\w}{\e_j}+\s{\D_{\e_j}\w}{\e_i}\right)\nonumber\\
		&-\delta_{i,j}e^{-2\lambda}\left(2\s{d\phi}{d\w}_g^2-16|\Re(\partial\phi\totimes\bar{\partial}\w)|_{WP}^2-16|\partial\phi\totimes\partial\w|_{WP}^2\right)\bigg\}.
	\end{align}
	Therefore, we have (in the following expression, we will not always write explicitly that the expression we consider are taken at $t=0$)
	\begin{align}\label{b00}
		&\frac{d^2}{dt^2}\left(\int_{\Sigma}|d\alpha_t|_{g_t}^2d\mathrm{vol}_{g_t}+2K_{g_0}\int_{\Sigma}\alpha_te^{-2\alpha}d\mathrm{vol}_{g_t}\right)_{|t=0}\nonumber\\
		&=-\int_{\Sigma}\sum_{i,j=1}^{2}\left(\frac{d^2}{dt^2}g^{i,j}_t\right)\p{x_i}\alpha\cdot\p{x_j}\alpha\,d\mathrm{vol}_g\nonumber\\
		&-\int_{\Sigma}\sum_{i,j=1}^{2}\left(\frac{d}{dt}g^{i,j}_t\right)\left(\p{x_i}\left(\frac{d}{dt}\alpha_t \right)\cdot\p{x_j}\alpha+\p{x_i}\alpha\cdot\p{x_j}\left(\frac{d}{dt}\alpha_t\right)\right)d\vg\nonumber\\
		&-\int_{\Sigma}\sum_{i,j=1}^{2}\left(\frac{d}{dt}g_t^{i,j}\right)\p{x_i}\alpha\cdot\p{x_j}\alpha\,\s{d\phi}{d\w}_gd\vg-\int_{\Sigma}\sum_{i,j=1}^{2}\left(\frac{d}{dt}g^{i,j}_t\right)\p{x_i}\alpha\cdot\p{x_j}\alpha\s{d\phi}{d\w}_gd\vg\nonumber\\
		&-\int_{\Sigma}\sum_{i,j=1}^{2}g^{i,j}\left(\p{x_i}\left(\frac{d}{dt}\alpha_t\right)\cdot\p{x_j}\alpha+\p{x_i}\alpha\cdot\p{x_j}\left(\frac{d}{dt}\alpha_t\right)\right)\s{d\phi}{d\w}_gd\vg\nonumber\\
		&-\int_{\Sigma}|d\alpha|_g^2\left(|d\w|_g^2+\s{d\phi}{d\w}_g^2-16|\Re(\partial\phi\totimes\bar{\partial}\phi)|_{WP}^2-16|\partial\phi\totimes\partial\w|_{WP}^2\right)d\vg\nonumber\\
		&-2K_{g_0}\int_{\Sigma}\left(\frac{d}{dt}\alpha_t\right)\s{d\phi}{d\w}_gd\mathrm{vol}_{g_0}\nonumber\\
		&+K_{g_0}\int_{\Sigma}\left(|d\w|_g^2+\s{d\phi}{d\w}_g^2-16|\Re(\partial\phi\totimes\bar{\partial}\phi)|_{WP}^2-16|\partial\phi\totimes\partial\w|_{WP}^2\right)d\mathrm{vol}_{g_0}\nonumber\\
		&-2\int_{\Sigma}d\left(\frac{d}{dt}\alpha_t\right)\wedge\omega_t-2\int_{\Sigma}d\alpha\wedge \left(\frac{d}{dt}\omega_t\right)\nonumber\\
		&=-\int_{\Sigma}\sum_{i,j=1}^{2}\left(\frac{d^2}{dt^2}g^{i,j}_t\right)\p{x_i}\alpha\cdot\p{x_j}\alpha\,d\mathrm{vol}_g\nonumber\\
		&-\int_{\Sigma}\sum_{i,j=1}^{2}\left(\frac{d}{dt}g^{i,j}_t\right)\left(\p{x_i}\left(\frac{d}{dt}\alpha_t \right)\cdot\p{x_j}\alpha+\p{x_i}\alpha\cdot\p{x_j}\left(\frac{d}{dt}\alpha_t\right)\right)d\vg\nonumber\\
		&-2\int_{\Sigma}\sum_{i,j=1}^{2}\left(\frac{d}{dt}g_t^{i,j}\right)\p{x_i}\alpha\cdot\p{x_j}\alpha\,\s{d\phi}{d\w}_gd\vg\nonumber\\
		&-\int_{\Sigma}\sum_{i,j=1}^{2}g^{i,j}\left(\p{x_i}\left(\frac{d}{dt}\alpha_t\right)\cdot\p{x_j}\alpha+\p{x_i}\alpha\cdot\p{x_j}\left(\frac{d}{dt}\alpha_t\right)\right)\s{d\phi}{d\w}_gd\vg\nonumber\\
		&-\int_{\Sigma}|d\alpha|_g^2\left(|d\w|_g^2+\s{d\phi}{d\w}_g^2-16|\Re(\partial\phi\totimes\bar{\partial}\phi)|_{WP}^2-16|\partial\phi\totimes\partial\w|_{WP}^2\right)d\vg\nonumber\\
		&-2K_{g_0}\int_{\Sigma}\left(\frac{d}{dt}\alpha_t\right)\s{d\phi}{d\w}_gd\mathrm{vol}_{g_0}\nonumber\\
		&+K_{g_0}\int_{\Sigma}\left(|d\w|_g^2+\s{d\phi}{d\w}_g^2-16|\Re(\partial\phi\totimes\bar{\partial}\phi)|_{WP}^2-16|\partial\phi\totimes\partial\w|_{WP}^2\right)d\mathrm{vol}_{g_0}\nonumber\\
		&-2\int_{\Sigma}d\left(\frac{d}{dt}\alpha_t\right)\wedge\omega_t-2\int_{\Sigma}d\alpha\wedge \left(\frac{d}{dt}\omega_t\right)\nonumber\\
		&=\mathrm{(I)}+\mathrm{(II)}
		+\mathrm{(III)}+\mathrm{(IV)}
		+\mathrm{(V)}
		+\mathrm{(VI)}+\mathrm{(VII)}
		+\mathrm{(VIII)}+\mathrm{(IX)}.
	\end{align}
	We will use the convenient notation
	\begin{align*}
		\widetilde{\mathscr{O}}(\phi)=\int_{\Sigma}|d\alpha|_g^2d\vg+2K_{g_0}\int_{\Sigma}\alpha\,e^{-2\alpha}\,d\vg.
	\end{align*}
	Notice that $\widetilde{\mathscr{O}}$ are linked by the formula
	\begin{align*}
		\mathscr{O}(\phi)=\frac{1}{2}\widetilde{\mathscr{O}}(\phi)-\frac{K_{g_0}}{2}\log\int_{\Sigma}d\vg
	\end{align*}
	We compute thanks of \eqref{form}
	\begin{align}\label{b1}
		\mathrm{(I)}&=-\int_{\Sigma}\sum_{i,j=1}^{2}\left(\frac{d^2}{dt^2}g^{i,j}_t\right)\p{x_i}\alpha\cdot\p{x_j}\alpha\,d\vg
		=2\int_{\Sigma}\sum_{i,j=1}^{2}e^{-4\lambda}\s{\D_{\e_i}\w}{\D_{\e_j}\w}\p{x_i}\alpha\cdot\p{x_j}\alpha\,d\vg\nonumber\\
		&-4\int_{\Sigma}\sum_{i,j=1}^{2}e^{-4\lambda}\left(\s{\D_{\e_i}\w}{\e_j}+\s{\D_{\e_j}\w}{\e_i}\right)\p{x_i}\alpha\cdot\p{x_j}\alpha\,\s{d\phi}{d\w}_gd\vg\nonumber\\
		&+\int_{\Sigma}\sum_{i=1}^{2}e^{-2\lambda}\left(\p{x_i}\alpha\right)^2\left(2\s{d\phi}{d\w}_g^2-16|\Re(\partial\phi\totimes\bar{\partial}\w)|_{WP}^2-16|\partial\phi\totimes\partial\w|_{WP}^2\right)d\vg\nonumber\\
		&=2\int_{\Sigma}\s{d\w\totimes d\w}{d\alpha\otimes d\alpha}_gd\vg-4\int_{\Sigma}\s{d\phi\totimes d\w+d\w\totimes d\phi}{d\alpha\otimes d\alpha}_g\s{d\phi}{d\w}_gd\vg\nonumber\\
		&+2\int_{\Sigma}|d\alpha|_g^2\left(2\s{d\phi}{d\w}_g^2-16|\Re(\partial\phi\totimes\bar{\partial}\w)|_{WP}^2-16|\partial\phi\totimes\partial\w|_{WP}^2\right)d\vg.
	\end{align}
	Furthermore, observe that
	\begin{align*}
		g^{-1}\otimes \Re(\partial\phi\totimes \bar{\partial}\w)=\frac{1}{4}e^{-2\lambda}\left(\s{\p{x_1}\phi}{\D_{\e_1}\w}+\s{\p{x_2}\phi}{\D_{\e_2}\w}\right)=\frac{1}{4}\s{d\phi}{d\w}_g
	\end{align*}
	so 
	\begin{align*}
	&2\int_{\Sigma}|d\alpha|_g^2\left(2\s{d\phi}{d\w}_g^2-16|\Re(\partial\phi\totimes\bar{\partial}\w)|_{WP}^2-16|\partial\phi\totimes\partial\w|_{WP}^2\right)d\vg\nonumber\\
	&
	=2\int_{\Sigma}|d\alpha|_g^2\left(\s{d\phi}{d\w}_g^2-16|\partial\phi\totimes\partial\w|_{WP}^2\right)d\vg
\end{align*}
	We estimate directly as $|\s{d\phi}{d\w}_g|\leq \sqrt{2}|d\w|_g$ and $|\D\phi|=\sqrt{2}e^{\lambda}$
	\begin{align}\label{e1}
		|\mathrm{(I)}|\leq (2+4\cdot (2\sqrt{2})\cdot \sqrt{2}+4)\left(\int_{\Sigma}|d\alpha|_g^2d\vg\right)\,\np{|d\w|_g}{\infty}{\Sigma}^2\leq 22\left(\int_{\Sigma}|d\alpha|_g^2d\vg\right)\norm{\w}_{\mathscr{E}_{\phi}(\Sigma,\R^n)}^2.
	\end{align}
	From now on, we write
	\begin{align*}
		\left(\frac{d}{dt}\alpha_t\right)_{|t=0}=\alpha'_0
	\end{align*}
	The next term is
	\begin{align}\label{b2}
		\mathrm{(II)}&=-\int_{\Sigma}\sum_{i,j=1}^{2}\left(\frac{d}{dt}g^{i,j}_t\right)\left(\p{x_i}\left(\frac{d}{dt}\alpha_t\right)\cdot\p{x_j}\alpha+\p{x_i}\alpha\cdot\p{x_j}\left(\frac{d}{dt}\alpha_t\right)\right)d\vg\nonumber\\
		&=\int_{\Sigma}\sum_{i,j=1}^{2}e^{-4\lambda}\left(\s{\D_{\e_i}\w}{\e_j}+\s{\D_{\e_j}\w}{\e_i}\right)\left(\p{x_i}\left(\frac{d}{dt}\alpha_t\right)\cdot\p{x_j}\alpha+\p{x_i}\alpha\cdot\p{x_j}\left(\frac{d}{dt}\alpha_t\right)\right)d\vg\nonumber\\
		&=\int_{\Sigma}\s{d\phi\totimes d\w+d\w\totimes d\phi}{d\alpha\otimes d\alpha_0'+d\alpha_0'\otimes d\alpha}_gd\vg.
	\end{align}
	In particular, we obtain by Cauchy-Schwarz inequality the direct estimate
	\begin{align}\label{e2}
		|\mathrm{(II)}|&\leq 8\left(\int_{\Sigma}|d\alpha|_g^2d\vg\right)^{\frac{1}{2}}\left(\int_{\Sigma}|d\alpha_0'|_g^2d\vg\right)^{\frac{1}{2}}\np{|d\w|}{\infty}{\Sigma}\nonumber\\
		&\leq 8\left(\int_{\Sigma}|d\alpha|_g^2d\vg\right)^{\frac{1}{2}}\left(\int_{\Sigma}|d\alpha_0'|_g^2d\vg\right)^{\frac{1}{2}}\norm{\w}_{\mathscr{E}_{\phi}(\Sigma,\R^n)}.
	\end{align}
	Now, we have
	\begin{align}\label{b3}
		\mathrm{(III)}&=-2\int_{\Sigma}\sum_{i,j=1}^{2}\left(\frac{d}{dt}g^{i,j}_t\right)\p{x_i}\alpha\cdot\p{x_j}\alpha\,\s{d\phi}{d\w}_gd\vg\nonumber\\
		&=2\int_{\Sigma}\sum_{i,j=1}^{2}e^{-4\lambda}\left(\s{\D_{\e_i}\w}{\e_j}+\s{\D_{\e_j}\w}{\e_i}\right)\p{x_i}\alpha\cdot \p{x_j}\alpha\,\s{d\phi}{d\w}_gd\vg\nonumber\\
		&=2\int_{\Sigma}\s{d\phi\totimes d\w+d\w\totimes d\phi}{d\alpha\otimes d\alpha}_g\s{d\phi}{d\w}_gd\vg.
	\end{align}
	This term is also directly estimated as
	\begin{align}\label{e3}
		|\mathrm{(III)}|\leq 8\left(\int_{\Sigma}|d\alpha|_g^2d\vg\right)\np{|d\w|_g}{\infty}{\Sigma}^2\leq 8\left(\int_{\Sigma}|d\alpha|_g^2d\vg\right)\norm{\w}_{\mathscr{E}_{\phi}(\Sigma,\R^n)}^2
	\end{align}
	Then
	\begin{align}\label{b4}
		\mathrm{(IV)}&=-\int_{\Sigma}\sum_{i,j=1}^2g^{i,j}\left(\p{x_i}\left(\frac{d}{dt}\alpha_t\right)\cdot\p{x_j}\alpha+\p{x_i}\alpha\cdot\p{x_j}\left(\frac{d}{dt}\alpha\right)\right)\s{d\phi}{d\w}_gd\vg\nonumber\\
		&=-2\int_{\Sigma}\s{d\alpha}{d\alpha_0'}_g\s{d\phi}{d\w}_gd\vg.
	\end{align}
	This implies that
	\begin{align}\label{e4}
		|\mathrm{(IV)}|&\leq 4\left(\int_{\Sigma}|d\alpha|_g^2d\vg\right)^{\frac{1}{2}}\left(\int_{\Sigma}|d\alpha_0'|_g^2d\vg\right)^{\frac{1}{2}}\np{|d\w|_g}{\infty}{\Sigma}\nonumber\\
		&\leq 4\left(\int_{\Sigma}|d\alpha|_g^2d\vg\right)^{\frac{1}{2}}\left(\int_{\Sigma}|d\alpha_0'|_g^2d\vg\right)^{\frac{1}{2}}\norm{\w}_{\mathscr{E}_{\phi}(\Sigma,\R^n)}.
	\end{align}
	We now see directly that
	\begin{align}\label{b5}
		\mathrm{(V)}
		&=-\int_{\Sigma}|d\alpha|_g^2\left(|d\w|_g^2+\s{d\phi}{d\w}_g^2-16|\Re(\partial\phi\totimes \bar{\partial}\w)|_{WP}^2-16|\partial\phi\totimes\partial\w|_{WP}^2\right)d\vg\nonumber\\
		&=-\int_{\Sigma}|d\alpha|_g^2\left(|d\w|_g^2-16|\partial\phi\totimes \partial\w|_{WP}^2\right)d\vg
	\end{align}
	Therefore, we have
	\begin{align}\label{e5}
		|\mathrm{(V)}|\leq 2\left(\int_{\Sigma}|d\alpha|_g^2d\vg\right)\np{|d\w|_g}{\infty}{\Sigma}\leq 2\left(\int_{\Sigma}|d\alpha|_g^2d\vg\right)\norm{\w}_{\mathscr{E}_{\phi}(\Sigma,\R^n)}^2.
	\end{align}
	Next we recall that
	\begin{align}\label{b6}
		\mathrm{(VI)}=-2K_{g_0}\int_{\Sigma}\alpha_0'\s{d\phi}{d\w}_gd\mathrm{vol}_{g_0}.
	\end{align}
	By a direct estimate we obtain
	\begin{align}\label{e61}
		\left|\mathrm{(VI)}\right|&=\left|-2K_{g_0}\int_{\Sigma}\left(\frac{d}{dt}\alpha_t
		\right)\s{d\phi}{d\w}_gd\mathrm{vol}_{g_0}\right|\leq 2\sqrt{2}|K_{g_0}|\int_{\Sigma}|\alpha_0'|d\mathrm{vol}_{g_0}\np{|d\w|_g}{\infty}{\Sigma}
	\end{align}
	As $g_{0,t}$ is a unit volume metric for all $t\in I$, we have
	\begin{align*}
		0=\frac{d}{dt}\left(\int_{\Sigma}d\mathrm{vol}_{g_{0,t}}\right)_{|t=0}=\frac{d}{dt}\left(\int_{\Sigma}e^{-2\alpha_t}d\mathrm{vol}_{g_t}\right)_{|t=0}=-2\int_{\Sigma}\alpha_0'd\mathrm{vol}_{g_0}+\int_{\Sigma}\s{d\phi}{d\w}_gd\mathrm{vol}_{g_0}.
	\end{align*}
	Therefore, we have
	\begin{align}\label{e62}
		\left|\int_{\Sigma}\alpha_0'd\mathrm{vol}_{g_0}\right|=\frac{1}{2}\left|\int_{\Sigma}\s{d\phi}{d\w}_gd\mathrm{vol}_{g_0}\right|\leq \frac{1}{\sqrt{2}}\np{|d\w|_g}{\infty}{\Sigma}.
	\end{align}
	Furthermore, by Cauchy-Schwarz and Poincar\'{e}-Wirtinger inequalities, we have for some universal constant $C_{\mathrm{PW}}=C_{\mathrm{PW}}(g_0)$ the estimate (as $g_0$ has unit volume)
	\begin{align}\label{e63}
		\left(\int_{\Sigma}\left|\alpha_0'-\int_{\Sigma}\alpha_0'd\mathrm{vol}_{g_0}\right|d\mathrm{vol}_{g_0}\right)^{2}&\leq \int_{\Sigma}\left|\alpha_0'-\int_{\Sigma}\alpha_0'd\mathrm{vol}_{g_0}\right|^2d\mathrm{vol}_{g_0}\leq C_{\mathrm{PW}}^2\int_{\Sigma}|d\alpha_0'|_{g_0}^2d\mathrm{vol}_{g_0}\nonumber\\
		&=C_{\mathrm{PW}}^2\int_{\Sigma}|d\alpha_0'|_g^2d\vg,
	\end{align}
	where the last inequality follows from the conformal invariance of the Laplacian in dimension $2$.
	Therefore, we have by \eqref{e62} and \eqref{e63} the estimate
	\begin{align}\label{e64}
		\int_{\Sigma}|\alpha_0'|d\mathrm{vol}_{g_0}\leq \int_{\Sigma}\left|\alpha_0'-\int_{\Sigma}\alpha_0'd\mathrm{vol}_{g_0}\right|d\mathrm{vol}_{g_0}+\left|\int_{\Sigma}\alpha_0'd\mathrm{vol}_{g_0}\right|\leq C\left(\int_{\Sigma}|d\alpha_0'|_g^2d\mathrm{vol}_g\right)^{\frac{1}{2}}+\frac{1}{\sqrt{2}}\np{|d\w|_g}{\infty}{\Sigma}.
	\end{align}
	Finally, by \eqref{e61} and \eqref{e64}, we have
	\begin{align}\label{e6}
		|\mathrm{(VI)}|&\leq 4C_{PW}|K_{g_0}|\left(\int_{\Sigma}|d\alpha_0'|^2d\mathrm{vol}_g\right)^{\frac{1}{2}}\np{|d\w|_g}{\infty}{\Sigma}+2|K_{g_0}|\np{|d\w|_g}{\infty}{\Sigma}^2\nonumber\\
		&= 8\pi C_{PW}|\chi(\Sigma)|\left(\int_{\Sigma}|d\alpha_0'|^2d\mathrm{vol}_g\right)^{\frac{1}{2}}\norm{\w}_{\mathscr{E}_{\phi}(\Sigma,\R^n)}+4\pi|\chi(\Sigma)|\,\norm{\w}_{\mathscr{E}_{\phi}(\Sigma,\R^n)}^2.
	\end{align}
	As
	\begin{align}\label{b7}
		\mathrm{(VII)}&=K_{g_0}\int_{\Sigma}\left(|d\w|_g^2+\s{d\phi}{d\w}_g^2-16|\Re(\partial\phi\totimes\bar{\partial}\w)|_{WP}^2-16|\partial\phi\totimes\partial\w|_{WP}^2\right)d\mathrm{vol}_{g_0}\nonumber\\
		&=K_{g_0}\int_{\Sigma}\left(|d\w|_g^2-16|\partial\phi\totimes\partial\w|_{WP}^2\right)d\mathrm{vol}_{g_0}
	\end{align}
	we trivially have as $g_0$ has unit volume
	\begin{align}\label{e7}
		|\mathrm{(VII)}|\leq 2|K_{g_0}|\np{|d\w|_g}{\infty}{\Sigma}^2=8\pi|\chi(\Sigma)|\np{|d\w|_g}{\infty}{\Sigma}^2\leq 8\pi|\chi(\Sigma)|\norm{\w}_{\mathscr{E}_{\phi}(\Sigma,\R^n)}^2.
	\end{align}
	Now, we have by Lemma \ref{d2k}
	\begin{align}\label{b8}
		\mathrm{(VIII)}=-2\int_{\Sigma}d\left(\frac{d}{dt}\alpha_t\right)\wedge \omega_t=-4\int_{\Sigma}d\alpha_0'\wedge \Im\left(\s{\H}{\partial\w}-g^{-1}\otimes\left(\h_0\totimes\bar{\partial}\w\right)\right).
	\end{align}	
	Therefore, this quantity is directly estimated thanks of Cauchy-Schwarz inequality as
	\begin{align}\label{e8}
		|\mathrm{(VIII)}|&\leq 2\left(\int_{\Sigma}|d\alpha_0'|_g^2d\mathrm{vol}_g\right)^{\frac{1}{2}}\left(\sqrt{W(\phi)}+\sqrt{W(\phi)-2\pi\chi(\Sigma)}\right)\np{|d\w|_g}{\infty}{\Sigma}\nonumber\\
		&\leq 2\left(\int_{\Sigma}|d\alpha_0'|_g^2d\mathrm{vol}_g\right)^{\frac{1}{2}}\left(\sqrt{W(\phi)}+\sqrt{W(\phi)-2\pi\chi(\Sigma)}\right)\norm{\w}_{\mathscr{E}_{\phi}(\Sigma,\R^n)}.
	\end{align}
	Finally, we have by Lemma \ref{d2k}
	\begin{align}\label{b91}
		\mathrm{(IX)}&=-2\int_{\Sigma}d\alpha\wedge \left(\frac{d}{dt}\omega_t\right)\nonumber\\
		&=-2\int_{\Sigma}d\alpha\wedge \Im\,\bigg(2\s{\Delta_g^{\perp}\w+4\,\Re\left(g^{-2}\otimes\left(\bar{\partial}\phi\totimes\bar{\partial}\w\right)\otimes \h_0\right)}{\partial\w}-\partial\left(|\D^{\perp}\w|^2_g\right)\nonumber\nonumber\\
		&-2\s{d\phi}{d\w}_g\left(\s{\H}{\partial \w}-g^{-1}\otimes\left(\h_0\totimes\bar{\partial}\w\right)\right)
		-8\,g^{-1}\otimes \left(\partial\phi\totimes\partial\w\right)\otimes \s{\H}{\bar{\partial}\w}\bigg).
	\end{align}
	Now, 
	we have as $\Sigma$ is closed
	\begin{align*}
		2\int_{\Sigma}d\alpha\wedge \Im(\partial|\D^{\perp}\w|_g^2)&=\int_{\Sigma}d\alpha\wedge \ast d|\D^{\perp}\w|_g^2=\int_{\Sigma}\s{d\alpha}{d|\D^{\perp}\w|_g^2}_gd\vg
		=-\int_{\Sigma}|\D^{\perp}\w|_g^2\,\Delta_{g}\alpha\,d\vg.
	\end{align*}
	Therefore, we have
	\begin{align}\label{b9}
		\mathrm{(IX)}&=-\int_{\Sigma}|\D^{\perp}\w|_g^2\,\Delta_{g}\alpha\,d\mathrm{vol}_{g}-4\int_{\Sigma}d\alpha\wedge\,\Im\bigg(\s{\Delta_{g}^{\perp}\w+4\,\Re\left(g^{-2}\otimes\left(\bar{\partial}\phi\totimes\bar{\partial}\w\right)\otimes \h_0\right)}{\partial\w}\nonumber\\
		&-\s{d\phi}{d\w}_g\left(\s{\H}{\partial\w}-g^{-1}\otimes\left(\h_0\totimes\bar{\partial}\w\right)\right)
		-4\,g^{-1}\otimes\left(\partial\phi\totimes\partial\w\right)\otimes\s{\H}{\bar{\partial}\w}\bigg)
	\end{align}
	Now, by the Liouville equation, we have (recall that $g=e^{2\alpha}g_0$)
	\begin{align*}
		-\Delta_g\alpha=K_{g}-K_{g_0}e^{-2\alpha},
	\end{align*}
	and as $|K_g|\leq \frac{1}{2}|\vec{\I}_g|^2=2|\H|^2-K_g$, we deduce that
	\begin{align}\label{e91}
		\left|-\int_{\Sigma}|\D^{\perp}\w|_g^2\,\Delta_g\alpha\,d\vg\right|&\leq \left(\int_{\Sigma}|K_g|d\vg+\int_{\Sigma}|K_{g_0}|d\mathrm{vol}_{g_0}\right)\np{|d\w|_g}{\infty}{\Sigma}^2\nonumber\\
		&\leq \left(2\int_{\Sigma}|\H|^2d\vg-\int_{\Sigma}K_gd\vg+|K_{g_0}|\right)\np{|d\w|_g}{\infty}{\Sigma}^2\nonumber\\
		&=\left(2\,W(\phi)+2\pi|\chi(\Sigma)|-2\pi\chi(\Sigma)\right)\np{|d\w|_g}{\infty}{\Sigma}^2
	\end{align}
	
	 Then we have by Cauchy-Schwarz and Minkowski's inequalities
	\begin{align}\label{e92}
		&\left|\mathrm{(IX)}+\int_{\Sigma}|\D^{\perp}\w|_g^2\,\Delta_g\,d\vg\right|\leq 4\left(\int_{\Sigma}|d\alpha|_g^2d\vg\right)^{\frac{1}{2}}\bigg\{\frac{1}{2}\np{|d\w|_g}{\infty}{\Sigma}\left(\int_{\Sigma}|\Delta_g^{\perp}\w|_g^2d\vg\right)^{\frac{1}{2}}
		\nonumber\\
		&+\frac{1}{\sqrt{2}}\left(\int_{\Sigma}|\h_0|^2_{WP}d\vg\right)^{\frac{1}{2}}\np{|d\w|_g}{\infty}{\Sigma}^2+\left(\left(\int_{\Sigma}|\H|^2d\vg\right)^{\frac{1}{2}}+\left(\int_{\Sigma}|\h_0|_{WP}^2d\vg\right)^{\frac{1}{2}}\right)\np{|d\w|_g}{\infty}{\Sigma}^2\nonumber\\
		&+\frac{1}{\sqrt{2}}\left(\int_{\Sigma}|\H|^2d\vg\right)^{\frac{1}{2}}\np{|d\w|_g}{\infty}{\Sigma}^2\bigg\}
		=\left(\int_{\Sigma}|d\alpha|_g^2d\vg\right)^{\frac{1}{2}}\bigg\{
		2\left(\int_{\Sigma}|\Delta_g^{\perp}\w|_g^2d\vg\right)^{\frac{1}{2}}
		\nonumber\\
		&+8\left(\sqrt{W(\phi)}+\sqrt{W(\phi)-2\pi\chi(\Sigma)}\right)\np{|d\w|_g}{\infty}{\Sigma}
		\bigg\}\np{|d\w|_g}{\infty}{\Sigma}
	\end{align}
	Therefore, we finally obtain by \eqref{e91} and \eqref{e92} the bound
	\begin{align}\label{e9}
		&|\mathrm{(IX)}|\leq \left(2\,W(\phi)+2\pi|\chi(\Sigma)|-2\pi\chi(\Sigma)\right)\np{|d\w|_g}{\infty}{\Sigma}^2
		+\left(\int_{\Sigma}|d\alpha|_g^2d\vg\right)^{\frac{1}{2}}\bigg\{
		2\left(\int_{\Sigma}|\Delta_g^{\perp}\w|_g^2d\vg\right)^{\frac{1}{2}}
		\nonumber\\
		&+8\left(\sqrt{W(\phi)}+\sqrt{W(\phi)-2\pi\chi(\Sigma)}\right)\np{|d\w|_g}{\infty}{\Sigma}
		\bigg\}\np{|d\w|_g}{\infty}{\Sigma}\nonumber\\
		&\leq 2\left(\,W(\phi)+2\pi|\chi(\Sigma)|+\left(2+5\sqrt{W(\phi)}+5\sqrt{W(\phi)-2\pi\chi(\Sigma)}\right)\left(\int_{\Sigma}|d\alpha|_g^2d\vg\right)^{\frac{1}{2}}\right)\norm{\w}_{\mathscr{E}_{\phi}(\Sigma,\R^n)}^2.
	\end{align}
	Now, by \eqref{b0}, \eqref{b1}, \eqref{b2},  \eqref{b3}, \eqref{b4}, \eqref{b5}, \eqref{b6}, \eqref{b7}, \eqref{b8}, and \eqref{b9}, we obtain
	\begin{align*}
		&D^2\widetilde{\mathscr{O}}(\phi)(\w,\w)=2\int_{\Sigma}\s{d\w\totimes d\w}{d\alpha\otimes d\alpha}_gd\vg-4\int_{\Sigma}\s{d\phi\totimes d\w+d\w\totimes d\phi}{d\alpha\otimes d\alpha}_g\s{d\phi}{d\w}_gd\vg\nonumber\\
		&+2\int_{\Sigma}|d\alpha|_g^2\left(\s{d\phi}{d\w}_g^2-16|\partial\phi\totimes\partial\w|_{WP}^2\right)d\vg
	    +\int_{\Sigma}\s{d\phi\totimes d\w+d\w\totimes d\phi}{d\alpha\otimes d\alpha_0'+d\alpha_0'\otimes d\alpha}_gd\vg\\
		&+2\int_{\Sigma}\s{d\phi\totimes d\w+d\w\totimes d\phi}{d\alpha\otimes d\alpha}_g\s{d\phi}{d\w}_gd\vg
		-2\int_{\Sigma}\s{d\alpha}{d\alpha_0'}_g\s{d\phi}{d\w}_gd\vg\\
		&-\int_{\Sigma}|d\alpha|_g^2\left(|d\w|_g^2-16|\partial\phi\totimes\partial\w|_{WP}^2\right)d\vg
		-2K_{g_0}\int_{\Sigma}\alpha_0'\s{d\phi}{d\w}d\mathrm{vol}_{g_0}\\		&+K_{g_0}\int_{\Sigma}\left(|d\w|_g^2-16|\partial\phi\totimes\partial\w|_{WP}^2\right)d\mathrm{vol}_{g_0}
		-4\int_{\Sigma}d\alpha_0'\wedge \Im\left(\s{\H}{\partial\w}-g^{-1}\otimes\left(\h_0\totimes\bar{\partial}\w\right)\right)	\nonumber\\
		&-4\int_{\Sigma}d\alpha\wedge\,\Im\bigg(\s{\Delta_{g}^{\perp}\w+4\,\Re\left(g^{-2}\otimes\left(\bar{\partial\phi}\totimes\bar{\partial}\w\right)\otimes \h_0\right)}{\partial\w}
		-\s{d\phi}{d\w}_g\left(\s{\H}{\partial\w}-g^{-1}\otimes\left(\h_0\totimes\bar{\partial}\w\right)\right)\nonumber\\
		&-4\,g^{-1}\otimes\left(\partial\phi\totimes\partial\w\right)\otimes\s{\H}{\bar{\partial}\w}\bigg)-\int_{\Sigma}|\D^{\perp}\w|_g^2\,\Delta_{g}\alpha\,d\mathrm{vol}_{g}\nonumber\\
		&=2\int_{\Sigma}\s{d\alpha\otimes d\alpha}{d\w\totimes d\w}_gd\vg-2\int_{\Sigma}\s{d\alpha\otimes d\alpha}{d\phi\totimes d\w+d\w\totimes d\phi}_g\s{d\phi}{d\w}_gd\vg\nonumber\\
		&-\int_{\Sigma}|d\alpha|_g^2\left(|d\w|_g^2-2\s{d\phi}{d\w}_g^2-16|\partial\phi\totimes\partial\w|_{WP}^2\right)d\vg
		+K_{g_0}\int_{\Sigma}\left(|d\w|_g^2-16|\partial\phi\totimes\partial\w|_{WP}^2\right)d\mathrm{vol}_{g_0}\\
		&-4\int_{\Sigma}d\alpha\wedge\,\Im\bigg(\s{\Delta_{g}^{\perp}\w+4\,\Re\left(g^{-2}\otimes\left(\bar{\partial\phi}\totimes\bar{\partial}\w\right)\otimes \h_0\right)}{\partial\w}
		-\s{d\phi}{d\w}_g\left(\s{\H}{\partial\w}-g^{-1}\otimes\left(\h_0\totimes\bar{\partial}\w\right)\right)\\
		&-4\,g^{-1}\otimes\left(\partial\phi\totimes\partial\w\right)\otimes\s{\H}{\bar{\partial}\w}\bigg)-\int_{\Sigma}|\D^{\perp}\w|_g^2\,\Delta_{g}\alpha\,d\mathrm{vol}_{g}\\
		&+\int_{\Sigma}\s{d\phi\totimes d\w+d\w\totimes d\phi}{d\alpha\otimes d\alpha_0'+d\alpha_0'\otimes d\alpha}_gd\vg-2\int_{\Sigma}\s{d\alpha}{d\alpha_0'}_g\s{d\phi}{d\w}_gd\vg\\
		&-4\int_{\Sigma}d\alpha_0'\wedge \Im\left(\s{\H}{\partial\w}-g^{-1}\otimes\left(\h_0\totimes\bar{\partial}\w\right)\right)-2K_{g_0}\int_{\Sigma}\alpha_0'\s{d\phi}{d\w}d\mathrm{vol}_{g_0}
	\end{align*}
	By gathering the estimates \eqref{e1}, \eqref{e2}, \eqref{e3}, \eqref{e4}, \eqref{e5}, \eqref{e6}, \eqref{e7}, \eqref{e8}, \eqref{e9}, we obtain
	\begin{align}\label{ee1}
		&|D^2\widetilde{\mathscr{O}}(\phi)(\w,\w)|\leq 
		22\left(\int_{\Sigma}|d\alpha|_g^2d\vg\right)\norm{\w}_{\mathscr{E}_{\phi}(\Sigma,\R^n)}^2
		+8\left(\int_{\Sigma}|d\alpha|_g^2d\vg\right)^{\frac{1}{2}}\left(\int_{\Sigma}|d\alpha_0'|_g^2d\vg\right)^{\frac{1}{2}}\norm{\w}_{\mathscr{E}_{\phi}(\Sigma,\R^n)}\nonumber\\
		&+8\left(\int_{\Sigma}|d\alpha|_g^2d\vg\right)\norm{\w}_{\mathscr{E}_{\phi}(\Sigma,\R^n)}^2
		+4\left(\int_{\Sigma}|d\alpha|_g^2d\vg\right)^{\frac{1}{2}}\left(\int_{\Sigma}|d\alpha_0'|_g^2d\vg\right)^{\frac{1}{2}}\norm{\w}_{\mathscr{E}_{\phi}(\Sigma,\R^n)}\nonumber\\
		&+2\left(\int_{\Sigma}|d\alpha|_g^2d\vg\right)\norm{\w}_{\mathscr{E}_{\phi}(\Sigma,\R^n)}^2
		+8\pi C_{PW}|\chi(\Sigma)|\left(\int_{\Sigma}|d\alpha_0'|^2d\mathrm{vol}_g\right)^{\frac{1}{2}}\norm{\w}_{\mathscr{E}_{\phi}(\Sigma,\R^n)}+4\pi|\chi(\Sigma)|\,\norm{\w}_{\mathscr{E}_{\phi}(\Sigma,\R^n)}^2\nonumber\\
		&+8\pi|\chi(\Sigma)|\norm{\w}_{\mathscr{E}_{\phi}(\Sigma,\R^n)}^2
		+2\left(\int_{\Sigma}|d\alpha_0'|_g^2d\mathrm{vol}_g\right)^{\frac{1}{2}}\left(\sqrt{W(\phi)}+\sqrt{W(\phi)-2\pi\chi(\Sigma)}\right)\norm{\w}_{\mathscr{E}_{\phi}(\Sigma,\R^n)}\nonumber\\
		&+2\left(\,W(\phi)+2\pi|\chi(\Sigma)|+\left(2+5\sqrt{W(\phi)}+5\sqrt{W(\phi)-2\pi\chi(\Sigma)}\right)\left(\int_{\Sigma}|d\alpha|_g^2d\vg\right)^{\frac{1}{2}}\right)\norm{\w}_{\mathscr{E}_{\phi}(\Sigma,\R^n)}^2\nonumber\\
		&=2\left(W(\phi)+8\pi|\chi(\Sigma)|+16\int_{\Sigma}|d\alpha|_g^2d\vg\right)\norm{\w}_{\mathscr{E}_{\phi}(\Sigma,\R^n)}^2\nonumber\\
		&+2\left(2+5\sqrt{W(\phi)}+5\sqrt{W(\phi)-2\pi\chi(\Sigma)}\right)\left(\int_{\Sigma}|d\alpha|_g^2d\vg\right)^{\frac{1}{2}}\norm{\w}_{\mathscr{E}_{\phi}(\Sigma,\R^n)}^2\nonumber\\
		&+2\left(4\pi C_{\mathrm{PW}}|\chi(\Sigma)|+\sqrt{W(\phi)}+\sqrt{W(\phi)-2\pi\chi(\Sigma)}+6\left(\int_{\Sigma}|d\alpha|_g^2d\vg\right)^{\frac{1}{2}}\right)\left(\int_{\Sigma}|d\alpha_0'|^2_g\vg\right)^{\frac{1}{2}}\norm{\w}_{\mathscr{E}_{\phi}(\Sigma,\R^n)}.
		\end{align}
	We compute immediately
	\begin{align*}
		&\frac{d^2}{dt^2}\left(\log\int_{\Sigma}d\mathrm{vol}_{g_t}\right)_{|t=0}=\frac{d}{dt}\left(\frac{1}{\int_{\Sigma}d\mathrm{vol}_{g_t}}\int_{\Sigma}\s{d\phi_t}{d\w_t}_{g_t}d\mathrm{vol}_{g_t}\right)_{|t=0}
		=-\left(\dfrac{1}{\int_{\Sigma}d\vg}\left(\int_{\Sigma}\s{d\phi}{d\w}_gd\vg\right)\right)^2\\
		&+\dfrac{1}{\int_{\Sigma}d\vg}\int_{\Sigma}\left(|d\w|_g+\s{d\phi}{d\w}_g^2-16|\Re(\partial\phi\totimes\bar{\partial}\w)|_{WP}^2-16|\partial\phi\totimes\partial\w|_{WP}^2\right)d\vg
	\end{align*}
	Therefore, we have
	\begin{align*}
		&D^2\left(\mathscr{O}-\frac{1}{2}\widetilde{\mathscr{O}}\right)(\phi)(\w,\w)=\frac{K_{g_0}}{2}\left(\frac{1}{\int_{\Sigma}d\vg}\left(\int_{\Sigma}\s{d\phi
	}{d\w}_gd\vg\right)^2\right)\\
&-\frac{K_{g_0}}{2}\frac{1}{\int_{\Sigma}d\vg}\int_{\Sigma}\left(|d\w|_g^2+\s{d\phi}{d\w}_g^2-16|\Re(\partial\phi\totimes\bar{\partial}\w)|_{WP}^2-16|\partial\phi\totimes{\partial}\w|_{WP}^2\right)d\vg.
	\end{align*}
	We estimate this term directly as
	\begin{align}\label{ee2}
		&\left|D^2\left(\mathscr{O}-\frac{1}{2}\widetilde{\mathscr{O}}\right)(\phi)(\w,\w)\right|\leq 2|K_{g_0}|\np{|d\w|_g}{\infty}{\Sigma}^2=4\pi|\chi(\Sigma)|\np{|d\w|_g}{\infty}{\Sigma}^2\leq 4\pi|\chi(\Sigma)|\norm{\w}_{\mathscr{E}_{\phi}(\Sigma,\R^n)}^2.
	\end{align}
	Finally, by \eqref{ee1} and \eqref{ee2}, we obtain
	\begin{align*}
		&|D^2\mathscr{O}(\phi)(\w,\w)|\leq \left(W(\phi)+12\pi|\chi(\Sigma)|+16\int_{\Sigma}|d\alpha|_g^2d\vg\right)\norm{\w}_{\mathscr{E}_{\phi}(\Sigma,\R^n)}^2\nonumber\\
		&+\left(2+5\sqrt{W(\phi)}+5\sqrt{W(\phi)-2\pi\chi(\Sigma)}\right)\left(\int_{\Sigma}|d\alpha|_g^2d\vg\right)^{\frac{1}{2}}\norm{\w}_{\mathscr{E}_{\phi}(\Sigma,\R^n)}^2\nonumber\\
		&+\left(4\pi C_{\mathrm{PW}}|\chi(\Sigma)|+\sqrt{W(\phi)}+\sqrt{W(\phi)-2\pi\chi(\Sigma)}+6\left(\int_{\Sigma}|d\alpha|_g^2d\vg\right)^{\frac{1}{2}}\right)\left(\int_{\Sigma}|d\alpha_0'|^2_g\vg\right)^{\frac{1}{2}}\norm{\w}_{\mathscr{E}_{\phi}(\Sigma,\R^n)}
	\end{align*}
	which concludes the proof of the theorem.
\end{proof}

\begin{cor}\label{endproof}
	Under the hypothesis of Theorem \ref{d2o}, suppose if $\Sigma=S^2$ and that we have fixed an Aubin gauge. Then $D^2\mathscr{O}(\phi)$ is a continuous linear map on $\mathscr{E}_{\phi}(\Sigma,\R^n)$ and we if $\Sigma=S^2$for have for a universal constant $C$ depending only on the genus of $\Sigma$ such that
	\begin{align*}
		\norm{D^2\mathscr{O}(\phi)}_{\mathscr{E}'_{\phi}(\Sigma,\R^n)}\leq C\left(1+C_{PW}+W(\phi)+\int_{\Sigma}|d\alpha|_g^2d\vg\right),
	\end{align*}
	where $C_{\mathrm{PW}}=C_{\mathrm{PW}}(g_0)$ is the Poincar\'{e}-Wirtinger constant for the injection $L^{2}(\Sigma,g_0)/\R\hookrightarrow \dot{W}^{1,2}(\Sigma,g_0)$.
\end{cor}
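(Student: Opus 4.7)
The plan is to combine directly the two estimates already proven in the previous theorems. The second inequality stated at the end of Theorem \ref{d2o} gives us the bound
\begin{align*}
|D^2\mathscr{O}(\phi)(\w,\w)| &\leq A_1(\phi)\,\norm{\w}_{\mathscr{E}_\phi(\Sigma,\R^n)}^2
+ A_2(\phi)\,\left(\int_{\Sigma}|d\alpha_0'|_g^2 d\vg\right)^{1/2}\norm{\w}_{\mathscr{E}_\phi(\Sigma,\R^n)},
\end{align*}
where $A_1(\phi)$ and $A_2(\phi)$ are polynomial expressions in $\sqrt{W(\phi)}$, $\sqrt{W(\phi)-2\pi\chi(\Sigma)}$, $\left(\int_{\Sigma}|d\alpha|_g^2d\vg\right)^{1/2}$, $C_{\mathrm{PW}}$ and $|\chi(\Sigma)|$. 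The first step is to insert into this the estimate of Theorem \ref{mainestimate}, which (in the sphere case, as the Aubin gauge is fixed) controls
\begin{align*}
\left(\int_{S^2}|d\alpha_0'|_g^2 d\vg\right)^{1/2}\leq \Bigl(\sqrt{C}+\sqrt{W(\phi)}+\sqrt{W(\phi)-4\pi}+4\bigl(\textstyle\int_{S^2}|d\alpha|_g^2 d\vg\bigr)^{1/2}\Bigr)\np{|d\w|_g}{\infty}{S^2},
\end{align*}
while in higher genus the analogous bound holds with $4\pi$ replaced by $2\pi\chi(\Sigma)$ and an additional $\sqrt{\pi|\chi(\Sigma)|}$. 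Since by definition $\np{|d\w|_g}{\infty}{\Sigma}\leq \norm{\w}_{\mathscr{E}_{\phi}(\Sigma,\R^n)}$, the mixed term $A_2(\phi)(\int|d\alpha_0'|^2)^{1/2}\norm{\w}$ gets absorbed into a coefficient times $\norm{\w}_{\mathscr{E}_\phi(\Sigma,\R^n)}^2$.

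The second and final step is purely algebraic. After substitution, one obtains
\begin{align*}
|D^2\mathscr{O}(\phi)(\w,\w)|\leq Q\bigl(\sqrt{W(\phi)},\,\sqrt{\textstyle\int|d\alpha|_g^2d\vg},\,C_{\mathrm{PW}},\,|\chi(\Sigma)|\bigr)\,\norm{\w}_{\mathscr{E}_\phi(\Sigma,\R^n)}^2,
\end{align*}
where $Q$ is a polynomial whose terms are at worst quadratic in each of its first two arguments (this comes from multiplying $A_2$ by the bound on $(\int|d\alpha_0'|^2)^{1/2}$). Now applying repeatedly the two elementary inequalities $\sqrt{x}\leq 1+x$ and $2\sqrt{xy}\leq x+y$ for $x,y\geq 0$, every cross term such as $\sqrt{W(\phi)}\bigl(\int|d\alpha|_g^2d\vg\bigr)^{1/2}$ or $C_{\mathrm{PW}}\sqrt{W(\phi)}$ is dominated by $1+W(\phi)+\int|d\alpha|_g^2 d\vg+C_{\mathrm{PW}}$, up to a universal constant depending only on the genus through $|\chi(\Sigma)|$. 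Collecting everything yields the announced bound
\begin{align*}
\norm{D^2\mathscr{O}(\phi)}_{\mathscr{E}'_\phi(\Sigma,\R^n)}\leq C\left(1+C_{\mathrm{PW}}+W(\phi)+\int_{\Sigma}|d\alpha|_g^2d\vg\right).
\end{align*}

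Finally, to deduce that $D^2\mathscr{O}(\phi)$ is actually a continuous bilinear form (not merely a quadratic form on the diagonal), one uses the polarisation identity together with the fact that the expression for $D^2\mathscr{O}(\phi)(\w,\w)$ given in Theorem \ref{d2o} is already manifestly the diagonal of a continuous bilinear form in $(\w,\w)$ because each appearance of $\w$ is linear. The only genuinely non-routine point in the argument is controlling the term involving $\alpha_0'$, and this is precisely what Theorem \ref{mainestimate} was set up to do; once it is available, the corollary is an exercise in bookkeeping and elementary inequalities.
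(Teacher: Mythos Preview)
Your proposal is correct and follows essentially the same approach as the paper: both arguments plug the estimate of Theorem~\ref{mainestimate} for $\left(\int|d\alpha_0'|^2\right)^{1/2}$ into the second-derivative bound of Theorem~\ref{d2o}, then reduce the resulting polynomial expression to the linear form $1+C_{\mathrm{PW}}+W(\phi)+\int|d\alpha|_g^2$ via elementary inequalities (the paper invokes Cauchy's inequality, you use $\sqrt{x}\leq 1+x$ and AM--GM, which amounts to the same thing). The only cosmetic difference is that the paper notes explicitly $C_{\mathrm{PW}}(g_{S^2}/4\pi)=\tfrac{1}{2}$ for the sphere before combining, whereas you keep $C_{\mathrm{PW}}$ symbolic throughout; either way the bound is the same.
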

\begin{proof}
	First, by Theorem \ref{d2o} and as $C_{\mathrm{PW}}\left(\dfrac{g_{S^2}}{4\pi}\right)=\dfrac{1}{2}$, 
	\begin{align}\label{c1}
	&|D^2\mathscr{O}(\phi)(\w,\w)|\leq\left(W(\phi)+24\pi+16\int_{S^2}|d\alpha|_g^2d\vg\right)\norm{\w}_{\mathscr{E}_{\phi}(S^2,\R^n)}^2\nonumber\\
	&+\left(2+5\sqrt{W(\phi)}+5\sqrt{W(\phi)-4\pi}\right)\left(\int_{S^2}|d\alpha|_g^2d\vg\right)^{\frac{1}{2}}\norm{\w}_{\mathscr{E}_{\phi}(S^2,\R^n)}^2\nonumber\\
	&+\left(4\pi +\sqrt{W(\phi)}+\sqrt{W(\phi)-4\pi}+6\left(\int_{S^2}|d\alpha|_g^2d\vg\right)^{\frac{1}{2}}\right)\left(\int_{S^2}|d\alpha_0'|^2_g\vg\right)^{\frac{1}{2}}\norm{\w}_{\mathscr{E}_{\phi}(S^2,\R^n)}
\end{align}
	Thanks of Theorem \ref{mainestimate}, we have
	\begin{align}\label{c2}
		&\int_{S^2}|d\alpha'_0|_g^2d\vg\leq  \left(C+\sqrt{W(\phi)}+\sqrt{W(\phi)-4\pi}+4\left(\int_{S^2}|d\alpha|_g^2d\vg\right)^{\frac{1}{2}}\right)\np{|d\w|_g}{\infty}{S^2}.
	\end{align}
	 Combining \eqref{c1} and \eqref{c2}, we obtain by Cauchy's inequality
	 \begin{align}\label{nscal}
	 	&|D^2\mathscr{O}(\phi)(\w,\w)|\leq  \left(W(\phi)+24\pi+16\int_{S^2}|d\alpha|_g^2d\vg\right)\norm{\w}_{\mathscr{E}_{\phi}(S^2,\R^n)}^2\nonumber\\
	 	&+\left(2+5\sqrt{W(\phi)}+5\sqrt{W(\phi)-4\pi}\right)\left(\int_{S^2}|d\alpha|_g^2d\vg\right)^{\frac{1}{2}}\norm{\w}_{\mathscr{E}_{\phi}(S^2,\R^n)}^2\nonumber\\
	 	&+\left(4\pi +\sqrt{W(\phi)}+\sqrt{W(\phi)-4\pi}+6\left(\int_{S^2}|d\alpha|_g^2d\vg\right)^{\frac{1}{2}}\right)\left(\int_{S^2}|d\alpha_0'|^2_g\vg\right)^{\frac{1}{2}}\norm{\w}_{\mathscr{E}_{\phi}(S^2,\R^n)}\nonumber\\
	 	&\times \bigg\{
	 	\left(C+\sqrt{W(\phi)}+\sqrt{W(\phi)-4\pi}+4\left(\int_{S^2}|d\alpha|_g^2d\vg\right)^{\frac{1}{2}}\right)\norm{\w}_{\mathscr{E}_{\phi}(S^2,\R^n)}^2\bigg\}\nonumber\\
	 	&\leq C'\left(1+W(\phi)+\int_{S^2}|d\alpha|_g^2d\vg\right)\norm{\w}_{\mathscr{E}_{\phi}(S^2,\R^n)}^2.
	 \end{align}
	 The estimate for $\Sigma\neq S^2$ is the exact identical and this concludes the proof of the corollary.
\end{proof}

\section{The approximate tangent space at branched immersions}\label{bfinite}

In this section, we introduce the different possible definitions for the Morse index of a branched Willmore immersions. As unbranched Willmore surfaces are real-analytic, there is no difficulty in defining the Morse index as the dimension of the maximal subspace where the second derivative is negative definite. However,  the space of (branched) weak immersions cannot be endowed with a Banach space structure, there are \emph{a priori} three possible definitions for the Morse index depending on the smoothness of the variations considered.

\subsection{Branched weak immersions with finite total curvature}

\begin{defi}
	Let $\Sigma$ be a closed Riemann surface and fix a conformal map $\phi\in C^{l,\alpha}(\Sigma, \R^n)$ for some  $l\geq 1$ and $0\leq \alpha\leq 1$. We say that $\phi$ has a branch point of order $\theta_0\geq 2$ at some point $p\in U$ if there exists a neighbourhood $U$ of $p$ such that the restriction $\phi|_{U\setminus \ens{p}}:U\setminus \ens{p}\rightarrow \R^n$ is an immersion and if there exists a coordinate $z:U\rightarrow D^2\subset \C$ such that $z(p)=0$ and for some $\vec{A}_0\in \C^n\setminus\ens{0}$, 
	\begin{align*}
		\p{z}\phi(z)=\vec{A}_0z^{\theta_0-1}+o\left(z^{\theta_0-1}\right).
	\end{align*}
	We say that $\phi$ is a branched immersion if $\phi$ is an immersion outside finitely many points $p_1,\cdots,p_m\in \Sigma$ which are branch points of $\phi$. We say that a branched immersion $\phi:\Sigma\rightarrow \R^n$ has finite total curvature if
	\begin{align*}
		\int_{\Sigma}|\mathbb{\vec{I}}|_g^2d\mathrm{vol}_{g}=\int_{\Sigma}\left(4|\vec{H}_{g}|^2-2K_g\right)d\mathrm{vol}_g<\infty.
	\end{align*}
	if $g=\phi^{\ast}g_{\,\R^n}$ is the (branched) pull-back metric and $\vec{\I}_g$ is the second fundamental form of $\phi$. 
\end{defi}

	Thanks of the Gauss-Bonnet theorem for $C^{1,\alpha}$ (for some $0<\alpha\leq 1$) branched immersions $\phi:\Sigma\rightarrow \R^n$ of finite total curvature (see \cite{kusnerpacific}), we have
	\begin{align*}
		\int_{\Sigma}{K_g}d\vg=2\pi\,\chi(\Sigma)+2\pi\sum_{p\in \Sigma}^{}(\theta_0(p)-1)
	\end{align*}
	which is a finite quantity as there are finitely many branch points. In particular, finite total curvature and finite Willmore energy are equivalent for $C^{1,\alpha}$ (with $0<\alpha\leq 1$) branched immersions with finite total curvature .

The definition of branch point can be actually relaxed to the setting of weak $W^{2,2}$-immersions : this is a classical theorem of M\"{u}ller and \v{S}ver\'{a}k (\cite{muller}), using the previous contribution of Shiohama (\cite{shiohama}).  The version that we will use is given by the following proposition.
\begin{prop}[Rivi\`{e}re, \cite{rivierecrelle} \label{riv}]
	Let $\phi\in W^{1,2}(D^2,\R^n)\cap W^{2,2}_{loc}(D^2\setminus\ens{0},\R^n)$ be a conformal immersion of $D^2\setminus\ens{0}$ such that $\phi$ has finite total curvature and $\log|\D\phi|\in L^{\infty}_{\mathrm{loc}}(D^2\setminus\ens{0})$. Then $\phi\in W^{1,\infty}(D^2,\R^n)$, $\phi$ is conformal on $D^2$, and there exists an \emph{integer} $\theta_0\in \N\setminus \ens{0}$ and $\alpha>0$ such that
	\begin{align*}
		e^{2\lambda}=2|\p{z}\phi(z)|^2=\alpha|z|^{2\theta_0-2}\left(1+o(1)\right).
	\end{align*}
\end{prop}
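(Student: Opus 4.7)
The plan is to analyze the Liouville equation for the conformal factor near the puncture, extract a logarithmic behavior modulo a bounded error, and then quantize the singular exponent by combining the conformality of $\phi$ with the $\bar\partial$-equation satisfied by $\p{z}\phi$.

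First I would work in the conformal chart $z:U\to D^2$ and write $g_\phi=e^{2\lambda}|dz|^2$. The hypothesis $\log|\D\phi|\in L^\infty_{\mathrm{loc}}(D^2\setminus\{0\})$ gives $\lambda\in L^\infty_{\mathrm{loc}}(D^2\setminus\{0\})$, and on this punctured disk the Liouville equation $-\Delta\lambda=K_g\,e^{2\lambda}$ holds. Since $|K_g|\le\tfrac12|\vec{\I}|_g^2$ pointwise and $\phi$ has finite total curvature, the source term $K_g e^{2\lambda}$ lies in $L^1(D^2)$. Introducing the Newtonian potential
\[
\mu(z)=\frac{1}{2\pi}\int_{D^2}\log\frac{1}{|z-w|}\,K_g(w)\,e^{2\lambda(w)}\,dw,
\]
which solves $-\Delta\mu=K_g e^{2\lambda}$ on $D^2$ in the distributional sense, the difference $\lambda-\mu$ is harmonic on $D^2\setminus\{0\}$. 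The classical decomposition of harmonic functions at an isolated singularity then gives
\[
\lambda(z)=-\gamma\log|z|+h(z)+\mu(z),
\]
for some real $\gamma$ and some function $h$ harmonic on all of $D^2$. The constraint $\phi\in W^{1,2}(D^2)$, equivalent to $e^{2\lambda}\in L^1(D^2)$, forces $\gamma<1$; a small-mass Brezis-Merle type estimate applied on shrinking disks ensures in addition that $h+\mu$ is bounded near the origin, so that $e^{2\lambda}(z)\sim e^{2(h(0)+\mu(0))}\,|z|^{-2\gamma}$.

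Next I would quantize $\gamma$. Set $f:=\p{z}\phi$, so that $f\cdot f\equiv0$ by conformality and
\[
\p{\z}f=\tfrac14\Delta\phi=\tfrac{e^{2\lambda}}{2}\,\H.
\]
Combining the asymptotic for $e^{2\lambda}$ with the finite Willmore energy bound $\int e^{2\lambda}|\H|^2\,dx<\infty$, one verifies that $\p{\z}f\in L^p_{\mathrm{loc}}(D^2)$ for some $p>2$. Its Cauchy transform is therefore Hölder continuous on $D^2$, and
\[
f_{\mathrm{hol}}(z):=f(z)-\frac{1}{\pi}\int_{D^2}\frac{\p{\z}f(w)}{z-w}\,dw
\]
is a $\C^n$-valued holomorphic map on $D^2\setminus\{0\}$. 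The pointwise control $|f(z)|\lesssim|z|^{-\gamma}$ with $\gamma<1$ rules out any negative powers in its Laurent expansion, hence $f_{\mathrm{hol}}(z)=\vec A_0\,z^{k}+O(z^{k+1})$ for some integer $k\ge0$ and some $\vec A_0\in\C^n\setminus\{0\}$. Matching leading orders yields $\gamma=-k$, and the isotropy $f\cdot f=0$ descends to $\vec A_0\cdot\vec A_0=0$. Setting $\theta_0:=k+1\in\N\setminus\{0\}$ and $\alpha:=2|\vec A_0|^2>0$ produces the claimed expansion $e^{2\lambda}=2|\p{z}\phi(z)|^2=\alpha|z|^{2\theta_0-2}(1+o(1))$. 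Furthermore $f\in L^\infty_{\mathrm{loc}}(D^2)$ entails $\phi\in W^{1,\infty}(D^2,\R^n)$, and the identity $f\cdot f=0$ extends continuously through $0$, so that $\phi$ remains conformal on the full disk.

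The main obstacle is the integrality of the singular exponent: Liouville's equation alone produces only a real parameter $\gamma<1$, and only the combination of the $\bar\partial$-equation for $\p{z}\phi$ with the quadratic isotropy $f\cdot f=0$ forces $\gamma$ to be a nonpositive integer. A secondary technical point is ensuring enough regularity of the Newtonian potential $\mu$ near the puncture, which requires an $\epsilon$-regularity absorption of finite total curvature on shrinking disks to upgrade the $L^1$ source to bounded behavior of $h+\mu$.
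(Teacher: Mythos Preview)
The paper does not prove this proposition; it is quoted from \cite{rivierecrelle} (with antecedents in M\"uller--\v Sver\'ak and Shiohama) and used as a black box. Your outline is in the spirit of those references, but one step does not go through as written.

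The assertion that $\partial_{\bar z}f\in L^p_{\mathrm{loc}}(D^2)$ for some $p>2$ is not justified and is in general false. Finite total curvature yields only $e^{\lambda}\H\in L^2$; writing $\partial_{\bar z}f=\tfrac12\,e^{\lambda}\cdot(e^{\lambda}\H)$ and using $e^{\lambda}\lesssim|z|^{-\gamma}$ with $\gamma<1$, H\"older gives at best $\partial_{\bar z}f\in L^r$ with $r\le 2/(1+\max(\gamma,0))\le 2$. Even for an unbranched $W^{2,2}$ immersion ($\gamma=0$) the mean curvature is generically no better than $L^2$, so $\partial_{\bar z}f\in L^2$ and nothing more. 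The Cauchy transform is then only $W^{1,2}$, not continuous, and your subsequent ``matching of leading orders'' between $f$ and $f_{\mathrm{hol}}$ is not justified either, since the Cauchy transform piece can dominate $f_{\mathrm{hol}}$ near the origin. A smaller but related issue: Brezis--Merle on shrinking disks gives exponential integrability of $\mu$, not $L^\infty$ (let alone continuity), so the claimed asymptotic $e^{2\lambda}\sim c\,|z|^{-2\gamma}$ with a definite constant $c$ is also unsupported at this stage.

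The repair in the cited proofs replaces the crude $L^p$ bound by the compensated structure of the Gauss equation. H\'elein's Coulomb frame rewrites $-\Delta\lambda$ as a Jacobian of $W^{1,2}$ maps, and Wente's inequality then upgrades your potential $\mu$ from $\mathrm{BMO}$ to $C^0$, giving the genuine $(1+o(1))$. The same frame shows that the Gauss map $\n\in W^{1,2}(D^2,\mathscr{G}_{n-2}(\R^n))$ extends continuously across the puncture; the unit frame $e^{-\lambda}\partial_z\phi$ therefore has a well-defined integer winding number around $0$, and $\theta_0-1$ is that integer. Only after this topological quantization does the constraint $\gamma<1$ force $\gamma\in\Z_{\le 0}$, and hence $\phi\in W^{1,\infty}$.
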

This Proposition first allows us to relax the notion of branched immersions. Weak branched immersions are defined as follows
\begin{align*}
&\mathscr{E}_b(\Sigma,\R^n)=W^{2,2}\cap W^{1,\infty}(\Sigma,\R^n)\cap\bigg\{\phi:\;\,\phi\;\, \text{has finite total curvature and}\\
&\text{there exists}\; p_1,\cdots,p_m\in \Sigma\;\, \text{such that for all}\;  \epsilon>0,\;\, \inf_{\Sigma \setminus \cup_{j=1}^mD_{\epsilon}^2(p_j)}|d\phi\wedge d\phi|_{g_0}>0\bigg\}
\end{align*}
Here $D^2_{\epsilon}(p_j)$ designs a geodesic disk of radius $\epsilon>0$ around $p_j$ with respect to any fixed smooth metric.

We now define the order of a point of a weak branched immersion as follows.
\begin{prop}\label{branch}
	Let $\phi\in \mathscr{E}_b(\Sigma,\R^n)$ be a weak immersion of finite total curvature. Then for all $p\in \Sigma$, there exists an \emph{integer} $\theta_0=\theta_0(p)\geq 1$ such that for all complex chart $z$ defined on an neighbourhood of $p$ such that $z(p)=0$, there exists $\alpha=\alpha(z)>0$ such that
	\begin{align*}
		e^{2\lambda}=2|\p{z}\phi|^2=\alpha|z|^{2\theta_0-2}\left(1+o(1)\right).
	\end{align*}
	We say that $p$ is a branch point of order $\theta_0(p)\geq 1$.
\end{prop}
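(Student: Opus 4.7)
The plan is to split the argument into two cases according to whether $p$ belongs to the finite branch set $\{p_1,\dots,p_m\}$ of $\phi$, and to reduce the non-trivial case to the already established Proposition \ref{riv}. The statement has two parts: the local expansion in a fixed chart, and the independence of the integer $\theta_0$ from the choice of chart.

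If $p\notin \{p_1,\dots,p_m\}$, then by the very definition of $\mathscr{E}_b(\Sigma,\R^n)$, there is a neighbourhood of $p$ on which $|d\phi\wedge d\phi|_{g_0}$ is bounded below, so $\phi$ is a genuine conformal immersion in the chart $z$ (working in the conformal structure making $\phi$ conformal, which is built in by the definition). Consequently $|\partial_z\phi|^2$ is continuous and strictly positive at $p$, and we may take $\theta_0(p)=1$ with $\alpha(z)=2|\partial_z\phi(0)|^2$, making the expansion tautological.

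If $p=p_j$ for some $j$, I would fix any complex chart $z\colon U\to D^2$ around $p$ with $z(p)=0$ and observe that, by the definition of $\mathscr{E}_b$, the restriction $\phi\circ z^{-1}$ lies in $W^{1,2}(D^2,\R^n)\cap W^{2,2}_{\mathrm{loc}}(D^2\setminus\{0\},\R^n)$, is conformal on $D^2\setminus\{0\}$, satisfies $\log|d\phi|\in L^{\infty}_{\mathrm{loc}}(D^2\setminus\{0\})$ (because the non-degeneracy condition in the definition of $\mathscr{E}_b$ is uniform away from the finite branch set), and has finite total curvature globally. These are precisely the hypotheses of Rivière's Proposition \ref{riv}, which directly yields an integer $\theta_0\geq 1$ and a constant $\alpha>0$ with $e^{2\lambda}=2|\partial_z\phi|^2=\alpha|z|^{2\theta_0-2}(1+o(1))$.

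For independence of the chart, let $w$ be another complex chart at $p$ with $w(p)=0$. The transition $z=f(w)$ is then a biholomorphism with $f(0)=0$, so $f(w)=cw+O(w^2)$ for some $c\in\C^{\ast}$. Applying the chain rule,
\[
|\partial_w\phi|^2 = |f'(w)|^2\,|\partial_z\phi|^2 = \frac{\alpha}{2}\,|f'(w)|^2\,|f(w)|^{2\theta_0-2}(1+o(1)) = \frac{\alpha|c|^{2\theta_0}}{2}\,|w|^{2\theta_0-2}(1+o(1)),
\]
so the integer $\theta_0$ is preserved while the constant $\alpha$ transforms multiplicatively by $|c|^{2\theta_0}$.

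The main (and only) technical point is verifying that the hypotheses of Proposition \ref{riv} are met by an arbitrary element of $\mathscr{E}_b(\Sigma,\R^n)$ at a branch point — namely the conformality on the punctured disc and $L^{\infty}_{\mathrm{loc}}$ control on $\log|d\phi|$ away from the puncture. Both follow directly from the definition of $\mathscr{E}_b$ and the choice of complex structure making $\phi$ conformal; once this is noted, the proposition is a clean corollary of Rivière's theorem together with a one-line chain-rule computation.
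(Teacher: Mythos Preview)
Your proposal is correct and matches the paper's intended approach: the paper states Proposition~\ref{branch} without an explicit proof, presenting it as an immediate consequence of Rivi\`{e}re's Proposition~\ref{riv} (the sentence preceding it reads ``We now define the order of a point of a weak branched immersion as follows''). Your case split, direct invocation of Proposition~\ref{riv} at the finitely many degenerate points, and chain-rule verification of chart independence are exactly what is implicitly being asserted.
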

\begin{rem}
	This definition of order of branch point does not agree with the classical definition of branch point of algebraic curves (see \cite{griffiths} for example), as the order of the branch point is defined to be $\theta_0(p)-1>0$ (whenever this strict inequality holds), while for $\theta_0(p)=1$ we see that the curve is unbranched at $p$. However, in the general context where we set ourselves logarithmic singularities may occur for $\theta_0=1$ (the inversion of the catenoid for example exhibits this singular behaviour), which are of course excluded for algebraic curves, but need to be taken into account here. 
\end{rem}

\begin{lemme}\label{gb}
	Let $\phi\in \mathscr{E}_b(\Sigma,\R^n)$ be a branched immersion of finite total curvature. Then the following Gauss-Bonnet formula holds
	\begin{align*}
		\int_{\Sigma}K_gd\vg=2\pi\chi(\Sigma)+2\pi\sum_{p\in \Sigma}^{}(\theta_0(p)-1).
	\end{align*}
	In particular, $\phi$ has finite total curvature if and only if it has finite Willmore energy.
\end{lemme}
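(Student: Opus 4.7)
The approach I would take is the classical one of cutting out small neighborhoods of the branch points and applying the smooth Gauss--Bonnet theorem to the complement, then analyzing the boundary contributions as these neighborhoods shrink. The key input is Proposition \ref{branch}, which gives the precise asymptotic behavior of the conformal factor near a branch point.

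More precisely, let $p_1,\ldots,p_m$ be the branch points of $\phi$, with orders $\theta_0(p_j)\in\N\setminus\{0\}$, and for each $j$ fix a conformal chart $z_j$ near $p_j$ with $z_j(p_j)=0$. For $\epsilon>0$ small enough, set $\Sigma_\epsilon=\Sigma\setminus\bigsqcup_{j=1}^m D_\epsilon^2(p_j)$. Then $\phi|_{\Sigma_\epsilon}$ is a smooth immersion (outside a set of measure zero, but in any case the pull-back metric extends smoothly to a neighbourhood of $\Sigma_\epsilon$), so the classical Gauss--Bonnet theorem applies:
\begin{align*}
\int_{\Sigma_\epsilon}K_g\,d\vg+\sum_{j=1}^m\oint_{\partial D_\epsilon^2(p_j)}k_g\,ds_g=2\pi\chi(\Sigma_\epsilon)=2\pi\chi(\Sigma)-2\pi m,
\end{align*}
where the boundary circles are oriented as the boundary of $\Sigma_\epsilon$.

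The main technical step is to evaluate the boundary contributions. In the local chart, write $g=e^{2\lambda_j}|dz_j|^2$, so by Proposition \ref{branch},
\begin{align*}
\lambda_j(z)=(\theta_0(p_j)-1)\log|z|+\frac{1}{2}\log\alpha_j+o(1)\qquad\text{as }z\to 0.
\end{align*}
For a conformal metric, a direct computation gives $k_g\,ds_g=d\theta+\ast d\lambda_j$ along $|z|=\epsilon$ (with $\theta$ the Euclidean angle). Parametrising $z=\epsilon e^{i\theta}$ and using that the boundary of $\Sigma_\epsilon$ is traversed clockwise relative to the disk, we obtain
\begin{align*}
-\oint_{\partial D_\epsilon^2(p_j)}k_g\,ds_g=\oint_{|z|=\epsilon,\,\mathrm{ccw}}\!\!k_g\,ds_g=2\pi+\int_0^{2\pi}\epsilon\,\partial_r\lambda_j(\epsilon e^{i\theta})\,d\theta\xrightarrow[\epsilon\to 0]{}2\pi\theta_0(p_j).
\end{align*}
The convergence of the integral of $\epsilon\,\partial_r\lambda_j$ to $2\pi(\theta_0(p_j)-1)$ is the delicate point: the logarithmic part contributes exactly this limit, and the bounded remainder can be absorbed either by refining the asymptotic expansion (standard in the setting of M\"uller--\v Sver\'ak type regularity), or, more robustly, by writing the integral as a Stokes integral on an annulus $\epsilon<|z|<R$ and invoking Liouville's equation $-\Delta\lambda_j=K_g e^{2\lambda_j}$ together with the $L^1$ control $K_g\in L^1(d\vg)$ (which follows from the finite total curvature hypothesis via $|K_g|\leq\tfrac12|\vec{\mathbb{I}}|^2$).

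Once the boundary contributions are identified, finite total curvature implies $K_g\in L^1(d\vg)$, so $\int_{\Sigma_\epsilon}K_g\,d\vg\to\int_\Sigma K_g\,d\vg$ by dominated convergence, yielding
\begin{align*}
\int_\Sigma K_g\,d\vg-2\pi\sum_{j=1}^m\theta_0(p_j)=2\pi\chi(\Sigma)-2\pi m,
\end{align*}
which is the announced formula. For the equivalence, the Gauss equation gives the pointwise identity $|\vec{\mathbb{I}}|_g^2=4|\vec{H}|_g^2-2K_g$. As $\int_\Sigma K_g\,d\vg$ is finite (by what we just proved, assuming finite total curvature), the formula
\begin{align*}
\int_\Sigma|\vec{\mathbb{I}}|^2_g\,d\vg=4\,W(\phi)-2\int_\Sigma K_g\,d\vg
\end{align*}
gives one direction. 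For the converse, one uses $|\vec{\mathbb{I}}|^2=4|\vec{H}|^2-2K_g\geq 0$ to bound $K_g^+\leq 2|\vec{H}|^2$, so $W(\phi)<\infty$ implies $K_g^+\in L^1$; combined with the fact that any weak branched immersion possesses a well-defined Gauss--Bonnet identity (the argument above goes through with $\int K_g$ understood as a principal value, since $K_g^+\in L^1$ forces $K_g^-\in L^1$ by the same identity), finiteness of $W(\phi)$ forces finiteness of the total curvature. The main obstacle in this scheme is the rigorous justification of the geodesic-curvature boundary term, which genuinely requires going beyond the leading-order expansion from Proposition \ref{branch}; this is where the Stokes/Liouville reformulation or the stronger regularity at branch points from \cite{muller,rivierecrelle} must be invoked.
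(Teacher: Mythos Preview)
Your approach is correct and is the classical excision argument, but it differs from the paper's proof, which is shorter and more distributional. The paper uses uniformisation to write $g=e^{2\alpha}g_0$ with $g_0$ of constant curvature and unit volume, then invokes Proposition~\ref{riv} to assert the distributional Liouville equation
\[
-\Delta_g\alpha = K_g - K_{g_0}e^{-2\alpha} - 2\pi\sum_{p\in\Sigma}(\theta_0(p)-1)\,\delta_p,
\]
and simply integrates over $\Sigma$. The Dirac masses encode exactly the boundary geodesic-curvature contributions you compute by hand; the paper hides your limit $\int_0^{2\pi}\epsilon\,\partial_r\lambda_j\,d\theta\to 2\pi(\theta_0(p_j)-1)$ inside the identity $-\Delta\log|z|=2\pi\delta_0$. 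Your route is more hands-on and avoids global uniformisation, but be careful with one point: you claim $\phi|_{\Sigma_\epsilon}$ is a smooth immersion, which is not guaranteed for $\phi\in\mathscr{E}_b$ (only $W^{2,2}\cap W^{1,\infty}$), so the Gauss--Bonnet formula on $\Sigma_\epsilon$ already needs the Liouville equation to be justified---at which point your Stokes/Liouville reformulation and the paper's argument essentially merge. In short, both proofs rest on the same analytic fact (the logarithmic singularity of the conformal factor), but the paper packages it as a single distributional identity while you unpack it as a boundary computation.
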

\begin{proof}
	Let $\alpha:\Sigma\rightarrow \R$ be the measurable function given by the uniformisation theorem such that 
	\begin{align*}
		g=e^{2\alpha}g_0,
	\end{align*}
	where $g_0$ is a metric of constant Gauss curvature $K_{g_0}$ and unit volume.
	Thanks of Proposition \ref{riv}, we obtain by the Liouville equation
	\begin{align}\label{sum}
	-\Delta_g\alpha=K_g-K_{g_0}e^{-2\alpha}-2\pi \sum_{p\in \Sigma}^{}(\theta_0(p)-1)\delta_{p},
	\end{align}
	where only finitely many terms are non-zero in the sum in \eqref{sum}.
	By integrating this equation with respect to the natural volume form we obtain the desired formula
	\begin{align*}
		\int_{\Sigma}K_gd\vg=2\pi \chi(\Sigma)+2\pi\sum_{p\in \Sigma}^{}(\theta_0(p)-1)
	\end{align*}
	and this concludes the proof of the lemma.
\end{proof}

Now, we have the following \emph{a priori} estimate.

\begin{prop}
	Let $\Sigma$ be a closed Riemann surface $2<p<\infty$ and $n\geq 3$ fixed. There exists a universal constant $C_1=C_1(n,p,\Sigma)<\infty$ such that for all weak immersion $\phi\in \mathscr{E}({\Sigma,\R^n})\cap W^{2,p}(\Sigma,\R^n)$, we have 
	\begin{align*}
	\int_{\Sigma}|\h_0|^p_{WP}d\vg\leq e^{C_1(1+W(\phi))}\int_{\Sigma}|\H|^pd\vg.
	\end{align*}
\end{prop}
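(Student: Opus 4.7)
The proof rests on the Codazzi--Mainardi identity, which shows that in a conformal chart $z$ with $g = e^{2\lambda}|dz|^2$ the Hopf differential $\h_0 = 2\,\vec{\I}(\e_z,\e_z)\,dz^2$ is morally a Beurling transform of $\H$. The plan is to derive this in detail, apply local Calder\'on--Zygmund theory on conformal disks, and then use an $\epsilon$-regularity and covering argument to globalise with the exponential factor $e^{C_1(1+W(\phi))}$. Pointwise bounds are inadequate: although $|\h_0|^2_{WP} = |\H|^2 - K_g$ by the Gauss equation, $|K_g|$ is of the same order as $|\h_0|^2_{WP}$, so the naive inequality is circular.

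First I would derive the $\bar\partial$-equation explicitly. Starting from $\vec{\I}(\e_z,\e_z) = \p{z}^2\phi - 2(\p{z}\lambda)\,\p{z}\phi$, differentiating in $\bar z$ and using the Gauss formula $\p{z}\p{\z}\phi = \tfrac{e^{2\lambda}}{2}\H$ together with Liouville's equation $-4\p{z}\p{\z}\lambda = K_g e^{2\lambda}$ yields
\[
\p{\z}\vec{\I}(\e_z,\e_z) = \frac{e^{2\lambda}}{2}\p{z}\H + \frac{K_g e^{2\lambda}}{2}\p{z}\phi.
\]
Setting $F = \vec{\I}(\e_z,\e_z)$ and inverting $\bar\partial$ on a conformal disk $B$ via the Cauchy transform $T$, the algebraic identity $T\p{z} = S$ (the Beurling--Ahlfors transform) together with integration-by-parts cancellations between the $\p{z}\H$ and the $K_g\p{z}\phi = -2\p{\z}\p{z}\lambda\cdot\p{z}\phi$ contributions produce the clean representation
\[
2F = S(e^{2\lambda}\H) + (\text{lower-order correction}),
\]
the correction being controllable in $L^2$ by $\int|\p{z}\lambda|^2\,dA \lesssim W(\phi) + |\chi(\Sigma)|$. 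Exploiting the $L^p$-boundedness of $S$ on $\C$ for $1<p<\infty$ and the pointwise identities $|F|^p\,dA = 2^{-p}\,e^{(2p-2)\lambda}|\h_0|^p_{WP}\,d\vg$ and $|e^{2\lambda}\H|^p\,dA = e^{(2p-2)\lambda}|\H|^p\,d\vg$, this yields a local estimate
\[
\int_B |\h_0|^p_{WP}\,d\vg \le C_p\,e^{(2p-2)\,\mathrm{osc}_B \lambda}\int_B|\H|^p\,d\vg + (\text{lower-order terms}).
\]

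Second I would globalise by an $\epsilon$-regularity and covering argument. By M\"uller--\v{S}ver\'ak and Rivi\`ere (Proposition~\ref{riv}), there exists a universal $\epsilon_0>0$ such that on any conformal disk $B$ with $\int_B|\vec{\I}|^2\,d\vg < \epsilon_0$ one has $\mathrm{osc}_B\lambda \le C_0$, so that the local estimate holds with universal constants. Covering $\Sigma$ by a finite conformal atlas, the \emph{bad} disks---those in which the $\epsilon_0$ threshold is exceeded---number at most $N \le W(\phi)/\epsilon_0 + C(\Sigma)$, thanks to $\int |\vec{\I}|^2\,d\vg \le 4W(\phi) + 4\pi|\chi(\Sigma)|$ from Lemma~\ref{gb}. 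On each bad disk a cruder constant $A=A(n,p,\Sigma)$ is introduced via H\"older interpolation and the $W^{2,p}$ hypothesis. Patching with a subordinate partition of unity and iterating over the $N$ bad disks produces the total multiplicative factor $A^N \le e^{C_1(1+W(\phi))}$, as required.

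The main technical obstacle will be the bookkeeping through the $\bar\partial$-inversion and the weighted-$L^p$ translation: tangential/boundary and commutator terms must be absorbed into the left-hand side via a Gronwall-type bootstrap, while the weight $e^{(2p-2)\lambda}$ must be controlled through the $\epsilon$-regularity on $\lambda$. A secondary issue is the behaviour near branch points, where the expansion $e^{2\lambda}\sim|z|^{2\theta_0-2}$ of Proposition~\ref{branch} must be used to confirm that the singular contribution can be absorbed into the global constant.
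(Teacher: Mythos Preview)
Your core strategy---control $\h_0$ by $\H$ through an elliptic estimate, convert between intrinsic and chart norms via $\mathrm{osc}\,\lambda$, and control the latter by $\epsilon$-regularity---is morally the paper's, but the paper takes a much shorter path and avoids the gap in your globalisation.

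\textbf{A simpler local argument.} The paper bypasses Codazzi and the $\bar\partial$-inversion entirely. On a conformal disk one has the flat equation $\Delta\phi = 2e^{2\lambda}\H$ and the pointwise identity $\h_0 = 2(\p{z}^2\phi)^{N}\,dz^2$. Interior Calder\'on--Zygmund for the flat Laplacian then gives
\[
\np{\p{z}^2\phi}{p}{D^2(0,1/4)} \le C\,\np{\Delta\phi}{p}{D^2(0,1/2)} = 2C\left(\int_{D^2(0,1/2)}|\H|^p e^{2p\lambda}\,|dz|^2\right)^{1/p},
\]
with no commutators, boundary terms, or Beurling bookkeeping. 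Converting both sides to intrinsic quantities costs exactly the factor $\exp\big(4(p-1)\np{\lambda-\bar\lambda}{\infty}{D^2(0,1/2)}\big)$.

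\textbf{The genuine gap: your globalisation.} Your good/bad-disk dichotomy is both unnecessary and ill-founded. Since the proposition assumes $\phi\in\mathscr{E}(\Sigma,\R^n)$ (unbranched), the measure $|d\n|_g^2\,d\vg$ has no atoms, and one can cover $\Sigma$ by Besicovitch balls each carrying total curvature below $8\pi/3$: there are \emph{no} bad disks. Consequently the exponential in $W(\phi)$ does not come from counting bad regions and multiplying constants $A^N$. It arises on every single disk, because the local bound on $\np{\lambda-\bar\lambda}{\infty}{}$ is itself linear in $W(\phi)$: the paper uses H\'elein's controlled moving frame plus Wente to solve $-\Delta\nu = \D\tilde\e_1\times\D\tilde\e_2$ with $\np{\nu}{\infty}{}\le C$, and then controls the harmonic remainder $\lambda-\nu$ through a global $\np{\D\lambda}{2,\infty}{}\lesssim 1+W(\phi)$. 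Your proposed substitute on bad disks---``a cruder constant $A=A(n,p,\Sigma)$ via H\"older interpolation and the $W^{2,p}$ hypothesis''---would depend on $\wp{\phi}{2,p}{\Sigma}$, not on $W(\phi)$, so it cannot yield the stated inequality. Finally, the branch-point discussion is irrelevant here: the proposition is for $\mathscr{E}(\Sigma,\R^n)$, not $\mathscr{E}_b$.
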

\begin{rem}
	For $p=2$, thanks of Gauss-Bonnet formula, we have
	\begin{align*}
	\int_{\Sigma}^{}|\h_0|^2_{WP}d\vg=\int_{\Sigma}^{}\left(|\H|^2-K_g\right)d\vg=\int_{\Sigma}^{}|\H|^2d\vg-2\pi\chi(\Sigma)
	\end{align*}
	so we do not need the local estimate to obtain the boundedness of
	\begin{align*}
	\sigma_k^2\int_{\Sigma^2}|\vec{\I}|_{g_k}^2d\mathrm{vol}_{g_k}\conv{k\rightarrow\infty}0.
	\end{align*}
\end{rem}
\begin{proof}
	By compactness of $\Sigma$ and Besicovitch covering theorem, we can write
	\begin{align*}
		\Sigma=\bigcup_{j=1}^mB_j
	\end{align*}
	such that each open $B_j$ does not intersect more than $N(\Sigma)$ elements of the collection $\ens{B_1,\cdots,B_m}$, and for all  $1\leq j\leq m$, 
	\begin{align*}
		\int_{B_j}|d\n|^2_gd\vg<\frac{8\pi}{3}.
	\end{align*}
	Now, up to shrinking the $B_j$, we can assume that each $B_j$ is included in a chart domain, so we assume from now on that $\phi:D^2\rightarrow \R^n$ is a conformal weak immersion such that
	\begin{align}\label{est}
		\int_{D^2}|\D\n|^2dx<\frac{8\pi}{3}.
	\end{align}
	In particular, thanks of a theorem of Hélein (\cite{helein}), there exists a moving frame $(\tilde{\e_1},\tilde{\e_2})$ such that
	\begin{align*}
		-\Delta\lambda=\p{x_1}\tilde{\e_1}\cdot\p{x_2}\tilde{\e_2}-\p{x_2}\tilde{\e_1}\cdot\p{x_1}\tilde{\e_2}
	\end{align*}
	and for some universal constant $\Gamma_0>0$
	\begin{align}\label{ineq}
		\np{\D\tilde{\e_1}}{2}{D^2}^2+\np{\D\tilde{\e_2}}{2}{D^2}^2\leq \Gamma_0\int_{D^2}|\D\n|^2dx<\frac{8\pi \Gamma_0}{3}.
	\end{align}
	Let $\nu\in W^{1,2}_0(D^2)$ be the solution of the equation
    \begin{align*}
    	-\Delta\nu=\p{x_1}\tilde{\e_1}\cdot\p{x_2}\tilde{\e_2}-\p{x_2}\tilde{\e_1}\cdot\p{x_1}\tilde{\e_2}.
    \end{align*}
    Then by Wente's inequality, $\nu\in W^{1,2}_0(D^2)\cap C^0(D^2)$ and 
    \begin{align}
    	\np{\D\nu}{2                                                                                                                                                                        }{D^2}+\np{\nu}{\infty}{D^2}\leq \left(\frac{1}{2\pi}+\frac{1}{2}\sqrt{\frac{3}{\pi}}\right)
    	\np{\D\tilde{\e_1}}{2}{D^2}\np{\D\tilde{\e_2}}{2}{D^2}\leq  6\Gamma_0
    \end{align}
    by \eqref{est} and \eqref{ineq}. As the function $\lambda-\nu$ is harmonic, there exists $\Gamma_1,\Gamma_2>0$ such that
    \begin{align}\label{const}
         \np{\lambda-\nu}{\infty}{D^2(0,1/2)}\leq \Gamma_1\np{\D(\lambda-\nu)}{2,\infty}{D^2}\leq 6\,\Gamma_0\Gamma_1+\Gamma_1\np{\D\lambda}{2,\infty}{D^2}\leq 6\,\Gamma_0\Gamma_1+\Gamma_2\left(1+W(\phi)\right)
    \end{align}
    Therefore, there exists $\bar{\lambda}\in \R$ and a universal constant $C>0$ (depending only on the conformal class of $g=\phi^{\ast}g_{\R^n}$, see \cite{lauriv1} for more informations about this subtle dependence) such that
    \begin{align*}
    	\np{\lambda-\bar{\lambda}}{\infty}{D^2(0,1/2)}\leq C\left(1+W(\phi)\right).
    \end{align*}
    Furthermore, notice that
    \begin{align*}
    	\Delta\phi=2e^{2\lambda}\H.
    \end{align*}
    Therefore, we have by Calder\'{o}n-Zygmund estimates
    \begin{align*}
    	\np{\p{z}^2\phi}{p}{D^2(0,1/4)}\leq C\np{\Delta\phi}{p}{D^2(0,1/2)}\leq 2C                                                                                                                                                                                                                                                                                                                                                                                                                                                                                          \left(\int_{D^2(0,1/2)}|\H|^pe^{2\lambda p}dx\right)^{\frac{1}{p}}.
    \end{align*}
    which implies
    \begin{align*}
    	&\int_{D^2(0,1/4)}|\h_0|^p_{WP}d\vg=\int_{D^2(0,1/4)}\left|2(\p{z}^2\phi)^N\right|^2e^{-2\lambda}|dz|^2\leq 2^p\int_{D^2(0,1/4)}|\p{z}^2\phi|^pe^{-2(p-1)\lambda}|dz|^2\\
    	&\leq 2^p\exp\left(2(p-1)\np{\lambda-\bar{\lambda}}{\infty}{D^2(0,1/2)}\right)e^{-2(p-1)\bar{\lambda}}\int_{D^2(0,1/2)}|\H|^pe^{2\lambda p}|dz|^2\\
    	&\leq 2^p\exp\left(4\np{\lambda-\bar{\lambda}}{\infty}{D^2(0,1/2)}\right)\int_{D^2(0,1/2)}|\H|^pe^{2\lambda}|dz|^2\\
    	&\leq 2^p\exp\left(4(p-1)\np{\lambda-\bar{\lambda}}{\infty}{D^2(0,1/2)}\right)\int_{D^2(0,1/2)}|\H|^pd\vg.
   \end{align*}
   Finally, by \eqref{const}, we see that there exists a universal constant $C=C(p)$ such that
   \begin{align*}
   	\int_{D^2(0,1/4)}|\h_0|^2_{WP}d\vg\leq e^{C(1+W(\phi))}\int_{D^2(0,1/2)}|\H|^pd\vg.
   \end{align*}
   Finally, by the Besicovitch covering theorem we obtain the final claim.
\end{proof}

\begin{rem}\label{energy}
	Now, consider the following relaxation of the area functional
	\begin{align*}
		A_{\sigma}(\phi)=\mathrm{Area}(\phi(\Sigma))+\sigma^2\int_{\Sigma}\left(1+|\H|^2\right)^2d\vg+\frac{1}{\log\left(\frac{1}{\sigma}\right)}\mathscr{O}(\phi)+\frac{1}{\log\log\log\left(\frac{1}{\sigma}\right)}W(\phi),
	\end{align*}
	 and the refined entropy condition for a critical point $\phi$ of $A_{\sigma}$
	\begin{align}\label{germain}
		\frac{d}{d\sigma}A_{\sigma}(\phi)\leq \frac{1}{\sigma\log\left(\frac{1}{\sigma}\right)\log\log\left(\frac{1}{\sigma}\right)\log\log\log\left(\frac{1}{\sigma}\right)\log\log\log\log\left(\frac{1}{\sigma}\right)}
	\end{align}
	Notice that we can always construct a sequence critical points satisfying this inequality together with the expected Morse index bound as long as the right-hand side of \eqref{germain} is replace by a positive function which does not belong to $L^1([0,1])$ (see \cite{index2}).
	
	The estimate \eqref{germain} implies that (up to choosing an Aubin gauge for $\Sigma=S^2$)
	\begin{align*}
		&\sigma^2\int_{\Sigma}\left(1+|\H|^2\right)d\vg\leq \frac{1}{\log\left(\frac{1}{\sigma}\right)\log\log\left(\frac{1}{\sigma}\right)}\\
		&\frac{1}{\log\left(\frac{1}{\sigma}\right)}\mathscr{O}(\phi)\leq \frac{1}{\log\log\left(\frac{1}{\sigma}\right)}\\
		&\frac{1}{\log\left(\frac{1}{\sigma}\right)}\int_{\Sigma}|d\alpha|_g^2d\vg\leq \frac{6}{\log\log\left(\frac{1}{\sigma}\right)}\\
		&\frac{1}{\log\log\log\left(\frac{1}{\sigma}\right)}W(\phi)\leq \frac{1}{\log\log\log\log\left(\frac{1}{\sigma}\right)}.
	\end{align*}
	In particular, we have as $\mathscr{A}(\w)\leq 2|d\w|_g|\vec{\I}|_g$
	\begin{align*}
		&|DW(\phi)(\w)|\leq W(\phi)^{\frac{1}{2}}\left(\int_{\Sigma}|\Delta_g^{\perp}\w|_g^2d\vg\right)^{\frac{1}{2}}+2\np{|d\w|_g}{\infty}{\Sigma}\int_{\Sigma}|\vec{\I}|_g||\H|d\vg\\
		&\leq W(\phi)^{\frac{1}{2}}\left(\int_{\Sigma}|\Delta_g^{\perp}\w|_g^2d\vg\right)^{\frac{1}{2}}+2\np{|d\w|_g}{\infty}{\Sigma}
	\end{align*}
	and
	\begin{align*}
		&\left|DF(\phi)(\w)\right|\leq 2\np{|d\w|_g}{\infty}{\Sigma}\int_{\Sigma}\left(1+|\H|^2\right)^2d\vg+2\left(\int_{\Sigma}\left(1+|\H|^2\right)^2d\vg\right)^{\frac{3}{4}}\left(\int_{\Sigma}|\Delta_g^{\perp}\w|^4d\vg\right)^{\frac{1}{4}}\\
		&+2\np{|d\w|_g}{\infty}{\Sigma}\left(\int_{\Sigma}|\vec{\I}|_g^4d\vg\right)^{\frac{1}{4}}\left(\int_{\Sigma}\left(1+|\H|^2\right)^2d\vg\right)^{\frac{3}{4}}\\
		&\leq 2\np{|d\w|_g}{\infty}{\Sigma}\int_{\Sigma}\left(1+|\H|^2\right)^2d\vg+2\left(\int_{\Sigma}\left(1+|\H|^2\right)^2d\vg\right)^{\frac{3}{4}}\left(\int_{\Sigma}|\Delta_g^{\perp}\w|^4d\vg\right)^{\frac{1}{4}}\\
		&+2\np{|d\w|_g}{\infty}{\Sigma}e^{\frac{C}{4}(1+W(\phi))}\int_{\Sigma}\left(1+|\H|^2\right)^2d\vg.
	\end{align*}
	Finally,
	\begin{align*}
		|D\mathscr{O}(\phi)(\w)|\leq 4\pi|\chi(\Sigma)|+3\int_{\Sigma}|d\alpha|_g^2d\vg+\left(\sqrt{W(\phi)}+\sqrt{W(\phi)-2\pi\chi(\Sigma)}\right)\left(\int_{\Sigma}|d\alpha|_g^2d\vg\right)^{\frac{1}{2}}
	\end{align*}
	Therefore, $A_{\sigma}$ satisfies the \textbf{Energy condition} of \cite{index2} (which says that the norm operator of $DA_{\sigma}$ stays bounded as long as long as $A_{\sigma}$ is bounded). Furthermore, observe that
	\begin{align*}
	&\sigma^2\left(\int_{\Sigma}|\vec{\I}|_g^4d\vg\right)^{\frac{1}{4}}\left(\int_{\Sigma}\left(1+|\H|^2\right)^2d\vg\right)^{\frac{3}{4}}\leq e^{\frac{C}{4}(1+W(\phi))}\sigma^2\int_{\Sigma}\left(1+|\H|^2\right)^2d\vg\\
	&\leq e^{\frac{C}{4}}\left(\log\log\left(\frac{1}{\sigma}\right)\right)^{\frac{C}{4}}\frac{1}{\log\left(\frac{1}{\sigma}\right)\log\log\left(\frac{1}{\sigma}\right)}
	=e^{\frac{C}{4}}\frac{\left(\log\log\left(\frac{1}{\sigma}\right)\right)^{\frac{C}{4}-1}}{\log\left(\frac{1}{\sigma}\right)}\conv{\sigma\rightarrow 0}0.
    \end{align*}
    Likewise, the second derivative satisfies the same estimate, so we see that the proof of the lower semi-continuity of the index for minimal surfaces obtained by the viscosity method in \cite{hierarchies} could be adapted to this relaxation of the area, if the hard analysis for $\sigma\rightarrow 0$ could be carried in a similar fashion. 
    
    Finally, one should notice that for immersions with valued into a closed manifold the second derivative of the relaxation term $A_{\sigma}-A_{0}$ would only change by curvature terms which are of $0$-th order at the PDE level, so they would not change the general bounds obtained previously.
\end{rem}

\subsection{The second derivative of the Willmore energy and viscous approximations}

We first recall that in \cite{eversion} the following functional is introduced for all $\sigma>0$
\begin{align*}
W_{\sigma}(\phi)=W(\phi)+\sigma^2\int_{\Sigma}\left(1+|\H|^2\right)^2d\vg+\frac{1}{\log\left(\frac{1}{\sigma}\right)}\mathscr{O}(\phi).
\end{align*}
To understand why we might have different definitions for the index, we recall the formulas (see \cite{indexS3})
\begin{align*}
&2\D_{\frac{d}{dt}}^\perp \H_{g_t}
=\Delta^{\perp}_g\w-\frac{1}{2}\sum_{i,j=1}^2\left(\s{\D_{\e_i}\w}{\e_j}+\s{\D_{\e_j}\w}{\e_i}\right)\vec{\I}(\e_i,\e_j)=\Delta_g^{\perp}\w+\mathscr{A}(\w),\\
&2\D_{\frac{d}{dt}}^\perp\D_{\frac{d}{dt}}^\perp \vec{H}_{g_t}=
\sum_{i,j=1}^2\bigg\{\left(-2\s{\D_{\e_i}\w}{\D_{\e_j}\w}+4\s{d\phi}{d\w}_g\left(\s{\D_{\e_i}\w}{\e_j}+\s{\D_{\e_j}\w}{\e_i}\right)\right)\vec{\I}(\e_i,\e_j)\\
&-2\left(\s{\D_{\e_i}\w}{\e_j}+\s{\D_{\e_j}\w}{\e_i}\right)\left((\D_{\e_i}\D_{\e_j}-\D_{(\D_{\e_i}\e_j)^\top})\w\right)^\perp\bigg\}\\
&-\sum_{i=1}^22\left(4\s{\D_{\e_1}\w}{\e_1}\s{\D_{\e_2}\w}{\e_2}-\left(\s{\D_{\e_1}\w}{\e_2}+\s{\D_{\e_1}\w}{\e_2}\right)^2\right)\vec{\I}(\e_i,\e_i)\\
&-\sum_{i,j=1}^2\s{(\D_{\e_i}\D_{\e_i}-\D_{(\D_{\e_i}\e_i)^\top})\w}{\e_j}\D_{\e_j}\w\bigg)^\perp\\
&=-2\,\vec{\I}\res_g \left(d\w\totimes d\w\right)+4\s{d\phi}{d\w}_g\,\vec{\I}\res_g \left(d\phi\totimes d\w+d\w\totimes d\phi\right)-4\left(\s{d\phi}{d\w}_g^2-16|\partial\phi\totimes \partial\w|_{WP}^2\right)\H\\
&-\s{\Delta_g\w}{d\phi}\res_g d\w-2(\D\,d\w)^{\perp}\res_g \left(d\phi\totimes d\w+d\w\totimes d\phi\right).
\end{align*}
where $\mathscr{A}(\w)$ is the Simons operator. In particular, we obtain
\begin{align*}
&\frac{d}{dt}|\H_{g_t}|^2_{|t=0}=2\s{\D_{\frac{d}{dt}}^{\perp}\H}{\H}=\s{\Delta_g^{\perp}\w+\mathscr{A}(\w)}{\H}\\
&\frac{d^2}{dt^2}|\H_{g_t}|^2_{|t=0}=2\s{\D_{\frac{d}{dt}}^{\perp}\D_{\frac{d}{dt}}^{\perp}\H}{\H}+2|\D_{\frac{d}{dt}}^{\perp}\H|^2\\
&=-2\bs{\s{\H}{\vec{\I}}}{d\w\totimes d\w}_g+4\s{d\phi}{d\w}_g\bs{\s{\H}{\vec{\I}}}{d\phi\totimes d\w+d\w\totimes d\phi}_g-4\left(\s{d\phi}{d\w}_g^2-16|\partial\phi\totimes \partial\w|_{WP}^2\right)|\H|^2\\
&-\bs{\s{\Delta_g\w}{d\phi}}{\s{\H}{d\w}}_g-2\bs{\s{\H}{(\D d\w)^{\perp}}}{d\phi\totimes d\w+d\w\totimes d\phi}_g+\frac{1}{2}\left|\Delta^{\perp}_g\w+\mathscr{A}(\w)\right|^2
\end{align*}
Therefore, we obtain
\begin{align}\label{index0}
DW(\phi)(\w)=\int_{\Sigma}\s{\Delta_g^{\perp}\w+\mathscr{A}(\w)}{\H}d\vg+\int_{\Sigma}|\H|^2\s{d\phi}{d\w}_gd\vg
\end{align}
Furthermore, as $\mathscr{A}(\w)\leq 2|d\w|_g|\vec{\I}|_g$, we have
\begin{align}\label{ineq0}
|DW(\phi)(\w)|\leq W(\phi)^{\frac{1}{2}}\left(\int_{\Sigma}|\Delta_g^{\perp}\w|^2d\vg\right)^{\frac{1}{2}}+\left(6\,W(\phi)-2\pi\chi(\Sigma)\right)\np{|d\w|_g}{\infty}{\Sigma}.
\end{align}
As $\H\in L^2(\Sigma,d\vg)$, we see that by Cauchy-Schwarz inequality that \eqref{index0} is well-defined. Actually, we see that \eqref{index0} is well-defined if and only if $\s{\Delta_g^{\perp}\w}{\H}\in L^1(\Sigma,d\vg)$.
Now, we compute
\begin{align}\label{d2w}
&D^2W(\phi)(\w,\w)=\int_{\Sigma}\frac{d^2}{dt^2}|\H_{g_t}|^2_{|t=0}d\vg+2\int_{\Sigma}\frac{d}{dt}|\H_{g_t}|^2\frac{d}{dt}\left(d\mathrm{vol}_{g_t}\right)_{|t=0}+\int_{\Sigma}|\H|^2\frac{d}{dt}\left(d\mathrm{vol}_{g_t}\right)_{|t=0}\nonumber\\
&=\int_{\Sigma}\bigg\{-2\bs{\s{\H}{\vec{\I}}}{d\w\totimes d\w}_g+4\s{d\phi}{d\w}_g\bs{\s{\H}{\vec{\I}}}{d\phi\totimes d\w+d\w\totimes d\phi}_g\nonumber\\
&-4\left(\s{d\phi}{d\w}_g^2-16|\partial\phi\totimes \partial\w|_{WP}^2\right)|\H|^2
-\bs{\s{\Delta_g\w}{d\phi}}{\s{\H}{d\w}}_g-2\bs{\s{\H}{(\D d\w)^{\perp}}}{d\phi\totimes d\w+d\w\totimes d\phi}_g\nonumber\\
&+\frac{1}{2}\left|\Delta^{\perp}_g\w+\mathscr{A}(\w)\right|^2+2\s{\Delta_g^{\perp}\w+\mathscr{A}(\w)}{\H}\s{d\phi}{d\w}_g+|\H|^2\left(|d\w|_g^2-16|\partial\phi\totimes \partial\w|_{WP}^2\right) \bigg\}d\vg.
\end{align}
We will show that $D^2W(\phi)(\w,\w)$ is well-defined if and only $\w$ is such that 
\begin{align}\label{key2}
\np{|d\w|_g}{\infty}{\Sigma}+\left(\int_{\Sigma}|\Delta_g^{\perp}\w|^2d\vg\right)^{\frac{1}{2}}<\infty,
\end{align}
a condition which is not satisfied for all variations $\w\in W^{2,2}_{\phi}\cap W^{1,\infty}(\Sigma,\R^n)$ if $\phi$ is a branched immersion. Indeed, at a branch point of multiplicity $\theta_0\geq 2$, the metric admits the a development (in some complex chart $(z,U)$ such that $z:U\rightarrow D^2\subset \C$ and $z(p)=0$ around the branch point) $e^{2\lambda}=\gamma|z|^{2\theta_0-2}\left(1+O(|z|)\right)$ for some $\gamma>0$ and
\eqref{key2} is equivalent to 
\begin{align*}
\frac{|\D\w|}{|z|^{\theta_0-1}}\in L^{\infty}(D^2),\quad \int_{D^2}\frac{|\Delta^{\perp}\w|^2}{|z|^{2\theta_0-2}}|dz|^2<\infty.
\end{align*}
In particular, if $\w$ is a smooth variation, then (by the analysis of \cite{beriviere} or \cite{classification}) $\w$ admits the following expansion for some $\vec{A}\in \C^n$
\begin{align*}
\w=\w(0)+\Re\left(\vec{A}_0z^{\theta_0}\right)+O(|z|^{\theta_0+1}).
\end{align*}
In particular, there are no conditions if the branch point is of multiplicity $1$.

Denoting
\begin{align*}
F(\phi)(\w,\w)=\int_{\Sigma}\left(1+|\H|^2\right)^2d\vg,
\end{align*}
we also have
\begin{align*}
DF(\phi)(\w,\w)=2\int_{\Sigma}\s{\H}{\Delta_g^{\perp}\w+\mathscr{A}(\w)}\left(1+|\H|^2\right)d\vg+\int_{\Sigma}\left(1+|\H|^2\right)^2\s{d\phi}{d\w}_gd\vg.
\end{align*}
Then, we compute (where the derivatives are meant to be taken at $t=0$)
\begin{align*}
&D^2F(\phi)(\w,\w)=\int_{\Sigma}\left(2\frac{d^2}{dt^2}|\H_{g_t}|^2\left(1+|\H|^2\right)+2\left(\frac{d}{dt}|\H_{g_t}|^2\right)^2\right)d\vg\\
&+4\int_{\Sigma}\frac{d}{dt}|\H_{g_t}|^2\left(1+|\H|^2\right)\frac{d}{dt}\left(d\mathrm{vol}_{g_t}\right)+\int_{\Sigma}\left(1+|\H|^2\right)^2\frac{d^2}{dt^2}\left(d\mathrm{vol}_{g_t}\right)\\
&=2\int_{\Sigma}\bigg\{-2\bs{\s{\H}{\vec{\I}}}{d\w\totimes d\w}_g+4\s{d\phi}{d\w}_g\bs{\s{\H}{\vec{\I}}}{d\phi\totimes d\w+d\w\totimes d\phi}_g\\
&-4\left(\s{d\phi}{d\w}_g^2-16|\partial\phi\totimes \partial\w|_{WP}^2\right)|\H|^2
-\bs{\s{\Delta_g\w}{d\phi}}{\s{\H}{d\w}}_g-2\bs{\s{\H}{(\D d\w)^{\perp}}}{d\phi\totimes d\w+d\w\totimes d\phi}_g\\
&+\frac{1}{2}\left|\Delta^{\perp}_g\w+\mathscr{A}(\w)\right|^2+2\s{d\phi}{d\w}_g\s{\H}{\Delta_g^{\perp}\w+\mathscr{A}(\w)}+|\H|^2\left(|d\w|_g^2-16|\partial\phi\totimes \partial\w|_{WP}^2\right) \bigg\}\left(1+|\H|^2\right)d\vg\\
&+2\int_{\Sigma}\s{\Delta_g^{\perp}\w+\mathscr{A}(\w)}{\H}^2d\vg+4\int_{\Sigma}\s{d\phi}{d\w}_g\s{\H}{\Delta_{g}^{\perp}\w+\mathscr{A}(\w)}\left(1+|\H|^2\right)d\vg\\
&+\int_{\Sigma}\left(|d\w|_g^2-16|\partial\phi\totimes \partial\w|_{WP}^2\right)\left(1+|\H|^2\right)^2d\vg
\end{align*}
Finally, we have
\begin{align}\label{ofu}
&D^2W_{\sigma}(\phi)(\w,\w)=D^2W(\phi)(\w,\w)+\sigma^2D^2F(\phi)(\w,\w)+\frac{1}{\log\left(\frac{1}{\sigma}\right)}D^2\mathscr{O}(\phi)(\w,\w)\nonumber\\
&=\int_{\Sigma}\bigg\{-2\bs{\s{\H}{\vec{\I}}}{d\w\totimes d\w}_g+4\s{d\phi}{d\w}_g\bs{\s{\H}{\vec{\I}}}{d\phi\totimes d\w+d\w\totimes d\phi}_g\nonumber\\
&-4\left(\s{d\phi}{d\w}_g^2-16|\partial\phi\totimes \partial\w|_{WP}^2\right)|\H|^2
-\bs{\s{\Delta_g\w}{d\phi}}{\s{\H}{d\w}}_g-2\bs{\s{\H}{(\D d\w)^{\perp}}}{d\phi\totimes d\w+d\w\totimes d\phi}_g\nonumber\\
&+\frac{1}{2}\left|\Delta^{\perp}_g\w+\mathscr{A}(\w)\right|^2+2\s{\Delta_g^{\perp}\w+\mathscr{A}(\w)}{\H}\s{d\phi}{d\w}_g+|\H|^2\left(|d\w|_g^2-16|\partial\phi\totimes \partial\w|_{WP}^2\right) \bigg\}d\vg\nonumber\\
&+\sigma^2\bigg\{2\int_{\Sigma}\bigg\{-2\bs{\s{\H}{\vec{\I}}}{d\w\totimes d\w}_g+4\s{d\phi}{d\w}_g\bs{\s{\H}{\vec{\I}}}{d\phi\totimes d\w+d\w\totimes d\phi}_g\nonumber\\
&-4\left(\s{d\phi}{d\w}_g^2-16|\partial\phi\totimes \partial\w|_{WP}^2\right)|\H|^2
-\bs{\s{\Delta_g\w}{d\phi}}{\s{\H}{d\w}}_g-2\bs{\s{\H}{(\D d\w)^{\perp}}}{d\phi\totimes d\w+d\w\totimes d\phi}_g\nonumber\\
&+\frac{1}{2}\left|\Delta^{\perp}_g\w+\mathscr{A}(\w)\right|^2+2\s{d\phi}{d\w}_g\s{\H}{\Delta_g^{\perp}\w+\mathscr{A}(\w)}+|\H|^2\left(|d\w|_g^2-16|\partial\phi\totimes \partial\w|_{WP}^2\right) \bigg\}\left(1+|\H|^2\right)d\vg\nonumber\\
&+2\int_{\Sigma}\s{\Delta_g^{\perp}\w+\mathscr{A}(\w)}{\H}^2d\vg+4\int_{\Sigma}\s{d\phi}{d\w}_g\s{\H}{\Delta_{g}^{\perp}\w+\mathscr{A}(\w)}\left(1+|\H|^2\right)d\vg\nonumber\\
&+\int_{\Sigma}\left(|d\w|_g^2-16|\partial\phi\totimes \partial\w|_{WP}^2\right)\left(1+|\H|^2\right)^2d\vg\bigg\}\nonumber\\
&+\frac{1}{\log\left(\frac{1}{\sigma}\right)}\bigg\{\int_{\Sigma}\s{d\alpha\otimes d\alpha}{d\w\totimes d\w}_gd\vg-\int_{\Sigma}\s{d\alpha\otimes d\alpha}{d\phi\totimes d\w+d\w\totimes d\phi}_g\s{d\phi}{d\w}_gd\vg\nonumber\\
&-\frac{1}{2}\int_{\Sigma}|d\alpha|_g^2\left(|d\w|_g^2-2\s{d\phi}{d\w}_g^2-16|\partial\phi\totimes\partial\w|_{WP}^2\right)d\vg
+\frac{K_{g_0}}{2}\int_{\Sigma}\left(|d\w|_g^2-16|\partial\phi\totimes\partial\w|_{WP}^2\right)d\mathrm{vol}_{g_0}\nonumber\\
&+\frac{1}{2}\int_{\Sigma}\s{d\phi\totimes d\w+d\w\totimes d\phi}{d\alpha\otimes d\alpha_0'+d\alpha_0'\otimes d\alpha}_gd\vg-\int_{\Sigma}\s{d\alpha}{d\alpha_0'}_g\s{d\phi}{d\w}_gd\vg\nonumber\\
&-2\int_{\Sigma}d\alpha_0'\wedge \Im\left(\s{\H}{\partial\w}-g^{-1}\otimes\left(\h_0\totimes\bar{\partial}\w\right)\right)-K_{g_0}\int_{\Sigma}\alpha_0'\s{d\phi}{d\w}d\mathrm{vol}_{g_0}\nonumber\\
&-2\int_{\Sigma}d\alpha\wedge\,\Im\bigg(\s{\Delta_{g}^{\perp}\w+4\,\Re\left(g^{-2}\otimes\left(\bar{\partial\phi}\totimes\bar{\partial}\w\right)\otimes \h_0\right)}{\partial\w}
-\s{d\phi}{d\w}_g\left(\s{\H}{\partial\w}-g^{-1}\otimes\left(\h_0\totimes\bar{\partial}\w\right)\right)\nonumber\\
&-4\,g^{-1}\otimes\left(\partial\phi\totimes\partial\w\right)\otimes\s{\H}{\bar{\partial}\w}\bigg)-\frac{1}{2}\int_{\Sigma}|\D^{\perp}\w|_g^2\,\Delta_{g}\alpha\,d\mathrm{vol}_{g}
+\frac{K_{g_0}}{2}\left(\frac{1}{\int_{\Sigma}d\vg}\int_{\Sigma}\s{d\phi}{d\w}_gd\vg\right)^2\nonumber\\
&-\frac{K_{g_0}}{2}\frac{1}{\int_{\Sigma}d\vg}\int_{\Sigma}\left(|d\w|_g^2-16|\partial\phi\totimes\partial\w|_{WP}^2\right)d\vg \bigg\}
\end{align}
The goal of the next sections is to give a reasonable definition of the index though the second derivative, as this is an index that one can hope to compute explicitly (see \cite{indexS3} for a first result in this direction), contrary to a $C^0$ index which seems elusive to us.

\subsection{On the definition of Morse index of branched Willmore immersions}

The vector spaces of admissible variations of $\phi$, denoted by $\mathrm{Var}_l(\phi)$, are defined for $l=0,1,2$ by
\begin{align*}
	\mathrm{Var}_l(\phi)=W^{2,2}_{\phi}\cap W^{1,\infty}(\Sigma,\R^n)\,\cap\,&\Big\{\w=\left(\frac{d}{dt}\phi_t\right)_{|t=0}\;\, \text{for some}\;\, C^{2}\;\, \text{family}\;\, \{\phi_t\}_{t\in I}\in \mathscr{E}(\Sigma,\R^n)\\
	&\text{for some fixed}\;\, 0<\alpha< 1,\;\,  \text{such that}\;\, \phi_0=\phi\;\, \text{and}\;\, t\mapsto W(\phi_t)\;\,\text{is}\;\, C^l\Big\}.
\end{align*}
Notice that imposing $\{\phi_t\}_{t\in I}$ to be $C^2$ could be relaxed to $C^1$ or $C^0$ (respectively for $\mathrm{Var}_0$ and $\mathrm{Var}_1$), but as variations may be taken as $\phi_t=\phi+t\w$ for some $\w\in W^{2,2}_{\phi}\cap W^{1,\infty}(\Sigma,\R^n)$, the condition is not restrictive. 

To have a differential notion of the index for a functional as the Willmore functions defined on the space of branch Sobolev immersions, which cannot be equipped with a Banach space structure, we must restrict to variations $\{\phi_t\}_{t\in I}$ such that the map $I\rightarrow \R, t\mapsto W(\phi_t)$ is $C^2$. Therefore, we define the restricted notion of index as follows.

\begin{defi}
	Let $\Sigma$ be a closed Riemann surface and let $\phi:\Sigma\rightarrow \R^n$ be a branched Willmore immersion. Then the index of $\phi$, denoted by $\mathrm{Ind}_{\mathscr{W}}(\phi)$ is defined as the dimension of the maximal sub-vector space $V\subset \mathrm{Var}_2(\phi)$ on which the second variation $D^2\mathscr{W}(\phi)$ is negative definite. Likewise we define the $W$ index denoted by $\mathrm{Ind}_W(\phi)$ where $\mathscr{W}$ is replaced by $W$.
\end{defi}

\begin{rem}
	In general, we could define the index as the dimension of the maximal sub-vector space $V\subset \mathrm{Var}_0(\phi)$, such that for all $\w\in V$, there exists a path $\{\phi\}_{t\in I}:\Sigma\rightarrow \R^n$ of branched immersions such that $\mathscr{W}(\phi_t)<\mathscr{W}(\phi)$ for all $t\in I\setminus \ens{0}$. However, it seems difficult to estimate this index, and this is why we need a differential index.
\end{rem}

The next lemma shows that that the the weakest possible notion of index preserves the branch points.

\begin{lemme}\label{gaussbonnet}
	Let $\phi:\Sigma\rightarrow \R^n$ be a  branched Willmore surface, $\epsilon>0$, $I=(-\epsilon,\epsilon)$ and $\{\phi_t\}_{t\in I}$ be family of branched immersions of finite total curvature such that $\phi_0=\phi$,  and assume that the map $I:t\mapsto {W}(\phi_t)$ is continuous and
	\begin{align*}
		\mathscr{W}(\phi_t)<\mathscr{W}(\phi)\;\,\text{for all}\;\, t\in I\setminus \ens{0}.
	\end{align*}
	 Then for $|t|$ small enough, the map $\phi_t:\Sigma\rightarrow \R^n$ has branch points located exactly at the branch points of $\phi$ and with the same multiplicity. Furthermore, if $\ens{\phi_t}_{t\in I}$ is differentiable, and
	 \begin{align*}
	 	w=\frac{d}{dt}\left(\phi_t\right)_{|t=0},
	 \end{align*}
	 then we have the estimate $|d\w|_g\in L^{\infty}(\Sigma)$.
\end{lemme}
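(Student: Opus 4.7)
The plan rests on the Gauss--Bonnet identity for branched weak immersions (Lemma \ref{gb}) combined with the pointwise identity $|\vec{\mathbb{I}}|_g^2 = 4|\H|^2 - 2K_g$. For each $\phi_t$ in the family, denoting the total branching by $T(\phi_t) := \sum_{p\in\Sigma}(\theta_0^t(p)-1) \in \N$, these two identities yield
\begin{equation*}
\int_\Sigma |\vec{\mathbb{I}}_t|_{g_t}^2\, d\mathrm{vol}_{g_t} = 4\, W(\phi_t) - 4\pi\chi(\Sigma) - 4\pi\, T(\phi_t).
\end{equation*}
Since $T(\phi_t)$ is integer-valued and $W(\phi_t)$ is continuous by hypothesis, the statement reduces to controlling $T(\phi_t)$ and $\int |\vec{\mathbb{I}}_t|^2 \,d\mathrm{vol}_{g_t}$ as $t\to 0$.

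The main step is to prove that $T(\phi_t) = T(\phi)$ for $|t|$ small. I would combine two ingredients. First, \emph{lower semi-continuity} $T(\phi_t)\geq T(\phi)$: near each branch point $p_i$ of $\phi$ of order $\theta_i$, Proposition \ref{riv} gives $|\p{z}\phi|^2 = \alpha_i\, |z|^{2(\theta_i-1)}(1+o(1))$ in a local conformal chart; a continuity-and-degree argument applied to $\p{z}\phi_t$ on small circles around $p_i$ forces the total branching of $\phi_t$ inside that disk to be at least $\theta_i - 1$, and away from $\{p_1,\ldots,p_m\}$ the map $\phi$ is a smooth immersion so $\phi_t$ cannot develop new branch points there for $|t|$ small. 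Second, \emph{upper semi-continuity} $T(\phi_t)\leq T(\phi)$: the functional $\int|\vec{\mathbb{I}}|^2\,d\mathrm{vol}_g$ is lower semi-continuous under the natural convergence of $\phi_t$ to $\phi$, so the displayed identity, combined with the continuity of $W(\phi_t)$ and the hypothesis $\mathscr{W}(\phi_t)<\mathscr{W}(\phi)$, yields $T(\phi_t)\leq T(\phi)+o(1)$ as $t\to 0$. Since $T$ is integer-valued, the two inequalities force $T(\phi_t)=T(\phi)$; furthermore, equality in the local estimate of the first ingredient pins down both the location and the multiplicity of each branch point, so the branch loci of $\phi_t$ and $\phi$ coincide.

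For the last assertion, once the branch structure is fixed, near each branch point $p_i$ the family admits a uniform local expansion $\phi_t(z) = \phi_t(p_i) + \mathrm{Re}(\vec{A}_0(t)\, z^{\theta_i}) + O(|z|^{\theta_i+1})$ with $t\mapsto \vec{A}_0(t)$ differentiable (by Proposition \ref{riv} applied uniformly in $t$ and the differentiability of the family $\{\phi_t\}$). Differentiating at $t=0$ yields $|d\w|(z) \lesssim |z|^{\theta_i-1}$ near $p_i$, which combined with $e^{2\lambda}\sim |z|^{2(\theta_i-1)}$ shows $|d\w|_g = e^{-\lambda}|d\w|$ is bounded near each $p_i$; away from the $\{p_i\}$ the bound is immediate from the regularity of the family.

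The key technical obstacle is the lower semi-continuity of $T$: extracting the orders of vanishing from a family of $W^{2,2}$-branched immersions is delicate since $\p{z}\phi_t$ is not holomorphic and the zero set can in principle fracture badly. The robust way around this is to exploit the precise asymptotic expansion of Proposition \ref{riv} together with a homotopy-invariance argument for the map $\p{z}\phi_t/|\p{z}\phi_t|$ on small circles enclosing each $p_i$, where the asymptotic expansion forces a well-defined winding number equal to $\theta_i - 1$ that is preserved by the continuous deformation.
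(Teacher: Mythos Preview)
Your route differs substantially from the paper's, and while the overall Gauss--Bonnet framework is the same, the way you split the work is inverted and the step you treat as routine is in fact the hard one.

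The paper reduces to $\phi_t=\phi+t\w$ with $\w$ a normal variation, then uses the Taylor-type expansion of the Gauss map $\n$ near a branch point (Corollary~4.24 of \cite{classification}) to expand $\det(g_t)$ explicitly as a polynomial in $t$ with coefficients of the form $\sum P_{k,l,p}(t)z^k\bar z^l\log^p|z|$. Since the coefficient of $|z|^{4\theta_0-4}$ is $1+O(t)$, this forces $m(t)\le\theta_0$ at each branch point and rules out new branch points elsewhere. Only \emph{after} this pointwise upper bound on the branching order does the paper invoke the hypothesis $\mathscr{W}(\phi_t)<\mathscr{W}(\phi)$: if $\theta_0(p,t)<\theta_0(p)$ somewhere, Gauss--Bonnet gives $\int K_{g_t}\le\int K_g-2\pi$, hence $\mathscr{W}(\phi_t)\ge\mathscr{W}(\phi)+2\pi-\varepsilon$, a contradiction.

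You do the opposite: you propose a winding-number argument for $T(\phi_t)\ge T(\phi)$ and invoke lower semi-continuity of $\int|\vec{\mathbb I}|^2$ for $T(\phi_t)\le T(\phi)$. Two problems. First, your lower bound via winding numbers is unnecessary: the identity $\mathscr{W}(\phi_t)=W(\phi_t)-2\pi\chi(\Sigma)-2\pi T(\phi_t)$ together with the hypothesis and the continuity of $W$ immediately yields $T(\phi_t)>T(\phi)-\varepsilon/(2\pi)$, hence $T(\phi_t)\ge T(\phi)$ by integrality. This is precisely the paper's argument, and it is where the hypothesis is actually used; you instead list the hypothesis under the upper bound, where it plays no role. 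Second, and more seriously, the lower semi-continuity of $\int|\vec{\mathbb I}_t|^2\,d\mathrm{vol}_{g_t}$ is a genuine gap: near branch points the metric degenerates and the Gauss map $\n_t$ is obtained by normalising $\partial_z\phi_t$, which vanishes there, so neither strong nor weak $W^{1,2}$ convergence of $\n_t$ to $\n$ is automatic. The paper's local analysis of $\det(g_t)$ is exactly what substitutes for this missing compactness. You identify the winding-number step as ``the key technical obstacle'', but it is in fact the easy half; the obstacle is the upper bound, and that is what the explicit expansion handles.

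Your argument for $|d\w|_g\in L^\infty$ via a uniform-in-$t$ expansion $\phi_t(z)=\phi_t(p_i)+\mathrm{Re}(\vec A_0(t)z^{\theta_i})+O(|z|^{\theta_i+1})$ is reasonable in spirit and close to what the paper's expansion machinery would give, though the paper does not isolate this step explicitly.
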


\begin{proof}
	Without loss of generality, we can assume that $\phi=\phi+t\w$ for some $\w\in W^{2,2}_{\phi}\cap W^{1,\infty}(\Sigma,\R^n)$. Let $p\in \Sigma$ be a branch point of  $\phi$ and $z$ a complex chart at $p$ such that $z(p)=0$. Thanks of Proposition \ref{branch}, there exists $\theta_0=\theta_0(p)\in \N\setminus\ens{0}$, and $\vec{A}_0\in \C^n\setminus\ens{0}$ (normalised such that $2|\vec{A}_0|^2=1$) such that
	\begin{align}\label{dev00}
	    &\p{z}\phi=\vec{A}_0z^{\theta_0-1}+O(|z|^{\theta_0})\nonumber\\
	    &e^{2\lambda}=2|\p{z}\phi|^2=|z|^{2\theta_0-2}\left(1+O(|z|)\right),
	\end{align}
	As the normal $\n:\Sigma\rightarrow \mathscr{G}_{n-2}(\R^n)$ admits the following Taylor type expansion for all thanks of the Corollary $4.24$ of \cite{classification}:
	\begin{align}\label{ndev0}
	\n=\Re\left(2i\ast \bar{\vec{A}_0}\wedge \vec{A}_0+\sum_{k,l,p}{\vec{A}_{k,l,p}}z^k\z^l\log^p|z|\right)+O(|z|^{\theta_0+1}\log^{p_0}|z|),
	\end{align}
	where $1\leq k+l\leq \theta_0$ for all the \emph{finitely many} $(k,l,p)\in \Z\times \Z\times\N$ such that $\vec{A}_{k,l,p}\neq 0$ and $1\leq k+l\leq \theta_0$ (we  take $\vec{A}_{k,l,p}$ to be $0$ outside of this range, and we also know that for all fixed $k,l$, there exists only finitely many $p\in \N$ such that $\vec{A}_{k,l,p}\neq 0$).
	Furthermore, up to shrinking the domain of $z$, we can find a local trivialisation $\n_1,\cdots,\n_{n-2}:\Sigma\rightarrow \R^n$ such that $|\n_j|=1$ for all $1\leq j\leq n$ and 
	\begin{align*}
		\n=\n_1\wedge\cdots\wedge \n_{n-2}.
	\end{align*}
	In particular, a normal variation $\w\in \mathscr{E}_{\phi}(\Sigma,\R^n)$ is given locally by
	\begin{align*}
		\w=\sum_{j=1}^{n-2}w_j\,\n_j,
	\end{align*}
	for some functions $w_j:W^{2,2}\cap W^{1,\infty}(\Sigma,\R)$. In particular, by standard regularisation, we can assume that $w_j\in C^{\infty}(\Sigma,\R)$. Therefore, there exists $\alpha\in \N^{\ast}$ and $\beta\in \N$  and a measurable function $\vec{T}$ such that
	\begin{align}\label{devw}
		&\p{z}\w=\vec{T}(z)+o(|z|^{\alpha-1}\log^{\beta}|z|),\quad C^{-1}|z|^{\alpha-1}\log^{\beta}|z|\leq \vec{T}(z)|\leq C|z|^{\alpha-1}\log^{\beta}|z|
	\end{align}
	so we obtain if $\alpha>\theta_0$ the estimate $\p{z}\w=O(|z|^{\theta_0})$, so
	\begin{align*}
		\p{z}\phi_t=\p{z}\phi+t\p{z}\w=\vec{A}_0z^{\theta_0-1}+O(|z|^{\theta_0}),
	\end{align*}
	so for all $t\in I$, $\phi_t:\Sigma\rightarrow \R^n$ has a branch point at $p$ of order $\theta_0$. Now, thanks of Proposition \ref{branch}, there exists $m(t)\in \N$ and $\gamma_t>0$ such that (up to the composition of a local conformal diffeomorphism) 
	\begin{align}\label{beta1}
	e^{2\lambda_t}=\sqrt{\det(g_t)}=\gamma_t|z|^{2m(t)-2}\left(1+o(1)\right),
	\end{align}
	If $\alpha\leq \theta_0-1$, we compute
	\begin{align}\label{exp}
		\frac{1}{4}\mathrm{det}(g_t)&=|\p{z}\phi|^4+4t|\p{z}\phi|^2\,\Re\left(\s{\p{z}\phi}{\p{\z}\w}\right)+t^2\left(4\,\Re\left(\s{\p{z}\phi}{\p{\z}\w}\right)^2+2|\p{z}\phi|^2|\p{z}\w|^2-4|\s{\p{z}\phi}{\p{z}\w}|^2\right)\nonumber\\
		&+4\,t^3\left(|\p{z}\w|^2\Re\left(\s{\p{z}\phi}{\p{\z}\w}\right)-\Re\left(\s{\p{z}\phi}{\p{z}\w}\s{\p{\z}\w}{\p{\z}\w}\right)\right)+t^4\left(|\p{z}\w|^4-|\s{\p{z}\w}{\p{z}\w}|^2\right)\nonumber\\
		&=\frac{1}{4}\gamma_t^2|z|^{4m(t)-4}\left(1+o(1)\right).
	\end{align}
	Furthermore, thanks of this expansion \eqref{exp}, \eqref{dev00} and \eqref{ndev0} (also recall that $\phi$ enjoys the same type of expansion as $\n$ in \eqref{ndev0}), there exists a sequence of $\C$-valued polynomials ${P}_{k,l,p}(t)$ of degree at most most $4$ and some coefficients $a_1,a_2,a_3,a_4\in \C$ and $p_0\in \N$ such that
	\begin{align*}
		\mathrm{det}(g_t)&=\Re\left(\sum_{m=0}^{4\theta_0-3}\sum_{p\geq 0}\sum_{k+l=m}^{}{P}_{k,l,p}(t)z^k\z^l\log^p|z|\right)+\left(1+a_1t+a_2t^2+a_3t^3+a_4t^4\right)|z|^{4\theta_0-4}\\
		&+\Re\left(\sum_{p\geq 1}^{}\sum_{k+l=4\theta_0-4}^{}{P}_{k,l,p}(t)z^k\z^l\log^p|z|\right)+O(|z|^{4\theta_0-5}\log^{p_0}|z|)\\
		&=\gamma_t^2|z|^{4m(t)-4}\left(1+o(1)\right),
	\end{align*}
	where $k,l\in \Z$ and the polynomials ${P}_{k,l,p}$ are almost all zero, that is, all but finitely many. In particular, if $|t|$ is small enough such that $|a_1t+a_2t^2+a_3t^3+a_4t^4|<1/2$, we must have the inequality $m(t)\leq \theta_0$.
	
	Finally, we deduce that for all $t\in I\setminus\ens{0}$ small enough, $\phi_t$ has a branch point at $p$ of constant order $\theta_0(p,t)\leq \theta_0(p)$ independent of $t$. In particular, if $\phi_t$ cannot have branch points at a point $p\in \Sigma$ where $\phi$ is unbranched.

	 Now, assume by contradiction that $p\in \Sigma$ is such that for all $|t|$ sufficiently small, $\theta_0(p,t)<\theta_0(p)$. By the Gauss-Bonnet theorem for branched immersions with finite total curvature (see Lemma \ref{gb}), we obtain
	\begin{align}\label{local1}
		\int_{\Sigma}K_{g_t}d\mathrm{vol}_{g_t}=2\pi \chi(\Sigma)+2\pi\sum_{j=1}^{m}\left(\theta_0(p_j,t )-1\right)\leq 2\pi \chi(\Sigma)+2\pi\sum_{j=1}^{m}(\theta_0(p_j)-1)-2\pi=\int_{\Sigma}K_gd\vg-2\pi.
	\end{align}
	In particular, as $t\mapsto W(\phi_t)$ is continuous, for all $0<\epsilon<2\pi$,  there exists  $\delta>0$ such that for all $|t|<\delta$
	\begin{align}\label{local2}
		\left|\int_{\Sigma}|\H_{g_t}|^2d\mathrm{vol}_{g_t}-\int_{\Sigma}|\H|^2d\vg\right|<\epsilon
	\end{align}
	so we obtain by \eqref{local1} and \eqref{local2} for all $|t|<\delta$
	\begin{align*}
		\mathscr{W}(\phi_t)=\int_{\Sigma}|\H_{g_t}|^2d\mathrm{vol}_{g_t}-\int_{\Sigma}K_{g_t}d\mathrm{vol}_{g_t}\geq\int_{\Sigma}|\H_g|^2d\vg-\int_{\Sigma}K_gd\vg+2\pi-\epsilon=\mathscr{W}(\phi)+2\pi-\epsilon>\mathscr{W}(\phi),
	\end{align*}
	and in particular, $\{\phi_t\}_{t\in I}$ cannot be a negative variation and we deduce the Lemma from this stronger observation.
\end{proof}

\begin{rem}
	This argument would not give any information if we had chosen the $W$-index, that is the index with respect to 
	\begin{align*}
		W(\phi)=\int_{\Sigma}|\H_g|^2d\mathrm{vol}_g,
	\end{align*}
	but as we want to define an index conformally invariant we chose the $\mathscr{W}$-index. The conformal invariance is needed as we can only hope to compute the $\mathscr{W}$-index of Willmore spheres, looking at the explicit expression in \cite{indexS3}.
\end{rem}

\begin{lemme}\label{normaldelta}
	Let $\Sigma$ be a closed Riemann surfaces and let $\phi:\Sigma\rightarrow \R^n$ be a smooth immersion. Then we have for all $\w\in \mathrm{Var}(\phi)$
	\begin{align*}
		\Delta_g^\perp\w=(\Delta_g\w)^{\perp}+4\,g^{-1}\otimes\Re\bigg\{
		g^{-1}\otimes\left(\bar{\partial}\phi\totimes\bar{\partial}\w\right)\otimes\h_0+\left(\partial\phi\totimes\bar{\partial}\w\right)\H \bigg\}.
	\end{align*}
\end{lemme}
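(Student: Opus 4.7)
My plan is to verify the identity pointwise in a local conformal chart $z=x_1+ix_2$ with $g=e^{2\lambda}|dz|^2$ and then translate back to the invariant notation of the statement. In such a chart the contraction $g^{ij}\Gamma_{ij}^k$ vanishes, which gives the simple expressions
\[
\Delta_g\w = 4e^{-2\lambda}\p{z}\p{\z}\w,\qquad \Delta_g^\perp\w = 2e^{-2\lambda}\bigl(\D^\perp_{\p{z}}\D^\perp_{\p{\z}}\w + \D^\perp_{\p{\z}}\D^\perp_{\p{z}}\w\bigr)
\]
on sections of the pull-back bundle, the latter factorisation being the one already used in the proof of Lemma~\ref{d2k}. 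The problem thus reduces to comparing $\D^\perp_{\p{z}}\D^\perp_{\p{\z}}\w$ with the normal projection $(\p{z}\p{\z}\w)^\perp$ (and its conjugate).

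Writing $\p{\z}\w = \D^\perp_{\p{\z}}\w + (\p{\z}\w)^\top$, the discrepancy is exactly $-(\p{z}(\p{\z}\w)^\top)^\perp$. The tangential part is computed as in the proof of Lemma~\ref{d2k}: differentiating the normality conditions $\s{\w}{\e_z}=\s{\w}{\e_{\z}}=0$ in the direction $\p{\z}$ yields $\s{\p{\z}\w}{\e_z}=-\tfrac{e^{2\lambda}}{2}\s{\w}{\H}$ and $\s{\p{\z}\w}{\e_{\z}}=-\tfrac{e^{2\lambda}}{2}\s{\w}{\bar{\H_0}}$, so that
\[
(\p{\z}\w)^\top = -\s{\w}{\H}\,\e_{\z} - \s{\w}{\bar{\H_0}}\,\e_z.
\]
The next step is to apply $\p{z}$ and project onto the normal bundle; the scalar derivatives of $\s{\w}{\H}$ and $\s{\w}{\bar{\H_0}}$ multiply tangent vectors and drop out, and only the normal contributions $(\p{z}\e_{\z})^\perp=\vec{\I}(\p{z},\p{\z})=\tfrac{e^{2\lambda}}{2}\H$ and $(\p{z}\e_z)^\perp=\vec{\I}(\p{z},\p{z})=\tfrac{e^{2\lambda}}{2}\H_0$ survive. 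This yields
\[
\D^\perp_{\p{z}}\D^\perp_{\p{\z}}\w = (\p{z}\p{\z}\w)^\perp + \tfrac{e^{2\lambda}}{2}\bigl(\s{\w}{\H}\H + \s{\w}{\bar{\H_0}}\H_0\bigr),
\]
and by complex conjugation the analogous identity holds for $\D^\perp_{\p{\z}}\D^\perp_{\p{z}}\w$ with $\bar{\H_0}$ replaced by $\H_0$ in the last term. Summing and multiplying by $2e^{-2\lambda}$ produces
\[
\Delta_g^\perp\w = (\Delta_g\w)^\perp + 2\s{\w}{\H}\H + 2\,\Re\bigl(\s{\w}{\bar{\H_0}}\H_0\bigr),
\]
which is exactly the Simons-type identity $\Delta_g^\perp\w - (\Delta_g\w)^\perp = \mathscr{A}(\w)$ that already appears throughout the main text.

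It will then remain only to rewrite these two scalar correction terms in the invariant $g^{-1}\otimes$-notation of the statement, by using the identifications $\partial\phi\totimes\bar\partial\w = -\tfrac{e^{2\lambda}}{2}\s{\w}{\H}\,dz\otimes d\z$ and $\bar\partial\phi\totimes\bar\partial\w = -\tfrac{e^{2\lambda}}{2}\s{\w}{\bar{\H_0}}\,d\z\otimes d\z$, together with the single-contraction convention $g^{-1}(dz,d\z) = e^{-2\lambda}$ already fixed by the formula for $\frac{d}{dt}(K_{g_t}\,d\mathrm{vol}_{g_t})$ in Lemma~\ref{d2k}. The geometric content of the lemma is contained entirely in the tangential decomposition of $\p{\z}\w$ above; the only remaining task is the purely mechanical bookkeeping of factors of two and of sign conventions in the Weil--Petersson-type contractions of the statement, and that is where I expect any real obstacle to lie, rather than in the underlying geometry.
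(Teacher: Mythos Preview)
Your proposal is correct and follows essentially the same route as the paper: work in a conformal chart, write $\Delta_g^{\perp}=2e^{-2\lambda}(\D^{\perp}_{\p{z}}\D^{\perp}_{\p{\z}}+\D^{\perp}_{\p{\z}}\D^{\perp}_{\p{z}})$, peel off the tangential part of $\p{\z}\w$, and observe that only the normal projections $(\p{z}\e_z)^{\perp}$, $(\p{z}\e_{\z})^{\perp}$ survive. The one cosmetic difference is that the paper leaves the correction terms in the form $\s{\D_{\p{\z}}\w}{\e_z}\H$ and $\s{\D_{\p{\z}}\w}{\e_{\z}}\H_0$ and passes directly from there to the $\partial\phi\totimes\bar{\partial}\w$, $\bar{\partial}\phi\totimes\bar{\partial}\w$ notation of the statement, whereas you first use normality to rewrite these as $\s{\w}{\H}$, $\s{\w}{\bar{\H_0}}$ and then have to undo that substitution in your final bookkeeping step; staying in the paper's form saves you that round trip and avoids the sign hazard you flag at the end.
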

\begin{proof}
	Taking conformal coordinates, if $e^{2\lambda}$ is the conformal parameter of $\phi$, this is easy to see (\cite{classification}) that
	\begin{align*}
		\Delta_g^\perp=2\left(\D_{\p{z}}^\perp\D_{\p{\z}}^\perp+\D_{\p{\z}}^\perp\D_{\p{z}}^\perp\right).
  	\end{align*}
	As for all vector $\vec{X}\in \R^n$, we have
	\begin{align*}
		\vec{X}^{\top}=-2\,g^{-1}\otimes \s{\vec{X}}{\bar{\partial}\phi}\otimes\partial\phi-2\,g^{-1}\otimes\s{\vec{X}}{\partial\phi}\otimes\bar{\partial}\phi,
	\end{align*}
	we obtain (writing $\e_z=\p{z}\phi$)
	\begin{align}\label{tang1}
		\D_{\p{z}}^\perp\D_{\p{\z}}^\perp\vec{w}&=\D_{\p{z}}^\perp\left(\D_{\p{\z}}\w-\D_{\p{\z}}^{\top}\w\right)=\D_{\p{z}}^\perp\D_{\p{\z}}\w+\D_{\p{z}}^\perp\Big(2e^{-2\lambda}\s{\D_{\p{\z}}\vec{w}}{\e_{\z}}\e_z+2e^{-2\lambda}\s{\D_{\p{\z}}\w}{\e_z}\e_{\z}\Big)\nonumber\\
		&=\D_{\p{z}}^\perp\D_{\p{\z}}\w+2e^{-2\lambda}\s{\D_{\p{\z}}\w}{\e_{\z}}\D_{\p{z}}^\perp\e_z+2e^{-2\lambda}\s{\D_{\p{\z}}\w}{\e_z}\D_{\p{z}}^\perp\e_{\z}\nonumber\\
		&=\D_{\p{z}}^\perp\D_{\p{\z}}\w+\s{\D_{\p{\z}}\w}{\e_{\z}}\H_0+\s{\D_{\p{\z}}\w}{\e_z}\H,
	\end{align}
	where we used the notations
	\begin{align*}
		&\H_0=2\,e^{-2\lambda}\vec{\I}(\e_z,\e_z)=2\,e^{-2\lambda}\D_{\p{z}}^\perp\e_z\\
		&\H=2\,e^{-2\lambda}\vec{\I}(\e_z,\e_{\z})=2\,e^{-2\lambda}\D_{\p{z}}^\perp\e_{\z}.
	\end{align*}
	Likewise, we have
	\begin{align*}
		\D_{\p{\z}}^\perp\D_{\p{z}}^\perp\w=\D_{\p{\z}}^\perp\D_{\p{z}}\w+\s{\D_{\p{z}}\w}{\e_{\z}}\H+\s{\D_{\p{z}}\w}{\e_{z}}\bar{\H_0}.
	\end{align*}
	As
	\begin{align}\label{tang2}
		\D_{\p{z}}^\perp\D_{\p{\z}}\w=\D_{\p{z}}\D_{\p{\z}}\w-(\D_{\p{z}}\D_{\p{\z}}\w)^{\top}=\D_{\p{z}}\D_{\p{\z}}\w+2e^{-2\lambda}\s{\D_{\p{z}}\D_{\p{\z}}\w}{\e_{\z}}\e_z+2e^{-2\lambda}\s{\D_{\p{z}}\D_{\p{\z}}\w}{\e_z}\e_{\z},
	\end{align}
	and
	\begin{align}\label{tang3}
		\Delta_g\w=2e^{-2\lambda}\left(\D_{\p{z}}\D_{\p{\z}}+\D_{\p{\z}}\D_{\p{z}}\right)\w,
	\end{align}
	we obtain by \eqref{tang1}, \eqref{tang2} and \eqref{tang3} the identity
	\begin{align*}
		\Delta_g^\perp\w&=\Delta_g\w+2e^{-2\lambda}\s{\Delta_g\w}{\e_{\z}}\e_z+2e^{-2\lambda}\s{\Delta_g\w}{\e_z}\e_{\z}+2e^{-2\lambda}\Big(\s{\D_{\p{\z}}\w}{\e_z}+\s{\D_{\p{z}}\w}{\e_{\z}}\e_z\Big)\H\\
		&+2e^{-2\lambda}\s{\D_{\p{\z}}\w}{\e_{\z}}\H_0+2e^{-2\lambda}\s{\D_{\p{z}}\w}{\e_z}\bar{\H_0}\\
		&=\Delta_g\w+4\,g^{-1}\otimes\Re\bigg\{(\Delta_g\w\totimes\partial\phi)\otimes\bar{\partial}\phi+g^{-1}\otimes\left(\bar{\partial}\w\totimes\bar{\partial}\phi\right)\otimes\h_0+\left(\partial\w\totimes\bar{\partial}\phi\right)\H \bigg\}\\
		&=\left(\Delta_g\w\right)^\perp+4\,g^{-1}\otimes\Re\bigg\{g^{-1}\otimes\left(\bar{\partial}\w\totimes\bar{\partial}\phi\right)\otimes\h_0+\left(\partial\w\totimes\bar{\partial}\phi\right)\H \bigg\}
	\end{align*}
	and this concludes the proof of the lemma.
\end{proof}

\begin{lemme}\label{est2}
    Let $\phi\in \mathscr{E}_{\phi}(D^2,\R^n)$ be a weak branched immersion with finite total curvature assume that $\phi$ has a unique branch point at $z=0$ of multiplicity $\theta_0\geq 2$ and let $\w\in \mathscr{E}_{\phi}(\Sigma,\R^n)$ be an admissible normal variation. Then for all $\alpha<\theta_0-1$, there exists a  constant $C=C(\phi)>0$ depending only on $\phi$ such that 
    \begin{align*}
    	\int_{D^2(0,\frac{1}{2})}\frac{|\D^2\w|^2}{|z|^{2\alpha}}|dz|^2\leq C\int_{D^2}|\Delta_g^{\perp}\w|^2d\vg+C\left(\frac{\alpha^2}{(\theta_0-1)-\alpha}+\int_{D^2}|\h_0|^2_{WP}d\vg\right)\np{|d\w|_g}{\infty}{D^2}.
    \end{align*}
    where $\D^2$ is the normal gradient defined in \eqref{opd2}.
\end{lemme}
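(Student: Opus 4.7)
The plan is to reduce the estimate to a weighted interior $L^2$ regularity bound for the flat Laplacian on $D^2$, exploiting the asymptotic $e^{2\lambda}=|z|^{2\theta_0-2}(1+o(1))$ at the branch point which governs the range of admissible weights.

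First I would decompose $\Delta_g\w=(\Delta_g\w)^{\perp}+(\Delta_g\w)^{\top}$ in the conformal chart $z$. The normal part is given by Lemma \ref{normaldelta}, which expresses $(\Delta_g\w)^{\perp}$ as $\Delta_g^{\perp}\w$ minus a correction pointwise dominated by $C(|\h_0|_{WP}+|\H|)|d\w|_g$. The tangential part is extracted by differentiating the normality relations $\s{\w}{\e_z}=\s{\w}{\e_{\z}}=0$ twice, which expresses $(\Delta_g\w)^{\top}$ as a linear combination of products $|\w|\,|\vec{\I}|_g^2$ and $|d\w|_g\,|\vec{\I}|_g$. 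Combined with the conformal identity $\Delta\w=e^{2\lambda}\Delta_g\w$, this produces a pointwise bound
\[
|\Delta\w|\leq Ce^{2\lambda}\bigl(|\Delta_g^{\perp}\w|+(|\h_0|_{WP}+|\H|)|d\w|_g+|\vec{\I}|_g^2|\w|\bigr).
\]

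Next I would apply a weighted $L^2$ Calder\'on--Zygmund estimate, obtained either by Muckenhoupt theory when $\alpha<1$ or by a dyadic decomposition of $D^2(0,1/2)$ into annuli $A_k=\{2^{-k-1}<|z|<2^{-k}\}$ with rescaled unweighted interior regularity on each annulus summed in a geometric series. This gives
\[
\int_{D^2(0,1/2)}\frac{|\partial^2\w|^2}{|z|^{2\alpha}}|dz|^2\leq C\int_{D^2(0,1)}\frac{|\Delta\w|^2}{|z|^{2\alpha}}|dz|^2+C\int_{D^2(0,1)}\frac{|d\w|^2}{|z|^{2+2\alpha}}|dz|^2.
\]
Inserting the pointwise bound on $|\Delta\w|$, the first term becomes $C\int|\Delta_g^{\perp}\w|^2d\vg+C\np{|d\w|_g}{\infty}{D^2}\int|\h_0|_{WP}^2d\vg$ after using $e^{2\lambda}|dz|^2=d\vg$, $|\h_0|_{WP}^2=|\H|^2-K_g$ and $e^{4\lambda}|z|^{-2\alpha}\leq C|z|^{2\theta_0-2-2\alpha}e^{2\lambda}$ (bounded when $\alpha<\theta_0-1$); the second term, using $|d\w|\leq C|z|^{\theta_0-1}\np{|d\w|_g}{\infty}{D^2}$, integrates in polar coordinates to $\frac{C}{(\theta_0-1)-\alpha}\np{|d\w|_g}{\infty}{D^2}$, which accounts for the threshold singularity as $\alpha\uparrow\theta_0-1$.

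Finally I would convert flat partials back to $\D^2\w$. In conformal coordinates, $\D^2_{\e_z,\e_z}\w=(\partial_z(\D^{\perp}_{\e_z}\w))^{\perp}-2(\partial_z\lambda)\D^{\perp}_{\e_z}\w$ and $\D^2_{\e_z,\e_{\z}}\w=\D^{\perp}_{\e_z}\D^{\perp}_{\e_{\z}}\w$, each pointwise controlled by $C|\partial^2\w|+C|\partial_z\lambda|\,|\partial\w|+C|\vec{\I}|_g\,|\partial\w|+C|\vec{\I}|_g^2\,|\w|$. The only new contribution beyond the previous step is the singular connection term $|\partial_z\lambda|^2|\partial\w|^2$, where $\partial_z\lambda=(\theta_0-1)/(2z)+O(1)$. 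A direct estimate yields the constant $(\theta_0-1)^2/((\theta_0-1)-\alpha)$; the sharper coefficient $\alpha^2/((\theta_0-1)-\alpha)$ of the statement is obtained by an integration by parts pairing the singular $1/z$ piece of $\partial_z\lambda$ against the derivative of the weight $|z|^{-\alpha}$, effectively replacing the factor $(\theta_0-1)^2$ by a commutator bound involving $\alpha^2$.

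The main obstacle is precisely this last integration by parts: extracting the sharp $\alpha^2$ in the connection term requires a careful cancellation between the holomorphic singularity of $\partial_z\lambda$ and the radial weight $|z|^{-\alpha}$ (most transparently carried out in polar coordinates via a Fourier expansion in the angular variable around the branch point), whereas every other step is a routine combination of Lemma \ref{normaldelta}, the standard interior Calder\'on--Zygmund estimate on each dyadic annulus, and the a priori $L^{\infty}$-bound on $|d\w|_g$.
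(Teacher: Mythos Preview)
Your route via flat Calder\'on--Zygmund is genuinely different from the paper's, which never leaves the normal bundle. The paper integrates by parts directly on $\int|\D^{\perp}_{\e_z}\D^{\perp}_{\e_z}\w|^2|z|^{-2\alpha}\rho^2\,|dz|^2$, using that for normal sections $X,Y$ one has $\s{\D^{\perp}_{\e_{\z}}X}{Y}=\s{\partial_{\z}X}{Y}$, so the flat integration-by-parts formula applies verbatim to $\D^{\perp}$. The Ricci commutator $R^{\perp}(\e_z,\e_{\z})$ only produces zeroth-order curvature terms, and the identity $\Delta^{\perp}\w=4\,\D^{\perp}_{\e_{\z}}\D^{\perp}_{\e_z}\w+4i\,\Im\bigl(\s{\D_{\e_z}\w}{\e_z}\bar{\H_0}\bigr)$ closes the estimate without ever invoking $(\Delta_g\w)^{\top}$ or flat second partials.

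Your proposal has two concrete gaps. First, the pointwise bound on $(\Delta_g\w)^{\top}$ is incomplete: differentiating $\s{\w}{\e_{\z}}=0$ twice yields
\[
\tfrac14\s{\Delta\w}{\e_{\z}}=-\tfrac{e^{2\lambda}}{2}\bigl(\s{\partial_z\w}{\bar{\H_0}}+\s{\partial_{\z}\w}{\H}+\s{\w}{\D^{\perp}_{\e_{\z}}\H}\bigr),
\]
so a first derivative of the mean curvature appears, and this is \emph{not} dominated by $|\vec{\I}|_g^2|\w|$. Since $C$ may depend on $\phi$ you could in principle absorb a norm of $\D^{\perp}\H$ into $C(\phi)$, but the bound as you wrote it is false, and for a merely weak branched immersion (the hypothesis of the lemma) $\D^{\perp}\H$ need not be well behaved. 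The paper's normal-bundle approach sidesteps this entirely.

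Second, your attribution of the $\alpha^2$ coefficient to the connection term $\partial_z\lambda\sim(\theta_0-1)/(2z)$ is mistaken. In the paper the $\alpha^2$ arises when the $\partial_z,\partial_{\z}$ in the Bochner-type integration by parts hit the weight $|z|^{-2\alpha}$ itself (each hit produces a factor $\alpha$; Cauchy with parameter $\epsilon$ then gives $\alpha^2/\epsilon$ on the term $\int|\D\w|^2|z|^{-2\alpha-2}|dz|^2$). The connection term is handled afterwards by the crude bound $|\partial_z\lambda|^2\leq C_1(\phi)|z|^{-2}$ and contributes only a $\phi$-dependent constant absorbed into $C$. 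Your proposed ``pairing the singular $1/z$ piece of $\partial_z\lambda$ against the derivative of the weight'' does not convert $(\theta_0-1)^2$ into $\alpha^2$ and is not the mechanism behind the stated coefficient.
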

\begin{proof}
	We first compute thanks of the Ricci identity
	\begin{align*}
		\D_{\e_{\z}}^{\perp}\D_{\e_z}^{\perp}\D^{\perp}_{\e_z}\w&=\D_{\e_{z}}^{\perp}\D_{\e_{\z}}^{\perp}\D_{\e_z}^\perp \w+R^{\perp}(\e_{\z},\e_z)\D_{\e_z}^{\perp}\w\\
		&=\D_{\e_{z}}^{\perp}\D_{\e_{\z}}^{\perp}\D_{\e_z}^{\perp}\w+\vec{\I}(\bar{\D}_{\e_{\z}}\D_{\e_{z}}^{\perp}\w,\e_{z})-\vec{\I}(\e_{\z},\bar{\D}_{\e_z}\D_{\e_z}^{\perp}\w).
	\end{align*}
	Furthermore, we have directly
	\begin{align*}
		\bar{\D}_{\e_{\z}}\D_{\e_z}^{\perp}\w&=2e^{-2\lambda}\s{\D_{\e_{\z}}\D_{\e_z}^{\perp}\w}{\e_{z}}\e_z+2e^{-2\lambda}\s{\D_{\e_{\z}}\D_{\e_z}^{\perp}\w}{\e_z}\e_{\z}\\
		&=-\s{\D_{\e_z}^{\perp}\w}{\bar{\H_0}}\e_z-\s{\D_{\e_{\z}}^{\perp}\w}{\H}\e_{\z}\\
		\bar{\D}_{\e_z}^{\perp}\D_{\e_z}^{\perp}\w&=-\s{\D_{\e_z}^{\perp}\w}{\H}\e_z-\s{\D_{\e_{z}}^{\perp}\w}{\H_0}\e_{\z}.
	\end{align*}
	Therefore, we finally obtain
	\begin{align*}
		\D_{\e_{\z}}^{\perp}\D_{\e_z}^{\perp}\D_{\e_z}^{\perp}\w&=\D_{\e_z}^{\perp}\D_{\e_{\z}}^{\perp}\D_{\e_z}^{\perp}\w+\frac{1}{2}e^{-2\lambda}\s{\D_{\e_z}^{\perp}\w}{\bar{\H_0}}\H_0+\frac{1}{2}e^{-2\lambda}\s{\D_{\e_z}^{\perp}\w}{\H}\H-\frac{1}{2}\s{\D_{\e_z}^{\perp}\w}{\H}\H-\frac{1}{2}\s{\D_{\e_z}^{\perp}\w}{\H_0}\bar{\H_0}\\
		&=\D_{\e_z}^{\perp}\D_{\e_{\z}}^{\perp}\D_{\e_z}^{\perp}\w+\frac{1}{2}e^{-2\lambda}\s{\D_{\e_z}^{\perp}\w}{\bar{\H_0}}\H_0-\frac{1}{2}\s{\D_{\e_z}^{\perp}\w}{\H_0}\bar{\H_0}
	\end{align*}
	Furthermore, we have
	\begin{align}\label{d1}
		\Delta^{\perp}=2\left(\D_{\e_z}^{\perp}\D_{\e_{\z}}^{\perp}+\D_{\e_{\z}}^{\perp}\D_{\e_z}^{\perp}\right)
	\end{align}
	while by Ricci's identity
	\begin{align}\label{d2}
		&\D_{\e_z}^{\perp}\D_{\e_{\z}}^{\perp}\w=\D_{\e_{\z}}^{\perp}\D_{\e_z}^{\perp}\w+R^{\perp}(\e_z,\e_{\z})\w\nonumber\\
		&=\D_{\e_{\z}}^{\perp}\D_{\e_z}^{\perp}\w+\vec{\I}(\bar{\D}_{\e_z}\w,\e_{\z})-\vec{\I}(\e_z,\bar{\D}_{\e_{\z}}\w)\nonumber\\
		&=\D_{\e_{\z}}^{\perp}\D_{\e_z}^{\perp}\w+2e^{-2\lambda}\left(\s{\D_{\e_z}\w}{\e_{\z}}\vec{\I}(\e_z,\e_{\z})+\s{\D_{\e_z}\w}{\e_z}\vec{\I}(\e_{\z},\e_{\z})-\s{\D_{\e_{\z}}\w}{\e_{\z}}\vec{\I}(\e_z,\e_z)-\s{\D_{\e_{\z}}\w}{\e_z}\vec{\I}(\e_z,\e_{\z})\right)\nonumber\\
		&=\D_{\e_{\z}}^{\perp}\D_{\e_z}^{\perp}\w+\s{\D_{\e_z}\w}{\e_{z}}\bar{\H_0}-\s{\D_{\e_{\z}}\w}{\e_{\z}}\vec{H}_0
	\end{align}
	as by normality
	\begin{align*}
		2e^{-2\lambda}\s{\D_{\e_z}\w}{\e_{\z}}\vec{\I}(\e_z,\e_{\z})=-\frac{e^{2\lambda}}{2}\s{\w}{\H}\H=2e^{-2\lambda}\s{\D_{\e_{\z}}\w}{\e_z}\vec{\I}(\e_z,\e_{\z}).
	\end{align*}
	Therefore, we obtain from \eqref{d1} and \eqref{d2} the identity
	\begin{align}\label{d3}
		\Delta^{\perp}\w=4\D_{\e_{\z}}^{\perp}\D_{\e_z}^{\perp}\w+2\left(\s{\D_{\e_z}\w}{\e_z}\bar{\H_0}-\s{\D_{\e_{\z}}\w}{\e_{\z}}\H_0\right)=4\D_{\e_{\z}}^{\perp}\D_{\e_z}^{\perp}\w+4i\,\Im\left(\s{\D_{\e_z}\w}{\e_z}\bar{\H_0}\right).
	\end{align} 
	In particular, observe as $\Delta^{\perp}\w$ is real and $\s{\D_{\e_z}\w}{\e_z}\bar{\H_0}-\s{\D_{\e_{\z}}\w}{\e_{\z}}\H_0$ is imaginary pure
	\begin{align*}
		\left|\D_{\e_{\z}}^{\perp}\D_{\e_z}^{\perp}\w\right|^{2}&=\left|\frac{1}{4}\Delta^{\perp}\w+\frac{1}{2}\left(\s{\D_{\e_{\z}}\w}{\e_{\z}}\H_0-\s{\D_{\e_z}\w}{\e_z}\bar{\H_0}\right)\right|^2\\
		&=\frac{1}{16}|\Delta^{\perp}\w|^2+\frac{1}{2}\left|\s{\D_{\e_{z}}\w}{\e_{z}}\right|^2|{\H_0}|^2-\frac{1}{2}\Re\left(\s{\D_{\e_{\z}}\w}{\e_{\z}}^2\s{\H_0}{\H_0}\right)
	\end{align*}
	Now, we have by integrating by parts
	\begin{align*}
		&\int_{D^2}\frac{|\D_{\e_z}^{\perp}\D_{\e_z}^{\perp}\w|^2}{|z|^{2\alpha}}\rho^2|dz|^2=\int_{D^2} {\s{\D_{\e_z}^{\perp}\D_{\e_z}^{\perp}\w}{\D_{\e_{\z}}^{\perp}\D_{\e_{\z}}^{\perp}\w}}\frac{\rho^2|dz|^2}{|z|^{2\alpha}}\\
		&=-\int_{D^2}\s{\D_{\e_{\z}}^{\perp}\D_{\e_z}^{\perp}\D_{\e_z}^{\perp}\w}{\D_{\e_{\z}}^{\perp}\w}\frac{\rho^2|dz|^2}{|z|^{2\alpha}}+\alpha\int_{D^2}z\s{\D_{\e_z}^{\perp}\D_{\e_z}^{\perp}\w}{\D_{\e_{\z}}^{\perp}\w}\frac{\rho^2|dz|^2}{|z|^{2\alpha+2}}-2\int_{D^2}{\s{\D_{\e_z}^{\perp}\D_{\e_z}^{\perp}\w}{\D_{\e_{\z}}^{\perp}\w}}\frac{(\p{\z}\rho)\, \rho |dz|^2}{|z|^{2\alpha}}.
	\end{align*}
	By \eqref{d2}, we also have
	\begin{align*}
		&-\int_{D^2}\s{\D_{\e_{\z}}^{\perp}\D_{\e_z}^{\perp}\D_{\e_z}^{\perp}\w}{\D_{\e_{\z}}^{\perp}\w}\frac{\rho^2|dz|^2}{|z|^{2\alpha}}=-\int_{D^2}\s{\D_{\e_z}^{\perp}\D_{\e_{\z}}^{\perp}\D_{\e_z}^{\perp}\w}{\D_{\e_{\z}}^{\perp}\w}\frac{\rho^2|dz|^2}{|z|^{2\alpha}}\\
		&+\frac{1}{2}\int_{D^2}e^{-2\lambda}\s{\D_{\e_z}^{\perp}\w}{\bar{\H_0}}\s{\H_0}{\D_{\e_{\z}}^{\perp}\w}\frac{|dz|^2}{|z|^{2\alpha}}
		-\frac{1}{2}\int_{D^2}e^{-2\lambda}\s{\D_{\e_z}^{\perp}\w}{\H_0}\s{\D_{\e_{\z}}^{\perp}\w}{\bar{\H_0}}\frac{\rho^2|dz|^2}{|z|^{2\alpha}}\\
		&=\int_{D^2}\s{\D_{\e_{\z}}^{\perp}\D_{\e_z}^{\perp}\w}{\D_{\e_z}^{\perp}\D_{\e_{\z}}^{\perp}\w}\frac{\rho^2|dz|^2}{|z|^{2\alpha}}-\alpha\int_{D^2}\bar{z}\s{\D_{\e_{\z}}^{\perp}\D_{\e_z}^{\perp}\w}{\D_{\e_{\z}}^{\perp}\w}\frac{\rho^2|dz|^2}{|z|^{2\alpha+2}}+2\int_{D^2}\s{\D_{\e_{\z}}^{\perp}\D_{\e_z}^{\perp}\w}{\D_{\e_{\z}}^{\perp}\w}\frac{(\p{z}\rho)\,\rho|dz|^2}{|z|^{2\alpha}}\\
		&+\frac{1}{2}\int_{D^2}e^{-2\lambda}\s{\D_{\e_z}^{\perp}\w}{\bar{\H_0}}\s{\H_0}{\D_{\e_{\z}}^{\perp}\w}\frac{\rho^2|dz|^2}{|z|^{2\alpha}}
		-\frac{1}{2}\int_{D^2}e^{-2\lambda}\s{\D_{\e_z}^{\perp}\w}{\H_0}\s{\D_{\e_{\z}}^{\perp}\w}{\bar{\H_0}}\frac{\rho^2|dz|^2}{|z|^{2\alpha}}.
	\end{align*}
	Thanks of \eqref{d3}, we have
	\begin{align*}
		&\s{\D^{\perp}_{\e_{\z}}\D_{\e_z}^{\perp}\w}{\D_{\e_z}^{\perp}\D_{\e_{\z}}^{\perp}\w}=|\D_{\e_{\z}}^{\perp}\D_{\e_z}^{\perp}\w|^2+\s{\D_{\e_z}\w}{\e_z}\s{\bar{\H_0}}{\D_{\e_{\z}}^{\perp}\D_{\e_z}^{\perp}\w}-\s{\D_{\e_{\z}}\w}{\e_{\z}}\s{\H_0}{\D_{\e_{\z}}\D_{\e_z}^{\perp}\w}\\
		&=\left|\frac{1}{4}\Delta^{\perp}\w+\frac{1}{2}\left(\left(\s{\D_{\e_z}\w}{\e_z}\bar{\H_0}-\s{\D_{\e_{\z}}\w}{\e_{\z}}\H_0\right)\right)\right|^2
		+\s{\D_{\e_z}\w}{\e_z}\s{\bar{\H_0}}{\left(\s{\D_{\e_z}\w}{\e_z}\bar{\H_0}-\s{\D_{\e_{\z}}\w}{\e_{\z}}\H_0\right)}\\
		&-\s{\D_{\e_{\z}}\w}{\e_{\z}}\s{\H_0}{\left(\s{\D_{\e_z}\w}{\e_z}\bar{\H_0}-\s{\D_{\e_{\z}}\w}{\e_{\z}}\H_0\right)}\\
		&=\frac{1}{16}|\Delta^{\perp}\w|^2+\frac{1}{4}\left|\s{\D_{\e_z}\w}{\e_z}\bar{\H_0}-\s{\D_{\e_{\z}}\w}{\e_{\z}}\H_0\right|^2+2\,\Re\left(\s{\D_{\e_{\z}}\w}{\e_{\z}}\s{\H_0}{\H_0}\right)-2|\s{\D_{\e_z}\w}{\e_z}|^2|\H_0|^2\\
		&=\frac{1}{16}|\Delta^{\perp}\w|^2+\frac{3}{2}\,\Re\left(\s{\D_{\e_{\z}}\w}{\e_{\z}}\s{\H_0}{\H_0}\right)-\frac{3}{2}|\s{\D_{\e_z}\w}{\e_z}|^2|\H_0|^2.
	\end{align*}
	Finally, we obtain
	\begin{align}\label{cancelcod}
		&\int_{D^2}\frac{|\D_{\e_z}^{\perp}\D_{\e_{z}}^{\perp}\w|^2}{|z|^{2\alpha}}\rho^2|dz|^2=\frac{1}{16}\int_{D^2}\frac{|\Delta^{\perp}\w|^2}{|z|^{2\alpha}}\rho^2|dz|^2-2\int_{D^2}\s{\D_{\e_z}^{\perp}\D_{\e_z}^{\perp}\w}{\D_{\e_{\z}}^{\perp}\w}\frac{(\p{\z}\rho)\rho|dz|^2}{|z|^{2\alpha}}\nonumber\\
		&+2\int_{D^2}\s{\D_{\e_{\z}}^{\perp}\D_{\e_z}^{\perp}\w}{\D_{\e_{\z}}^{\perp}\w}\frac{(\p{z}\rho)\,\rho|dz|^2}{|z|^{2\alpha}}+\alpha\int_{D^2}z\s{\D_{\e_z}^{\perp}\D_{\e_z}^{\perp}\w}{\D_{\e_{\z}}^{\perp}\w}\frac{\rho^2|dz|^2}{|z|^{2\alpha+2}}\nonumber\\
		&-\alpha\int_{D^2}\z\s{\D_{\e_{\z}}^{\perp}\D_{\e_z}^{\perp}\w}{\D_{\e_{\z}}^{\perp}\w}\frac{\rho^2|dz|^2}{|z|^{2\alpha+2}}
		+\frac{1}{2}\int_{D^2}e^{-2\lambda}\left|\s{\H_0}{\D_{\e_{\z}}^{\perp}\w}\right|^2\frac{\rho^2|dz|^2}{|z|^{2\alpha}}
		-\frac{1}{2}\int_{D^2}e^{-2\lambda}\left|\s{\H_0}{\D_{\e_z}^{\perp}\w}\right|^2\frac{\rho^2|dz|^2}{|z|^{2\alpha}}\nonumber\\
		&+\frac{3}{2}\int_{D^2}\Re\left(\s{\D_{\e_{\z}}\w}{\e_{\z}}\s{\H_0}{\H_0}\right)\frac{\rho^2|dz|^2}{|z|^{2\alpha}}-\frac{3}{2}\int_{D^2}|\s{\D_{\e_z}\w}{\e_z}|^2|\H_0|^2\frac{\rho^2|dz|^2}{|z|^{2\alpha}}\nonumber\\
		&\leq \frac{1}{16}\int_{D^2}\frac{|\Delta^{\perp}\w|^2}{|z|^{2\alpha}}\rho^2|dz|^2-2\int_{D^2}\s{\D_{\e_z}^{\perp}\D_{\e_z}^{\perp}\w}{\D_{\e_{\z}}^{\perp}\w}\frac{(\p{\z}\rho)\rho|dz|^2}{|z|^{2\alpha}}
		+2\int_{D^2}\s{\D_{\e_{\z}}^{\perp}\D_{\e_z}^{\perp}\w}{\D_{\e_{\z}}^{\perp}\w}\frac{(\p{z}\rho)\,\rho|dz|^2}{|z|^{2\alpha}}\nonumber\\
		&+\alpha\int_{D^2}z\s{\D_{\e_z}^{\perp}\D_{\e_z}^{\perp}\w}{\D_{\e_{\z}}^{\perp}\w}\frac{\rho^2|dz|^2}{|z|^{2\alpha+2}}
		-\alpha\int_{D^2}\z\s{\D_{\e_{\z}}^{\perp}\D_{\e_z}^{\perp}\w}{\D_{\e_{\z}}^{\perp}\w}\frac{\rho^2|dz|^2}{|z|^{2\alpha+2}}
		+\frac{1}{2}\int_{D^2}e^{-2\lambda}\left|\s{\H_0}{\D_{\e_{\z}}^{\perp}\w}\right|^2\frac{\rho^2|dz|^2}{|z|^{2\alpha}}\nonumber\\
		&+\frac{3}{2}\int_{D^2}\Re\left(\s{\D_{\e_{\z}}\w}{\e_{\z}}\s{\H_0}{\H_0}\right)\frac{\rho^2|dz|^2}{|z|^{2\alpha}}\nonumber\\
		&=\frac{1}{16}\int_{D^2}\frac{|\Delta^{\perp}\w|^2}{|z|^{2\alpha}}\rho^2|dz|^2+\mathrm{(I)}+\mathrm{(II)}+\mathrm{(III)}+\mathrm{(IV)}+\mathrm{(V)}+\mathrm{(VI)}.
	\end{align}
	Now, we estimate as $|\D\rho(x)||x|\leq 2$ for $x\in \mathrm{supp}(\D\rho)$
	\begin{align}\label{z1}
		\mathrm{(I)}&=\left|2\int_{D^2}\s{\D_{\e_z}^{\perp}\D_{\e_z}^{\perp}\w}{\D_{\e_{\z}}^{\perp}\w}\frac{(\p{\z}\rho)\rho|dz|^2}{|z|^{2\alpha}}\right|\leq\epsilon \int_{D^2}|\D_{\e_z}^{\perp}\D_{\e_z}^{\perp}\w|^2\frac{\rho^2|dz|^2}{|z|^{2\alpha}}+\frac{1}{\epsilon}\int_{D^2}|\D_{\e_{\z}}^{\perp}\w|^2\frac{|\p{\z}\rho|^2|dz|^2}{|z|^{2\alpha}}\nonumber\\
		&\leq \epsilon\int_{D^2}|\D_{\e_z}^{\perp}\D_{\e_z}^{\perp}\w|^2\frac{\rho^2|dz|^2}{|z|^{2\alpha}}+\frac{1}{2\epsilon}\int_{D^2}\frac{|\D\w|^2}{|z|^{2\alpha+2}}|dz|^2.
	\end{align}
	Likewise, we have
	\begin{align}\label{z2}
		\mathrm{(II)}&=\left|2\int_{D^2}\s{\D_{\e_{\z}}^{\perp}\D_{\e_z}^{\perp}\w}{\D_{\e_{\z}}^{\perp}\w}\frac{(\p{z}\rho)\,\rho|dz|^2}{|z|^{2\alpha }}\right|\leq \epsilon\int_{D^2}|\D^{\perp}_{\e_{\z}}\D^{\perp}_{\e_z}\w|^2\frac{\rho^2|dz|^2}{|z|^{2\alpha}}+\frac{1}{2\epsilon}\int_{D^2}\frac{|\D\w|^2}{|z|^{2\alpha+2}}|dz|^2\nonumber\\
		&=\frac{\epsilon}{16}\int_{D^2}\frac{|\Delta^{\perp}\w|^2}{|z|^{2\alpha}}\rho^2|dz|^2+\frac{\epsilon}{2}\int_{D^2}|\s{\D_{\e_z}\w}{\e_z}|^2|\H_0|^2\frac{\rho^2|dz|^2}{|z|^{2\alpha}}-\frac{\epsilon}{2}\int_{D^2}\Re\left(\s{\D_{\e_{\z}}\w}{\e_{\z}}^2\s{\H_0}{\H_0}\right)\frac{\rho^2|dz|^2}{|z|^{2\alpha}}\nonumber\\
		&+\frac{1}{2\epsilon}\int_{D^2}\frac{|\D\w|^2}{|z|^{2\alpha+2}}|dz|^2\nonumber\\
		&\leq \frac{\epsilon}{16}\int_{D^2}\frac{|\Delta^{\perp}\w|^2}{|z|^{2\alpha}}\rho^2|dz|^2+\epsilon\int_{D^2}|\s{\D_{\e_z}\w}{\e_z}|^2|\H_0|^2\frac{\rho^2|dz|^2}{|z|^{2\alpha}}+\frac{1}{2\epsilon}\int_{D^2}\frac{|\D\w|^2}{|z|^{2\alpha+2}}|dz|^2
	\end{align}
	Now we estimate
	\begin{align}\label{z3}
	\mathrm{(III)}&=\left|\alpha\int_{D^2}z\s{\D_{\e_z}^{\perp}\D_{\e_z}^{\perp}\w}{\D_{\e_{\z}}^{\perp}\w}\frac{\rho^2|dz|^2}{|z|^{2\alpha+2}}\right|\leq \epsilon\int_{D^2}|\D_{\e_z}^{\perp}\D_{\e_z}^{\perp}\w|^2\frac{\rho^2|dz|^2}{|z|^{2\alpha}}+\frac{\alpha^2}{4\epsilon}\int_{D^2}\frac{|\D_{\e_{\z}}^{\perp}\w|^2}{|z|^{2\alpha+2}}|dz|^2\nonumber\\
	&\leq \epsilon\int_{D^2}|\D_{\e_z}^{\perp}\D_{\e_z}^{\perp}\w|^2\frac{\rho^2|dz|^2}{|z|^{2\alpha}}+\frac{\alpha^2}{8\epsilon}\int_{D^2}\frac{|\D\w|^2}{|z|^{2\alpha+2}}|dz|^2.
	\end{align}
	The next inequality is
	\begin{align}\label{z4}
	\mathrm{(IV)}&=\left|\alpha\int_{D^2}\z\s{\D_{\e_{\z}}^{\perp}\D_{\e_z}^{\perp}\w}{\D_{\e_{\z}}^{\perp}\w}\frac{\rho^2|dz|^2}{|z|^{2\alpha+2}}\right|\leq \frac{\epsilon}{2}\int_{D^2}\frac{|\Delta^{\perp}\w|^2}{|z|^{2\alpha}}\rho^2|dz|^2+\frac{\epsilon}{2}\int_{D^2}|\s{\D_{\e_z}\w}{\e_z}|^2|\H_0|^2\frac{\rho^2|dz|^2}{|z|^{2\alpha}}\nonumber\\
	&-\frac{\epsilon}{2}\int_{D^2}\Re\left(\s{\D_{\e_{\z}}\w}{\e_{\z}}^2\s{\H_0}{\H_0}\right)\frac{\rho^2|dz|^2}{|z|^{2\alpha}}
	+\frac{\alpha^2}{8\epsilon}\int_{D^2}\frac{|\D\w|^2}{|z|^{2\alpha+2}}|dz|^2.
	\end{align}
	Now, as $z=0$ is a branched point of order $\theta_0$, there exists $\gamma>0$ such that
	\begin{align*}
		g=e^{2\lambda}|dz|^2=\gamma|z|^{2\theta_0-2}(1+O(|z|))|dz|^2,
	\end{align*}
	In particular, we have for some constant $C_0=C_0(\phi)$ the estimate
	\begin{align}\label{non-explicit}
		\frac{1}{C_0(\phi)}|z|^{2\theta_0-2}\leq e^{2\lambda}\leq C_0(\phi)|z|^{2\theta_0-2}
	\end{align}
	and as $\alpha<\theta_0-1$, we obtain
	\begin{align}\label{z5}
	\mathrm{(V)}&=\frac{1}{2}\int_{D^2}e^{-2\lambda}\left|\s{\H_0}{\D_{\e_z}\w}\right|^2\frac{\rho^2|dz|^2}{|z|^{2\alpha}}\leq \frac{1}{4}\np{|d\w|_g}{\infty}{D^2}^2\int_{D^2}|\h_0|^2_{WP}\frac{e^{4\lambda}|dz|^2}{|z|^{2\alpha}}\nonumber\\
	&\leq \frac{C_0}{4}\np{|d\w|_g}{\infty}{D^2}^2\int_{D^2}|\h_0|^2_{WP}d\vg.
	\end{align}
	We also estimate directly
    \begin{align}\label{z6}
	|\mathrm{(VI)}|&=\frac{3}{2} \left|\int_{D^2}\Re\left(\s{\D_{\e_{\z}}\w}{\e_{\z}}\s{\H_0}{\H_0}\right)\frac{\rho^2|dz|^2}{|z|^{2\alpha}}\right|\leq\frac{3}{2}\int_{D^2}|\s{\D_{\e_z}\w}{\e_z}|^2|\H_0|^2\frac{\rho^2|dz|^2}{|z|^{2\alpha}}\nonumber\\
	&\leq  \frac{3C_0}{8}\np{|d\w|_g}{\infty}{D^2}^2\int_{D^2}|\h_0|^2_{WP}d\vg.
	\end{align}
    Finally, by \eqref{cancelcod}, we obtain the estimate
    \begin{align*}
    	\int_{D^2}\frac{|\D_{\e_z}^{\perp}\D_{\e_z}^{\perp}\w|^2}{|z|^{2\alpha}}\rho^2|dz|^2&\leq 2\epsilon\int_{D^2}\frac{|\D_{\e_z}^{\perp}\D_{\e_z}^{\perp}\w|^2}{|z|^{2\alpha}}\rho^2|dz|^2+\frac{1}{16}\left(1+2\epsilon\right)\int_{D^2}\frac{|\Delta^{\perp}\w|^2}{|z|^{2\alpha}}\rho^2|dz|^2\\
    	&+\frac{1}{\epsilon}\left(1+\frac{\alpha^2}{4}\right)\int_{D^2}\frac{|\D\w|^2}{|z|^{2\alpha+2}}|dz|^2+C_0\np{|d\w|_g}{\infty}{D^2}^2\int_{D^2}|\h_0|^2_{WP}d\vg
    \end{align*}
    Now, choosing $\epsilon=1/4$ and marking $\rho$ converge to the indicator function of $D^2(0,1/2)$, we obtain
    \begin{align}\label{key}
    	\int_{D^2(0,1/2)}\frac{|\D_{\e_z}^{\perp}\D_{\e_z}^{\perp}\w|^2}{|z|^{2\alpha}}|dz|^2&\leq \frac{3}{16}\int_{D^2}\frac{|\Delta^{\perp}\w|^2}{|z|^{2\alpha}}+2(4+\alpha^2)\int_{D^2}\frac{|\D\w|^2}{|z|^{2\alpha+2}}|dz|^2\nonumber\\
    	&+2C_0\np{|d\w|_g}{\infty}{D^2}^2\int_{D^2}|\h_0|^2_{WP}d\vg.  
    \end{align}
    Now, we have
    \begin{align*}
    	\D^2_{\e_z,\e_z}=\D^{\perp}_{\e_z}\D_{\e_z}^{\perp}-\D_{\bar{\D}_{\e_z}\e_z}^{\perp}=\D_{\e_z}^{\perp}\D_{\e_z}^{\perp}-2(\p{z}\lambda)\D_{\e_z}^{\perp}
    \end{align*}
    Now, recall the expansion from \cite{classification}
    \begin{align*}
    	&\lambda=(\theta_0-1)\log|z|+\frac{1}{2}\log(\gamma)+O(|z|)\\
    	&2(\p{z}\lambda)=\frac{(\theta_0-1)}{z}+O(1),
    \end{align*}
    where the rest $2(\p{z}\lambda)-(\theta_0-1)/z$ is en element of $C^{0,\alpha}$ for all $0<\alpha<1$. In particular, there exists $C_1=C_1(\phi)>0$ such that
    \begin{align*}
    	4|\D\lambda|^2=8|\p{z}\lambda|^2\leq \frac{C_1}{|z|^2}
    \end{align*}
     This implies that
    \begin{align*}
    	|\D^{2}_{\e_z,\e_z}\w|^2&= |\D_{\e_z}^{\perp}\D_{\e_z}^{\perp}\w|^2-4|\p{z}\lambda|^2\Re\left(\s{\D_{\e_z}^{\perp}\D_{\e_z}^{\perp}\w}{\D_{\e_{\z}}^{\perp}\w}\right)+4|\p{z}\lambda|^2|\D_{\e_z}^{\perp}\w|^2\\
    	&\leq 2|\D_{\e_z}^{\perp}\D_{\e_z}^{\perp}\w|^2+\frac{C_1}{|z|^2}|\D_{\e_z}^{\perp}\w|^2.
    \end{align*}
    Now, recall that by the Ricci equation
    \begin{align*}
    	\left(\D^2_{\e_z,\e_{\z}}-\D^2_{\e_{\z},\e_z}\right)\w&=R^{\perp}(\e_z,\e_{\z})\w=\vec{\I}(\bar{\D}_{\e_z}\w,\e_{\z})-\vec{\I}(\e_z,\bar{\D}_{\e_{\z}}\w)\\
    	&=\frac{1}{2}\left(\s{\D_{\e_{z}}\w}{\e_{z}}\bar{\H_0}-\s{\D_{\e_{\z}}\w}{\e_{\z}}\H_0\right)=i\,\Im\left(\s{\D_{\e_z}\w}{\e_z}\bar{\H_0}\right).
    \end{align*}
    Furthermore, we have by the parallelogram identity
    \begin{align*}
    	&|\D^2\w|^2=\sum_{i,j=1}^{2}|\D^2_{\e_i,\e_j}\w|^2
    	=|\D^{2}_{\e_z+\e_{\z},\e_z+\e_{\z}}\w|^2+|\D^2_{\e_z+\e_{\z},i(\e_z-\e_{\z})}\w|^2+|\D^2_{i(\e_z-\e_{\z}),\e_z+\e_{\z}}\w|^2\nonumber\\
    	&+|\D^2_{\e_z+\e_{\z},i(\e_z-\e_{\z})}\w|^2=\left|2\,\Re(\D^{2}_{\e_z,\e_z}\w)+\frac{1}{2}\Delta^{\perp}\w\right|^2+\left|-2\,\Im(\D_{\e_z,\e_z}^2\w)-i(\D^2_{\e_z,\e_{\z}}-\D^2_{\e_{\z},\e_z})\w\right|^2\nonumber\\
    	&+\left|-2\,\Im\left(\D_{\e_z,\e_z}^2\w\right)+i(\D^2_{\e_z,\e_{\z}}-\D^2_{\e_{\z},\e_z})\w\right|^2
    	+\left|2\,\Re(\D^2_{\e_z,\e_z}\w)+\frac{1}{2}\Delta^{\perp}\w\right|^2\\
    	&=4\left|\Re\left(\D_{\e_z,\e_z}^2\w\right)\right|^2+4\left|\Im(\D_{\e_z,\e_z}^2)\right|+\frac{1}{2}|\Delta^{\perp}\w|^2+2\left|(\D_{\e_z,\e_z}^{2}-\D_{\e_{\z},\e_z})\w\right|^2\\
    	&=4|\D_{\e_z,\e_z}^2\w|^2+\frac{1}{2}|\Delta^{\perp}\w|^2+2\left|\Im\left(\s{\D_{\e_z}\w}{\e_z}\bar{\H_0}\right)\right|^2.
    \end{align*}
    Then by \eqref{non-explicit}
    \begin{align*}
    	2\left|\Im\left(\s{\D_{\e_z}\w}{\e_z}\bar{\H_0}\right)\right|^2\leq \frac{1}{2}e^{4\lambda}|d\w|_g^2|\H_0|^2=\frac{1}{2}e^{4\lambda}|d\w|_g^2\,|\h_0|_{WP}^2.
    \end{align*}
    Finally, we have
    \begin{align*}
    	|\D^2\w|^2\leq 8|\D^{\perp}_{\e_z}\D^{\perp}_{\e_z}\w|^2+\frac{1}{2}|\Delta^{\perp}\w|^2+\frac{C_1}{|z|^2}|\D_{\e_z}^{\perp}\w|^2+\frac{1}{2}e^{4\lambda}|d\w|_g^2|\h_0|^2_{WP},
    \end{align*}
    so we obtain the estimate
    \begin{align*}
    	\int_{D^2(0,1/2)}\frac{|\D^2\w|^2}{|z|^{2\alpha}}|dz|^2&\leq 2\int_{D^2}\frac{|\Delta^{\perp}\w|^2}{|z|^{2\alpha}}|dz|^2+\left(16(4+\alpha^2)+C_1\right)\int_{D^2}\frac{|\D\w|^2}{|z|^{2\alpha+2}}|dz|^2\\
    	&+(2C_0+1)\np{|d\w|_g}{\infty}{D^2}^2\int_{D^2}|\h_0|^2_{WP}\,d\vg.
    \end{align*}
    Finally, as $\alpha<\theta_0-1$, we obtain by \eqref{non-explicit}
    \begin{align*}
    	\int_{D^2}\frac{|\D\w|^2}{|z|^{2\alpha+2}}|dz|^2\leq C_0\np{|d\w|_g}{\infty}{D^2}^2\int_{D^2}|z|^{2(\theta_0-\alpha)-4}|dz|^2=\frac{\pi C_0}{(\theta_0-1)-\alpha}\np{|d\w|_g}{\infty}{D^2}^2,
    \end{align*}
    while
    \begin{align*}
    	\int_{D^2}\frac{|\Delta^{\perp}\w|^2}{|z|^{2\alpha}}|dz|^2\leq C_0\int_{D^2}|\Delta_g^{\perp}\w|^2d\vg.
    \end{align*}
    Gathering the different estimates, we obtain
    \begin{align*}
    	\int_{D^2(0,1/2)}\frac{|\D^2\w|^2}{|z|^{2\alpha}}|dz|^2&\leq 2C_0\int_{D^2}|\Delta_g^{\perp}\w|^2d\vg+\bigg(\frac{\pi C_0(16(4+\alpha^2)+C_1)}{(\theta_0-1)-\alpha}\nonumber\\
    	&+(2C_0+1)\int_{D^2}|\h_0|^2_{Wp}d\vg\bigg)\np{|d\w|_g}{\infty}{D^2}^2\\
    	&\leq C_2\left(\int_{D^2}|\Delta_g^{\perp}\w|^2d\vg+\left(\frac{\alpha^2}{(\theta_0-1)-\alpha}+\int_{D^2}|\h_0|^2_{WP}d\vg\right)\np{|d\w|_g}{\infty}{D^2}^2\right)
    \end{align*}
\end{proof}
\begin{rem}
	One can obtain better estimates in codimension $1$, as the four last terms of \eqref{cancelcod} cancel. 
\end{rem}
\begin{prop}\label{variations}
	Let $\Sigma$ be a closed Riemann surface and let $\phi:\Sigma\rightarrow\R^n$ be a branched Willmore immersion and $g=\phi^{\ast}g_{\,\R^n}$. Then the set of admissible variations $\w\in W^{2,2}_{\phi}\cap W^{1,\infty}(\Sigma,\R^n)$ coincides with
	\begin{align}\label{tangentspace}
	\mathrm{Var}_2(\phi)=W^{2,2}_{\phi}\cap W^{1,\infty}(\Sigma,\R^n)\cap\ens{\w: \mathscr{L}_{g}\w\in L^2(\Sigma,d\vg), |d\w|_{g}\in L^{\infty}(\Sigma,d\mathrm{vol}_{g_0})},
	\end{align}
	where 
	\begin{align*}
	\mathscr{L}_g\,\w=\Delta_g^\perp\w+\mathscr{A}(\w).
	\end{align*}
	is the Jacobi operator of $\phi$, and $g_0$ is a conformal metric to $\phi$ of constant Gauss curvature, and $\mathscr{A}(\w)$ is the Simons operator.
\end{prop}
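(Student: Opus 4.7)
The plan is to establish the equality of sets by showing the two inclusions separately; throughout I take $\phi_t = \phi + t\w$ as the canonical admissible family, which is $C^{\infty}$ in $t$ for $|t|$ small since $\w \in W^{1,\infty}$.

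\textbf{Necessity} ($\mathrm{Var}_2(\phi) \subseteq \cdots$). Suppose $t \mapsto W(\phi_t)$ is $C^2$. The Gauss--Bonnet identity $\mathscr{W}(\phi_t) = W(\phi_t) - 2\pi\chi(\Sigma) - 2\pi \sum_j (\theta_0(p_j,t)-1)$ from Lemma \ref{gb}, combined with the continuity of $W(\phi_t)$ and the integrality of the multiplicity sum, forces branch points and their orders to be preserved for $|t|$ small, by the argument inside Lemma \ref{gaussbonnet}. Expanding $e^{2\lambda_t} = 2|\p{z}\phi + t\p{z}\w|^2 = \gamma_t|z|^{2\theta_0-2}(1+o(1))$ at a branch point then yields $|\p{z}\w| = O(|z|^{\theta_0-1})$, i.e.\ $|d\w|_g \in L^\infty$. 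To extract the $L^2$ condition on $\mathscr{L}_g\w$, I inspect formula \eqref{d2w}: every summand except $\tfrac12|\Delta_g^\perp\w + \mathscr{A}(\w)|^2 = \tfrac12|\mathscr{L}_g\w|^2$ is bounded pointwise by $C\np{|d\w|_g}{\infty}{\Sigma}^{p}(1+|\H|^2+|\vec{\I}|^2)$ for some $p\leq 4$, hence belongs to $L^1(d\vg)$. The only summand that \emph{a priori} requires more regularity is $\bs{\s{\H}{(\D d\w)^\perp}}{d\phi\totimes d\w + d\w\totimes d\phi}_g$, but the algebraic identity $\D^\top_{\e_z}\w = -\s{\w}{\H}\e_z - \s{\w}{\H_0}\e_{\z}$ from the proof of Lemma \ref{d2k} decomposes its tangential contribution into algebraic combinations of $\w, d\w, \H, \h_0$, and the remainder is absorbed into $\mathscr{L}_g\w$. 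Finiteness of $D^2W(\phi)(\w,\w)$ therefore forces $|\mathscr{L}_g\w|^2 \in L^1(d\vg)$, and since $|\mathscr{A}(\w)| \leq 2|d\w|_g|\vec{\I}|_g$ gives $\mathscr{A}(\w) \in L^2$, I conclude $\mathscr{L}_g\w \in L^2$.

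\textbf{Sufficiency.} Conversely, take $\w \in W^{2,2}_\phi \cap W^{1,\infty}$ with $\mathscr{L}_g\w \in L^2$ and $|d\w|_g \in L^\infty$, and set $\phi_s = \phi + s\w$. I will check that each summand of \eqref{d2w} (evaluated at $\phi_s$) is in $L^1(\Sigma, d\mathrm{vol}_{g_s})$ and depends continuously on $s$ near $0$, yielding $s \mapsto W(\phi_s) \in C^2$ and hence $\w \in \mathrm{Var}_2(\phi)$. The benign terms (first-order data combined with $\H, \vec{\I}, \mathscr{L}_g\w$) are handled by Cauchy--Schwarz. The delicate terms contain $(\D d\w)^\perp$ or $\Delta_g\w$; for these I invoke Lemma \ref{est2} on a neighbourhood of each branch point $p$ of order $\theta_0 \geq 2$ with $\alpha \nearrow \theta_0-1$, which via $e^{2\lambda} \simeq |z|^{2\theta_0-2}$ produces a local $L^2(d\vg)$ bound on $(\D d\w)^\perp$ depending only on $\|\mathscr{L}_g\w\|_{L^2}$ and $\np{|d\w|_g}{\infty}{\Sigma}$. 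Away from branch points, standard elliptic regularity suffices. The tangential parts $(\D d\w)^\top$ and $(\Delta_g\w)^\top$ are reduced, as in the necessity step, to algebraic expressions in $\H, \h_0, \w, d\w$, together with Lemma \ref{normaldelta} comparing $\Delta_g^\perp\w$ with $(\Delta_g\w)^\perp$. Continuity in $s$ follows since all constants depend continuously on the immersion in the relevant norms.

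\textbf{Main obstacle.} The crux is the local control of $\D^2\w$ near branch points: the operator $\mathscr{L}_g$ degenerates there because $e^{2\lambda} \sim |z|^{2\theta_0-2} \to 0$, and a straightforward Calderón--Zygmund or coercivity estimate for $\Delta_g^\perp$ fails to bound the full Hessian $\D^2\w$ in $L^2(d\vg)$. Lemma \ref{est2} is tailored precisely to bypass this by producing a weighted estimate with critical weight $|z|^{-2\alpha}$ for any $\alpha < \theta_0 - 1$; the divergent prefactor $\alpha^2/((\theta_0-1)-\alpha)$ is compensated by the convergence of the weight against the pullback volume form. This borderline matching between the degeneracy of $g$ and the admissible exponent is what makes $\{\mathscr{L}_g\w \in L^2,\ |d\w|_g \in L^\infty\}$ the natural space on which $D^2W(\phi)$ is defined.
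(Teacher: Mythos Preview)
Your overall strategy matches the paper's: reduce both inclusions to a term-by-term analysis of the explicit formula \eqref{d2w} for $D^2W(\phi)(\w,\w)$, isolate the summand $\tfrac12|\Delta_g^{\perp}\w+\mathscr{A}(\w)|^2$, and show that all remaining terms are integrable once $|d\w|_g\in L^{\infty}(\Sigma)$. However, there is a genuine gap in how you control the second-order terms involving $(\D d\w)^{\perp}$ and $\s{\Delta_g\w}{d\phi}$ near branch points.

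You claim that Lemma \ref{est2} with $\alpha\nearrow\theta_0-1$ ``produces a local $L^2(d\vg)$ bound on $(\D d\w)^{\perp}$'', and that the divergent constant $\alpha^2/((\theta_0-1)-\alpha)$ is ``compensated by the convergence of the weight against the pullback volume form''. This is incorrect: since $e^{2\lambda}\simeq|z|^{2\theta_0-2}$, the $L^2(d\vg)$ norm of $(\D d\w)^{\perp}$ corresponds exactly to the \emph{critical} exponent $\alpha=\theta_0-1$, which Lemma \ref{est2} does not reach; the constant blows up and no compensation occurs. A related issue affects your necessity argument: you assert that the term $\bs{\s{\H}{(\D d\w)^{\perp}}}{d\phi\totimes d\w+d\w\totimes d\phi}_g$ can be decomposed via the identity $\D_{\e_z}^{\top}\w=-\s{\w}{\H}\e_z-\s{\w}{\H_0}\e_{\z}$ into algebraic pieces plus something ``absorbed into $\mathscr{L}_g\w$''. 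But that identity controls the tangential part of $d\w$, not of $\D d\w$; the second-order quantity $(\D d\w)^{\perp}$ genuinely involves full Hessians and does not reduce algebraically to $\Delta_g^{\perp}\w$ modulo zeroth- and first-order data. Since in the necessity direction you do not yet know $\Delta_g^{\perp}\w\in L^2$, you cannot invoke Lemma \ref{est2} either. The paper handles both directions differently: it writes $\w=\sum_j w_j\n_j$, regularises the scalar coefficients $w_j$ to be smooth, and then uses the explicit asymptotic expansion of the Gauss map from \cite{classification} (in particular $\D\n=O(\log|z|)$, $\D^2\n=O(|z|^{-1})$) together with $|\D\w|=O(|z|^{\theta_0-1})$ to obtain the \emph{pointwise} bound $\D^2\w=O(|z|^{\theta_0-2})$. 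This gives $|\D^2\w|\,|\H|=O(|z|^{-1})\in L^1_{\mathrm{loc}}$ directly, without any appeal to Lemma \ref{est2}, and works uniformly for both inclusions. Your route via Lemma \ref{est2} could be salvaged for sufficiency by pairing the subcritical weighted estimate (for a fixed $\alpha\in(\theta_0-2,\theta_0-1)$) with the pointwise bound $|\H|=O(|z|^{1-\theta_0})$ via Cauchy--Schwarz, but for necessity you still need the paper's pointwise estimate since Lemma \ref{est2} presupposes $\Delta_g^{\perp}\w\in L^2$.
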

\begin{rem}
	As a branch point of multiplicity $\theta_0\geq 1$, by parametrising $\phi$ locally by the unit disk $D^2$ such that
	\begin{align*}
	e^{2\lambda}=2|\p{z}\phi(z)|^{2}=|z|^{2\theta_0-2}\left(1+O(|z|)\right)
	\end{align*}
	The conditions of \eqref{tangentspace} read
	\begin{align*}
	\int_{D^2}\frac{|\Delta^{\perp}\w|^2}{|z|^{2\theta_0-2}}|dz|^2<\infty,\;\,\text{and}\;\,\frac{\D\w}{|z|^{\theta_0-1}}\in L^{\infty}(D^2)
	\end{align*}
	if $\Delta$ is the flat Laplacian. In particular, if $\theta_0=1$, there is not additional conditions, as expected. This implies if $\w$ is smooth or $\dfrac{\Delta\w}{|z|^{\theta_0-1}}\in L^{\infty}(D^2)$ (\emph{e.g.} by Proposition $C.2$ of \cite{beriviere}) that for some $\vec{C}_1\in \C^n$
	\begin{align*}
		\w(z)=\w(0)+\Re\left(\vec{C}_1z^{\theta_0}\right)+O(|z|^{\theta_0+1}\log^2|z|).
	\end{align*}
	thanks of the appendix of \cite{classification} (see also the appendix of \cite{beriviere} for a weaker form). This also shows that there are no conditions for $\theta_0=1$, as expected.
\end{rem}

\begin{proof}
	As $|d\w|_g\in L^{\infty}(\Sigma)$, we deduce that $D^2W(\phi)(\w,\w)$ is finite if and only
	\begin{align}\label{conv1}
	\left|\int_{\Sigma}\left(-\bs{\s{\Delta_g\w}{d\phi}}{\s{\H}{d\w}}_g-2\bs{\s{\H}{(\D d\w)^{\perp}}}{d\phi\totimes d\w+d\w\totimes d\phi}_g
	+\frac{1}{2}\left|\Delta^{\perp}_g\w+\mathscr{A}(\w)\right|^2\right)d\vg\right|<\infty
	\end{align}
	As away from the branch points the conditions are trivially satisfied (recall that the immersion is real analytic outside of branch points), it suffices to restrict to the case of immersions $\phi:D^2\rightarrow\R^n$ from the unit disk such that for some $\theta_0\geq 2$ (for $\theta_0=1$, there is nothing to prove), and for some $\vec{A}_0\in \C^n\setminus\ens{0}$, we have
	\begin{align*}
	\phi(z)=\Re\left(\vec{A}_0z^{\theta_0}\right)+O\left(|z|^{\theta_0+1-\epsilon}\right)
	\end{align*}
	for all $\epsilon>0$ (see \cite{beriviere} and \cite{classification} for more details about this expansion). In particular, up to a suitable normalisation of $\vec{A}_0$, we obtain
	\begin{align}\label{devconf}
	e^{2\lambda}=|z|^{2(\theta_0-1)}\left(1+O(|z|)\right)\quad (\text{the}\;\,\epsilon\;\,\text{disappears}).
	\end{align}
	Now, let $\{\phi_t\}_{|t|<\epsilon}:D^2\rightarrow\R^n$ a sequence of branched immersions such that $\phi_0=\phi$, we must determine under which minimal conditions the map
	\begin{align*}
	t\mapsto \mathscr{W}(\phi_t)
	\end{align*}
	is an element of $C^2((-\epsilon,\epsilon),\R)$.
	
	Now, let $(U,z)$ a complex chart around a branch point $p\in \Sigma$ of multiplicity $\theta_0\geq 2$ of $\phi$ such that $z(p)=0$, and $z:U\rightarrow D^2\subset \C$ and consider the restriction $\phi:D^2\rightarrow \R^n$ and let  $\n:D^2\rightarrow \mathscr{G}_{n-2}(\R^n)$ is the unit normal. Up to shrinking $D^2$ we have locally $\n=\n_1\wedge \cdots \wedge \n_{n-2}$ for some $\n_1,\cdots,\n_{n-2}:D^2\rightarrow \R$ satisfying $|\n_j|=1$, and 
	\begin{align*}
	\w=\sum_{j=1}^{n}w_j\n_j,
	\end{align*}
	and we can assume that $w_j\in C^{\infty}(\Sigma)$ by standard regularisation. Furthermore, as $\w$ is an admissible variation, we have
	\begin{align*}
	|d\w|_g=e^{-\lambda}|\D\w|\in L^{\infty}(D^2),
	\end{align*}
	and there exists  $\gamma>0$, and integer $\alpha\leq \theta_0-1$ and $\vec{C}_1\in \C^n$ such that 
	\begin{align*}
	&e^{2\lambda}=\gamma|z|^{2\theta_0-2}\left(1+O(|z|)\right),\\
	&\H=O\left(\frac{\vec{C}_1}{z^\alpha}\right)+O(|z|^{1-\alpha})
	\end{align*}
	so 
	\begin{align}\label{est9}
	\frac{|\D\w|}{|z|^{\theta_0-1}}\in L^{\infty}(D^2).
	\end{align}
	Now, recall as  $\n=e^{-2\lambda}\,2i\,\ast\left(\p{\z}\phi\wedge \p{z}\phi\right)$ and as $\phi$ for all $k\in \N$ there exists $\alpha_k\in \N$  such that (see \cite{classification})
	\begin{align*}
	\D^k\phi=O(|z|^{\theta_0-k}\log^{\alpha_k}|z|)
	\end{align*} 
	we have 
	\begin{align}\label{est10}
	\D\n=O(\log|z|),\qquad
	\D^2\n=O\left(\frac{1}{|z|}\right).
	\end{align}
	If $\theta_0=1$, then we easily see that all integrals are finite. Otherwise if $\theta\geq 2$, then we obtain by \eqref{est9} and \eqref{est10}
	\begin{align*}
	\D^2\w=O(|z|^{\theta_0-2})
	\end{align*}
	In particular, we have
	\begin{align*}
	|\D^2\w||\H|=O\left(\frac{1}{|z|}\right),
	\end{align*}
	so that for some constant $C>0$
	\begin{align}\label{conv2}
	&\left|\int_{D^2}\left(-\bs{\s{\Delta_g\w}{d\phi}}{\s{\H}{d\w}}_g-2\bs{\s{\H}{(\D d\w)^{\perp}}}{d\phi\totimes d\w+d\w\totimes d\phi}_g\right)d\vg\right|\nonumber\\
	&\leq 8\np{|d\w|_g}{\infty}{D^2}\int_{D^2}|\D^2\w||\H||dz|^2\leq C\np{|d\w|_g}{\infty}{D^2}\int_{D^2}\frac{|dz|^2}{|z|}=2\pi C\np{|d\w|_g}{\infty}{D^2}<\infty.
	\end{align}
	
	Furthermore, as $|\mathscr{A}(\w)|\leq 2|d\w|_g|\vec{\I}|_g$, we deduce that
	\begin{align}\label{conv3}
	\int_{\Sigma}|\mathscr{A}(\w)|^2d\vg\leq 4\np{|d\w|_g}{\infty}{\Sigma}^2\int_{\Sigma}|\vec{\I}|_g^2d\vg=16\left(W(\phi)-\pi\chi(\Sigma)\right)\np{|d\w|_g}{\infty}{\Sigma}^2<\infty.
	\end{align}
	Finally, we see by \eqref{conv1}, \eqref{conv2} and \eqref{conv3} that $D^2W(\phi)(\w,\w)$ is well-defined if and only if $|d\w|_g\in L^{\infty}(\Sigma)$ and
	\begin{align*}
	\int_{\Sigma}|\Delta_g^{\perp}\w|^2d\vg<\infty,
	\end{align*}
	which was the claim to prove.
\end{proof}

\textbf{In the following, we will write $\mathrm{Var}(\phi)$ instead of $\mathrm{Var}_2(\phi)$.}

We first recall the following proposition from \cite{classification} (this is a small improvement from a lemma of \cite{beriviere}).
\begin{prop}\label{integrating}
	Let $u\in C^1(\bar{D^2}\setminus\ens{0})$ be such that
	\begin{align*}
	{\partial}_{\z}\,u(z)=\mu(z)f(z),\quad z\in D^2\setminus\ens{0} 
	\end{align*}
	where $f\in L^p(D^2)$ for some $2<p\leq \infty$, and $|\mu(z)|\simeq |z|^a\log^b|z|$ at $0$ for some $a\in \N$, and $b\geq 0$. Then
	\begin{align*}
	u(z)=P(z)+|\mu(z)|T(z)
	\end{align*}
	for some polynomial $P$ of degree less or equal than $a$, and a function $T$ such that
	\begin{align*}
	T(z)=O(|z|^{1-\frac{2}{p}}\log^{\frac{2}{p'}}|z|).
	\end{align*}
	In particular, if $f\in L^{\infty}(D^2)$, we have
	\begin{align*}
	u(z)=P(z)+O(|z|^{a+1}\log^{b+2}|z|).
	\end{align*}
\end{prop}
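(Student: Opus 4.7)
The approach I would take is to invert the Cauchy--Riemann operator via the Cauchy transform and isolate the polynomial singularity by a partial Taylor expansion of the Cauchy kernel at $z=0$.

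First, I set
\begin{align*}
v(z) := -\frac{1}{\pi}\int_{D^2}\frac{\mu(\zeta)f(\zeta)}{\zeta-z}|d\zeta|^2,
\end{align*}
so that $\p{\z}v = \mu f$ holds distributionally on $D^2$. Since $\mu$ is bounded on $\bar{D^2}$ and $f \in L^p(D^2)$, the density $\mu f$ belongs to $L^p(D^2)$, and standard Calder\'{o}n--Zygmund estimates yield $v \in C^{0,1-2/p}(\bar{D^2})$. Consequently $h := u - v$ is holomorphic on the punctured disk $D^2 \setminus \{0\}$; the tacit assumption (present in the applications of this lemma) that $u$ is bounded near the puncture forces $h$ to extend holomorphically across the origin by Riemann's removable singularity theorem, whence I Taylor-expand $h(z) = \tilde Q(z) + O(|z|^{a+1})$ with $\tilde Q$ a polynomial of degree at most $a$.

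The key algebraic input is the partial geometric expansion
\begin{align*}
\frac{1}{\zeta-z} = \sum_{k=0}^{a}\frac{z^k}{\zeta^{k+1}} + \frac{z^{a+1}}{\zeta^{a+1}(\zeta-z)},
\end{align*}
which decomposes $v = Q + R$ with polynomial part
\begin{align*}
Q(z) = -\frac{1}{\pi}\sum_{k=0}^{a}z^k\int_{D^2}\frac{\mu(\zeta)f(\zeta)}{\zeta^{k+1}}|d\zeta|^2;
\end{align*}
each coefficient is finite because $|\mu(\zeta)|/|\zeta|^{k+1}\lesssim |\zeta|^{a-k-1}|\log|\zeta||^b$ is locally integrable for $k \leq a$ and $f \in L^p \subset L^1$. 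Taking $P := \tilde Q + Q$ produces the polynomial in the statement, and the entire proof reduces to the estimate of
\begin{align*}
R(z) = -\frac{z^{a+1}}{\pi}\int_{D^2}\frac{\mu(\zeta)f(\zeta)}{\zeta^{a+1}(\zeta-z)}|d\zeta|^2.
\end{align*}

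I would control $R(z)$ by splitting the domain into $\{|\zeta|\leq 2|z|\}$ and $\{|\zeta|\geq 2|z|\}$ and applying H\"{o}lder's inequality on each piece. On the outer region, $|\zeta-z|\geq |\zeta|/2$ reduces matters to $\int_{|\zeta|\geq 2|z|}|\zeta|^{-2p'}|\log|\zeta||^{bp'}|d\zeta|^2 \lesssim |z|^{2-2p'}|\log|z||^{bp'}$ (convergent because $p'<2$), contributing a factor $|z|^{-2/p}|\log|z||^b$; on the inner region, the Riesz-type estimate $\int_{|\zeta-z|\leq 3|z|}|\zeta-z|^{-p'}|d\zeta|^2 \lesssim |z|^{2-p'}$ produces the same $|z|^{-2/p}$ scaling while the logarithmic weight $|\log|\zeta||^b$ absorbs an additional $|\log|z||^{2/p'}$ factor. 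Combining the two pieces and multiplying by $z^{a+1}$ yields $|R(z)| \leq C|z|^{a+1-2/p}|\log|z||^{b+2/p'}$, which is exactly $|\mu(z)||T(z)|$ with $T(z) = O(|z|^{1-2/p}\log^{2/p'}|z|)$. The main obstacle is the accurate bookkeeping of the logarithmic losses---particularly at the endpoint $p=\infty$, where the Cauchy transform of an $L^\infty$ density famously acquires an extra $|z|\log(1/|z|)$ factor beyond Lipschitz regularity; this is precisely the source of the $\log^{2/p'}$ term in the statement, and yields the announced specialisation $u = P + O(|z|^{a+1}\log^{b+2}|z|)$ for $f \in L^\infty$ upon letting $p \to \infty$.
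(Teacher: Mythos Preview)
The paper does not prove this proposition: it is merely recalled from \cite{classification} (itself a sharpening of a lemma in \cite{beriviere}), so there is no in-paper argument to compare your attempt against. Your approach --- inverting $\partial_{\bar z}$ via the Cauchy transform, peeling off the holomorphic part by Riemann's removable singularity theorem, and extracting the degree-$a$ polynomial by a finite geometric expansion of the Cauchy kernel --- is the standard route to results of this type and is essentially correct.

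Two remarks. First, you are right to flag that the hypothesis $u\in C^1(\bar{D^2}\setminus\{0\})$ does not by itself force $h=u-v$ to extend across the origin; boundedness of $u$ near $0$ is indeed a tacit assumption, satisfied in every application of this lemma in the paper and in fact forced \emph{a posteriori} by the conclusion. Second, your bookkeeping of the logarithmic losses is accurate: the outer-region H\"older estimate already gives $|z|^{a+1-2/p}\log^b|z|$, and it is the inner region (together with the $p=\infty$ endpoint, where the Cauchy transform of an $L^\infty$ density picks up the well-known extra $\log$) that is responsible for the additional $\log^{2/p'}$ factor in the statement.
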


\begin{cor}\label{absurd}
	Let $\Sigma$ be a closed Riemann surface and let $\phi:\Sigma\rightarrow \R^n$ be a branched Willmore immersion. 
	Then $\mathrm{Ind}_W(\phi)=\mathrm{Ind}_{\mathscr{W}}(\phi)$.
\end{cor}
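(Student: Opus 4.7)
The plan is to show that the two quadratic forms $D^2W(\phi)$ and $D^2\mathscr{W}(\phi)$ agree on all of $\mathrm{Var}_2(\phi)$, so that their maximal negative-definite subspaces have the same dimension. Since $\mathscr{W}(\psi)=W(\psi)-\int_\Sigma K_{g_\psi}\,d\mathrm{vol}_{g_\psi}$ for every branched immersion $\psi$, it is enough to prove that the map $t\mapsto \int_\Sigma K_{g_t}\,d\mathrm{vol}_{g_t}$ associated to the variation $\phi_t=\phi+t\w$ is locally constant near $t=0$ for every $\w\in\mathrm{Var}_2(\phi)$. By the Gauss-Bonnet formula for branched immersions of finite total curvature (Lemma \ref{gb}), this reduces to showing that the branching data of $\phi_t$ (the locations and multiplicities of the branch points) coincides with that of $\phi$ for all sufficiently small $|t|$.

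First I would work locally near a branch point $p$ of $\phi$ of multiplicity $\theta_0\geq 2$, in a conformal chart $z$ with $z(p)=0$. By Proposition \ref{branch}, $\p{z}\phi=\tfrac{\theta_0}{2}\vec{A}_0 z^{\theta_0-1}+o(z^{\theta_0-1})$ with $\vec{A}_0\in\C^n\setminus\ens{0}$, and $e^{2\lambda}=|z|^{2\theta_0-2}(1+O(|z|))$. The condition $\w\in\mathrm{Var}_2(\phi)$ gives both $|d\w|_g\in L^\infty$, hence $|\p{z}\w|\leq C|z|^{\theta_0-1}$, and $\Delta_g^\perp\w\in L^2$. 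The main obstacle is to upgrade the latter to an expansion $\p{z}\w(z)=\vec{B}_0\,z^{\theta_0-1}+o(z^{\theta_0-1})$ for some $\vec{B}_0\in\C^n$. I would obtain this by applying Proposition \ref{integrating} to the Beltrami-type equation $\p{\z}(\p{z}\w)=\tfrac{1}{4}\Delta\w$, whose right-hand side inherits the weight $|z|^{2(\theta_0-1)}$ coming from $e^{2\lambda}$ and Lemma \ref{normaldelta} (applied after local approximation of $\w$ in $W^{2,2}$); the $L^\infty$-bound on $|d\w|_g$ then forces all polynomial terms of order strictly less than $\theta_0-1$ produced by Proposition \ref{integrating} to vanish, leaving exactly one leading coefficient $\vec{B}_0$.

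Combining the expansions gives $\p{z}\phi_t=\bigl(\tfrac{\theta_0}{2}\vec{A}_0+t\vec{B}_0\bigr)z^{\theta_0-1}+o(z^{\theta_0-1})$. Since $\vec{A}_0\neq 0$, the coefficient $\tfrac{\theta_0}{2}\vec{A}_0+t\vec{B}_0$ remains non-zero for $|t|<\epsilon_p$ for some $\epsilon_p>0$, so $\phi_t$ retains a branch point at $p$ of the same multiplicity $\theta_0$. Taking $\epsilon=\min_p\epsilon_p>0$ over the finitely many branch points of $\phi$, and noting that no new branch point can appear in the complement since $\phi$ is a smooth immersion there and $t\w$ is a small $C^1$-perturbation, the full branching data of $\phi_t$ equals that of $\phi$ whenever $|t|<\epsilon$.

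By Lemma \ref{gb} this means
\begin{align*}
\int_\Sigma K_{g_t}\,d\mathrm{vol}_{g_t}=2\pi\chi(\Sigma)+2\pi\sum_{p\in\Sigma}\bigl(\theta_0(p)-1\bigr)
\end{align*}
is constant on $(-\epsilon,\epsilon)$, so $\tfrac{d^2}{dt^2}\big|_{t=0}\int_\Sigma K_{g_t}\,d\mathrm{vol}_{g_t}=0$, which yields $D^2\mathscr{W}(\phi)(\w,\w)=D^2W(\phi)(\w,\w)$ for every $\w\in\mathrm{Var}_2(\phi)$. The two quadratic forms coincide on $\mathrm{Var}_2(\phi)$ and therefore admit identical maximal negative-definite subspaces, whence $\mathrm{Ind}_W(\phi)=\mathrm{Ind}_{\mathscr{W}}(\phi)$.
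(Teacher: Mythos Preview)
Your route is genuinely different from the paper's and, once the local expansion is in place, arguably cleaner. The paper does not argue via Gauss--Bonnet; instead it writes $\frac{d^2}{dt^2}(K_{g_t}d\mathrm{vol}_{g_t})|_{t=0}=d\omega(\w)$ (Lemma \ref{d2k}), applies Stokes on $\Sigma$ minus shrinking disks about the branch points, and shows the circle integrals $\int_{\partial D_i}\omega(\w)$ vanish using the Courant--Lebesgue lemma together with an expansion $(\D_{\p{z}}\w)^N=\vec{C}_1z^{\theta_0-1}+O(|z|^{\theta_0-\epsilon})$ to control the one delicate term $\Im\,\partial(|\D^\perp\w|_g^2)$. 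So both proofs hinge on the same local analysis of $\D_{\p{z}}\w$ near branch points; the difference lies in how that expansion is used afterwards. Your approach has the advantage that it explains \emph{why} the correction vanishes (discreteness of the branching data), whereas the paper's computation shows it directly.

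Two points in your argument need tightening. First, your appeal to Proposition \ref{integrating} requires $f\in L^p$ for some $p>2$, but the raw hypothesis $\Delta_g^\perp\w\in L^2(d\vg)$ only gives the borderline $p=2$ once the weight is extracted. The paper closes this gap by writing $\w=\sum w_j\n_j$, regularising the scalar coefficients $w_j$ to be smooth, and then using the finite polynomial--log expansion of the normal $\n$ (equation \eqref{ndev}) to obtain a pointwise bound $\Delta^N\w=O(|z|^\alpha\log^\beta|z|)$; the $L^2$ constraint then forces $\alpha\geq\theta_0-1$, after which Proposition \ref{integrating} applies with any $p<\infty$. Your ``local approximation of $\w$ in $W^{2,2}$'' is too vague for this step. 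Second, a minor point: from $\p{z}\phi_t=\vec{C}_tz^{\theta_0-1}+o(|z|^{\theta_0-1})$ the mere nonvanishing of $\vec{C}_t$ does not yield multiplicity $\theta_0$, because $\phi_t$ is not conformal in the $z$-chart. What you actually need is $|\s{\vec{C}_t}{\vec{C}_t}|<|\vec{C}_t|^2$ (equivalently, $\vec{C}_t$ and $\bar{\vec{C}_t}$ are $\C$-linearly independent) so that $\det g_t\asymp|z|^{4\theta_0-4}$; this holds for small $t$ since $\vec{A}_0$ is isotropic, but it should be said.
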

\begin{proof}
	Now, recall that by Lemma \eqref{d2k}
	\begin{align*}
		\frac{d^2}{dt^2}\left(K_{g_t}d\mathrm{vol}_{g_t}\right)_{|t=0}&=d\,\Im\,\bigg(2\s{\Delta_g^{\perp}\w+4\,\Re\left(g^{-2}\otimes\left(\bar{\partial}\phi\totimes\bar{\partial}\w\right)\otimes \h_0\right)}{\partial\w}-\partial\left(|\D^{\perp}\w|^2_g\right)\nonumber\\
		&-2\s{d\phi}{d\w}_g\left(\s{\H}{\partial \w}-g^{-1}\otimes\left(\h_0\totimes\bar{\partial}\w\right)\right)
		-8\,g^{-1}\otimes \left(\partial\phi\totimes\partial\w\right)\otimes \s{\H}{\bar{\partial}\w}\bigg)\\
		&=d\,\omega(\w)
  	\end{align*}
	for a $1$-form $\omega(\w,\w)$ bilinear in $\w$. From the explicit formula we obtain for some universal constant
	\begin{align*}
		|\omega(\w)+\Im(\partial|\D^{\perp}\w|_g^2)|\leq \left(|\Delta_g^{\perp}\w|e^{\lambda}\np{|d\w|_g}{\infty}{\Sigma}+C(|\H|+|\h_0|_{WP})e^{\lambda}\np{|d\w|_g}{\infty}{\Sigma}^2\right) 
	\end{align*}
	Let $p\in \Sigma$ a branch point of $\phi$ and define for all $j\in \N$ $r_j=2^{-j}$, and for all $r>0$, let $B_r(p)$ denote the open ball for the geodesic distance with respect to any smooth fixed metric and let $\partial B_r(p)$ denotes its frontier. Thanks of the Courant-Lebesgue lemma, for all $j\in \N$, there exists $\rho_j\in [r_{j+1},r_j]$ such that
	\begin{align}\label{chart}
		&\int_{\partial B_{\rho_j}(p)}\left({|\Delta^{\perp}_g\w|}+(|\H|+|\h_0|_{WP})\right)e^{\lambda}d\mathscr{H}^1\leq 2\sqrt{\pi}\left(\int_{B_{r_{j}}\setminus B_{r_{j+1}}(p)}|\Delta_g^{\perp}\w|e^{2\lambda}|dz|^2\right)^{\frac{1}{2}}
		\nonumber\\
		&+2\sqrt{\pi}\left(\int_{B_{r_j}\setminus B_{r_{j+1}}(0)}2\left(|\H|^2+|\h_0|^2_{WP}\right)d\vg\right)\conv{j\rightarrow \infty}0
	\end{align}
	as $\Delta_g^{\perp}\w\in {L}^2(\Sigma,d\vg)$. Therefore, if ${Q}_{\phi}$ is the quadratic form such that
	\begin{align*}
		D^2W(\phi)[\w,\w]=\int_{\Sigma}Q_{\phi}(\w)d\vg,
	\end{align*}
	we have
	\begin{align}\label{stokes1}
		D^2\mathscr{W}(\phi)[\w,\w]=\int_{\Sigma}Q_{\phi}(\w)d\vg-\int_{\Sigma}d\left(\omega(\w)\right).
	\end{align}
	Now, if $\phi$ has branch points $p_1,\cdots,p_m$, taking any complex chart $(U_i,z_i)$ with $z_i(U)=D^2\subset \C$, and defining $D_i(j)=z^{-1}_i\left(\bar{B}_{\rho_j(p_i)}\right)$
	\begin{align*}
		\Sigma_j=\Sigma\setminus \bigcup_{j=1}^mD_i(j),
	\end{align*}
	we obtain by Stokes theorem
	\begin{align}\label{stokes2}
		D^2\mathscr{W}(\phi)(\w,\w)=D^2W(\phi)(\w,\w)-\sum_{i=1}^{m}\int_{\partial D_i(j)}\omega(\w) 
	\end{align}
	where the $D_i$ are positively oriented (this explains the minus sign). Now, thanks of \eqref{chart}, we obtain
	\begin{align}\label{chart2}
		\left|\sum_{i=1}^m\int_{\partial D_i(j)}\omega(\w)+\Im\left(\partial|\D^{\perp}\w|_g^2\right)\right| \conv{j\rightarrow \infty}0.
	\end{align}
	Now, assume that $n=3$. Then $\w=w\n$ for some $w:\Sigma\rightarrow \R$ and $|\D^{\perp}\w|_g^2=|dw|_g^2$, while $\Delta_g^{\perp}\w=\Delta_g w$. Therefore, by regularisation, we can assume that $w$ is smooth. The condition $\Delta_gw\in L^2(D^2,d\vg)$ in a disk around a branch point of multiplicity $\theta_0\geq 2$ implies that $g=e^{2\lambda}|dz|^2=|z|^{2\theta_0-2}(1+O(|z|))$ (up to a positive multiplicative constant)
	\begin{align*}
		\int_{\Sigma}\frac{|\Delta w|^2}{|z|^{2\theta_0-2}}|dz|^2<\infty,
	\end{align*}
	and as $w$ is smooth, we have 
	\begin{align*}
		\frac{\Delta w}{|z|^{\theta_0-1}}\in L^{\infty}(D^2),
	\end{align*}
	so by Proposition \ref{integrating}, we have for some polynomial $P$ of degree less or equal than $\theta_0-1$ the expansion
	\begin{align*}
		\p{z}w=P(z)+O(|z|^{\theta_0}\log^2|z|),
	\end{align*}
	which as $w$ is smooth implies in fact
	\begin{align*}
		\p{z}w=P(z)+O(|z|^{\theta_0}).
	\end{align*}
	Furthermore, as $\dfrac{|\p{z}w|}{|z|^{\theta_0-1}}\in L^{\infty}(D^2)$, the polynomial $P$ must be for some $a\in \C$ of the form
	\begin{align*}
		P(z)=az^{\theta_0-1}.
	\end{align*}
	Therefore, we have $\p{z}w=az^{\theta_0-1}+O(|z|^{\theta_0})$
	so that
	\begin{align*}
		\frac{|\D w|^2}{|z|^{2\theta_0-2}}=4|a|^2+O(|z|),
	\end{align*}
	and as $g\in C^{2,1}(D^2)$ and $w$ is smooth, we obtain
	\begin{align*}
		\partial\left(|dw|_g^2\right)=\partial\left(|\D^{\perp}\w|^2_g\right)=O(1),
	\end{align*}
	so we deduce that for all $i=1,\cdots,m$
	\begin{align}\label{chart3}
		\lim\limits_{j\rightarrow \infty}\int_{\partial D_i(j)}\Im\left(\partial|\D^{\perp}\w|^2_g\right)=0,
	\end{align}
	Therefore, by \eqref{chart2} and \eqref{chart3} we have $D^2\mathscr{W}(\phi)[\w,\w]=D^2W(\phi)[\w,\w]$ for all $\w\in \mathrm{Var}(\phi)$ and this concludes the proof of the corollary for the case $n=3$. 
	
	We now come back to the general case where $n\geq 3$ is arbitrary.
	As the estimate we need is local, we assume from now on that $\phi:D^2\rightarrow \R^n$ is a branched Willmore disk with a unique branch point at $0$ of multiplicity $\theta_0\geq 1$.
	Let $\n:\Sigma\rightarrow \mathscr{G}_{n-2}(\R^n)$ be the unit normal, then up to shrinking the domain of $\phi$
	we can write locally $\n=\n_1\wedge\cdots\wedge \n_{n-2}$ for some $\n_1,\cdots,\n_{n-2}:\Sigma\rightarrow \R$ such that $|\n_j|=1$ for all $1\leq j\leq n-2$. Furthermore, for all $\w\in \mathscr{E}_{\phi}(D^2,\R^n)$, there exists functions $w_1,\cdots,w_{n-2}:D^2\rightarrow \R$ such that
	 and
	\begin{align*}
		\w=\sum_{j=1}^{n-2}w_j\n_j,
	\end{align*}
	In particular, by regularisation, we can assume that $w_j\in C^{\infty}(D^2)$. However, notice in general the best possible estimate for $\n$ is  $\D\n\in \mathrm{BMO}(D^2)$.
	However, recall (see \cite{classification}) that for some $\vec{A}_0\in \C^n\setminus\ens{0}$ we have the expansions (up to the normalisation $2|\vec{A}_0|^2=1$)
	\begin{align}\label{dev}
		&\p{z}\phi=\vec{A}_0z^{\theta_0-1}\left(1+O(|z|)\right)\nonumber\\
		&e^{2\lambda}=|z|^{2\theta_0-2}\left(1+O(|z|)\right)\nonumber\\
		&\vec{H}=O(|z|^{1-\theta_0})\nonumber\\
		&\h_0=O(|z|^{\theta_0-1}).
	\end{align}	
	The key point (as in the proof of Lemma \ref{gaussbonnet}) is that the unit normal $\n$ can be developed to any finite order by the corollary $4.24$ of \cite{classification}, in the form
	\begin{align}\label{ndev}
		\n=\Re\left(2i\ast \bar{\vec{A}_0}\wedge \vec{A}_0+\sum_{k,l,p}{\vec{A}_{k,l,p}}z^k\z^l\log^p|z|\right)+O(|z|^{\theta_0+1}\log^{p_0}|z|),
	\end{align}
	where $1\leq k+l\leq \theta_0$ for all the \emph{finitely many} $(k,l,p)\in \Z\times \Z\times\N$ such that $\vec{A}_{k,l,p}\neq 0$ and $1\leq k+l\leq \theta_0$ (we  take $\vec{A}_{k,l,p}$ to be $0$ outside of this range, and we also know that for all fixed $k,l$, there exists only finitely many $p\in \N$ such that $\vec{A}_{k,l,p}\neq 0$). In particular, we deduce that  there exists two integers $\alpha\in \N$ and $\beta\in \N$ such that
	\begin{align}\label{part0}
		\Delta^N\w=O(|z|^{\alpha}\log^{\beta}|z|),
	\end{align}
	and as
	\begin{align*}
		\frac{\Delta^N\w}{|z|^{\theta_0-1}}\in L^2(D^2),
	\end{align*}
	while $1/z\notin L^2(D^2)$, we deduce that $\alpha\geq \theta_0-1$.  Also, notice that by Lemma \ref{normaldelta}, \eqref{d3} and \eqref{dev}
	\begin{align}\label{part1}
		\Delta^N\w&=4\D_{\e_{\z}}^{\perp}\D_{\e_z}^{\perp}\w+4i\,\Im\left(\s{\D_{\e_z}\w}{\e_z}\bar{\H_0}\right)\nonumber\\
		&=4\D_{\p{\z}}^N\D_{\p{z}}^N\w+O(|z|^{\theta_0-1}).
	\end{align}
	Furthermore, we have by \eqref{dev}
	\begin{align}\label{part2}
		\D_{\p{\z}}^{\top}(\D_{\p{z}}^{N}\w)&=2e^{-2\lambda}\s{\D_{\p{\z}}\D^{N}_{\p{z}}\w}{\e_{\z}}\e_z+2e^{-2\lambda}\s{\D_{\p{\z}}\D^N_{\p{z}}\w}{\e_z}\e_{\z}\nonumber\\
		&=-\s{\D_{\p{z}}^N\w}{\bar{\H_0}}\e_z-\s{\D_{\p{\z}}^N\w}{\H}\e_{\z}\nonumber\\
		&=O(|z|^{\theta_0-1}|z|^{1-\theta_0}\cdot|z|^{\theta_0-1})=O(|z|^{\theta_0-1}).
	\end{align}
	Finally, by \eqref{part0}, \eqref{part1} and \eqref{part2}, we obtain
   \begin{align*}
   	\D_{\p{\z}}\left(\D_{\p{z}}^N\w\right)=O(|z|^{\theta_0-1}\log^{\beta}|z|)
   \end{align*}
	which implies that
	\begin{align}\label{nend}
		\frac{\D_{\p{\z}}\left(\D_{\p{z}}^N\w\right)}{|z|^{\theta_0-1}}\in \bigcap_{p<\infty}L^p(D^2).
	\end{align}
	 By a Proposition \ref{integrating} the estimate \eqref{nend} implies that the exists a $\C^n$-valued polynomial $\vec{P}$ of degree at most $\theta_0-1$ such that
	\begin{align*}
		\D_{\p{z}}^N\w=\vec{P}(z)+O(|z|^{\theta_0-\epsilon})\;\,\text{for all}\;\, \epsilon>0.
	\end{align*}
	Furthermore, as $\D\w=O(|z|^{\theta_0-1})$, and by decomposing this estimate into tangent and normal parts, we deduce that in fact $P$ must be a monomial, so we have
	\begin{align*}
		P(z)=\vec{C}_1z^{\theta_0-1}
	\end{align*}
	for some $\vec{C}_1\in \C^n$, and
	\begin{align}\label{part3}
		\D_{\p{z}}^N\w=\vec{C}_1z^{\theta_0-1}+O(|z|^{\theta_0-\epsilon})\;\, \text{for all}\;\, \epsilon>0.
	\end{align}
	 Finally, we obtain by \eqref{part3}
	\begin{align*}
		|\D^{\perp}\w|_g^2=4e^{-2\lambda}|(\D_{\p{z}}\w)^N|^2=4|\vec{C}_1|^2+O(|z|^{1-\epsilon})
	\end{align*}
	and we can differentiate this estimate (as the development can be performed to any arbitrary order) to obtain
	\begin{align*}
		\partial\left(|\D^{\perp}\w|^2_g\right)=O(|z|^{-\epsilon})\;\,\text{for all}\;\, \epsilon>0,
	\end{align*}
	which implies that
	\begin{align*}
		\partial\left(|\D^{\perp}\w|_g^2\right)\in \bigcap_{p<\infty}L^p(D^2).
	\end{align*}
	Therefore, the previous argument based on the Courant-Lebesgue lemma can be used to show that we have for some $\rho_j\in [r_j,r_{j+1}]$ the estimate
	\begin{align*}
		&\int_{\partial B_{\rho_j}(p)}\left(|\Delta_g^{N}\w|e^{\lambda}+\left(|\H|+|\h_0|_{WP}\right)e^{\lambda}+|\partial\left(|\D^{\perp}\w|_g^2\right)|\right)d\mathscr{H}^1\leq 2\sqrt{\pi}\left(\int_{B_{r_{j}}\setminus B_{r_{j+1}}(p)}|\Delta_g^{\perp}\w|e^{2\lambda}|dz|^2\right)^{\frac{1}{2}}
			\nonumber\\
			&+2\sqrt{\pi}\left(\int_{B_{r_j}\setminus B_{r_{j+1}}(0)}2\left(|\H|^2+|\h_0|^2_{WP}\right)d\vg\right)+2\sqrt{\pi}\int_{B_{r_j}\setminus B_{r_{j+1}}(0)}|\partial(|\D^{\perp}\w|_g^2)|^2d\mathscr{L}^2\conv{j\rightarrow \infty}0.
	\end{align*}
	Therefore, \eqref{chart2} shows that we have
	\begin{align*}
		\lim\limits_{j\rightarrow \infty}\left|\sum_{i=1}^{m}\int_{\partial D_i(j)}\omega(\w)\right|\conv{j\rightarrow \infty}0,
	\end{align*}
	or equivalently that by \eqref{stokes1} and \eqref{stokes2}, we have the equality $D^2W(\phi)(\w,\w)=D^2\mathscr{W}(\phi)(\w,\w)$. QED.
\end{proof}

\section{Proof of the main result}\label{admissible}

\subsection{Admissible classes of min-max}

Here we recall the subset of the natural classes of min-max presented by Palais in (\cite{palais2}) for which our results apply.

In the following, denote by $X$ the complete Hilbert manifold (see \cite{hierarchies}, \cite{lower})
\begin{align*}
X=\widetilde{\mathrm{Imm}}_{3,2}(\Sigma,\R^n)=\mathrm{Imm}_{3,2}(\Sigma,\R^n)/\mathrm{Diff}_+^{\ast}(\Sigma),
\end{align*}
where $\mathrm{Diff}_+^{\ast}(\Sigma)$ is the group of positive $W^{3,2}$ diffeomorphisms of $\Sigma$, fixing three distinct points if $\Sigma$ has genus $0$, fixing one point if $\Sigma$ has genus $1$.

\begin{defi}[Min-max families]\label{defminmax}

	\textbf{(1)} \textbf{ Admissible family.}	We say that $\mathscr{A}\subset \mathscr{P}(X)\setminus\ens{\varnothing}$ is an admissible min-max family of dimension $d\in\N$ with boundary $(B^{d-1},h)$ (possibly empty)
	for $X$ if
	\begin{enumerate}
		\itemsep-0.2em 
		\item[(A1)] For all $A\in\A$, $A$ is compact in $X$,
		\item[(A2)] There exists a $d$-dimensional compact \emph{Lipschitz} manifold $M^d$ with boundary $B^{d-1}$, (possibly empty)
		and a continuous map $h:B\rightarrow X$ such that for all $A\in\mathscr{A}$, there exists a continuous map $f:M^d\rightarrow X$ that $A=f(M^d)$, and $f=h$ on $B$. 
		\item[(A3)] For every homeomorphism $\varphi$ of $X$ isotopic to the identity map such that $\varphi|_{B}=\mathrm{Id}|_{h(B)}$, and for all $A\in\A$, we have $\varphi(A)\in A$.
	\end{enumerate}
	
	More generally, one can relax the notions of uniqueness of the compact manifold $M^d$ as follows. Let $I$ a set of indices and a family $\ens{M_i^d}_{i\in I}$ of compact Lipschitz manifold with boundary $(B^{d-1}_i,h_i)$. Then we define
	\begin{align*}
	\mathscr{A}=\mathscr{P}(X)\cap \ens{A: \text{there exists}\;\,i\in I\;\,\text{and}\;\, f\in C^0(M_i^d,X)\;\, \text{such that}\;\, A=f(M_i^d)\;\,\text{and}\;\, f_{B_i^{d-1}}=h_i}
	\end{align*}
	Clearly, these classes is stable under homeomorphisms of $X$ isotopic to the identity preserving the boundary $h(B)$ (resp. $h(B_i)$ for all $i\in I$).
	
	Finally, we define the following boundary values of an admissible family $\mathscr{A}$ by
	\begin{align*}
	&\widehat{\beta}(F,\mathscr{A})=\sup_{i\in I}\,\sup F(h_i(B_i^{d-1})),\quad (\text{where}\;\, C_i=h(B^{d-1}_i)\;\, \text{for all}\;\, i\in I).
	\end{align*}
\end{defi}

\begin{defi}\label{nt}
	Let  $\mathscr{A}^{\ast}$ be a $d$-dimensional admissible  min-max family  with boundary $\ens{C_i}_{i\in I}$ of $X$. We say that $\mathscr{A}$ (resp. $\mathscr{A}^{\ast}$, resp. $\widetilde{\mathscr{A}}$) is \emph{non-trivial} with respect to a continuous map $F:X\rightarrow \R$ if 
	\begin{align}\label{adapted}
	\beta(F,\mathscr{A})=\inf_{A\in \mathscr{A}}\sup F(A)>\sup \sup F(h_i(B_i^{d-1}))=\widehat{\beta}(F,\mathscr{A}).
	\end{align}
	Whenever this does not yield confusion, we shall write more simply $\beta(\mathscr{A})$ and $\widehat{\beta}(\mathscr{A})$.
\end{defi}

The second class of mappings are based on (co)-homology type properties.

\begin{defi}\label{homology}
	Let $R$ be an arbitrary ring,  $G$ be an abelian group, and $d\in \N$ be a fixed integer.
	
	\textbf{(2)} \textbf{Homological family.} Let ${\alpha}_{\ast}\in H_d(X,B,R)\setminus \ens{0}$ be a \emph{non-trivial} $d$-dimensional relative (singular) homology class of $X$ with respect to $B$ with $R$ coefficients. We say that $\underline{\mathscr{A}}=\underline{\mathscr{A}}(\alpha_{\ast})$ is a $d$-dimensional homological family with respect to $\alpha_{\ast}\in H_d(X,B,R)$ and boundary $B$ if
	\begin{align*}
	\underline{\mathscr{A}}({\alpha}_{\ast})=\mathscr{P}(X)\cap \ens{A: A\;\, \text{compact}, B\subset A\;\, \text{and}\;\, \alpha\in \mathrm{Im}(\iota^{A}_{\ast})},
	\end{align*} 
	where for all $A \supset B$, the application $\iota_{\ast}^A: H_d(A,B,R)\rightarrow H_d(X,A,R)$ is the induced map in homology from the injection $\iota^{A}:A\rightarrow X$.
	
	\textbf{(3)} \textbf{Cohomological family.} Let $\alpha^{\ast}\in H^{d}(X,G)\setminus\ens{0}$ be a \emph{non-trivial} $d$-dimensional (singular) cohomology class of $X$ with $G$ coefficients. We say that $\bar{\mathscr{A}}=\bar{\mathscr{A}}(\alpha^{\ast})$ is a $d$-dimensional cohomological family with respect to $\alpha^{\ast}\in H^d(X,G)$ if
	\begin{align*}
	\bar{\mathscr{A}}(\alpha^{\ast})=\mathscr{P}(X)\cap\ens{A: A \text{ compact and}\;\, \alpha^{\ast}\notin \mathrm{Ker}(\iota^{\ast}_{A})},
	\end{align*}
	where for all $A\subset X$, the application $\iota^{\ast}_{ A}:H^d(X,G)\rightarrow H^d(A,G)$ is the induced map in cohomology from the injection $\iota_{A}:A\rightarrow X$. In other word, the non-zero class $\alpha^{\ast}$ is not annihilated by the restriction map in cohomology $\iota^{\ast}_{ A}:H^d(X,G)\rightarrow H^d(A,G)$.
	
\end{defi}

\subsection{Statement and proof of the lower semi-continuity of the index}

We now state the main theorem for the case of admissible families, although it also holds for homological and cohomological families as presented in Definition \ref{homology}.

\begin{theorem}\label{liminf}
	Let $\mathscr{A}$ $d$-dimensional admissible family of $W^{3,2}$ immersions that we assume to be non-trivial (in the sense of Definition \ref{nt}). There exists 
	 a sequence $\ens{\sigma_k}_{k\in\N}$ of positive numbers converging to zero and $\{\phi_k\}_{k\in\N}$ a sequence of critical immersions $S^2\rightarrow\R^n$ associated to $\{{W}_{\sigma_k}\}_{k\in\N}$ such that we have
	\begin{align}\label{p0}
		\lim\limits_{k\rightarrow\infty}W_{\sigma_k}(\phi)=\beta(\mathscr{A}),\quad \partial_{\sigma}W_{\sigma_k}(\phi_k)=o\left(\frac{1}{\sigma_k\log\frac{1}{\sigma_k}}\right),\quad \mathrm{Ind}_{W_{\sigma_k}}(\phi_k)\leq d,
	\end{align}
	and there exists $\phi_{\infty}\in W^{1,\infty}(S^2,\R^n)$ and a sequence of Lipschitz homeomorphisms $\psi_k:S^2\rightarrow S^2$ such that
	\begin{align*}
		\phi_k\circ \psi_k\conv{k\rightarrow\infty}\phi_{\infty}\quad \text{strongly in}\;\, C^0(S^2,\R^n).
	\end{align*}
	then up to translations, there exists branched Willmore spheres $\vec{\xi}_j:S^2\rightarrow\R^n$ ($j=1,\cdots,m$ for some $m\geq 1$) such that
	for all $j=1,\cdots,m$, there exists finitely many points $\{p_1^j,\cdots,p_{m_j}^j\}\in S^2$ and a sequence of positive conformal diffeomorphism $\{f_k^j\}_{k\in\N}\subset \mathrm{Diff}_+(S^2)$ such that
	\begin{align}
		\phi_k\circ f_k^j\conv{k\rightarrow\infty}\vec{\xi}_j\quad\text{strongly in}\;\, C^l_{\mathrm{loc}}(S^2\setminus\{p_1^1,\cdots,p_{m_j}^j\}),
	\end{align}
	and
	\begin{align}\label{indexbound2}
		&{(\phi_{\infty})}_\ast[S^2]=\sum_{j=1}^{m}{(\vec{\xi}_j)}_{\ast}[S^2],\;\,\text{and}\quad 
		\sum_{j=1}^{m}\mathrm{Ind}_{W}(\vec{\xi}_j)\leq d.
	\end{align}
\end{theorem}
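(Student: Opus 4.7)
My proof plan proceeds in four stages.

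First, I would invoke the general min-max framework of \cite{index2} applied to the relaxed functional $W_\sigma$. The hypotheses needed are the \textbf{Energy bound} and \textbf{Entropy condition}, both of which were verified earlier in the paper: the Energy bound follows from the linear-in-$\norm{\w}_{\mathscr{E}_\phi}$ estimate on $D\mathscr{O}(\phi)$ proved in Theorem \ref{d2o} combined with the elementary bounds on $DW$ and $DF$ recorded in Remark \ref{energy}, and the refined Onofri inequality (under an Aubin gauge) ensures that a uniform bound on $W_\sigma$ controls $\int_{S^2}|d\alpha|_g^2\,d\vg$. This produces the sequence $(\sigma_k,\phi_k)$ satisfying \eqref{p0}.

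Second, I would apply the compactness theory of \cite{eversion} together with the classification of branched Willmore spheres from \cite{classification}, \cite{sagepaper}. The entropy inequality forces $\sigma_k^2\int(1+|\H|^2)^2\,d\vg\to 0$ and $\frac{1}{\log(1/\sigma_k)}\mathscr{O}(\phi_k)\to 0$, so the $\phi_k$ behave asymptotically as critical points of $W$ and undergo standard bubble-tree convergence after composition with Lipschitz homeomorphisms $\psi_k$ and concentration diffeomorphisms $f_k^j$, producing the branched Willmore spheres $\vec\xi_j$ together with the identity ${(\phi_\infty)}_\ast[S^2]=\sum_j{(\vec\xi_j)}_\ast[S^2]$.

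Third, for the index bound I would argue by contradiction. Suppose $\sum_j \mathrm{Ind}_W(\vec\xi_j)\geq d+1$. For each $j$ I pick admissible variations $\w^j_1,\dots,\w^j_{d_j}\in \mathrm{Var}(\vec\xi_j)$ on whose span $D^2W(\vec\xi_j)$ is negative definite. Using a cut-off supported away from the finitely many branch points $\{p^j_1,\dots,p^j_{m_j}\}$ of $\vec\xi_j$ and the bubble concentration points, and then transporting through $f_k^j$, I produce variations $\widetilde\w^{j,k}_i$ of $\phi_k$. Because the $f_k^j$ concentrate in mutually disjoint regions of $S^2$, for $k$ large the collection $\{\widetilde\w^{j,k}_i\}_{j,i}$ has pairwise essentially disjoint supports, hence is linearly independent, and spans a subspace of dimension $\geq d+1$ in $\mathrm{Var}(\phi_k)$.

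Fourth—the main obstacle—I must show $D^2W_{\sigma_k}(\phi_k)$ is still negative definite on this subspace for $k$ large. Splitting $D^2W_{\sigma_k}=D^2W+\sigma_k^2 D^2F+\frac{1}{\log(1/\sigma_k)}D^2\mathscr{O}$ as in \eqref{ofu}, the first term converges to $\sum_j D^2W(\vec\xi_j)(\w^j_i,\w^j_i)$ by $\varepsilon$-regularity away from the bubble points together with the local estimate of Lemma \ref{est2} (controlling $\D^2\w$ at the branch points of the $\vec\xi_j$, which guarantees no concentration of the quadratic form at those points). The $\sigma_k^2 D^2F$ term is absorbed thanks to the refined entropy condition of Remark \ref{energy} combined with the $L^p$ control of $\h_0$ by $\H$. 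For the Onofri term—the truly non-local piece—I would use Corollary \ref{endproof}, which gives
\[
\frac{1}{\log(1/\sigma_k)}|D^2\mathscr{O}(\phi_k)(\widetilde\w^{j,k}_i,\widetilde\w^{j,k}_i)|\leq \frac{C}{\log(1/\sigma_k)}\Bigl(1+W(\phi_k)+\int_{S^2}|d\alpha_k|_{g_k}^2\,d\vg\Bigr)\,\|\widetilde\w^{j,k}_i\|_{\mathscr{E}_{\phi_k}}^2,
\]
and then note that entropy forces $\frac{1}{\log(1/\sigma_k)}\int|d\alpha_k|^2\to 0$ while $\|\widetilde\w^{j,k}_i\|_{\mathscr{E}_{\phi_k}}$ stays bounded by construction; the crucial input here is Theorem \ref{mainestimate}, which provides the \emph{a priori} bound on $\alpha'_0$ needed to close the estimate independently of how $\alpha_k$ behaves near the bubble/branch scales. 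This yields $D^2W_{\sigma_k}(\phi_k)<0$ on a $(d+1)$-dimensional subspace, contradicting $\mathrm{Ind}_{W_{\sigma_k}}(\phi_k)\leq d$ from \eqref{p0}, and hence \eqref{indexbound2}.
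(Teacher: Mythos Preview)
Your proposal is correct and follows essentially the same strategy as the paper's own proof: invoke \cite{index2} for the viscous index bound and entropy estimate, extract the bubble tree via \cite{eversion}, then transport negative variations of each limit $\vec{\xi}_j$ back to $\phi_k$ through logarithmic cut-offs supported in the strong-convergence regions, and kill the $\sigma_k^2 D^2F$ and $\frac{1}{\log(1/\sigma_k)}D^2\mathscr{O}$ contributions using, respectively, the smooth convergence on compact sets and Corollary~\ref{endproof} combined with the Aubin-gauge Onofri inequality and the entropy condition. The only cosmetic difference is that the paper argues directly (showing each $\mathrm{Ind}_W(\vec{\xi}_j)\leq \liminf_k \mathrm{Ind}_{W_{\sigma_k}}(\phi_k)$ and then summing via disjointness of supports) rather than by contradiction, and it handles the $\sigma_k^2 D^2F$ term simply by noting that the cut-off variations live in the smooth-convergence region so $D^2F$ has a finite limit there---you do not actually need the refined entropy of Remark~\ref{energy} (which concerns a different relaxation $A_\sigma$) for that step.
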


\begin{rem}
	The proof of the next theorem is long but not difficult. It amounts at checking precisely how the negative variations can be "pull-backed" to the sequence of critical points of the viscous energies. 
\end{rem}

\begin{proof}
	\textbf{Step 0 : Index bound for $\sigma>0$.} This is  direct consequence of the main result of \cite{index2} that there exists a sequence $\ens{\sigma_k}_{k\in \N}\subset (0,\infty)$  converging to zero and a sequence of critical points $\phi_{\sigma}$ of $W_{\sigma}$ such that
	\begin{align}\label{ub}
		W_{\sigma_k}(\phi_{k})=\beta(\sigma_k),\quad  W_{\sigma_k}(\phi_k)-W(\phi_k)\leq \frac{1}{\log\left(\frac{1}{\sigma_k}\right)\log\log\left(\frac{1}{\sigma_k}\right)},\quad\text{and}\quad \mathrm{Ind}_{W_{\sigma_k}}(\phi_{k})\leq d.
	\end{align}
	The hypothesis of the main result of \cite{index2} are easily seen to be satisfied. The ellipticity of the operator induced by the second derivative of $W_{\sigma}$ is easy to check as in \cite{hierarchies}, and it proved that for all critical point $\phi$ of $W_{\sigma}$ (which is smooth by \cite{eversion}),  the restriction of $D^2W_{\sigma}(\phi)$ on $W^{3,2}_{\phi}(\Sigma,T\R^n)$ is a Fredholm operator. The \textbf{Energy bound} was already checked in Remark \ref{energy}. Finally, the Palais-Smale condition is a basic ingredient needed even when we do not prescribe the index of the viscous critical points and it was proved in \cite{eversion}.
	
	Now, the whole point of this paper is to show that the upper-bound \eqref{ub} for the index is preserved as $\sigma_k\conv{k\rightarrow \infty}0$.
	
	\textbf{Step 1 : Definitions and properties of the Willmore bubble tree.}
	
	Fix some $0<\delta<\dfrac{8\pi}{3}$. Recall that by the constructions in \cite{quanta} (see Proposition $\mathrm{III.1}$) and \cite{eversion}, there exists $a^1,\cdots,a^n\in S^2$ such that for some sequence of positive diffeomorphisms $\ens{\psi_k}_{k\in \N}\subset \mathrm{Diff}_+(S^2)$
	\begin{align}\label{strong}
		\phi_k\circ \psi_k\conv{k\rightarrow\infty}\phi_{\infty}\;\, \text{strongly in}\;\, C^l_{\mathrm{loc}}(S^2\setminus\ens{a_1,\cdots,a_n})
	\end{align}
	and furthermore, there exists $Q^1,\cdots, Q^m\in \N$ such for all $1\leq i\leq m$ and for all $1\leq j\leq Q^i$, there exists sequences of points $\{x_k^{i,j}\}_{k\in \N}\subset S^2$ such that $x_k^{i,j}\conv{k\rightarrow\infty}a_i$ and sequences of radii $\{\rho^{i,j}_k\}_{k\in \N}\subset (0,\infty)$ such that $\rho_k^{i,j}\conv{k\rightarrow\infty}0$ satisfying
	\begin{align*}
		\int_{B_{\rho^{i,j}_k}(x_k^{i,j})}|d\n_k|^2_{g_k}d\mathrm{vol}_{g_k}>\frac{8\pi}{3}-\delta.
	\end{align*}
	Furthermore, for all $i\in \ens{1,\cdots,m}$ and $j\in \ens{1,\cdots,Q^i}$, the set of indices
	\begin{align*}
		I^{i,j}=\ens{1,\cdots, Q^i}\cap\ens{j': x_k^{i,j'}\in B_{\rho^{i,j}_k}(x_k^{i,j})\;\,\text{and}\;\, \dfrac{\rho_k^{i,j}}{\rho^{i,j'}_k}\conv{k\rightarrow\infty}\infty} 
	\end{align*}
	is independent of $k$. Finally, for all $0<\alpha<1$, we define the neck region $\Omega_k(\alpha)\subset S^2$ as
	\begin{align}\label{defneck}
		\Omega_k(\alpha)=\bigcup_{i=1}^m\left\{\left(B_{\alpha}(a^i)\setminus \bigcup_{j=1}^{Q_i}B_{\alpha^{-1}\rho_{k}^{i,j}}(x_k^{i,j})\right)\cup \bigcup_{j=1}^{Q^i}\bigcup_{j'\in I^{i,j}}\left(B_{\alpha \rho_k^{i,j}}(x_k^{i,j'})\setminus\bigcup_{j''\in I^{i,j}}B_{\alpha^{-1}\rho_k^{i,j''}}(x_k^{i,j''})\right)\right\}.
	\end{align}
	Notice that by the definition of $I^{i,j}$, the right part of \eqref{defneck} is not empty if $I^{i,j}\neq \varnothing$ and $k$ is large enough. One of the main result of \cite{quanta} and \cite{eversion} is to show that there is no concentration of energy in these neck regions, \emph{i.e.}
	\begin{align*}
		\lim\limits_{\alpha\rightarrow 0}\lim\limits_{k\rightarrow\infty}\np{\D\n_{k}}{2}{\Omega_k(\alpha)}=0.
	\end{align*}
	From this step one easily infer the quantization of energy. Let 
	\begin{align*}
		B(i,j,\alpha,k)=B_{\alpha^{-1}\rho_{k}^{i,j}}(x_k^{i,j})\setminus\bigcup_{j'\in I^{i,j}}B_{\alpha \rho_k^{i,j}}(x_k^{i,j'})
	\end{align*}
	and notice that by \eqref{defneck}, we have
	\begin{align}\label{trivialobs}
		\Omega_k(\alpha)\cup\bigcup_{i=1}^m\bigcup_{j=1}^{Q^i}B(i,j,\alpha,k)=\bigcup_{i=1}^mB_{\alpha}(a^i).
	\end{align}
	
	Now, if $\vec{\Psi}:S^2\rightarrow \R^n$, we let $Q_{\Psi}$ the quadratic function in the variation $\w\in \mathrm{Var}(\vec{\Psi})$ such that 
	\begin{align*}
	D^2W(\Psi)[\w,\w]=\int_{S^2}Q_{\Psi}(\w)d\mathrm{vol}_{g_{\Psi}}
	\end{align*}
	is well-defined.
	We remark thanks of the strong convergence \eqref{strong} and thanks of the invariance by re-parametrisation of the Willmore energy that for all $0<\alpha<1$, we have
	\begin{align}\label{t1}
		\lim\limits_{k\rightarrow\infty}\int_{S^2\setminus \cup_{i=1}^mB_{\alpha}(a^i)}Q_{\phi_k}(P(\phi_k)\w)d\mathrm{vol}_{g_{\phi_k}}=\int_{S^2\setminus \cup_{i=1}^mB_{\alpha}(a^i)}Q_{\phi_{\infty}}(\w)d\mathrm{vol}_{g_{\phi_{\infty}}}
	\end{align}
	if $P(\phi_k)$ is the projection on $\mathrm{Var}(\phi_k)$.
	
	\textbf{We write in the following $\w_k=P(\phi_k)\w$ for all $\w\in \mathrm{Var}(\phi_{\infty})$}.
	
	Furthermore, as $\phi_{\infty}$ is a branched Willmore sphere, we have for all $\w\in \mathrm{Var}(\phi_{\infty})$
	\begin{align}\label{t2}
		\lim_{\alpha\rightarrow 0}\int_{S^2\setminus  \cup_{i=1}^mB_{\alpha}(a^i)}Q_{\phi_{\infty}}(\w)d\mathrm{vol}_{g_{\phi_{\infty}}}=D^2W(\phi_{\infty})[\w,\w].
	\end{align}
	Taking together \eqref{t1} and \eqref{t2} yields
	\begin{align}\label{est1}
		\lim_{\alpha\rightarrow 0}\lim_{k\rightarrow\infty}\int_{S^2\setminus \cup_{i=1}^mB_{\alpha}(a^i)}Q_{\phi_k}(\w_k)d\mathrm{vol}_{g_{\phi_k}}=D^2W(\phi_{\infty})[\w,\w].
	\end{align}
	
	Finally, for all $1\leq i\leq n$, for all $1\leq j\leq Q^i$, there exists $z_k^{i,j}\in B(i,j,\alpha,k)$, there exists a branched Willmore sphere $\phi_{\infty}^{i,j}:\C\rightarrow \R^3$, and for all $j'\in I^{i,j}$ there exists and $a_i^{j,j'}(k)\conv{k\rightarrow\infty}a_i^{j,j'}\in D_{\C}^2(0,1)$, such that the renormalised immersion
	\begin{align*}
		&\phi_k^{i,j}(\alpha):A_{\C}^{i,j}(\alpha)=D_{\C}^2(0,\alpha^{-1})\setminus\bigcup_{j''\in I^{i,j}}D_{\C}^2(a^{j,j'}_i(k),\alpha)\rightarrow \R^3\\
		&\phi_k^{i,j}(\alpha)(y)=e^{-\lambda_k(z_k^{i,j})}\left(\phi_k(\rho_k^{i,j}y+x_{k}^{i,j})-\phi_k(x_k^{i,j})\right)
	\end{align*}
	satisfies the following conditions:
	\begin{enumerate}
		\item[(1)] For all $j'\in I^{i,j}$, $\lim_{k\rightarrow \infty}a_i^{j,j'}(k)=\lim\limits_{k\rightarrow\infty}\dfrac{x_k^{i,j}-z_k^{i,j}}{\rho_k^{i,j}}=a_i^{j,j'}$.
		\item[(2)] $\phi_k^{i,j}(\alpha)\conv{k\rightarrow\infty} \phi_{\infty}^{i,j}$ in $C^l(A_{\C}^{i,j}(\alpha))$ for all $l\in \N$.
	\end{enumerate}
    Now fix some $0<\alpha<1$.
    We obtain thanks of the strong convergence in all $C^l$ that for any variation $\w\in W^{3,2}(S^2,\R^3)$ and the explicit expression, we have
    \begin{align*}
    	\lim\limits_{k\rightarrow\infty}\int_{A_{\C}^{i,j}(\alpha)}Q_{\phi_k^{i,j}}(\w_k)d\mathrm{vol}_{g_{\phi_k^{i,j}}}=\int_{A_{\C}^{i,j}(\alpha)}Q_{\phi^{i,j}_{\infty}}(\w)d\mathrm{vol}_{g_{\phi_{\infty}^{i,j}}}
    \end{align*}
    so that
    \begin{align}\label{est3}
    	\lim\limits_{\alpha\rightarrow 0}\lim_{k\rightarrow\infty}\int_{A_{\C}(\alpha)}Q_{\phi_k^{i,j}}(\w_k)d\mathrm{vol}_{g_{\phi_k^{i,j}}}=D^2W(\phi_{\infty}^{i,j})[\w,\w].
    \end{align}
    Notice that for all $0<\alpha<1$, for all $(i_1,j_1)\neq (i_2,j_2)$ and for all large enough $k\geq1$ the three subset of $S^2$
    \begin{align*}
    	S^2\setminus \bigcup_{i=1}^{n}B_{\alpha}(a^i),\;\, \Omega_k(\alpha),\;\, B(i_1,j_1,\alpha,k)\;\, \text{and}\;\, B(i_2,j_2,\alpha,k)
    \end{align*}
    are pair-wise disjoint. 
    
    \textbf{Step 2 : Cut-off function.}
    
    Now, we introduce a continuous function $f:(0,1)\rightarrow (0,1)$ such that for all $0<\alpha<1$, we have $f(\alpha)<\alpha$ and
    \begin{align}\label{falpha}
    \frac{\alpha}{f(\alpha)}\conv{\alpha\rightarrow 0}\infty
    \end{align}
    and a cut-off function $\eta:S^2\rightarrow [0,1]$ such that (this is the classical logarithm cut-off trick, see \cite{coldingminicozzi1} or the proof of theorem $\mathrm{III.3}$ in \cite{hierarchies})
    \begin{align*}
    \left\{
    \begin{alignedat}{2}
    &\eta=1\quad \text{in}\;\, S^2\setminus\bigcup_{i=1}^m B_{\alpha}(a^i)\\
    &\mathrm{supp}(\eta)\subset S^2\setminus \bigcup_{i=1}^m B_{f(\alpha)}(a^i) 
    \end{alignedat}\right.
    \end{align*}
    defined on $B_{\alpha}(a^i)\setminus B_{f(\alpha)}(a^i)$ by
    \begin{align*}
    \eta(z)=-\frac{\log(f(\alpha))}{\log\left(\frac{\alpha}{f(\alpha)}\right)}+\frac{\log|z-a^i|}{\log\left(\frac{\alpha}{f(\alpha)}\right)}.
    \end{align*}

    \textbf{Step 3 : Passage to the limit in the second derivative of the Willmore energy of the macroscopic surface.}
    
    Now, suppose that the index of $\phi_{\infty}$ is equal to $d\geq 1$ (if $\phi_{\infty}$ is stable, there is nothing to prove). Then there exists $\w^1,\cdots,\w^d\in \mathrm{Var}(\phi_{\infty})$ orthogonal in $L^2$ such that for all $1\leq l\leq d$, we have for some $0<\delta_l<\infty$
    \begin{align*}
    	D^2W(\phi_{\infty})(\w^l,\w^l)<-\delta_l.
    \end{align*}
    In particular, thanks of the property \eqref{t2} there exists $0<\alpha<1$ small enough such that
    \begin{align*}
    	\int_{S^2\setminus\cup_{i=1}^m{B_{\alpha}}(a^i)}Q_{\phi_{\infty}}(\w^{l})d\mathrm{vol}_{g_{\phi_{\infty}}}<-\frac{\delta_l}{2}\;\,\text{for all}\;\, 1\leq l\leq d.
    \end{align*}
    Now, thanks of \eqref{t1}, we obtain for $k\geq 1$ large enough that
    \begin{align}\label{ineq8}
    	\int_{S^2\setminus \cup_{i=1}^mB_{\alpha}(a^i)}Q_{\phi_k}(\w^l_k)d\mathrm{vol}_{g_{\phi_k}}<-\frac{\delta_l}{4}.
    \end{align}
    Taking $k$ large enough, we can suppose as 
    \begin{align*}
    	\Omega_k(\alpha)\conv{k\rightarrow\infty}\bigcup_{i=1}^mB_{\alpha}(a^i)\setminus\ens{a^i}\;\, \text{uniformly}
    \end{align*}
    that $\mathrm{supp}(\eta)\subset \Omega_k(\alpha)$. Then, we have 
    \begin{align}\label{ineq9}
    	\left|D^2W(\phi_k)[\eta\,\w_k^l,\eta\,\w_k^l]-\int_{S^2\setminus\cup_{i=1}^mB_{\alpha}(a^i)}Q_{\phi_k}(\w_k^l)d\mathrm{vol}_{g_{\phi_k}}\right|\leq \left|\int_{\supp(\D\eta)}Q_{\phi_k}(\eta\,\w^l_k)d\mathrm{vol}_{g_{\phi_k}}\right|
    \end{align}
    so if we prove that for an appropriate choice of $f(\alpha)$, we have
    \begin{align*}
    	\lim_{\alpha\rightarrow 0}\limsup_{k\rightarrow\infty}\left|\int_{\supp(\D\eta)}Q_{\phi_k}(\eta\,\w^l_k)d\mathrm{vol}_{g_{\phi_k}}\right|=0
    \end{align*}
    this will imply by \eqref{ineq8} and \eqref{ineq9} that for $\alpha$ small enough and $k$ large enough, we have
    \begin{align*}
    	D^2W(\phi_k)[\eta\,\w_k^l,\eta\,\w_k^l]<-\frac{\delta_l}{8}.
    \end{align*}
    Therefore, we will have proved as the $\eta\,\w^l_k$ are linearly independent for $0<\alpha<1$ small enough that
    \begin{align*}
    	\mathrm{Ind}_{W}(\phi_{\infty})\leq \liminf_{k\rightarrow\infty}\mathrm{Ind}_{W_{\sigma_k}}(\phi_k).
    \end{align*}
    Now, recall the explicit formula
    \begin{align}\label{d2w1}
    &D^2W(\phi)(\w,\w)=\int_{\Sigma}\frac{d^2}{dt^2}|\H_{g_t}|^2_{|t=0}d\vg+2\int_{\Sigma}\frac{d}{dt}|\H_{g_t}|^2\frac{d}{dt}\left(d\mathrm{vol}_{g_t}\right)_{|t=0}+\int_{\Sigma}|\H|^2\frac{d}{dt}\left(d\mathrm{vol}_{g_t}\right)\nonumber\\
    &=\int_{\Sigma}\bigg\{-2\bs{\s{\H}{\vec{\I}}}{d\w\totimes d\w}_g+4\s{d\phi}{d\w}_g\bs{\s{\H}{\vec{\I}}}{d\phi\totimes d\w+d\w\totimes d\phi}_g\nonumber\\
    &-4\left(\s{d\phi}{d\w}_g^2-16|\partial\phi\totimes \partial\w|_{WP}^2\right)|\H|^2
    -\bs{\s{\Delta_g\w}{d\phi}}{\s{\H}{d\w}}_g-2\bs{\s{\H}{(\D d\w)^{\perp}}}{d\phi\totimes d\w+d\w\totimes d\phi}_g\nonumber\\
    &+\frac{1}{2}\left|\Delta^{\perp}_g\w+\mathscr{A}(\w)\right|^2+2\s{\Delta_g^{\perp}\w+\mathscr{A}(\w)}{\H}\s{d\phi}{d\w}_g+|\H|^2\left(|d\w|_g^2-16|\partial\phi\totimes \partial\w|_{WP}^2\right) \bigg\}d\vg.
    \end{align}
    Thanks of the strong convergence, we have
    \begin{align*}
    	\int_{\supp(\D\eta)}Q_{\phi_k}(\eta\,\w_k^l)d\mathrm{vol}_{g_{\phi_k}}\conv{k\rightarrow\infty}\int_{\supp(\D\eta)}Q_{\phi_{\infty}}(\eta\,\w_\infty^l)d\mathrm{vol}_{g_{\phi_\infty}}.
    \end{align*}
    As $\eta$ is a harmonic function and thanks of \eqref{d2w1}, we see that it suffices to prove that the following quantities
    \begin{align*}
        &J^1_k(\alpha)=\int_{B_{\alpha}(a^i)\setminus B_{f(\alpha)}(a^i)}|\D d(\eta\,\w_k^l)|_{{g_{\phi_k}}}\,|\H_{g_{\phi_k}}|d\mathrm{vol}_{g_{\phi_k}}\\
    	&J^2_k(\alpha)=\int_{B_{\alpha}(a^i)\setminus B_{f(\alpha)}(a^i)}|d(\eta\,\w_k^l)|_{g_{\phi_k}}^2|\H_{g_{\phi_k}}|||\vec{\I}_{g_{\phi_k}}|d\mathrm{vol}_{g_{\phi_k}}\\
    	&J^3_k(\alpha)=\int_{B_{\alpha}(a^i)\setminus B_{f(\alpha)}(a^i)}|\Delta_{g_{\phi_k}}^{\perp}(\eta\,\w_k^l)|^2d\mathrm{vol}_{g_{\phi_k}}
    \end{align*}
    verify
    \begin{align}\label{prop}
    	\lim\limits_{\alpha\rightarrow 0}\limsup_{k\rightarrow \infty}J^{l}_k(\alpha)=0,\quad \text{for}\; l=1,2,3.
    \end{align}
	Now, as there is no concentration of energy in $B_{\alpha}(a^i)\setminus B_{f(\alpha)}(a^i)$, we deduce with obvious notations that
	\begin{align*}
		J_{k}^l(\alpha)\conv{k\rightarrow \infty}J^l_{\infty}(\alpha)\quad \text{for}\;\, l=1,2,3.
	\end{align*}
	Now write to simplify $g=g_{\phi_{\infty}}$ and $e^{2\lambda}$ the conformal parameter of $\phi_{\infty}$. Then if $\phi_{\infty}$ does not have a branch point at $a^i$ (or has a branch point of multiplicity $1$, though this cannot happen as we would not have a \emph{true} Willmore), the estimates are trivial. Otherwise, we have, up to a normalisation constant which we take equal to $1$, the estimate
	\begin{align*}
		e^{2\lambda}=|z|^{2\theta_0-2}\left(1+O(|z|)\right)
	\end{align*}
	for some integer $\theta_0\geq 2$. As $\w\in \mathrm{Var}(\phi_{\infty})$, we have 
	\begin{align}\label{boundvar}
		\int_{S^2}|\Delta_{g}^{\perp}\w|^2d\vg+\np{|d\w|_g}{\infty}{S^2}^2\leq C<\infty.
	\end{align}
	for some positive constant $C<\infty$. Now, as $\eta$ is harmonic on $B_{\alpha}(a^i)\setminus B_{f(\alpha)}(a^i)$, we have 
	\begin{align*}
		\Delta_g^{\perp} (\eta\,\w)=\eta\,\Delta_g^{\perp}\w+2\s{d\eta}{\D^{\perp}\w}_g.
	\end{align*}
	Using \eqref{boundvar}, we deduce that
	\begin{align}\label{boundvar2}
		\int_{B_{\alpha}(a^i)\setminus B_{f(\alpha)}(a^i)}|\Delta_g^{\perp}(\eta\,\w)|^2d\vg&\leq 2\int_{B_{\alpha}(a^i)\setminus B_{f(\alpha)}(a^i)}|\Delta_g^{\perp}\w|^2d\vg\nonumber\\
		&+8\np{|d\w|_g}{\infty}{S^2}^2\int_{B_{\alpha}(a^i)\setminus B_{f(\alpha)}(a^i)}|\D\eta|^2|dz|^2
	\end{align}
	As
	\begin{align}\label{boundvar3}
		\int_{B_{\alpha}(a^i)\setminus B_{f(\alpha)}(a^i)}|\D\eta|^2|dz|^2=\int_{B_{\alpha}\setminus B_{f(\alpha)}(0)}\frac{|dz|^2}{\log^2\left(\frac{\alpha}{f(\alpha)}\right)|z|^2}=\frac{2\pi}{\log\left(\frac{\alpha}{f(\alpha)}\right)}
	\end{align}
	we deduce from \eqref{boundvar}, \eqref{boundvar2} and \eqref{boundvar3} that
	\begin{align*}
		J_{\infty}^3(\alpha)=\int_{B_{\alpha}(a^i)\setminus B_{f(\alpha)}(a^i)}|\Delta_g^{\perp}(\eta\,\w)|^2d\vg\leq 2\int_{B_{\alpha}(a^i)\setminus B_{f(\alpha)}(a^i)}|\Delta_g^{\perp}\w|^2d\vg+16\pi\,C\frac{1}{\log\left(\frac{\alpha}{f(\alpha)}\right)}\conv{\alpha\rightarrow 0}0
	\end{align*}
	thanks of \eqref{falpha}.
	
	Now, let $\nu\leq \theta_0-2$ such that
	\begin{align}\label{improved}
		\H=\Re\left(\frac{\vec{C}_1}{z^{\nu}}\right)+O(|z|^{1-\alpha-\epsilon})
	\end{align}
	for all $\epsilon>0$. We also easily have $|\h_0|=O(|z|^{\theta_0-1})$, so that for some constant $C$ depending only on $\phi_{\infty}$
	\begin{align*}
		\int_{B_{\alpha}\setminus B_{f(\alpha)}(0)}|\D d{\eta}\,\w|_g\,|\H|d\vg&\leq C\np{\w}{\infty}{S^2}\int_{B_\alpha\setminus B_{f(\alpha)}(0)}\frac{1}{\log\left(\frac{\alpha}{f(\alpha)}\right)|z|^2}\cdot |z|^{1-\theta_0}\cdot |z|^{-\nu}\cdot  |z|^{2\theta_0-2}|dz|^2\\
		&=C\np{\w}{\infty}{S^2}\frac{1}{\log\left(\frac{\alpha}{f(\alpha)}\right)}\frac{1}{\theta_0-1-\nu}\left(\alpha^{\theta_0-1-\nu}-f(\alpha)^{\theta_0-1-\nu}\right)
	\end{align*}
	As $\nu\leq \theta_0-2$, we have $\theta_0-1-\nu\geq 1$, so we have
	\begin{align*}
		\int_{B_{\alpha}\setminus B_{f(\alpha)}(0)}|\D\,d{\eta}\,\w|_g		
		|\H|d\vg&\leq {C}\np{\w}{\infty}{S^2}\frac{\alpha}{\log\left(\frac{\alpha}{f(\alpha)}\right)}\conv{\alpha\rightarrow 0}0.
	\end{align*}
	Now, the other terms are treated similarly by virtue of Lemma \ref{est2} for the other components of $J_{\infty}^1(\alpha)$ so that
	\begin{align*}
		J_{\infty}^1(\alpha)\conv{\alpha\rightarrow 0}0.
	\end{align*}
	Finally, we have
	\begin{align*}
		J_{\infty}^2(\alpha)\leq 2\np{|d\w|_g}{\infty}{S^2}^2\int_{B_{\alpha}\setminus B_{f(\alpha)}(a^i)}|\vec{\I}|_g^2d\vg+2\np{\w}{\infty}{S^2}^2\int_{B_{\alpha}\setminus B_{f(\alpha)}(a^i)}|\D\eta|^2|\H||\vec{\I}|_gd\vg
	\end{align*}
	We have
	\begin{align*}
		\int_{B_{\alpha}\setminus B_{f(\alpha)}(a^i)}|\D\eta|^2|\H||\vec{\I}|_gd\vg\leq C\int_{B_\alpha\setminus B_{f(\alpha)}(0)}\frac{1}{\log^2\left(\frac{\alpha}{f(\alpha)}\right)|z|^2}|z|^{-\nu}|z|^{1-\theta_0}|z|^{2\theta_0-2}|dz|^2\leq C\frac{\alpha}{\log^2\left(\frac{\alpha}{f(\alpha)}\right)}\conv{\alpha\rightarrow 0}0
	\end{align*}
	so that
	\begin{align*}
		\lim\limits_{\alpha\rightarrow 0}J_{\infty}^2(\alpha)=0.
	\end{align*}
	Therefore, we obtain
	\begin{align*}
		\mathrm{Ind}_W(\phi_{\infty})\leq \liminf_{k\rightarrow\infty}\mathrm{Ind}_{W_{\sigma_k}}(\phi_k).
	\end{align*}
	
	\textbf{Step 4 : Passage to the limit in the viscous energy.}
	
	Recall that
	\begin{align*}
	&D^2F(\phi)(\w,\w)=2\int_{S^2}\bigg\{-2\bs{\s{\H}{\vec{\I}}}{d\w\totimes d\w}_g+4\s{d\phi}{d\w}_g\bs{\s{\H}{\vec{\I}}}{d\phi\totimes d\w+d\w\totimes d\phi}_g\\
	&-4\left(\s{d\phi}{d\w}_g^2-16|\partial\phi\totimes \partial\w|_{WP}^2\right)|\H|^2
	-\bs{\s{\Delta_g\w}{d\phi}}{\s{\H}{d\w}}_g-2\bs{\s{\H}{(\D d\w)^{\perp}}}{d\phi\totimes d\w+d\w\totimes d\phi}_g\\
	&+\frac{1}{2}\left|\Delta^{\perp}_g\w+\mathscr{A}(\w)\right|^2+2\s{d\phi}{d\w}_g\s{\H}{\Delta_g^{\perp}\w+\mathscr{A}(\w)}+|\H|^2\left(|d\w|_g^2-16|\partial\phi\totimes \partial\w|_{WP}^2\right) \bigg\}\left(1+|\H|^2\right)d\vg\\
	&+2\int_{S^2}\s{\Delta_g^{\perp}\w+\mathscr{A}(\w)}{\H}^2d\vg+4\int_{\Sigma}\s{d\phi}{d\w}_g\s{\H}{\Delta_{g}^{\perp}\w+\mathscr{A}(\w)}\left(1+|\H|^2\right)d\vg\\
	&+\int_{S^2}\left(|d\w|_g^2-16|\partial\phi\totimes \partial\w|_{WP}^2\right)\left(1+|\H|^2\right)^2d\vg.
	\end{align*}
	Let us write $T$ the quadratic form on $\w$ such that
	\begin{align*}
	D^2F(\phi)[\w,\w]=\int_{S^2}T_{\phi}(\w)d\mathrm{vol}_{g_{\phi}}
	\end{align*}	
	First, on $S^2\setminus \cup_{i=1}^mB_{f(\alpha)}(a^i)$, by the strong convergence in all $C^l(S^2\setminus \cup_{i=1}^mB_{f(\alpha)}(a^i))$ for all $l\in \N$, we have (recall that $\eta\,\w_k^l$ is supported in $S^2\setminus \cup_{i=1}^mB_{f(\alpha)}(a^i)$)
	\begin{align*}
	\lim_{k\rightarrow \infty}D^2F(\phi_k)[\eta\,\w_k^l,\eta\,\w_k^l]=\lim\limits_{k\rightarrow \infty}\int_{S^2\setminus \cup_{i=1}^nB_{f(\alpha)}(a^i)}T_{\phi_k}(\eta\w_k^l)d\mathrm{vol}_{g_{\phi_k}}=\int_{S^2\setminus\cup_{i=1}^m B_{f(\alpha)}(a^i)}T_{\phi_{\infty}}(\eta\w^l)d\mathrm{vol}_{g_{\phi_{\infty}}}
	\end{align*}
	which is a finite quantity as $\phi_{\infty}$ is real analytic on $S^2\setminus \bigcup_{i=1}^m B_{f(\alpha)}(a^i)$. In particular, we have
	\begin{align}
	\lim_{k\rightarrow \infty}\sigma_k^2D^2F(\phi_k)(\eta\,\w_k^l,\eta\,\w_k^l)=0.
	\end{align}
	and \emph{a fortiori}
	\begin{align*}
	\lim\limits_{\alpha\rightarrow 0}\limsup_{k\rightarrow \infty}\left|\sigma_k^2D^2F(\phi_k)(\eta\,\w_k^l,\eta\,\w_k^l)\right|=0
	\end{align*}
	which concludes the proof of this step. By fixing a Aubin gauge (se \cite{eversion}) for the parameters $\alpha_k:S^2\rightarrow \R$ such that $g_{\phi_k}=e^{2\alpha_k}g_{0,k}$ for some constant Gauss curvature metric $g_{0,k}$ of volume $1$ the estimate
	\begin{align}
		\frac{1}{6}\int_{S^2}|d\alpha_k|_{g_k}^2d\mathrm{vol}_{g_k}\leq \mathscr{O}(\phi_k)
	\end{align}
	which implies by the proof of Corollary \ref{endproof}, we have 
	\begin{align}\label{key}
	\left|D^2\mathscr{O}(\phi_k)(\eta\,\w_k^l,\eta\,\w_k^l)\right|&\leq C\left(1+W(\phi_k)+\int_{S^2}|d\alpha_{k}|^2_{g_{\phi_k}}d\mathrm{vol}_{g_{\phi_k}}\right)\nonumber\\
	&\qquad\qquad\times\left(\np{|d(\eta\w_k^l)|_g^2}{\infty}{S^2}^2+\int_{S^2}|\Delta_{g_{\phi_k}}^{\perp}(\eta\w_k^l)|^2_{g_{\phi_k}}d\mathrm{vol}_{g_{\phi_k}}\right)\nonumber\\
	&\leq C\left(1+W(\phi_k)+6\mathscr{O}(\phi_k)\right)\left(\np{|d(\eta\w_k^l)|_g}{\infty}{S^2}^2+\int_{S^2}|\Delta_{g_{\phi_k}}^{\perp}(\eta\w_k^l)|^2_{g_{\phi_k}}d\mathrm{vol}_{g_{\phi_k}}\right)\nonumber\\
	&\leq C\left(1+W(\phi_k)+6\,\mathscr{O}(\phi_k)\right)
	\end{align}
	by hypothesis on $\w^l$, the second component in the product of the right-hand side of \eqref{key} is uniformly bounded independently of $k$ thanks of \eqref{boundvar2}. Furthermore, we have the estimate
	\begin{align*}
		W(\phi_k)\leq W_{\sigma_k}(\phi_k)=\beta(\sigma_k)\conv{k\rightarrow \infty}\beta_0,
	\end{align*}
	so we have for $k$ large enough
	\begin{align*}
		\left|D^2\mathscr{O}(\phi_k)(\eta\,\w_k^l,\eta\,\w_k^l)\right|\leq C\left(1+\beta_0+6\,\mathscr{O}(\phi_k)\right)
	\end{align*}
	and for some constant $C$ depending only on $\beta_0$ we obtain by the entropy condition
	\begin{align*}
		\frac{1}{\log\left(\frac{1}{\sigma_k}\right)}\left|D^2\mathscr{O}(\phi_k)(\eta\,\w_k^l,\eta\,\w_k^l)\right|\leq \frac{C}{\log\left(\frac{1}{\sigma_k}\right)}+\frac{C}{\log\left(\frac{1}{\sigma_k}\right)}\mathscr{O}(\phi_k)\leq \frac{C}{\log\left(\frac{1}{\sigma_k}\right)}+\frac{C}{\log\log\left(\frac{1}{\sigma_k}\right)}\conv{k\rightarrow \infty}0.
	\end{align*}
	
	\textbf{Step 5: Passage to the limit in the Willmore energy of the bubbles.}
	
	Recall that for all $1\leq i\leq m$, and for all $1\leq j\leq Q^i$, we have
   \begin{align*}
		B(i,j,\alpha,k)=B_{\alpha^{-1}\rho_{k}^{i,j}}(x_k^{i,j})\setminus\bigcup_{j'\in I^{i,j}}B_{\alpha \rho_k^{i,j}}(x_k^{i,j'}).
    \end{align*}
    Now, we remark that for all $\tilde{\w}_k\in \mathrm{Var}(\phi_k)$, we have for $\w_k(y)=\tilde{\w}_k(\rho_k^{i,j}y+x_{k}^{i,j})$ and
    \begin{align*}
    	{D}(i,j,\alpha,k)=D_{\C}(0,\alpha^{-1})\setminus \cup_{j'\in I^{i,j}}D_{\C}(a^{j,j'}_i(k),\alpha)
    \end{align*}
     the identities
    \begin{align}\label{changemap}
        &\np{|d\tilde{\w}_k|_{g_{\phi_k}}}{\infty}{\mathrm{dom}(\tilde{\w}_k)}=\np{|d\w_k|_{g_{\phi_k^{i,j}}}}{\infty}{\mathrm{dom}(\w_k)}\nonumber\\
    	&\int_{B(i,j,\alpha,k)}|\H_{\phi_k}|^2d\mathrm{vol}_{g_{\phi_k}}=\int_{D(i,j,\alpha,k)}|\H_{\phi_k^{i,j}}|^2d\mathrm{vol}_{g_{\phi_k^{i,j}}}\nonumber\\
    	&\int_{B(i,j,\alpha,k)}Q_{\phi_k}(\tilde{\w}_k,\tilde{\w}_k)d\mathrm{vol}_{g_{\phi_k}}=\int_{D(i,j,\alpha,k)}Q_{\phi_k}(\w_k,\w_k)d\mathrm{vol}_{g_{\phi_k^{i,j}}}\nonumber\\
    	&\int_{B(i,j,\alpha,k)}T_{\phi_k}(\tilde{\w}_k,\tilde{\w}_k)d\mathrm{vol}_{g_{\phi_k}}=\int_{D(i,j,\alpha,k)}T_{\phi_k}({\w}_k,{\w}_k)d\mathrm{vol}_{g_{\phi_k^{i,j}}}.
    \end{align}
    Now, let $\w^1,\cdots,\w^{d_{i,j}}\in \mathrm{Var}(\phi_{\infty}^{i,j})$ an orthonormal basis in $L^2(S^2,g_0)$ (where $g_0$ is the standard constant Gauss curvature metric on $S^2$) of negative variations of $\phi^{i,j}_{\infty}$, and $\w_k^l=P(\phi_k^{i,j})\w^l$ for all $1\leq l\leq d_{i,j}$, we have thanks of the strong convergence on $D(i,j,\alpha,k)$ that for any test function $\eta\in C^{\infty}_{c}(\bar{D}(i,j,\infty,\alpha))$ that
    \begin{align*}
    	&\int_{D(i,j,k,\alpha)}Q_{\phi^{i,j}_k}(\eta\,\w_k^l)d\mathrm{vol}_{g_{\phi_k^{i,j}}}\conv{k\rightarrow \infty}\int_{D(i,j,\infty,\alpha)}Q_{\phi^{i,j}_{\infty}}(\eta\,\w^l)d\mathrm{vol}_{g_{\phi_{\infty}^{i,j}}}\\
    	&\int_{D(i,j,k,\alpha)}T_{\phi^{i,j}_k}(\eta\,\w_k^l)d\mathrm{vol}_{g_{\phi_k^{i,j}}}\conv{k\rightarrow \infty} \int_{D(i,j,\infty,\alpha)}T_{\phi^{i,j}_{\infty}}(\eta\,\w^l)d\mathrm{vol}_{g_{\phi_{\infty}^{i,j}}}
    \end{align*}
    where the last quantity is finite as $\phi_{\infty}^{i,j}$ is real analytic in an open neighbourhood of $D_{\C}(0,\alpha^{-1})\setminus \cup_{j'\in I^{i,j}}D_{\C}(a^{j,j'}_i,\alpha)$ for all $\alpha>0$.
	Therefore, we obtain
	\begin{align}\label{zerolim}
		\lim\limits_{k\rightarrow\infty}\sigma_k^2D^2F(\phi_k)(\tilde{\eta}\tilde{\w}_k,\tilde{\eta}\tilde{\w}_k)=0.
	\end{align}
	Now, by convenience of notation, we replace the bubble domain $D(i,j,\infty,\alpha)$ by $D(i,j,\infty,\alpha^2)$ and we let $\eta:D_{\C}(0,\alpha^{-1})\setminus \cup_{j'\in I^{i,j}}D_{\C}(a^{j,j'}_i,\alpha)$, such that 
	\begin{align*}
		\begin{alignedat}{1}
		&\eta=1\quad \text{in}\;\, D_{\C}(0,\alpha^{-1})\setminus \bigcup_{j'\in I^{i,j}}D_{\C}(a^{j,j'}_i,\alpha)\\
		&\eta(z)=2+\frac{\log|z|}{\log(\alpha)}\quad \text{for}\;\, z\in D_{\C}(0,\alpha^{-2})\setminus D_{\C}(0,\alpha^{-1})\\
		&\eta(z)=2-\frac{\log|z-a_i^{j,j'}|}{\log(\alpha)}\quad \text{for}\;\, z\in D_{\C}(a^{j,j'}_i,\alpha)\setminus D_{\C}(a^{j,j'}_i,\alpha^2).
		\end{alignedat}
	\end{align*}
	Now, at this step, \emph{by the exact same argument} as the one given in the pervious step, we have
	\begin{align}\label{zerolim2}
		\lim\limits_{\alpha\rightarrow 0}\int_{D(i,j,\infty,\alpha)\setminus D(i,j,\infty,\alpha^2)}Q_{\phi_{\infty}^{i,j}}(\eta\w^l)d\mathrm{vol}_{g_{\phi_{\infty}^{i,j}}}=0
	\end{align}
	and as $\w^l\in \mathrm{Var}(\phi^{i,j}_{\infty})$, we deduce that
	\begin{align}\label{negvar}
		\int_{D(i,j,\infty,\alpha^2)}Q_{\phi^{i,j}_{\infty}}(\eta\w^l)d\mathrm{vol}_{g_{\phi_{\infty}^{i,j}}}=\int_{D(i,j,\infty,\alpha^2)}Q_{\phi^{i,j}_{\infty}}(\w^l)d\mathrm{vol}_{g_{\phi_{\infty}^{i,j}}}\conv{\alpha\rightarrow 0}D^2W(\phi^{i,j}_{\infty})[\w,\w]<0.
	\end{align}
	Here we suppose that the bubble is compact, which can always be assumed by taking a suitable inversion. 
	In particular, if $\delta_l>0$ is such that $D^2W(\phi^{i,j}_{\infty})[\w,\w]<-\delta_l$, thanks of \eqref{negvar} and \eqref{negvar}, there exists some $\alpha_l>0$ such that for all $0<\alpha<\alpha_l$
	\begin{align*}
		\int_{D(i,j,\infty,\alpha^2)}Q_{\phi_{\infty}^{i,j}}(\eta\,\w^l,\eta\,\w^l)d\mathrm{vol}_{g_{\phi_k^{i,j}}}\leq -\frac{\delta_l}{2}
	\end{align*}
	which implies in turn by \eqref{changemap} and \eqref{zerolim} that for $k$ large enough and $0<\alpha<\alpha_l$, we have
	\begin{align*}
		D^2W_{\sigma_k}(\phi_k)[\tilde{\eta}\,\tilde{\w}_k^l,\tilde{\eta}\,\tilde{\w}_k^l]\leq -\frac{\delta_l}{4}.
	\end{align*}
	Therefore, $\displaystyle 0<\alpha<\min_{1\leq l\leq d_{i,j}}\alpha_l$, there exist $K\geq 1$ such that for all $k\geq K$ the $\eta\,\w_k^l$ are linearly independent and
	\begin{align*}
		D^2W_{\sigma_k}(\phi_k)[\tilde{\eta}\,\tilde{\w}_k^l,\tilde{\eta}\,\tilde{\w}_k^l]\leq -\frac{\delta_l}{4}<0
	\end{align*}
	so that
	\begin{align*}
		\mathrm{Ind}_{W}(\phi_k^{i,j})\leq \liminf_{k\rightarrow\infty}\mathrm{Ind}_{W_{\sigma_k}}(\phi_k).
	\end{align*}
	
	\textbf{Step 6 : Conclusion.}

	Therefore, there cannot be any linear relations between the negative variations of $\phi_{\infty}$,  $\phi^{i,j}_{\infty}$ and  $\phi_{\infty}^{i',j'}$ for $(i,j)\neq (i',j')$ once projected on $\phi_k$ for $k$ large enough and $\alpha$ small enough as they have disjoint support so we obtain the claimed inequality
	\begin{align*}
		\mathrm{Ind}_W(\phi_{\infty})+\sum_{i=1}^{m}\sum_{j=1}^{Q^i}\mathrm{Ind}_W(\phi_{\infty}^{i,j})\leq \liminf_{k\rightarrow\infty}\mathrm{Ind}_{W_{\sigma_k}}(\phi_k)\leq d.
	\end{align*}
	and this concludes the proof of the theorem.
\end{proof}
\begin{rem}
	We could also obtain the reverse bound by adding the nullity (and taking a co-dual, homotopic or cohomotopic admissible family \cite{index2}). However, due to the conformal group of $\R^n$, the nullity of branched Willmore surfaces is always at least equal to $3$, and as we do not have any upper bound for the number of surfaces realising the min-max, this would not yield much more informations. 
\end{rem}

\nocite{}
\bibliographystyle{plain}
\bibliography{biblio}

\end{document}